\theoremstyle{plain}
\newtheorem{theorem}{Theorem}[section]
\newtheorem{lemma}[theorem]{Lemma}
\newtheorem{corollary}[theorem]{Corollary}
\newtheorem{conjecture}[theorem]{Conjecture}
\newtheorem*{theorem*}{Theorem}
\newtheorem*{claim*}{Claim}
\newtheorem*{lemma*}{Lemma}
\theoremstyle{definition}
\newtheorem{examples}[theorem]{Examples}
\newtheorem{remark}[theorem]{Remark}
\newtheorem{problem}[theorem]{Problem}
\newcommand{\dictionary}[1]{{#1}$\,^{\textcolor{blue}{\mathrm{dict}}}$} 
\newcommand{\dist}{\mathrm{dist}}
\newcommand{\R}{\mathbb{R}}
\newcommand{\N}{\mathbb{N}}
\newcommand{\Z}{\mathbb{Z}}
\newcommand{\F}{\mathbb{F}}
\newcommand{\B}{\mathrm{B}}
\newcommand{\E}{\mathrm{E}}
\DeclareMathOperator{\im}{im}
\newcommand\Sym{\mathfrak S} 
 \newcommand\oo{\mathfrak o} 
    \newcommand\sgn{{\rm sgn\,}}
    \newcommand\card{{\rm card\,}}
    \newcommand\xx{\mathbf{x}} 
 \newcommand\ee{\mathbf{e}}
\newcommand\conv{{\rm conv}}
\newcommand{\conn}{\operatorname{conn}}
\newcommand{\id}{\operatorname{id}}
\newcommand{\ind}{\operatorname{index}}
\newcommand{\res}{\operatorname{res}}
\newcommand{\colim}{\operatorname{colim}}
\newcommand{\trf}{\operatorname{tr}} 
\newcommand{\relint}{\operatorname{relint}} 
\newcommand\sk{\operatorname{sk}}
\begin{document}

\title[Beyond the Borsuk--Ulam theorem]{Beyond the Borsuk--Ulam theorem:\\ The topological Tverberg story}



\author[Blagojevi\'c]{Pavle V. M. Blagojevi\'{c}} 
\thanks{The research by Pavle V. M. Blagojevi\'{c} leading to these results has
        received funding from DFG via Collaborative Research Center TRR~109 ``Discretization in Geometry and Dynamics.''
        Also supported by the grant ON 174008 of the Serbian Ministry of Education and Science.}
\address{Inst. Math., FU Berlin, Arnimallee 2, 14195 Berlin, Germany\hfill\break%
\mbox{\hspace{4mm}}Mat. Institut SANU, Knez Mihailova 36, 11001 Beograd, Serbia}
\email{blagojevic@math.fu-berlin.de} 
\author[Ziegler]{G\"unter M. Ziegler} 
\thanks{The research by G\"unter M. Ziegler received funding from DFG via the Research Training Group “Methods for Discrete Structures” and the Collaborative Research Center TRR~109 ``Discretization in Geometry and Dynamics.''}  
\address{Inst. Math., FU Berlin, Arnimallee 2, 14195 Berlin, Germany} 
\email{ziegler@math.fu-berlin.de}

\date{May 21, 2017}

\dedicatory{Dedicated to the memory of Ji\v{r}\'{\i} Matou\v{s}ek}

\maketitle 

\begin{abstract}
	Bárány's ``topological Tverberg conjecture'' from 1976 states that
	any continuous map of an $N$-simplex $\Delta_N$ to~$\R^d$, for $N\ge(d+1)(r-1)$, 
	maps points from $r$ disjoint faces in~$\Delta_N$ to the same point in $\R^d$.
	The proof of this result for the case when $r$ is a prime, as well as some colored version of 
	the same result, using the results of Borsuk--Ulam and Dold on the non-existence of equivariant maps
	between spaces with a free group action, were main topics of Matou\v{s}ek's 2003 book
	``Using the Borsuk--Ulam theorem.''
	
	In this paper we show how advanced equivariant topology methods allow one to go beyond the prime case of the
	topological Tverberg conjecture.
	
	First we explain in detail how equivariant cohomology tools 
	(employing the Borel construction, comparison of Serre spectral sequences, Fadell--Husseini index, etc.) 
	can be used to prove the
	topological Tverberg conjecture whenever $r$ is a prime power.
	Our presentation includes a number of improved proofs as well as
	new results, such as a complete determination of the 
	Fadell--Husseini index of chessboard complexes in the prime case.
	
	Then we introduce the ``constraint method,'' which applied to suitable ``unavoidable complexes'' 
	yields a great variety of variations and corollaries to the topological
	Tverberg theorem, such as the
	``colored'' and the ``dimension-restricted'' (Van Kampen--Flores type) versions.
	
	Both parts have provided crucial components to the
	recent spectacular counter-examples in high dimensions for the case when $r$ is not a prime power.	
\end{abstract}


 
\section{Introduction}
\label{sec : Introduction}

Ji\v{r}\'{\i}  Matou\v{s}ek's 2003 book ``\emph{Using the Borsuk--Ulam Theorem.\ Lectures on Topological Methods in Combinatorics and Geometry}'' \cite{Matousek2008} is an inspiring introduction to the use of equivariant methods in Discrete Geometry.
Its main tool is the Borsuk--Ulam theorem, and its generalization by Albrecht Dold, which says that there is no equivariant map from an $n$-connected space to an $n$-dimensional finite complex that is equivariant with respect to a non-trivial finite group acting freely.
One of the main applications of this technology in Matou\v{s}ek's book was a proof for Bárány's ``topological Tverberg conjecture'' on $r$-fold intersections in the case when $r$ is a prime, originally due to Imre B\'ar\'any, Senya Shlosman and Andr\'as Sz\H{u}cs \cite{Barany1981}.
This conjecture claimed that for any continuous map $f:\Delta_N\rightarrow\R^d$, when $N\ge(d+1)(r-1)$, there are $r$ points in disjoint faces of the simplex~$\Delta_N$ that $f$ maps to the same point in $\R^d$.

The topological Tverberg conjecture was extended to the case when $r$ is a prime power by Murad \"{O}zaydin in an unpublished paper from 1987 \cite{Oezaydin1987}.
This cannot, however, be achieved via the Dold theorem, since in the prime power case the group actions one could use on the codomain are not free. 
So more advanced methods are needed, such as the Serre spectral sequence for the Borel construction and the Fadell--Husseini index. 
In this paper we present the area about and around the topological Tverberg conjecture, with complete proofs for all of the results, which include the prime power case of the topological Tverberg conjecture. 

\"{O}zaydin in 1987 not only proved the topological Tverberg theorem for prime power $r$, but he also showed, using equivariant obstruction theory, that the approach fails when $r$ is not a prime power: In this case the equivariant map one looks for does exist.

In a spectacular recent development, Isaac Mabillard and Uli Wagner \cite{Mabillard2014} \cite{Mabillard2015} have developed an $r$-fold version of the classical ``Whitney trick'' (cf. \cite{Whitney1944}), which yields the failure of the generalized Van Kampen--Flores theorem when $r\ge6$ is not a prime power.
Then Florian Frick observed that this indeed implies the existence of counterexamples to the topological Tverberg conjecture \cite{Frick2015} \cite{Blagojevic2015} by
a lemma of Gromov \cite[p.\,445]{Gromov2010} that is an instance of the constraint method of Blagojevi\'c, Frick and Ziegler \cite[Lemma\,4.1(iii) and Lemma\,4.2]{Blagojevic2014}.
(See \cite{Barany2016} for a popular rendition of the story.)

The Tverberg theorem from 1966 \cite{Tverberg1966} and its conjectured extension to continuous maps (the topological Tverberg conjecture) have seen a great number of variations and extensions, among them ``colored'' variants as well as versions with restricted dimensions (known as generalized Van Kampen--Flores theorems).
Although many of these were first obtained as independent results, sometimes with very similar proof patterns, our presentation shows that there are many easy implications between these results, using in particular the ``constraint method'' applied to ``unavoidable complexes,'' as developed by the present authors with Florian Frick \cite{Blagojevic2014}.
(Mikhail Gromov \cite[p.\,445]{Gromov2010} had sketched one particular instance: The topological Tverberg theorem for maps to $\R^{n+1}$ implies a generalized Van Kampen--Flores theorem for maps to $\R^n$.)
Thus we can summarize the implications in the following scheme, which shows that all further main results follow from two sources, the topological Tverberg theorem for prime powers,  and the optimal colored Tverberg theorem of the present authors with Benjamin Matschke \cite{Blagojevic2009}, which up to now even for affine maps is available only for the prime case:

{\footnotesize
\[
\xymatrix@!C= 4em{
  &\text{Topological Tverberg for } p^n \ar[dl]\ar[ddl]|!{[dl];[dl]}\hole \ar[drr]\ar[dddl]|!{[dl];[dl]}\hole|!{[ddl];[ddl]}\hole    &  &  &\text{Optimal colored Tverberg for }p\ar[dr]\ar[dl]& &\\
\text{Generalized Van Kampen--Flores for }p^n &  & &\text{Topological Tverberg for } p & &\text{B\'ar\'any--Larman for }p-1\\
\text{Colored Van Kampen--Flores for }p^n\ar[drrrr]\ar[d]   & & & & \\
\text{Weak colored Tverberg for }p^n\ar[drrrr] & & & &\text{Vre\'cica--\v{Z}ivaljevi\'c colored Tverberg for }p^n\text{ type B}\ar[d]\\
& & & &\text{\v{Z}ivaljevi\'c--Vre\'cica colored Tverberg for }p^n\text{ type A}
}
\]
}

\smallskip

Our journey in this paper starts with Radon's 1921 theorem and its topological version, in Section~\ref{sec : The Begining}.
Here the Borsuk--Ulam theorem is all that's needed.
In Section~\ref{sec : The Tverberg theorem} we state the topological Tverberg conjecture and first prove it in the prime case (with a proof that is close to the original argument by Bárány, Shlosman and Sz{\H{u}}cs), and then for prime powers---this is where we go ``beyond the Borsuk--Ulam theorem.''
Implications and corollaries of the topological Tverberg theorem are developed in Section~\ref{sec : Corollaries of the topological Tverberg theorem}---so that's where we put constraints, and ``add color.''
In Section~\ref{sec : Counterexamples to the topological Tverberg conjecture} we get to the counterexamples.
And finally in Section~\ref{sec : The optimal colored Tverberg theorem} we discuss the ``optimal colored Tverberg conjecture,''
which is a considerable strengthening of Tverberg's theorem, but up to now has been proven only in the prime case.

A summary of the main topological concepts and tools used in this paper is given at the end in the form of a dictionary, where a reference to the dictionary in the text is indicated by \dictionary{concept}.

\bigskip
\noindent
{\em Acknowledgements}. We are grateful to Alexander Engstr\"om and Florian Frick for excellent observations on drafts of this paper and many useful comments.
We want to express our gratitude to Peter Landweber for his continuous help and support in improving this manuscript.

\section{The Beginning}
\label{sec : The Begining}

\subsection{Radon's theorem}

One of the first cornerstone results of convex geometry is a 1921 theorem of Johann Radon about overlapping convex hulls of points in a Euclidean space.  

\medskip
Let $\R^d$ be a $d$-dimensional Euclidean space. 
Let $\xx_1,\ldots,\xx_m$ be points in $\R^d$ and let $\alpha_1,\ldots,\alpha_m$ be non-negative real numbers that sum up to $1$, that is, $\alpha_1\geq 0,\ldots,\alpha_m\geq 0$ and $\alpha_1+\cdots+\alpha_m=1$.
The \emph{convex combination} of the points $\xx_1,\ldots,\xx_m$ determined by the scalars $\alpha_1,\ldots,\alpha_m$ is the following point in $\R^d$:
\[
\xx=\alpha_1\xx_1+\cdots+\alpha_m\xx_m .
\]
For a subset $C$ of $\R^d$ we define the \emph{convex hull} of $C$, denoted by $\conv(C)$, to be the set of all convex combinations of finitely many points in $C$:
\[
\conv(C):=\{\alpha_1\xx_1+\cdots+\alpha_m\xx_m : m\in\N,\,\xx_i\in C,\, \alpha_i\in\R_{\geq 0},\, \alpha_1+\cdots+\alpha_m=1\}.
\]
Now Radon's theorem can be stated as follows and proved using elementary linear algebra.

\begin{theorem}[Radon's theorem, point configuration version \cite{Radon1921}]
Let $\R^d$ be a $d$-dimensional Euclidean space, and let $X\subseteq \R^d$ be a subset with (at least) $d+2$ elements. 
Then there are disjoint subsets $P$ and $N$ of $X$ with the property that
\[
\conv(P)\cap\conv(N)\neq\emptyset.
\]
\end{theorem}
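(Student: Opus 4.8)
The plan is to reduce the statement to a pure dimension count in linear algebra; no topology is needed here. First I would select $d+2$ \emph{distinct} points $\xx_1,\dots,\xx_{d+2}$ from $X$ and consider the homogeneous linear system consisting of the $d$ coordinate equations expressing $\lambda_1\xx_1+\cdots+\lambda_{d+2}\xx_{d+2}=0$ together with the single scalar equation $\lambda_1+\cdots+\lambda_{d+2}=0$. This is a system of $d+1$ linear equations in the $d+2$ unknowns $\lambda_1,\dots,\lambda_{d+2}$, so its solution space is nontrivial, and I may fix some solution $(\lambda_1,\dots,\lambda_{d+2})\neq 0$.

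Next, using that solution, I would split the index set $\{1,\dots,d+2\}$ into $I_+:=\{i:\lambda_i>0\}$ and $I_-:=\{i:\lambda_i<0\}$. Since the $\lambda_i$ sum to $0$ and are not all zero, both $I_+$ and $I_-$ are nonempty, and $S:=\sum_{i\in I_+}\lambda_i=-\sum_{i\in I_-}\lambda_i>0$. Dividing the identity $\sum_{i\in I_+}\lambda_i\xx_i=\sum_{i\in I_-}(-\lambda_i)\xx_i$ by $S$ exhibits one point $\xx^\ast=\sum_{i\in I_+}\tfrac{\lambda_i}{S}\xx_i=\sum_{i\in I_-}\tfrac{-\lambda_i}{S}\xx_i$ that is simultaneously a convex combination of $\{\xx_i:i\in I_+\}$ and of $\{\xx_i:i\in I_-\}$. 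Setting $P:=\{\xx_i:i\in I_+\}$ and $N:=\{\xx_i:i\in I_-\}$ then yields disjoint subsets of $X$ with $\xx^\ast\in\conv(P)\cap\conv(N)$, as desired.

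The one point that needs a little care—rather than being a genuine obstacle—is the bookkeeping at the end: one must ensure that $P$ and $N$ come out as honestly \emph{disjoint} subsets of $X$, which is exactly why I pick the $d+2$ points to be pairwise distinct at the outset, so that distinct indices correspond to distinct elements of $X$. Everything else is elementary, and in particular the Borsuk--Ulam theorem plays no role in this version; it enters only for the topological strengthening of Radon's theorem treated afterwards.
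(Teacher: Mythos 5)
Your proof is correct and takes essentially the same route as the paper: the same $(d+1)\times(d+2)$ homogeneous linear system, the same sign-splitting of a nontrivial solution, and the same normalization by the common sum to exhibit the Radon point. The only cosmetic difference is that the paper lumps the zero-coefficient indices into $N$ (using $a_i\le 0$) while you discard them; both are fine since zero coefficients contribute nothing to the convex combination.
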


\begin{proof}
Let $X=\{\xx_1,\ldots,\xx_{d+2}\}\subset\R^d$. 
The homogeneous system of $d+1$ linear equations in $d+2$ variables
\[
\alpha_1\xx_1+\cdots+\alpha_{d+2}\xx_{d+2}=0,\qquad \alpha_1+\cdots+\alpha_{d+2}=0
\]
has a non-trivial solution, say $\alpha_1=a_1,\dots,\alpha_{d+2}=a_{d+2}$.
Denote
\[
P:=\{i : a_i>0\}\qquad\text{and}\qquad N:=\{i : a_i\leq 0\}.
\]
Then $P\cap N=\emptyset$ while $P\neq\emptyset$ and $N\neq\emptyset$, and
\begin{eqnarray*}
a_1\xx_1+\cdots+a_{d+2}\xx_{d+2} = 0   & \Rightarrow &   \sum_{i\in P} a_i\xx_i= \sum_{i\in N} -a_i\xx_i,\\
a_1+\cdots+a_{d+2} = 0                 & \Rightarrow &   \sum_{i\in P} a_i= \sum_{i\in N} -a_i=:A,
\end{eqnarray*} 
where $A>0$.
Consequently, the following point is in the intersection of convex hulls of $P$ and $N$:
\[
\xx:=\sum_{i\in P} \frac{a_i}{A}\xx_i= \sum_{i\in N} \frac{-a_i}{A}\xx_i\in\conv(\{\xx_i : i\in P\})\cap\conv(\{\xx_i : i\in N\}).\vspace{-5pt}
\]
\end{proof}

\begin{figure}
\centering
\includegraphics[scale=0.8]{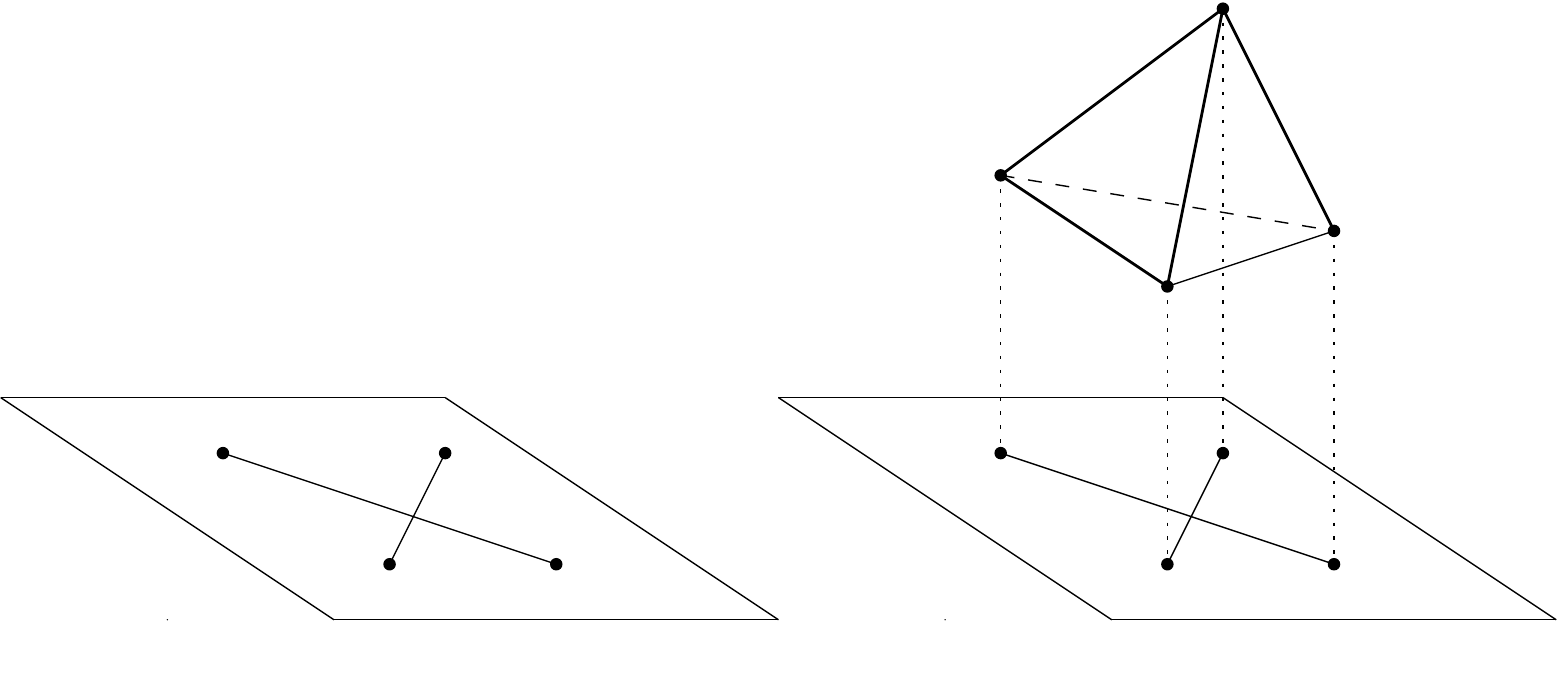}
\caption{\small Illustration of Radon's theorem in the plane for both versions of the theorem.}
\end{figure}
 
In order to reformulate Radon's theorem we recall the notion of an affine map.
A map $f :  D\rightarrow\R^d$ defined on a subset $D\subseteq\R^k$ 
is \emph{affine} if for every $m\in\N$, $\xx_1,\ldots,\xx_m\in D$, 
and $\alpha_1,\ldots,\alpha_m\in\R$ with 
$\alpha_1+\cdots+\alpha_m=1$ and $\alpha_1\xx_1+\cdots+\alpha_m\xx_m\in D$, we have
\[
f(\alpha_1\xx_1+\cdots+\alpha_m\xx_m)=\alpha_1 f(\xx_1)+\cdots+\alpha_m f(\xx_m).
\]
Here and in the following let $\Delta_k:=\conv\{\ee_1,\ldots,\ee_{k+1}\}$ be 
the \emph{standard $k$-dimensional
simplex}: This simplex given as the convex hull of the standard basis of $\R^{k+1}$
has the disadvantage of not being full-dimensional in $\R^k$, but it has the extra advantage
of being obviously symmetric (with symmetry given by permutation of coordinates).
With this, Radon's theorem can be restated as follows.

\begin{theorem}[Radon's theorem, affine map version]
Let $f : \Delta_{d+1}\rightarrow \R^d$ be an affine map. 
Then there are disjoint faces $\sigma_1$ and $\sigma_2$ of the $(d+1)$-simplex
$\Delta_{d+1}$ with the property that
\[
f(\sigma_1)\cap f(\sigma_2)\neq\emptyset.
\]
\end{theorem}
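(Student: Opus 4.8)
The plan is to deduce this from the same elementary linear-algebra argument that proved the point-configuration version, using that an affine map on a simplex is controlled by its values on the vertices.

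First I would record the elementary fact about affine maps that makes the translation work: if $f\colon\Delta_{d+1}\to\R^d$ is affine and $\sigma=\conv\{\ee_i : i\in S\}$ is the face of $\Delta_{d+1}=\conv\{\ee_1,\dots,\ee_{d+2}\}$ spanned by a subset $S\subseteq\{1,\dots,d+2\}$, then $f(\sigma)=\conv\{f(\ee_i):i\in S\}$. This is immediate from the definition of an affine map, since every point of $\sigma$ is a convex combination of the $\ee_i$ with $i\in S$ and $f$ commutes with convex combinations. In particular, disjoint index sets $P,N\subseteq\{1,\dots,d+2\}$ give disjoint faces $\sigma_1,\sigma_2$ of $\Delta_{d+1}$, and $f(\sigma_1)\cap f(\sigma_2)=\conv\{f(\ee_i):i\in P\}\cap\conv\{f(\ee_i):i\in N\}$. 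So it suffices to find disjoint $P,N$ making this last intersection nonempty.

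Next I would set $\xx_i:=f(\ee_i)\in\R^d$ for $i=1,\dots,d+2$ and run verbatim the argument from the proof of the point-configuration version: the homogeneous system $\alpha_1\xx_1+\cdots+\alpha_{d+2}\xx_{d+2}=0$, $\alpha_1+\cdots+\alpha_{d+2}=0$ of $d+1$ linear equations in $d+2$ unknowns has a nontrivial solution $(a_1,\dots,a_{d+2})$; put $P:=\{i:a_i>0\}$ and $N:=\{i:a_i\le 0\}$. Since the $a_i$ are not all zero and sum to zero, both $P$ and $N$ are nonempty, they are disjoint by construction, and normalizing by $A:=\sum_{i\in P}a_i=\sum_{i\in N}(-a_i)>0$ exhibits the point $\sum_{i\in P}\tfrac{a_i}{A}\xx_i=\sum_{i\in N}\tfrac{-a_i}{A}\xx_i$ in $\conv\{\xx_i:i\in P\}\cap\conv\{\xx_i:i\in N\}$. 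Taking $\sigma_1$ and $\sigma_2$ to be the faces of $\Delta_{d+1}$ spanned by $P$ and $N$ completes the proof.

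I do not expect a genuine obstacle here, as the statement is essentially a reformulation. The only point worth a remark is that the images $f(\ee_1),\dots,f(\ee_{d+2})$ need not be pairwise distinct, so one cannot literally invoke the point-configuration theorem with $X=\{f(\ee_1),\dots,f(\ee_{d+2})\}$; but the linear-algebra argument is insensitive to repetitions among the $\xx_i$, so re-running it (rather than quoting the earlier theorem as a black box) is the cleanest route. Alternatively one could treat the degenerate case of fewer than $d+2$ distinct images separately and apply the point-configuration version in the generic case, but that is more cumbersome.
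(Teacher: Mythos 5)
Your proof is correct and follows exactly the route the paper intends: the paper presents the affine-map version as a direct restatement of the point-configuration version, and your argument of setting $\xx_i = f(\ee_i)$ and re-running the linear-algebra argument (rather than quoting the earlier theorem as a black box, to sidestep possible coincidences among the images) is the natural and complete way to make that restatement rigorous.
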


With this version of Radon's theorem at hand, it is natural to ask: 
\emph{Would Radon's theorem still hold if instead of an affine map we consider an arbitrary continuous map $f : \Delta_{d+1} \rightarrow \R^d$?}

\subsection{The topological Radon theorem}

The question we have just asked was answered in 1979 by Ervin Bajm\'oczy and Imre B\'ar\'any \cite{Bajmoczy1979}, using the Borsuk--Ulam theorem.

\begin{theorem}[Topological Radon theorem]
\label{th : Topological Radon theorem} 
Let $f : \Delta_{d+1}\rightarrow \R^d$ be any continuous map.
Then there are two disjoint faces $\sigma_1$ and $\sigma_2$ of $\Delta_{d+1}$ 
whose images under $f$ intersect,
\[
f(\sigma_1)\cap f(\sigma_2)\neq\emptyset.
\]
\end{theorem}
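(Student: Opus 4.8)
The plan is to run the standard ``configuration space / test map'' scheme and reduce the statement to the Borsuk--Ulam theorem, exactly as one does for the case $r=2$ of the topological Tverberg theorem. Realize $\Delta_{d+1}=\conv\{\ee_1,\dots,\ee_{d+2}\}\subseteq\R^{d+2}$, so that each point $x\in\Delta_{d+1}$ has a support $\operatorname{supp}(x)=\{i:x_i>0\}$ and lies in a face $\sigma$ precisely when $\operatorname{supp}(x)\subseteq\sigma$. The configuration space will be the $2$-fold deleted join
\[
X:=(\Delta_{d+1})^{\ast 2}_{\Delta}=\{\,\sigma_1\ast\sigma_2 : \sigma_1,\sigma_2\text{ disjoint faces of }\Delta_{d+1}\,\},
\]
whose points we write as formal convex combinations $t\,x_1\oplus(1-t)\,x_2$ with $x_1,x_2\in\Delta_{d+1}$ and $t\in[0,1]$, subject to $\operatorname{supp}(x_1)\cap\operatorname{supp}(x_2)=\emptyset$ when $0<t<1$; it carries the free $\Z/2$-action that swaps the two join factors. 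Since $\Delta_{d+1}$ has $d+2$ vertices, matching $\ee_i$ in the first factor with $+\ee_i$ and $\ee_i$ in the second factor with $-\ee_i$ identifies $X$ with the boundary of the $(d+2)$-dimensional cross-polytope, and under this identification the swap becomes the antipodal map. Hence $X$ is $\Z/2$-homeomorphic to the sphere $S^{d+1}$ with the antipodal involution.

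Now suppose, for contradiction, that $f(\sigma_1)\cap f(\sigma_2)=\emptyset$ for every pair of disjoint faces of $\Delta_{d+1}$. I would define the test map
\[
F:X\longrightarrow\R^{d}\times\R=\R^{d+1},\qquad F\big(t\,x_1\oplus(1-t)\,x_2\big)=\big(\,t\,f(x_1)-(1-t)\,f(x_2),\;2t-1\,\big).
\]
One checks that $F$ is well defined and continuous, and that it is $\Z/2$-equivariant with respect to the antipodal action $v\mapsto -v$ on $\R^{d+1}$: swapping the join factors interchanges $t$ and $1-t$ and negates both coordinates. The key point is that $F$ never takes the value $0$. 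Indeed, if $F(p)=0$ then the last coordinate forces $t=\tfrac12$, so $f(x_1)=f(x_2)$; and because $\tfrac12\in(0,1)$, the points $x_1$ and $x_2$ lie in disjoint faces of $\Delta_{d+1}$, contradicting the assumption. Composing $F$ with the radial retraction $\R^{d+1}\setminus\{0\}\to S^{d}$ therefore produces a continuous $\Z/2$-equivariant map $S^{d+1}\to S^{d}$ between spheres with free antipodal actions, which is impossible by the Borsuk--Ulam theorem. This contradiction proves the theorem.

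The only step that takes a real idea, rather than routine verification, is the choice of configuration space: one must recognize the deleted join $X$ as an antipodal $(d+1)$-sphere, so that the dimension count comes out to give precisely a forbidden map $S^{d+1}\to S^{d}$. Everything else---continuity of $F$ (including at $t\in\{0,1\}$), the equivariance signs, and the equivalence ``$F$ avoids $0$'' $\Leftrightarrow$ ``there is no Radon partition for $f$''---is a direct check, though the join coordinates make it mildly tedious to write out with care.
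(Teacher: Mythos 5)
Your proof is correct, but it is not the proof the paper gives for Theorem~\ref{th : Topological Radon theorem}; instead it instantiates, at $r=2$, the ``deleted join'' machinery the paper only introduces later in Section~\ref{subsec : Equivariant topology steps in}. The paper's argument for this theorem uses the $2$-fold \emph{deleted product} $X=(\Delta_{d+1})^{\times 2}_{\Delta(2)}$: from a hypothetical counterexample $f$ it builds the normalized-difference map $g:X\to S^{d-1}$, and then, separately, constructs an explicit equivariant map $h:S(W_{d+2})\cong S^{d}\to X$ (by sending a zero-sum vector to the pair of normalized positive/negative parts), so that $g\circ h:S^{d}\to S^{d-1}$ contradicts Borsuk--Ulam. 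You instead take the $2$-fold deleted join, observe via Lemma~\ref{lemma:deleted join of simplex} that $(\Delta_{d+1})^{*2}_{\Delta(2)}\cong[2]^{*(d+2)}\cong S^{d+1}$ with the join-swap acting antipodally, and feed a single test map straight into Borsuk--Ulam as an $S^{d+1}\to S^{d}$ map. Your test map $F(t\,x_1\oplus(1-t)\,x_2)=(t f(x_1)-(1-t)f(x_2),\,2t-1)$ is, up to reordering coordinates, exactly what one gets by composing the paper's join map $J_f$ (from Section~\ref{subsec : Equivariant maps induced by f}) with the orthogonal projection $R_J$ onto $W_2^{\oplus(d+1)}$, so the verification of equivariance and zero-avoidance is sound. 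The trade-off: the deleted-product proof needs the auxiliary map $h$ to manufacture a sphere in the domain, while the deleted-join proof gets the sphere for free and its dimension count is one higher on both sides ($S^{d+1}\to S^{d}$ versus $S^{d}\to S^{d-1}$), so both are tight applications of Borsuk--Ulam. The deleted-product version has the advantage of giving a free action for all $r$ (relevant when one pushes to primes via Dold), but for $r=2$ your route is, if anything, cleaner.
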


\begin{proof}
Let $\Delta_{d+1}=\conv\{\ee_1,\ldots,\ee_{d+2}\}$ be the standard simplex.
Consider the subcomplex $X$ of the polyhedral complex $\Delta_{d+1}\times\Delta_{d+1}$ given by
\[
X:=
\{
(x_1,x_2)\in\Delta_{d+1}\times\Delta_{d+1} :
\text{there are faces }
\sigma_1 ,\sigma_2 \subset\Delta_{d+1}\text{ such that }\sigma_1 \cap \sigma_2 =\emptyset,\ x_1\in\sigma_1,\ x_2\in \sigma_2\}.
\]
The group $\Z/2=\langle\varepsilon\rangle$ acts freely on $X$ by $\varepsilon\cdot(x_1,x_2)=(x_2,x_1)$.

Let us assume that the theorem does not hold.
Then there exists a continuous map $f : \Delta_{d+1}\rightarrow \R^d$ such that 
$f(x_1)\neq f(x_2)$ for all $(x_1,x_2)\in X$. 
Consequently the map $g :  X \rightarrow S^{d-1}$ given by 
\[
g(x_1,x_2):=\frac{f(x_1)-f(x_2)}{\|f(x_1)-f(x_2)\|},
\]
is continuous and $\Z/2$-equivariant, where the action on $S^{d-1}=S(\R^d)$, the unit sphere in $\R^d$, is the standard antipodal action.

Next we define a continuous $\Z/2$-equivariant map from a $d$-sphere to $X$.
For this we do not use the standard $d$-sphere, but 
the unit sphere $S(W_{d+2})$ in the
hyperplane $W_{d+2}:=\{(a_1,\ldots,a_{d+2})\in\R^{d+2} : a_1+\cdots+a_{d+2}=0\}\subset\R^{d+2}$,
that is,
\[
S(W_{d+2})\ =\ 
\{
(a_1,\ldots,a_{d+2})\in\R^{d+2} :
 a_1+\cdots+a_{d+2}=0,\, a_1^2+\cdots+a_{d+2}^2=1
\}.
\]
This representation of the $d$-sphere
also has the standard antipodal $\Z/2$-action. 
The map $h :  S(W_{d+2})\rightarrow X$ is defined by
\[
h (a_1,\ldots,a_{d+2}):=
\Big(\sum_{a_i\geq 0}\frac{\,a_i}{A}\,\ee_i,
\sum_{a_i<0}\frac{-a_i}{A}\,\ee_i\Big),
\]
where $A:=\sum_{a_i > 0}a_i=-\sum_{a_i< 0}a_i>0$. 
This is easily checked to be well-defined and continuous;
the image point lies in the cell $\conv\{\ee_i:a_i>0\} \times \conv\{\ee_j:a_j<0\}$
of the complex $\Delta_{d+1}\times\Delta_{d+1}$.

The composition map $g\circ h:S(W_{d+2})\rightarrow S^{d-1}$ yields a continuous $\Z/2$-equivariant map 
from a free $d$-sphere to a free $(d-1)$-sphere 
that contradicts the \dictionary{Borsuk--Ulam theorem}.
Thus the theorem holds.
\end{proof}

\subsection{The Van Kampen--Flores theorem}
The topological Radon theorem guarantees that for every continuous map $\Delta_{d+1}\rightarrow \R^d$ there exist two pairwise disjoint faces whose $f$-images overlap.  
It is natural to ask: \emph{Is it possible to say something about the dimension of the disjoint faces whose $f$-images intersect?}
In the spirit of Poincar\'e's classification of mathematical problems \cite[Lec.\,1]{Arnold2016} this \emph{binary problem} has a quick answer \emph{no}, but if understood as an \emph{interesting problem} it has an answer:
If we are willing to spend an extra vertex/dimension, 
meaning, put the simplex $\Delta_{d+2}$ in place of $\Delta_{d+1}$, 
we get the following theorem from the 1930s of Egbert R. Van Kampen and Antonio Flores \cite{Flores1932} \cite{VanKampen1933}.

\begin{theorem}[Van Kampen--Flores theorem] 
Let $d\geq 2$ be an even integer, and let $f : \Delta_{d+2} \rightarrow \R^d$ be a 
continuous map.  
Then there are disjoint faces $\sigma_1$ and $\sigma_2$ of $\Delta_{d+2}$ of dimension at most $d/2$ whose images under $f$ intersect, 
\[
f(\sigma_1)\cap f(\sigma_2)\neq\emptyset.
\]
\end{theorem}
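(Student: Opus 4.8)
The plan is to deduce the Van Kampen--Flores theorem from the topological Radon theorem (Theorem~\ref{th : Topological Radon theorem}), applied in one higher dimension, by encoding the dimension restriction ``$\dim\sigma_i\le d/2$'' as a single extra real-valued \emph{constraint function} adjoined to~$f$. This is the $r=2$ instance of Gromov's observation \cite{Gromov2010} that the topological Tverberg theorem for maps to $\R^{n+1}$ implies a Van Kampen--Flores statement for maps to~$\R^n$; it trades the sought dimension bound for the ``extra vertex'' one has available in $\Delta_{d+2}$ compared with $\Delta_{d+1}$.

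Concretely, given a continuous map $f:\Delta_{d+2}\to\R^d$, set $K:=\sk_{d/2}(\Delta_{d+2})$, the $(d/2)$-skeleton of $\Delta_{d+2}$; here one uses that $d$ is even, so that $d/2\in\N$. Since $|K|$ is a compact (hence closed) subset of $\Delta_{d+2}$, the function $\rho:\Delta_{d+2}\to\R_{\ge0}$, $\rho(x):=\dist(x,|K|)$, is continuous, and $\rho(x)=0$ holds exactly when the carrier of $x$ --- the minimal face of $\Delta_{d+2}$ containing~$x$ --- has dimension at most $d/2$. Now consider $F:=(f,\rho):\Delta_{d+2}\to\R^{d}\times\R=\R^{d+1}$. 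Since $\Delta_{d+2}=\Delta_{(d+1)+1}$, the topological Radon theorem applied to $F$ yields disjoint faces $\tau_1,\tau_2$ of $\Delta_{d+2}$ and points $y_1\in\tau_1$, $y_2\in\tau_2$ with $F(y_1)=F(y_2)$, i.e.\ $f(y_1)=f(y_2)$ and $\rho(y_1)=\rho(y_2)$.

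It remains to sharpen $\tau_1,\tau_2$ to faces of dimension at most~$d/2$. I would take $\sigma_i$ to be the carrier of $y_i$; since $y_i\in\tau_i$ and $\tau_i$ is a face, $\sigma_i\subseteq\tau_i$, so $\sigma_1,\sigma_2$ are again disjoint, and $f(y_1)=f(y_2)$ lies in $f(\sigma_1)\cap f(\sigma_2)$. For the dimensions, note that $\sigma_1$ and $\sigma_2$ are disjoint \emph{nonempty} faces of a simplex on $d+3$ vertices, so $(\dim\sigma_1+1)+(\dim\sigma_2+1)\le d+3$, i.e.\ $\dim\sigma_1+\dim\sigma_2\le d+1$; hence at most one of them can have dimension $\ge d/2+1$. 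If, say, $\dim\sigma_1\ge d/2+1$, then $\dim\sigma_2\le d/2$, so $y_2\in|K|$ and $\rho(y_2)=0$, whence $\rho(y_1)=0$, which forces $\dim\sigma_1\le d/2$ --- a contradiction. Therefore both carriers have dimension at most $d/2$, and $\sigma_1,\sigma_2$ are as required.

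I do not expect a serious obstacle here: the only creative step is to guess the right constraint --- pass to $\R^{d+1}$ and measure the distance to the $(d/2)$-skeleton --- after which the proof is the short pigeonhole computation above, which is also exactly where the parity of~$d$ (equivalently, the extra vertex of $\Delta_{d+2}$) is used. A more classical alternative would imitate the proof of Theorem~\ref{th : Topological Radon theorem} directly: a failure of the conclusion on $|K|$ produces a $\Z/2$-equivariant map from the $2$-fold deleted join $K^{*2}_\Delta$ to $S^{d-1}$, which contradicts the Borsuk--Ulam/Dold theorem provided one knows that $K^{*2}_\Delta$ is $(d-1)$-connected; but establishing that connectivity is less transparent than the constraint reduction, so I would present the argument above.
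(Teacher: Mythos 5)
Your proof is correct and follows essentially the same route as the paper: adjoin the constraint function $\dist(\,\cdot\,,\sk_{d/2}(\Delta_{d+2}))$ as an extra coordinate, apply the topological Radon theorem to the resulting map to $\R^{d+1}$, and then use a pigeonhole count on the $d+3$ vertices plus the shared distance value to force both faces into the $d/2$-skeleton. The only cosmetic difference is that you pass explicitly to the carriers of the witness points, whereas the paper phrases the Radon output directly in terms of relative interiors; the substance and the key creative step (the constraint function) are identical.
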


\begin{proof}
Let $g : \Delta_{d+2} \rightarrow \R^{d+1}$ be a continuous map defined by
\[
g(x):=\big(f(x),\dist(x,\sk_{d/2}(\Delta_{d+2}))\big)
\]
where $\sk_{d/2}(\Delta_{d+2})$ denotes the ${d}/2$-skeleton of the simplex $\Delta_{d+2}$, and $\dist(x,\sk_{d/2}(\Delta_{d+2}))$ is the distance of the point $x$ from the subcomplex $\sk_{d/2}(\Delta_{d+2})$.
Observe that if $x\in\relint\sigma$ and $\dist(x,\sk_{d/2}(\Delta_{d+2}))=0$, then the simplex $\sigma$ belongs to the subcomplex $\sk_{d/2}(\Delta_{d+2})$.

Now the topological Radon theorem can be applied to the continuous map $g : \Delta_{d+2} \rightarrow \R^{d+1}$.
It yields the existence of points $x_1\in\relint\sigma_1$ and $x_2\in\relint\sigma_2$, with $\sigma_1\cap\sigma_2=\emptyset$, such that
\[
g(x_1)=g(x_2)\qquad \Longleftrightarrow\qquad f(x_1)=f(x_2),\quad \dist(x_1,\sk_{d/2}(\Delta_{d+2}))=\dist(x_2,\sk_{d/2}(\Delta_{d+2})).
\]
If one of the simplices $\sigma_1$, or $\sigma_2$, would belong to $\sk_{d/2}(\Delta_{d+2})$, then 
\[
\dist(x_1,\sk_{d/2}(\Delta_{d+2}))=\dist(x_2,\sk_{d/2}(\Delta_{d+2}))=0,
\]
implying that both $\sigma_1$ and $\sigma_2$ belong to $\sk_{d/2}(\Delta_{d+2})$, which would concludes the proof of the theorem.

In order to prove that at least one of the faces $\sigma_1$ and $\sigma_2$ belongs to $\sk_{d/2}(\Delta_{d+2})$,
note that these are two disjoint faces of the simplex $\Delta_{d+2}$, which
has $d+3$ vertices, so by the pigeonhole principle one of them
has at most $\lfloor(d+3)/2\rfloor=d/2+1$ vertices.  
\end{proof}

The proof we have presented is an example of the \emph{constraint method} developed in \cite{Blagojevic2014}.
An important message of this proof is that the Van Kampen--Flores theorem is a corollary of the topological Radon theorem.
It is clear that we could have considered a continuous map $f$ defined only on the $d/2$-skeleton. 

\medskip
All the results we presented so far have always claimed something about intersections of the images of two disjoint faces $\sigma_1$ and $\sigma_2$, which we refer to as $2$-fold overlap, or intersection. 
\emph{What about $r$-fold overlaps, for $r>2$?}

\section{The topological Tverberg theorem}
\label{sec : The Tverberg theorem}

\subsection{The topological Tverberg conjecture}
\label{subsec : The topological Tverberg conjectrue}

In 1964, freezing in a hotel room in Manchester,
the Norwegian mathematician Helge Tverberg proved the following $r$-fold generalization of Radon's theorem \cite{Tverberg1966}.
It had been conjectured by Bryan Birch in 1954, who had established the result
in the special case of dimension $d=2$ \cite{Birch1959}.
The case $d=1$ is easy, see below. 
(See \cite{Ziegler2011} for some of the stories surrounding these discoveries.)

\begin{theorem}[Tverberg's theorem] 
Let $d\geq 1$ and $r\geq 2$ be integers, $N=(d+1)(r-1)$, and let $f : \Delta_N\rightarrow \R^d$ be an affine map.
Then there exist $r$ pairwise disjoint faces $\sigma_1,\ldots,\sigma_r$ of the simplex $\Delta_N$ whose $f$-images overlap,  
\begin{equation}
\label{eq:Tverberg property}
f(\sigma_1)\cap\cdots\cap f(\sigma_r)\neq\emptyset.
\end{equation}
\end{theorem}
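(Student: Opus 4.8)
Since $f$ is affine it is determined by the images $\xx_i:=f(\ee_i)\in\R^d$ of the $N+1$ vertices, and every face $\sigma=\conv\{\ee_i:i\in S\}$ satisfies $f(\sigma)=\conv\{\xx_i:i\in S\}$. So the statement is equivalent to the point-configuration form: any $N+1=(d+1)(r-1)+1$ points $\xx_1,\dots,\xx_{N+1}$ in $\R^d$ admit a partition $\{1,\dots,N+1\}=S_1\sqcup\cdots\sqcup S_r$ with $\bigcap_{j=1}^r\conv\{\xx_i:i\in S_j\}\neq\emptyset$. For $r=2$ this is Radon's theorem, and for $d=1$ it is elementary (sort the $2r-1$ points on the line, make the median a singleton block, and pair the remaining $2r-2$ points symmetrically around it). For the general case my plan is to reduce the point-configuration form, via Sarkaria's tensor trick, to the colorful Carath\'eodory theorem of B\'ar\'any: if $C_1,\dots,C_{k+1}$ are finite subsets of $\R^{k}$ each having the origin in its convex hull, then there is a rainbow choice $c_i\in C_i$ with $0\in\conv\{c_1,\dots,c_{k+1}\}$.

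First I would set up the reduction. Put $D:=d+1$, lift each point to $\hat\xx_i:=(1,\xx_i)\in\R^{D}$, and fix vectors $v_1,\dots,v_r\in\R^{r-1}$ with $v_1+\cdots+v_r=0$ and $v_1,\dots,v_{r-1}$ linearly independent (e.g.\ the vertices of a regular simplex centered at the origin). For each $i$ form the color class
\[
C_i:=\{\,\hat\xx_i\otimes v_1,\ \dots,\ \hat\xx_i\otimes v_r\,\}\ \subset\ \R^{D}\otimes\R^{r-1}\ \cong\ \R^{D(r-1)}=\R^{N}.
\]
Each $C_i$ contains the origin in its convex hull, since $\tfrac1r\sum_{j=1}^r\hat\xx_i\otimes v_j=\hat\xx_i\otimes\bigl(\tfrac1r\sum_{j=1}^r v_j\bigr)=0$. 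There are exactly $N+1$ color classes in $\R^{N}$, which is precisely the count for which colorful Carath\'eodory applies.

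Next I would unwind the rainbow selection it produces: colors $j(i)\in\{1,\dots,r\}$ and weights $\mu_i\ge0$ with $\sum_i\mu_i=1$ and $\sum_i\mu_i\,(\hat\xx_i\otimes v_{j(i)})=0$. Grouping by color and writing $w_j:=\sum_{i:\,j(i)=j}\mu_i\hat\xx_i\in\R^{D}$, this becomes $\sum_{j=1}^r w_j\otimes v_j=0$; substituting $v_r=-(v_1+\cdots+v_{r-1})$ turns it into $\sum_{j=1}^{r-1}(w_j-w_r)\otimes v_j=0$, and linear independence of $v_1,\dots,v_{r-1}$ forces $w_1=\cdots=w_r=:w=(t,y)\in\R\times\R^d$. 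The first coordinate gives $t=\sum_{i:\,j(i)=j}\mu_i$ for each $j$, so $rt=\sum_i\mu_i=1$, i.e.\ $t=1/r>0$; in particular every block $S_j:=\{i:j(i)=j\}$ is nonempty. Finally, rescaling by $r$, the point $z:=ry\in\R^d$ satisfies $z=\sum_{i\in S_j}(r\mu_i)\xx_i$ with $\sum_{i\in S_j}r\mu_i=1$ for every $j$, whence $z\in\bigcap_{j=1}^r\conv\{\xx_i:i\in S_j\}$. Since $S_1,\dots,S_r$ partition the vertex set, the faces $\sigma_j:=\conv\{\ee_i:i\in S_j\}$ are pairwise disjoint and $z\in\bigcap_j f(\sigma_j)$.

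The only ingredient that is not elementary linear algebra is the colorful Carath\'eodory theorem, and even that has a short self-contained proof (among all rainbow $(k+1)$-tuples, minimize the distance from their convex hull to the origin; if that minimum were positive, a separating-hyperplane argument would let one replace a single vertex so as to strictly decrease it, a contradiction), so I would either cite it or include this argument. The part that needs care is the bookkeeping of the reduction: the lift to $\R^{D}$ must contribute exactly one extra dimension and the tensor factor $\R^{r-1}$ must multiply it by $r-1$, so that $(d+1)(r-1)+1$ is exactly $\dim\R^{N}+1$, the number of color classes the colorful Carath\'eodory theorem is fed. A completely different plan would be to run Tverberg's original induction (on $r$, peeling off one part at a time), but that argument is substantially more intricate, and the Sarkaria route is the one I would take.
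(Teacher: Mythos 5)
Your proof is correct, and it is a faithful rendering of the Sarkaria--B\'ar\'any--Onn argument via the tensor trick and colorful Carath\'eodory. Note, however, that the paper does not actually prove the (affine) Tverberg theorem: it states it with a reference to Tverberg's original 1966 paper and immediately turns to the topological version, which is the paper's real subject. (The topological Tverberg theorem proved later in the paper would only recover the affine statement when $r$ is a prime power, so it is not a substitute.) Thus there is no ``paper's proof'' to compare against; you have supplied a complete argument where the source only cites one.

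On the merits of your argument: the dimension bookkeeping is exactly right ($D(r-1)=(d+1)(r-1)=N$, so you feed $N+1$ color classes to colorful Carath\'eodory in $\R^N$); the passage from $\sum_{j=1}^r w_j\otimes v_j=0$ to $w_1=\cdots=w_r$ uses correctly that $v_1,\dots,v_{r-1}$ form a basis of $\R^{r-1}$ so that the coefficients in $\R^D\otimes\R^{r-1}\cong(\R^D)^{r-1}$ are uniquely determined; and reading off the lift coordinate gives $t=1/r>0$, which both forces every block $S_j$ to be nonempty and lets you renormalize to a genuine common point $z=ry\in\bigcap_j\conv\{\xx_i:i\in S_j\}$. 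The one external ingredient, colorful Carath\'eodory, is accurately stated and your sketch of its proof (minimize distance from the origin over rainbow simplices, then swap a vertex across a separating hyperplane) is the standard one. This is one of the cleanest known proofs of Tverberg's theorem and substantially shorter than Tverberg's own induction, which you correctly identify as the more intricate alternative.
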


Any collection of $r$ pairwise disjoint faces $\sigma_1,\ldots,\sigma_r$ of the simplex $\Delta_N$ having property \eqref{eq:Tverberg property} is called a \emph{Tverberg partition} of the map $f$. 

The dimension of the simplex in the theorem is optimal, it cannot be decreased.
To see this consider the affine map $h : \Delta_{N-1}\rightarrow \R^d$ given on the vertices of $\Delta_{N-1}=\conv\{\ee_1,\ldots,\ee_{N}\}$ by
\begin{equation}
	\label{map h}
	\ee_{i} \overset{h}{\longmapsto }  u_{\lfloor (i-1)/(r-1)\rfloor } 
\end{equation}
where $\{u_0,\ldots, u_d\}$ is an affinely independent set in $\R^d$,
e.g., $ (u_0,\ldots, u_d)=(0,\ee_1,\dots,\ee_d)$.
For each vertex of the simplex $\conv \{u_0,\ldots, u_d\}$ 
the cardinality of its preimage is $r-1$ 
\[
|h^{-1}(\{u_0\})|=\cdots= | h^{-1}(\{u_d\}) | = r-1,
\]
and so the map $h$ has no Tverberg partition.
Even more is true: Any affine map $h : \Delta_{N-1}\rightarrow \R^d$ that is in general position cannot have a Tverberg partition.

As in the case of Radon's theorem it is natural to ask:
\emph{Would the Tverberg theorem still hold if instead of an affine map 
$f : \Delta_{N}\rightarrow \R^d$ we would consider an arbitrary continuous map?} 
This was first asked by B\'ar\'any in a 1976 letter to Tverberg.
In May of 1978 Tverberg posed the question in Oberwolfach, stating it for a general 
$N$-polytope in place of the $N$-simplex, see \cite{Gruber1979}.
(The problem for a general $N$-polytope can be reduced to the case of
the $N$-simplex by a theorem of Grünbaum: 
Every $N$-polytope as a cell complex is a refinement of the $N$-simplex \cite[p.\,200]{Grunbaum2003}.)
Thus, the topological Tverberg conjecture started its life in the late 1970s.

\begin{conjecture}[Topological Tverberg conjecture] 
Let $d\geq 1$ and $r\geq 2$ be integers, $N=(d+1)(r-1)$, and let $f : \Delta_N\rightarrow \R^d$ be a continuous map.
Then there exist $r$ pairwise disjoint faces $\sigma_1,\ldots,\sigma_r$ of the simplex $\Delta_N$ whose $f$-images overlap,  
\begin{equation}
\label{eq:Topological Tverberg property}
 f(\sigma_1)\cap\cdots\cap f(\sigma_r)\neq\emptyset.
\end{equation}
\end{conjecture}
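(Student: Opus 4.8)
The natural line of attack is the \emph{configuration space / test map} scheme, which turns the statement into the non-existence of an equivariant map. Assume, for contradiction, that some continuous $f\colon\Delta_N\to\R^d$ admits no Tverberg partition, that is, $f(\sigma_1)\cap\cdots\cap f(\sigma_r)=\emptyset$ for every choice of pairwise disjoint faces $\sigma_1,\dots,\sigma_r$ of $\Delta_N$. Ordered $r$-tuples of pairwise disjoint faces of $\Delta_N$ are parametrized by the $r$-fold deleted join $E:=(\Delta_N)^{*r}_{\Delta}$, the subcomplex of the $r$-fold join in which each vertex of $\Delta_N$ is used in at most one join copy. Combinatorially $E$ is the $(N{+}1)$-fold join of $r$-point sets, hence an $N$-dimensional complex that is $(N{-}1)$-connected, and it carries the natural $\Sym_r$-action permuting the $r$ join copies.

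Next I would build the test map. Write $W_r=\{a\in\R^r:a_1+\cdots+a_r=0\}$ for the standard $(r{-}1)$-dimensional $\Sym_r$-representation (as for $W_{d+2}$ above). A point of $E$ is a formal combination $t_1x_1\oplus\cdots\oplus t_rx_r$ with the $x_i$ lying in pairwise disjoint faces, $t_i\ge 0$, $\sum_i t_i=1$; sending it to
\[
\Big(\big(t_i-\tfrac1r\big)_{i},\ \big(t_if(x_i)-\tfrac1r\textstyle\sum_{j}t_jf(x_j)\big)_{i}\Big)\ \in\ W_r\oplus(W_r\otimes\R^d)
\]
defines a $\Sym_r$-equivariant map that, by the contradiction hypothesis, never takes the value $0$ (its vanishing is precisely the existence of a Tverberg partition with barycentric weights). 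Normalizing yields a $\Sym_r$-equivariant map $E\to S\big(W_r\oplus(W_r\otimes\R^d)\big)$, whose target sphere has dimension $(r{-}1)(d{+}1)-1=N-1$.

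It remains to rule out such an equivariant map. When $r=p$ is prime, restrict the action to $\Z/p\subset\Sym_r$: then $W_p\oplus(W_p\otimes\R^d)$ is a free $\Z/p$-representation, so the target is a free $(N{-}1)$-dimensional $\Z/p$-complex while $E$ is $(N{-}1)$-connected, and Dold's theorem (the generalized Borsuk--Ulam theorem) forbids the map; this recovers the Bárány--Shlosman--Szűcs argument. When $r=p^n$, the elementary abelian group $G=(\Z/p)^n$ of order $r$ still acts freely on $E$, via the regular action on the $r$ join copies, but it does \emph{not} act freely on the target sphere (a nonzero $g\in G$ has a nonzero fixed subspace of $W_r$ once $n\ge2$), so Dold does not apply. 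Here I would instead compare the Fadell--Husseini indices, aiming to show $\ind_G(E)\not\supseteq\ind_G\big(S(W_r\oplus(W_r\otimes\R^d))\big)$ --- concretely, that the Euler class $e(W_r)^{d+1}\in H^{N}(BG;\F_p)$ is nonzero yet does not lie in $\ind_G(E)$ --- by comparing the Serre spectral sequences of the two Borel constructions over $BG$.

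The step that cannot be pushed through is the case when $r$ is \emph{not} a prime power. Then there is no group of order $r$ acting on the $r$ join copies with a fixed-point structure small enough to make either Dold's theorem or the index comparison bite, and in fact the equivariant map $E\to S\big(W_r\oplus(W_r\otimes\R^d)\big)$ provably \emph{does} exist, by an equivariant obstruction-theory computation, so the scheme produces no contradiction at all. Thus this plan establishes the conjecture exactly for $r$ a prime power; for non-prime-power $r$ an entirely different idea would be required --- and, as the later sections of this paper recount, no proof is possible there, since the conjecture turns out to be false for every non-prime-power $r\ge 6$.
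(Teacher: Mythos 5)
Your proposal is correct in substance and follows essentially the same route as the paper: the statement is a \emph{conjecture} that is in fact false for every non-prime-power $r\ge 6$, so no complete proof exists; what can be proved is exactly the prime-power case, and you prove it by the same configuration-space/test-map scheme, Dold's theorem for $r$ prime, and a Fadell--Husseini/spectral-sequence comparison over $\B(\Z/p)^n$ for $r=p^n$, with the same diagnosis (\"Ozaydin's obstruction-theoretic existence result) of why the scheme collapses otherwise. The only real difference is cosmetic: the paper's prime-power argument runs on the deleted \emph{product} $(\Delta_N)^{\times r}_{\Delta(2)}$ with target $S(W_r^{\oplus d})$, whereas you use the deleted \emph{join} with target $S(W_r^{\oplus(d+1)})$; both are set up in the paper's Theorem~\ref{th : criterion for topological Tverberg}, and your version works because, although the full $\Sym_r$-action on the deleted join is not free for $r\ge3$, the regularly embedded $(\Z/p)^n$ does act freely on it (every non-identity element moves every index, and points in disjoint faces cannot coincide), which is what your index computation needs.
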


The case $r=2$ of the topological Tverberg conjecture amounts to the 
topological Radon theorem, so it holds.
The topological Tverberg conjecture is also easy to verify for $d=1$, as follows.

\begin{theorem}[Topological Tverberg theorem for $d=1$] 
Let $r\geq 2$ be an integer, and let $f : \Delta_{2r-2}\rightarrow \R$ be a continuous map.
Then there exist $r$ pairwise disjoint faces $\sigma_1,\ldots,\sigma_r$ of the simplex $\Delta_{2r-2}$ whose $f$-images overlap,  
\begin{equation*}
 f(\sigma_1)\cap\cdots\cap f(\sigma_r)\neq\emptyset.
\end{equation*}	
\end{theorem}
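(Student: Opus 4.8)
The plan is to bypass equivariant topology entirely: for $d=1$ the statement follows from sorting the vertices by their $f$-values and applying the intermediate value theorem. Recall that $\Delta_{2r-2}$ has exactly $2r-1$ vertices. First I would relabel them as $v_1,\ldots,v_{2r-1}$ so that the $f$-values are weakly increasing,
\[
f(v_1)\le f(v_2)\le\cdots\le f(v_{2r-1}).
\]

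Next, I would exhibit the Tverberg partition explicitly: take the $r-1$ edges $\sigma_i:=\conv\{v_i,v_{2r-i}\}$ for $i=1,\ldots,r-1$, together with the single middle vertex $\sigma_r:=\{v_r\}$. These $r$ faces are pairwise disjoint: the ``low'' indices $1,\ldots,r-1$ occurring among them are distinct, the ``high'' indices $r+1,\ldots,2r-1$ are distinct, a low index can never equal a high one, and the index $r$ occurs in none of the edges. Thus $\sigma_1,\ldots,\sigma_r$ are genuinely disjoint faces of $\Delta_{2r-2}$.

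Finally, I would check the overlap of the images. Since $f$ restricted to the segment $\sigma_i$ is continuous, $f(\sigma_i)$ is a connected subset of $\R$ containing both $f(v_i)$ and $f(v_{2r-i})$, hence it contains the whole closed interval $[f(v_i),f(v_{2r-i})]$ by the intermediate value theorem. As $i\le r\le 2r-i$ and the $f$-values were sorted, we get $f(v_i)\le f(v_r)\le f(v_{2r-i})$, so $f(v_r)\in f(\sigma_i)$ for every $i\in\{1,\ldots,r-1\}$; and trivially $f(v_r)\in f(\sigma_r)$. Therefore $f(v_r)\in f(\sigma_1)\cap\cdots\cap f(\sigma_r)$, and this intersection is nonempty. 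There is no real obstacle here — the only thing to be careful about is the index bookkeeping ensuring disjointness and that the median value $f(v_r)$ is caught by each of the edges, both of which are immediate once the vertices are sorted.
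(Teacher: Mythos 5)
Your proposal is correct and matches the paper's argument exactly: both sort the vertices by $f$-value, take the $r-1$ nested edges pairing the $i$-th smallest with the $i$-th largest for $i=1,\ldots,r-1$, and add the median vertex as the $r$-th face. The only difference is that you spell out the intermediate-value-theorem step and the disjointness bookkeeping, which the paper leaves implicit.
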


\begin{proof}
	Let $f : \Delta_{2r-2}\rightarrow \R$ be continuous.
	Sort the vertices of the simplex $\Delta_{2r-2}=\conv\{\ee_1,\ldots,\ee_{2r-1}\}$ such that
	$
	f(\ee_{\pi(1)})\leq f(\ee_{\pi(2)})\leq \cdots \leq 
	f(\ee_{\pi(2r-2)})\leq f(\ee_{\pi(2r-1)})
	$.
	Then the collection of $r-1$ edges and one vertex of $\Delta_{2r-2}$
	\[
	\sigma_1=[\ee_{\pi(1)},\ee_{\pi(2r-1)}], \
	\sigma_2=[\ee_{\pi(2)},\ee_{\pi(2r-2)}],  \ldots, \
	\sigma_{r-1}=[\ee_{\pi(r-1)},\ee_{\pi(r+1)}], \
	\sigma_{r}=\{\ee_{r}\}
	\]
	is a Tverberg partition for the map $f$.
\end{proof}

At first glance the topological Tverberg conjecture is a \emph{binary problem} in the Poincar\'e classification of mathematical problems.
To our surprise it is safe to say, at this point in time, that the topological Tverberg conjecture {was} one of the most \emph{interesting problems} that shaped interaction between Geometric Combinatorics on one hand and Algebraic and Geometric Topology on the other hand for almost four decades.

After settling the topological Tverberg conjecture for $d=1$ and $r=2$ we want to advance. How?

\subsection{Equivariant topology steps in}
\label{subsec : Equivariant topology steps in}

Let $d\geq 1$ and $r\geq 2$ be integers, and let $N=(d+1)(r-1)$.
Our effort to handle the topological Tverberg conjecture starts with an assumption that there is a  counterexample to the conjecture with  parameters $d$ and $r$.
Thus there is a continuous map $f : \Delta_N\rightarrow \R^d$ such that for every $r$-tuple $\sigma_1,\ldots,\sigma_r$ of pairwise disjoint faces of the simplex $\Delta_N$ their $f$-images do not intersect, that is,
\begin{equation}
	\label{eq : map f counterexample}
	f(\sigma_1)\cap\cdots\cap f(\sigma_r)=\emptyset.
\end{equation}

In order to capture this feature of our counterexample $f$ we parametrize all $r$-tuples of pairwise disjoint faces of the simplex $\Delta_N$.
This can be done in two similar, but different ways.

\subsubsection{The $r$-fold $2$-wise deleted product}

The  \emph{$r$-fold $2$-wise \dictionary{deleted product}} of a simplicial complex $K$ is the cell complex
\[
K^{\times r}_{\Delta(2)}:=
\{
(x_1,\ldots,x_r)\in \sigma_1\times\cdots\times \sigma_r\subset K^{\times r} :
\sigma_i\cap\sigma_j=\emptyset\text{  for  }i\neq j
\},	
\]
where $\sigma_1,\ldots,\sigma_r$ are non-empty faces of $K$.
The symmetric group $\Sym_r$ acts (from the left) on $K^{\times r}_{\Delta(2)}$ by
\[
\pi\cdot (x_1,\ldots,x_r):=	(x_{\pi^{-1}(1)},\ldots,x_{\pi^{-1}(r)}),
\]
for $\pi\in\Sym_r$ and $(x_1,\ldots,x_r)\in K^{\times r}_{\Delta(2)}$.
This action is free due to the fact that if $(x_1,\ldots,x_r)\in K^{\times r}_{\Delta(2)}$ then $x_i\neq x_j$ for all $i\neq j$.
We have seen a particular instance before:
The complex $X$ that we used in the proof 
of the topological Radon theorem \ref{th : Topological Radon theorem}
was $(\Delta_{d+1})^{\times2}_{\Delta(2)}$.
For more details on the deleted product construction 
see for example \cite{Barany1981} or \cite[Sec.\,6.3]{Matousek2008}.

In the case when $K$ is a simplex the topology of the deleted product 
$K^{\times r}_{\Delta(2)}$ is known from the following result of 
B\'ar\'any, Shlosman and Sz\H{u}cs \cite[Lem.\,1]{Barany1981}.

\begin{theorem}
\label{th : Connectivity of deleted product}
Let $N$ and $r$ be positive integers with $N\geq r-1$. 
Then $(\Delta_N)^{\times r}_{\Delta(2)}$ is an $(N-r+1)$-dimensional and $(N-r)$-connected CW complex.
\end{theorem}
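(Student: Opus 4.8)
The dimension claim is the easy half. A cell of $(\Delta_N)^{\times r}_{\Delta(2)}$ is a product $\sigma_1\times\cdots\times\sigma_r$ of pairwise disjoint non-empty faces of $\Delta_N$, hence has dimension $\sum_i\dim\sigma_i=\bigl(\sum_i|\sigma_i|\bigr)-r\le(N+1)-r=N-r+1$, where $|\sigma_i|$ denotes the number of vertices; and equality is attained, e.g.\ by giving $\sigma_1$ exactly $N-r+2$ vertices and letting $\sigma_2,\dots,\sigma_r$ be single vertices. So the substance is the connectivity statement, which I plan to prove by a double induction: an outer induction on $r$ and, for each fixed $r$, an inner induction on $N\ge r-1$. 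The outer base case $r=1$ is immediate since $(\Delta_N)^{\times1}_{\Delta(2)}=\Delta_N$ is contractible, hence $(N-1)$-connected. The inner base case $N=r-1$ is also immediate: disjointness forces each $\sigma_i$ to be a single vertex and $(\sigma_1,\dots,\sigma_r)$ to run through the $r!$ permutations of the $r$ vertices of $\Delta_{r-1}$, so the complex is a set of $r!$ points, which is $(-1)$-connected.

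For the inductive step fix $r\ge2$ and $N\ge r$, write $v:=\ee_{N+1}$ for the newly adjoined vertex, and regard $(\Delta_{N-1})^{\times r}_{\Delta(2)}$ (with $\Delta_{N-1}=\conv\{\ee_1,\dots,\ee_N\}$) as the subcomplex of cells of $(\Delta_N)^{\times r}_{\Delta(2)}$ not involving $v$. For $i\in\{1,\dots,r\}$ let $U_i$ be the union of the closed cells $\overline{\sigma_1\times\cdots\times\sigma_r}$ over all cells with $v\in\sigma_i$; this is a subcomplex, and I will establish three facts about the family $\{U_1,\dots,U_r\}$. (a) It covers $(\Delta_N)^{\times r}_{\Delta(2)}$: any cell can be enlarged by inserting $v$ into a factor (the unique factor already containing $v$, if any, or any factor otherwise). (b) A cell of $U_i$ can contain $v$ only in its $i$-th factor, because a face $\tau_1\times\cdots\times\tau_r$ of some $\sigma_1\times\cdots\times\sigma_r$ with $v\in\sigma_i$ has $v\notin\sigma_k\supseteq\tau_k$ for $k\ne i$; consequently, for $i\ne j$, a cell of $U_i\cap U_j$ involves $v$ in no factor, while conversely every cell of $(\Delta_{N-1})^{\times r}_{\Delta(2)}$ lies in each $U_i$; thus $U_i\cap U_j=(\Delta_{N-1})^{\times r}_{\Delta(2)}$ for $i\ne j$, and likewise every $t$-fold intersection with $t\ge2$ equals $(\Delta_{N-1})^{\times r}_{\Delta(2)}$. (c) The straight-line homotopy
\[
H_t(x_1,\dots,x_r)=\bigl(x_1,\dots,x_{i-1},(1-t)x_i+tv,x_{i+1},\dots,x_r\bigr)
\]
is well defined on $U_i$ --- any point of $U_i$ has $v$ absent from the carriers of its coordinates other than the $i$-th, so $(1-t)x_i+tv$ stays in the face of $\Delta_N$ spanned by $v$ and the carrier of $x_i$ (whether or not that carrier already contains $v$), still disjoint from the unchanged carriers --- and it is a deformation retraction of $U_i$ onto $\{x\in U_i:x_i=v\}$, which is canonically homeomorphic to $(\Delta_{N-1})^{\times(r-1)}_{\Delta(2)}$ via forgetting the $i$-th coordinate.

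Granting (a)--(c): by the outer induction hypothesis (the $(r-1)$-case with parameters $N-1$ and $r-1$) each $U_i\simeq(\Delta_{N-1})^{\times(r-1)}_{\Delta(2)}$ is $(N-r)$-connected, and by the inner induction hypothesis (the $r$-case with parameter $N-1$) every intersection of two or more of the $U_i$ equals $(\Delta_{N-1})^{\times r}_{\Delta(2)}$ and is therefore $(N-r-1)$-connected, in particular non-empty; hence the nerve of the cover is the full simplex on $r$ vertices and is contractible. The connectivity form of the nerve lemma then gives that $(\Delta_N)^{\times r}_{\Delta(2)}$ is $(N-r)$-connected; if one prefers to avoid the general nerve lemma one can instead induct on the number of parts, using that $(U_1\cup\cdots\cup U_{k-1})\cap U_k=(\Delta_{N-1})^{\times r}_{\Delta(2)}$ and that a union of two $(N-r)$-connected subcomplexes along a non-empty $(N-r-1)$-connected subcomplex is again $(N-r)$-connected (Mayer--Vietoris and van Kampen, then Hurewicz). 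The step I expect to cost the most care is (c): one must check that the retraction genuinely stays inside $U_i$ (allowing for a point of $U_i$ whose $i$-th carrier already contains $v$) and that its image is exactly $(\Delta_{N-1})^{\times(r-1)}_{\Delta(2)}$; beyond that it is only a matter of making sure the two inductions line up, which they do, since applying the $(r-1)$-case needs $N-1\ge(r-1)-1$ and applying the $r$-case at $N-1$ needs $N-1\ge r-1$, both true when $N\ge r$.
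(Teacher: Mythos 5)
Your proof is correct but takes a genuinely different route from the paper's. The paper runs a single induction on the number of factors: it projects $(\Delta_N)^{\times(k+1)}_{\Delta(2)}$ onto the first $k$ coordinates, landing in $\sk_{N-k}\big((\Delta_N)^{\times k}_{\Delta(2)}\big)$, shows each fiber is a simplex and hence contractible, and then invokes Smale's Vietoris-type mapping theorem together with Whitehead's theorem to get a homotopy equivalence with that skeleton, from which the connectivity bound drops out. Your argument is a double induction on $(r,N)$ built on a combinatorial "closed star" cover $\{U_1,\dots,U_r\}$ relative to the newly adjoined vertex $v$: you verify that each $U_i$ deformation retracts onto $(\Delta_{N-1})^{\times(r-1)}_{\Delta(2)}$ by sliding the $i$-th coordinate to $v$ (your carrier analysis in (c) is the right thing to check and is done correctly), that every $t$-fold intersection with $t\ge2$ is exactly $(\Delta_{N-1})^{\times r}_{\Delta(2)}$, and then apply the connectivity form of the nerve lemma (which the paper already states and uses, in the proof of Theorem~\ref{th : conn. of chessboard}). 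The bookkeeping is tight: the nerve is the full $(r-1)$-simplex because the common intersection is non-empty by the inner hypothesis, and the required $(N-r-|\sigma|+1)$-connectivity of the $t$-fold intersections for $t\ge 2$ follows from their $(N-r-1)$-connectivity. Each route has its merits: the paper's gives strictly more — an explicit homotopy equivalence with a skeleton of a smaller deleted product — while yours establishes only the connectivity bound, but it is more elementary, replaces the somewhat exotic Smale/Vietoris input with the nerve lemma the paper is already using for the chessboard connectivity, and has the pleasant feature that the decomposition lemma (your (a)--(c)) is a purely combinatorial statement about the cell structure of the deleted product.
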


\begin{proof}
A typical face of the CW complex $(\Delta_N)^{\times r}_{\Delta(2)}$ is of the form $\sigma_1\times\cdots\times\sigma_r$, where $\sigma_1,\ldots,\sigma_r$ are pairwise disjoint simplices.
Consequently, the number of vertices of these simplices together cannot exceed $N+1$, or in the language of dimension
\[
\dim (\sigma_1)+1+\cdots+\dim (\sigma_r)+1\leq N+1.
\]
The equality is attained when all the vertices are used, this is when $\sigma_1,\ldots,\sigma_r$ is a maximal face of dimension 
\[
\dim (\sigma_1\times\cdots\times\sigma_r) =  \dim(\sigma_1)+\cdots+\dim (\sigma_r)=N-r+1.
\]
Thus $(\Delta_N)^{\times r}_{\Delta(2)}$ is an $(N-r+1)$-dimensional CW complex.

\smallskip
For $N=r-1$ the deleted product $(\Delta_N)^{\times r}_{\Delta(2)}$ is the $0$-dimensional simplicial complex $[r]$ and the statement of the theorem holds.
Thus, we can assume that $N\geq r$.

\smallskip
For $N\geq r$ we establish the connectivity of the deleted product of a simplex by induction on $r$ making repeated use of the following classical 1957 theorem of Stephen Smale~\cite[Main Thm.]{Smale1957}:
\begin{quote}
	{\small
	\textbf{Smale's Theorem.} \emph{Let $X$ and $Y$ be connected, locally compact, separable metric spaces, and in addition let $X$ be locally contractible.
	Let $f:X\rightarrow Y$ be a continuous surjective proper map, that is, any preimage of a compact set is compact.
	If for every $y\in Y$ the preimage $f^{-1}(\{y\})$ is locally contractible and $n$-connected, then the induced homomorphism
	\[
	f_{\#} :  \pi_i(X)\rightarrow \pi_i(Y)
	\]
	is an isomorphism for all $0\leq i\leq n$, and is an epimorphism for $i=n+1$.}}
\end{quote}
Recall that $\Delta_{N}$ denotes the standard simplex, whose vertices $\ee_1,\ldots,\ee_{N+1}$ form the standard basis of~$\R^{N+1}$.
The induction starts with $r=1$ and the theorem claims that the simplex $\Delta_N$, a contractible space, is $(N-1)$-connected, which is obviously true.

In the case $r=2$ consider the surjection 
$p_1 : (\Delta_N)^{\times 2}_{\Delta(2)}\rightarrow \sk_{N-1}(\Delta_N)$ 
given by the projection on the first factor.
Any point $x_1$ of the $(N-1)$-skeleton $\sk_{N-1}(\Delta_N)$ of the simplex $\Delta_N$ lies in the relative interior of a face,
\[
x_1\in \relint\big(\conv\{\ee_{i} : i\in T\subseteq [N+1] \}\big)
\]
where $1\leq |T|\leq N$.
Let us denote the complementary set of vertices by $S:=\{\ee_i : i\notin T\}\neq\emptyset$ and its convex hull by $\Delta_S:=\conv (S)\cong\Delta_{|S|-1}$.
The fiber of the projection map $p_1$ over $x_1$ is given by
\[
p_1^{-1}(\{x_1\})=\{(x_1,x_2)\in (\Delta_N)^{\times 2}_{\Delta(2)} : x_2\in \Delta_S\}\cong \Delta_S,
\]
and consequently it is contractible.
By Smale's theorem the projection $p_1$ induces an isomorphism between homotopy groups.
Since we are working in the category of CW complexes the Whitehead theorem \cite[Thm.\,11.2]{Bredon2010} implies a homotopy equivalence of $(\Delta_N)^{\times 2}_{\Delta(2)}$ and $\sk_{N-1}(\Delta_N)$.
The $(N-1)$-skeleton of a simplex is $(N-2)$-connected and thus the theorem holds in the case $r=2$.

\smallskip
For the induction hypothesis assume that $(\Delta_N)^{\times i}_{\Delta(2)}$ is $(N-i)$-connected for all $i\leq k< r$.
In the induction step we want to prove that $(\Delta_N)^{\times (k+1)}_{\Delta(2)}$ is $(N-k-1)$-connected. 

\smallskip
Now consider the projection onto the first $k$ factors,
\[
p_k : (\Delta_N)^{\times (k+1)}_{\Delta(2)}\rightarrow \sk_{N-k}\big((\Delta_N)^{\times k}_{\Delta(2)}\big).
\]
Since $(\Delta_N)^{\times k}_{\Delta(2)}$ is $(N-k)$-connected 
by induction hypothesis, its $(N-k)$-skeleton 
$\sk_{N-k}\big((\Delta_N)^{\times k}_{\Delta(2)}\big)$ is $(N-k-1)$-connected.
For a typical point of the codomain we have that
\[
(x_1,\ldots,x_k)\in
\relint\big(\conv\{\ee_{i} : i\in T_1\subseteq [N+1] \}\big)
\times\cdots\times
\relint\big(\conv\{\ee_{i} : i\in T_k\subseteq [N+1] \}\big),
\]
where $T_i\cap T_j=\emptyset$ for all $1\leq i<j\leq k$, and $|T_1|-1+\cdots+|T_k|-1\leq N-k$.
As before, consider the complementary set of vertices $S:=\{\ee_i : i\notin T_1\cup\cdots\cup T_k \}\neq\emptyset$ and its convex hull $\Delta_S=\conv (S) \cong\Delta_{|S|-1}$.
The fiber of the projection map $p_k$ over $(x_1,\ldots,x_k)$ is given by
\[
p_k^{-1}(\{(x_1,\ldots,x_k)\})=\{(x_1,\ldots,x_k,x_{k+1})\in (\Delta_N)^{\times (k+1)}_{\Delta(2)} : x_{k+1}\in\Delta_S \}\cong \Delta_S,
\]
so it is contractible.
Again Smale's theorem applied to the projection $p_k$ induces an isomorphism between homotopy groups of $(\Delta_N)^{\times (k+1)}_{\Delta(2)}$ and $\sk_{N-k}\big((\Delta_N)^{\times k}_{\Delta(2)}\big)$.
Moreover, the Whitehead theorem implies that these spaces are homotopy equivalent. 
Since, $\sk_{N-k}\big((\Delta_N)^{\times k}_{\Delta(2)}\big)$ is $(N-k-1)$-connected we have concluded the induction step and the theorem is proved.
\end{proof}

\begin{remark}
Our proof of Theorem~\ref{th : Connectivity of deleted product} 
may be traced back to a proof in the lost preprint version of the paper \cite{Barany1981}.
Indeed, in the published version the first sentence of \cite[Proof of Lem.\,1]{Barany1981} says:
\begin{quote}
{\small
For this elementary proof we are indebted to the referee. Our original proof used the Leray spectral sequence.}
\end{quote}
Here we used Smale's theorem in place of the Leray spectral sequence argument.
\end{remark}

\subsubsection{The $r$-fold $k$-wise deleted join} 
Let $K$ be a simplicial complex. 
The  \emph{$r$-fold $k$-wise \dictionary{deleted join}} of the simplicial complex $K$ is the simplicial complex
\[
K^{* r}_{\Delta(k)}:=
\big\{
\lambda_1 x_1+\cdots+\lambda_r x_r\in \sigma_1*\cdots *\sigma_r\subset K^{*r} :
(\forall I\subset [n])\, \card I\geq k \Rightarrow \bigcap_{i\in I} \sigma_i=\emptyset
\big\},	
\]
where $\sigma_1,\ldots,\sigma_n$ are faces of $K$, including the empty face.
Thus in the case $k=2$ we have
\[
K^{* r}_{\Delta(2)}:=
\{
\lambda_1 x_1+\cdots+\lambda_r x_r\in \sigma_1*\cdots *\sigma_r\subset K^{*r} :
\sigma_i\cap\sigma_j=\emptyset\text{  for  }i\neq j
\}.
\]
The symmetric group $\Sym_r$ acts (from the left) on $K^{* r}_{\Delta(2)}$ as follows
\[
\pi\cdot (\lambda_1 x_1+\cdots+\lambda_r x_r):=	\lambda_{\pi^{-1}(1)} x_{\pi^{-1}(1)}+\cdots+\lambda_{\pi^{-1}(r)} x_{\pi^{-1}(r)},
\]
where $\pi\in\Sym_n$ and $\lambda_1 x_1+\cdots+\lambda_r x_r\in K^{* r}_{\Delta(2)}$.
This action is free only in the case when $r=2$.

\begin{examples}
\label{example:deleted product and joins}
(Compare Figure~\ref{fig3: examples}.)
\begin{compactenum}[(1)]
\item Let $K=\Delta_1$ be the $1$-simplex. 
Then $K^{\times 2}_{\Delta(2)}=S^0$ while $K^{*2}_{\Delta(2)}\cong S^1$.

\item For $K=S^0$ we have that $K^{\times 2}_{\Delta(2)}=S^0$, and $K^{*2}_{\Delta(2)}$ is a disjoint union of two intervals.

\item If $K=[3]$ then $K^{*2}_{\Delta(2)}\cong S^1$.
\item When $K=[k]$, the deleted join $K^{*r}_{\Delta(2)}$ is the $k\times r$ \dictionary{chessboard complex}, which is denoted by~$\Delta_{k,r}$.
\end{compactenum}
\end{examples}

\begin{figure}
\centering
\includegraphics[scale=0.75]{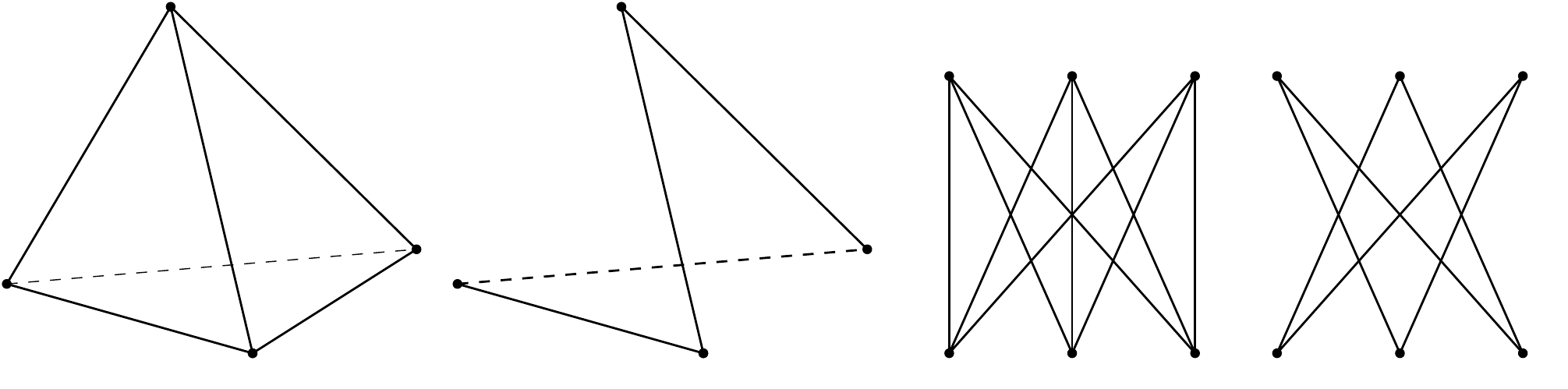}
\caption{\small The complexes $K^{*2}$ and $K^{*2}_{\Delta(2)}$ for $K=\Delta_1$ and $K=[3]$.}
\label{fig3: examples}
\end{figure}

The following lemma establishes the commutativity of the join and the deleted join operations on simplicial complexes. 
We state it for $k$-wise deleted joins and prove it here only for $2$-wise deleted joins.
For more details and insight consult the sections 
\emph{``Deleted Products Good''} and \emph{``\ldots Deleted Joins Better''} 
in Matou\v{s}ek's book, \cite[Sections.\,5.4 and 5.5]{Matousek2008}.

\begin{lemma}
\label{lemma:commutativity of join and deleted join}
Let $K$ and $L$ be simplicial complexes, and let $n\geq 2$ and $k\geq 2$ be integers.
There exists an isomorphism of simplicial complexes:
\[
(K*L)^{*n}_{\Delta(k)}\cong K^{*n}_{\Delta(k)} *  L^{*n}_{\Delta(k)}.
\]
\end{lemma}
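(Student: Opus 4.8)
The plan is to write down an explicit simplicial isomorphism, checking first that the two complexes have the same ground set and then that they have the same faces. First I would recall that the vertex set of $K*L$ is the disjoint union of the vertex set of $K$ and that of $L$, so a point of the join $(K*L)^{*n}$ is a formal convex combination $\sum_{i=1}^n \lambda_i z_i$ with each $z_i$ a point of $K*L$, i.e. $z_i = \mu_i x_i + \nu_i y_i$ with $x_i\in K$, $y_i\in L$, $\mu_i+\nu_i=1$. On the other side, a point of $K^{*n}_{\Delta(k)} * L^{*n}_{\Delta(k)}$ is written $\mu(\sum \alpha_i x_i) + \nu(\sum \beta_i y_i)$. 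The natural candidate for the isomorphism sends $\sum_i \lambda_i(\mu_i x_i + \nu_i y_i)$ to $(\sum_i \lambda_i\mu_i x_i) \,\oplus\, (\sum_i \lambda_i\nu_i y_i)$, viewed as a point in the join of the two $n$-fold deleted joins; one checks the barycentric coordinates match up and that this is a bijection on the level of the underlying sets (with the usual care that coefficients equal to zero drop the corresponding vertex).

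Next I would verify that this map respects the face structure, which is the actual content of the lemma. A face of $(K*L)^{*n}$ is a join $\tau_1 * \cdots * \tau_n$ where each $\tau_i = \sigma_i * \rho_i$ for faces $\sigma_i\in K$, $\rho_i\in L$ (either possibly empty). The key combinatorial observation is that, since the vertex sets of $K$ and $L$ are disjoint, for any index set $I$ we have $\bigcap_{i\in I}\tau_i = \bigl(\bigcap_{i\in I}\sigma_i\bigr) * \bigl(\bigcap_{i\in I}\rho_i\bigr)$, and this intersection is empty as a face of $K*L$ if and only if \emph{both} $\bigcap_{i\in I}\sigma_i=\emptyset$ in $K$ and $\bigcap_{i\in I}\rho_i=\emptyset$ in $L$. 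Hence the deleted-join condition ``$\card I\ge k \Rightarrow \bigcap_{i\in I}\tau_i=\emptyset$'' for the face $\tau_1*\cdots*\tau_n$ of $(K*L)^{*n}$ is equivalent to the conjunction of the same condition for $\sigma_1*\cdots*\sigma_n$ in $K^{*n}$ and for $\rho_1*\cdots*\rho_n$ in $L^{*n}$. But that conjunction is exactly what it means for $(\sigma_1*\cdots*\sigma_n) * (\rho_1*\cdots*\rho_n)$ to be a face of $K^{*n}_{\Delta(k)} * L^{*n}_{\Delta(k)}$. So the face posets are identified, and the map is a simplicial isomorphism.

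For the ``$k=2$'' case, which is the only one I would write out in full, everything above specializes: the condition $\bigcap_{i\in I}\tau_i=\emptyset$ for all $|I|\ge 2$ is just pairwise disjointness $\tau_i\cap\tau_j=\emptyset$ for $i\ne j$, and the disjoint-vertex-set observation gives $\tau_i\cap\tau_j = (\sigma_i\cap\sigma_j)*(\rho_i\cap\rho_j)$, which is empty iff $\sigma_i\cap\sigma_j=\emptyset$ and $\rho_i\cap\rho_j=\emptyset$. Thus $\tau_1*\cdots*\tau_n$ lies in $(K*L)^{*n}_{\Delta(2)}$ exactly when $\sigma_1*\cdots*\sigma_n\in K^{*n}_{\Delta(2)}$ and $\rho_1*\cdots*\rho_n\in L^{*n}_{\Delta(2)}$, which is the defining condition for a face of $K^{*n}_{\Delta(2)}*L^{*n}_{\Delta(2)}$.

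I expect the only delicate point to be bookkeeping: being careful that empty faces $\sigma_i$ or $\rho_i$ are allowed (the deleted join is defined over faces \emph{including} the empty face), and that the bijection on underlying point sets correctly handles vanishing barycentric coordinates, so that the map is genuinely well-defined and injective and not merely a face-poset isomorphism. None of this is hard, but it is the place where a careless argument would have a gap; the geometric content is entirely in the identity $\bigcap_{i\in I}(\sigma_i*\rho_i) = (\bigcap_{i\in I}\sigma_i)*(\bigcap_{i\in I}\rho_i)$, valid because $K$ and $L$ have disjoint vertex sets.
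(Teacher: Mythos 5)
Your proposal is correct and takes essentially the same approach as the paper: reduce to the combinatorial identity on face sets that comes from the disjointness of the vertex sets of $K$ and $L$. The only difference is that the paper explicitly restricts itself to $k=2$ (establishing the equivalence $(\sigma_i\cup\tau_i)\cap(\sigma_j\cup\tau_j)=\emptyset$ iff $\sigma_i\cap\sigma_j=\emptyset$ and $\tau_i\cap\tau_j=\emptyset$ for pairwise intersections only), whereas you correctly carry through the argument for arbitrary $k$ via the identity $\bigcap_{i\in I}(\sigma_i*\rho_i)=\bigl(\bigcap_{i\in I}\sigma_i\bigr)*\bigl(\bigcap_{i\in I}\rho_i\bigr)$, which holds because a vertex of $K*L$ lies in only one of $V(K)$, $V(L)$. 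That is a genuine, if modest, improvement: it proves the lemma as stated rather than just the special case the paper actually uses. Your remarks about bookkeeping (empty faces being allowed, vanishing barycentric coordinates dropping vertices) are exactly the right things to be careful about when upgrading the face-poset bijection to a simplicial isomorphism, and neither of them causes a problem here.
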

\begin{proof}
We give a proof only for the case $k=2$.
Let $\sigma_1,\ldots,\sigma_n$ and $\tau_1,\ldots,\tau_n$ be simplices in $K$ and $L$, respectively, such that $\sigma_i\cap \sigma_j=\emptyset$ and $\tau_i\cap \tau_j=\emptyset$ for all $i\neq j$.
In addition, since the simplicial complexes $K$ and $L$ have disjoint vertex sets, we get that $\sigma_i\cap \tau_j=\emptyset$ as well for all $i$ and $j$.
Thus, for all $i\neq j$ we obtain an equivalence:
\[
(\sigma_i\cup \tau_i)\cap (\sigma_j\cup \tau_j)=\emptyset
\qquad\text{ if and only if }\qquad
\sigma_i\cap \sigma_j=\emptyset\text{ and }\tau_i\cap \tau_j=\emptyset.
\]
It induces a bijection between the following simplices of $(K*L)^{*n}_{\Delta(2)}$ and $ K^{*n}_{\Delta(2)} *  L^{*n}_{\Delta(2)}$ by:
\[
(\sigma_1*\tau_1) *_{\Delta(2)}\cdots*_{\Delta(2)}(\sigma_n*\tau_n)
\quad\longleftrightarrow\quad
(\sigma_1*_{\Delta(2)}\cdots*_{\Delta(2)}\sigma_n)*(\tau_1*_{\Delta(2)}\cdots*_{\Delta(2)}\tau_n).\vspace{-5pt}
\]
\end{proof}

A direct consequence of the previous lemma is the following useful fact.
\begin{lemma}
\label{lemma:deleted join of simplex}
Let $r\geq 2$ and $2\leq k\leq r$ be integers.
Then
\begin{compactenum}[\rm ~~(1)]
\item $(\Delta_N)^{*r}_{\Delta(2)}\cong [r]^{*(N+1)}$,
\item $(\Delta_N)^{*r}_{\Delta(k)}\cong (\sk_{k-2}(\Delta_{r-1}))^{*(N+1)}$.
\end{compactenum}
\end{lemma}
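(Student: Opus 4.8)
The plan is to derive both isomorphisms from Lemma~\ref{lemma:commutativity of join and deleted join} by one structural observation plus a short direct computation. The key point is that the standard $N$-simplex is an iterated join of points: since $\Delta_N=\conv\{\ee_1,\dots,\ee_{N+1}\}$ is the join of its $N+1$ vertices, we have $\Delta_N\cong\Delta_0^{*(N+1)}$, where $\Delta_0$ is a single point. Iterating Lemma~\ref{lemma:commutativity of join and deleted join} with $n=r$, peeling off one join factor at a time (formally, by induction on $N$ via $\Delta_0^{*(N+1)}=\Delta_0*\Delta_0^{*N}$), then gives
\[
(\Delta_N)^{*r}_{\Delta(k)}\ \cong\ \big((\Delta_0)^{*r}_{\Delta(k)}\big)^{*(N+1)},
\]
so the whole statement reduces to computing the single complex $(\Delta_0)^{*r}_{\Delta(k)}$.

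That computation I would do straight from the definition of the $k$-wise deleted join. A face of $(\Delta_0)^{*r}_{\Delta(k)}$ has the form $\sigma_1*\cdots*\sigma_r$ with each $\sigma_i$ a face of $\Delta_0$, hence $\sigma_i\in\{\emptyset,\{v\}\}$ where $v$ is the unique vertex of $\Delta_0$; the deletion condition ``$\bigcap_{i\in I}\sigma_i=\emptyset$ whenever $\card I\ge k$'' says exactly that at most $k-1$ of the $\sigma_i$ equal $\{v\}$. Writing $v_1,\dots,v_r$ for the copies of $v$ in the $r$ join factors, the faces of $(\Delta_0)^{*r}_{\Delta(k)}$ are therefore precisely the subsets of $\{v_1,\dots,v_r\}$ of cardinality at most $k-1$, i.e.\ this complex is $\sk_{k-2}(\Delta_{r-1})$. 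Substituting into the displayed isomorphism proves (2). For (1) one just takes $k=2$: then $\sk_0(\Delta_{r-1})$ is the $r$ vertices with no higher faces, which is $[r]$, so $(\Delta_N)^{*r}_{\Delta(2)}\cong[r]^{*(N+1)}$; thus (1) is the special case $k=2$ of (2).

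The argument is essentially bookkeeping, and the only place that warrants a moment of care is the reduction step: one should check that Lemma~\ref{lemma:commutativity of join and deleted join} may legitimately be iterated --- the intermediate objects it produces are again simplicial complexes and the isomorphisms compose --- and one should verify that the boundary cases read correctly, in particular $N=0$ (where the induction starts trivially) and $k=r$ (where $\sk_{k-2}(\Delta_{r-1})=\partial\Delta_{r-1}$, and $(\Delta_0)^{*r}_{\Delta(r)}$ is indeed the boundary of the $(r-1)$-simplex). I do not expect any genuine obstacle beyond this.
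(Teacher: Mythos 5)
Your proof is essentially the same as the paper's: the paper also writes $\Delta_N\cong[1]^{*(N+1)}$ (with $[1]$ the one-point complex, i.e.\ $\Delta_0$), applies Lemma~\ref{lemma:commutativity of join and deleted join} iteratively to pull the deleted join inside, and identifies $[1]^{*r}_{\Delta(k)}\cong\sk_{k-2}(\Delta_{r-1})$. You have simply spelled out the induction and the computation of $(\Delta_0)^{*r}_{\Delta(k)}$ in more detail, but the route is identical.
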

\begin{proof}
$(\Delta_N)^{*r}_{\Delta(k)}\cong
([1]^{*(N+1)})^{*r}_{\Delta(k)}\cong
([1]^{*r}_{\Delta(k)})^{*(N+1)}\cong
(\sk_{k-2}(\Delta_{r-1}))^{*(N+1)}$.
\end{proof}

\noindent
Thus the $r$-fold $2$-wise deleted join of an $N$-simplex $(\Delta_N)^{*r}_{\Delta(2)}$ is an $N$-dimensional and $(N-1)$-connected simplicial complex. 

\subsubsection{Equivariant maps induced by $f$}
\label{subsec : Equivariant maps induced by f}
Recall that, at the beginning of Section~\ref{subsec : Equivariant topology steps in}, we have fixed integers  $d\geq 1$ and $r\geq 2$, and in addition we assumed the existence of the continuous map $f : \Delta_N\rightarrow \R^d$ that is a counterexample to the topological Tverberg theorem.

Define continuous maps induced by $f$ in the following way:
\begin{compactitem}
	\item the \emph{product map} is
\[
P_f :  (\Delta_N)^{\times r}_{\Delta(2)}\rightarrow (\R^d)^{\times r}\cong (\R^d)^{\oplus r},
\qquad
(x_1,\ldots,x_r)\longmapsto (f(x_1),\ldots, f(x_r));
\]
	\item the \emph{join map} is
\[
J_f :  (\Delta_N)^{* r}_{\Delta(2)}\rightarrow (\R^{d+1})^{\oplus r},
\qquad
\lambda_1 x_1+\cdots +\lambda_r x_r\longmapsto (\lambda_1, \lambda_1 f(x_1))\oplus\cdots\oplus(\lambda_r, \lambda_r f(x_r)).
\]
\end{compactitem}
The codomains $(\R^d)^{\oplus r}$ and $(\R^{d+1})^{\oplus r}$ of the maps $P_f$ and $J_f$ are equipped with the action of the symmetric group $\Sym_r$ given by permutation of the corresponding $r$ factors, that is
\[
\pi\cdot (y_1,\ldots,y_r) = (y_{\pi^{-1}(1)},\ldots,y_{\pi^{-1}(r)})
\quad\text{and}\quad
\pi\cdot (z_1,\ldots,z_r) = (z_{\pi^{-1}(1)},\ldots,z_{\pi^{-1}(r)})
\]
for $(y_1,\ldots,y_r)\in (\R^d)^{\oplus r}$ and $(z_1,\ldots,z_r)\in (\R^{d+1})^{\oplus r}$.
Then both maps $P_f$ and $J_f$ are $\Sym_r$-equivariant.
Indeed, the following diagrams commute:
{\small%
\[
\xymatrix{
(x_1,\ldots,x_r)\ar[r]^-{P_f}\ar[d]_{\pi\cdot}  & (f(x_1),\ldots,f(x_r))\ar[d]_{\pi\cdot}\\
(x_{\pi^{-1}(1)},\ldots,x_{\pi^{-1}(r)})\ar[r]^-{P_f} & (f(x_{\pi^{-1}(1)}),\ldots,f(x_{\pi^{-1}(r)})),
}
\]}%
and
{\small%
\[
\xymatrix{
\lambda_1 x_1+\cdots+\lambda_r x_r\ar[r]^-{J_f}\ar[d]_{\pi\cdot}  & (\lambda_1, \lambda_1 f(x_1))\oplus\cdots\oplus(\lambda_r, \lambda_r f(x_r))\ar[d]_{\pi\cdot}\\
\lambda_{\pi^{-1}(1)} x_{\pi^{-1}(1)}+\cdots+ \lambda_{\pi^{-1}(r)}x_{\pi^{-1}(r)}\ar[r]^-{J_f} & (\lambda_{\pi^{-1}(1)}, \lambda_{\pi^{-1}(1)} f(x_{\pi^{-1}(1)}))\oplus\cdots\oplus(\lambda_{\pi^{-1}(r)}, \lambda_{\pi^{-1}(r)} f(x_{\pi^{-1}(r)})).
}
\]}%
The $\Sym_r$-invariant subspaces 
\[
D_P:=\{(y_1,\ldots,y_r)\in (\R^d)^{\oplus r} : y_1=\cdots=y_r\}
\quad\text{and}\quad
D_J:=\{(z_1,\ldots,z_r)\in (\R^{d+1})^{\oplus r} : z_1=\cdots=z_r\}
\]
of the codomains $(\R^d)^{\oplus r}$ and $(\R^{d+1})^{\oplus r}$, respectively, are called the \emph{thin diagonals}.
The crucial property of the maps $P_f$ and $J_f$, for a counterexample continuous map $f : \Delta_N\rightarrow \R^d$, is that
\begin{equation}
	\label{eq : empty intersection with diagonals}
	\im (P_f) \cap D_P=\emptyset
	\qquad\text{and}\qquad
	\im (J_f) \cap D_J=\emptyset.
\end{equation}
Indeed, the property \eqref{eq : map f counterexample} of the map $f$ immediately implies that $\im (P_f)$ and $D_P$ are disjoint.
For the second relation of \eqref{eq : empty intersection with diagonals} assume that 
\[
(\lambda_1, \lambda_1 f(x_1))\oplus\cdots\oplus(\lambda_r, \lambda_r f(x_r))\in \im (J_f) \cap D_J\neq\emptyset
\] 
for some $\lambda_1 x_1+\cdots+\lambda_r x_r\in (\Delta_N)^{* r}_{\Delta(2)}$.
Then $\lambda_1=\cdots=\lambda_r=\tfrac1{r}$ and consequently $f(x_1)=\cdots=f(x_r)$.

\smallskip
Therefore, the maps $P_f$ and $J_f$ induce $\Sym_r$-equivariant maps
\begin{equation}
	\label{eq : eq-maps-01}
	(\Delta_N)^{\times r}_{\Delta(2)}\rightarrow (\R^d)^{\oplus r}{\setminus}D_P
\qquad\text{and}\qquad
(\Delta_N)^{* r}_{\Delta(2)}\rightarrow (\R^{d+1})^{\oplus r}{\setminus}D_J
\end{equation}
that, with an obvious abuse of notation, are again denoted by $P_f$ and $J_f$, respectively.
Let us denote by 
\begin{equation}
	\label{eq : eq-maps-02}
	R_P :  (\R^d)^{\oplus r}{\setminus}D_P\rightarrow D_P^{\perp}{\setminus}\{0\} \rightarrow S(D_P^{\perp})	
	\quad\text{and}\quad
	R_J :  (\R^{d+1})^{\oplus r}{\setminus}D_J\rightarrow D_J^{\perp}{\setminus}\{0\} \rightarrow S(D_J^{\perp})	
\end{equation}
the compositions of projections and deformation retractions.
Here $U^{\perp}$ denotes the orthogonal complement of the subspace $U$ in the relevant ambient real vector space, while $S(V)$ denotes the unit sphere in the real vector space $V$.
Both maps $R_P$ and $R_J$ are $\Sym_r$-equivariant maps with respect to the introduced actions.

Furthermore, let $\R^r$ be a vectors space with the (left) action of the symmetric group $\Sym_r$ given by the permutation of coordinates. 
Then the subspace $W_r=\big\{(t_1,\ldots,t_r)\in\R^r : \sum_{i=1}^r t_i=0\big\}$ is an $\Sym_r$-invariant subspace of dimension $r-1$.
There is an isomorphism of real $\Sym_r$-representations
\[
D_P^{\perp}\cong W_r^{\oplus d}
\qquad\text{and}\qquad
D_J^{\perp}\cong W_r^{\oplus (d+1)}.
\] 
Using this identification of $\Sym_r$-representations the $\Sym_r$-equivariant maps $R_P$ and $R_J$, defined in \eqref{eq : eq-maps-02}, can be presented by
\begin{equation}
	\label{eq : eq-maps-03}
	R_P :  (\R^d)^{\oplus r}{\setminus}D_P\rightarrow S(W_r^{\oplus d})
	\qquad\text{and}\qquad
	R_J :  (\R^{d+1})^{\oplus r}{\setminus}D_J\rightarrow  S(W_r^{\oplus (d+1)}).
\end{equation}

Finally we have the theorem we were looking for.
It will give us a chance to employ methods of algebraic topology to attack the topological Tverberg conjecture.

\begin{theorem}
	\label{th : criterion for topological Tverberg}
	Let $d\geq 1$ and $r\geq 2$ be integers, and let $N=(d+1)(r-1)$.
    If there exists a counterexample to the topological Tverberg conjecture, then there exist $\Sym_r$-equivariant maps
\[
	(\Delta_N)^{\times r}_{\Delta(2)}\rightarrow S(W_r^{\oplus d})
	\qquad\text{and}\qquad
	(\Delta_N)^{* r}_{\Delta(2)}\rightarrow  S(W_r^{\oplus (d+1)}).
\]
\end{theorem}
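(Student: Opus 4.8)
The plan is simply to assemble the $\Sym_r$-equivariant maps constructed in the preceding paragraphs; all the substantive content has already been front-loaded into that construction. Suppose $f:\Delta_N\to\R^d$ is a counterexample to the topological Tverberg conjecture, so that \eqref{eq : map f counterexample} holds for every $r$-tuple of pairwise disjoint faces of $\Delta_N$. First I would record the product map $P_f$ and the join map $J_f$ together with their $\Sym_r$-equivariance, which is immediate from the two commuting diagrams displayed above.

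Next comes the only point where the counterexample hypothesis is actually used: it forces $\im(P_f)\cap D_P=\emptyset$ and $\im(J_f)\cap D_J=\emptyset$, as in \eqref{eq : empty intersection with diagonals}. For $P_f$ this is a verbatim restatement of \eqref{eq : map f counterexample}; for $J_f$ one observes that a point of the deleted join mapped into the thin diagonal $D_J$ must have all coefficients $\lambda_i=\tfrac1r$ and all values $f(x_i)$ equal, again contradicting \eqref{eq : map f counterexample}. Hence $P_f$ and $J_f$ corestrict to the $\Sym_r$-equivariant maps into $(\R^d)^{\oplus r}\setminus D_P$ and $(\R^{d+1})^{\oplus r}\setminus D_J$ recorded in \eqref{eq : eq-maps-01}.

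Then I would compose with the equivariant retractions $R_P$ and $R_J$ of \eqref{eq : eq-maps-02}: orthogonal projection onto $D_P^{\perp}$ (respectively $D_J^{\perp}$), followed by the radial deformation retraction of the punctured complement onto its unit sphere. These composites are defined precisely because the images avoid the diagonals, so one never attempts to normalize the zero vector, and they are $\Sym_r$-equivariant since taking orthogonal complements, projecting, and normalizing are all equivariant for the permutation action. Finally, invoking the isomorphisms of $\Sym_r$-representations $D_P^{\perp}\cong W_r^{\oplus d}$ and $D_J^{\perp}\cong W_r^{\oplus (d+1)}$, the composites $R_P\circ P_f$ and $R_J\circ J_f$ become the two desired $\Sym_r$-equivariant maps
\[
(\Delta_N)^{\times r}_{\Delta(2)}\longrightarrow S(W_r^{\oplus d}),
\qquad
(\Delta_N)^{* r}_{\Delta(2)}\longrightarrow S(W_r^{\oplus (d+1)}).
\]
There is essentially no obstacle in this argument: the deleted product and deleted join were defined exactly so as to parametrize the $r$-tuples of disjoint faces, and the representation-theoretic identifications were set up in advance, so the proof is just a matter of stringing these pieces together. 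The single routine check is the well-definedness of $R_P$ and $R_J$, which is guaranteed by \eqref{eq : empty intersection with diagonals}.
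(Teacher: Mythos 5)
Your proof is correct and follows exactly the paper's approach: compose the product and join maps $P_f$, $J_f$ (which, by the counterexample hypothesis, miss the thin diagonals) with the equivariant retractions $R_P$, $R_J$ onto the spheres $S(W_r^{\oplus d})$ and $S(W_r^{\oplus(d+1)})$. The paper's own proof is a one-liner citing the same displayed constructions; you simply spell out the well-definedness and equivariance checks that the surrounding text had already established.
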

\begin{proof}
If $f : \Delta_N\rightarrow \R^d$ is a counterexample to the topological Tverberg conjecture, then by composing maps from \eqref{eq : eq-maps-01} and \eqref{eq : eq-maps-03} we get $\Sym_r$-equivariant maps
\[
R_P\circ P_f :  (\Delta_N)^{\times r}_{\Delta(2)}\rightarrow S(W_r^{\oplus d})
\qquad\text{and}\qquad
R_J\circ J_f :  (\Delta_N)^{* r}_{\Delta(2)}\rightarrow  S(W_r^{\oplus (d+1)}).\vspace{-5pt}
\]
\end{proof}

Now we have constructed our equivariant maps.
The aim is to \emph{find as many $r$'s as possible such that an $\Sym_r$-equivariant map}
\begin{equation}
	\label{eq : eq-maps-04}
	(\Delta_N)^{\times r}_{\Delta(2)}\rightarrow S(W_r^{\oplus d}),
	\qquad\emph{or}\qquad
	(\Delta_N)^{* r}_{\Delta(2)}\rightarrow  S(W_r^{\oplus (d+1)}),
\end{equation} 
{\em cannot exist}.
For this we keep in mind that the $\Sym_r$-action on $(\Delta_N)^{\times r}_{\Delta(2)}$ is free, while for $r\geq 3$ the $\Sym_r$-action on $(\Delta_N)^{* r}_{\Delta(2)}$ is not free.

\subsection{The topological Tverberg theorem}
\label{subsec : The topological Tverberg theorem for r a prime}

The story of the topological Tverberg conjecture continues with a 1981 breakthrough of B\'ar\'any, Shlosman and Sz\H{u}cs \cite{Barany1981}. 
They proved that in the case when $r$ is a prime, there is no $\Z/r$-equivariant map $(\Delta_N)^{\times r}_{\Delta(2)}\rightarrow S(W_r^{\oplus d})$, and consequently no $\Sym_r$-equivariant map can exist.
Hence, Theorem~\ref{th : criterion for topological Tverberg} settles the topological Tverberg conjecture in the case when $r$ is a prime.
We give a proof of this result relying on the following theorem of Dold \cite{Dold1983} \cite[Thm.\,6.2.6]{Matousek2008}:
\begin{quote}
	{\small
	\textbf{Dold's theorem.} \emph{Let $G$ be a non-trivial finite group.
For an $n$-connected $G$-space $X$ and at most $n$-dimensional free $G$-CW complex $Y$ there cannot be any continuous $G$-equivariant map $X\rightarrow Y$.}}
\end{quote}
 
\begin{theorem}[Topological Tverberg theorem for primes $r$]
 Let $d\geq 1$ be an integer, let $r\geq 2$ be a prime, $N=(d+1)(r-1)$, and let $f : \Delta_N\rightarrow \R^d$ be a continuous map.
Then there exist $r$ pairwise disjoint faces $\sigma_1,\ldots,\sigma_r$ of the simplex $\Delta_N$ whose $f$-images overlap, that is 
\begin{equation*}
 f(\sigma_1)\cap\cdots\cap f(\sigma_r)\neq\emptyset.
\end{equation*}	
\end{theorem}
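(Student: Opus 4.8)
The plan is to argue by contradiction using Theorem~\ref{th : criterion for topological Tverberg} together with Dold's theorem, exploiting the fact that for a prime $r$ the group $\Z/r$ acts freely on the spheres $S(W_r^{\oplus d})$ and $S(W_r^{\oplus(d+1)})$. Suppose $f:\Delta_N\to\R^d$ is a counterexample. By Theorem~\ref{th : criterion for topological Tverberg} we obtain a $\Sym_r$-equivariant map $(\Delta_N)^{\times r}_{\Delta(2)}\to S(W_r^{\oplus d})$, and restricting the action along the inclusion $\Z/r\hookrightarrow\Sym_r$ (generated by the cyclic shift) we get a $\Z/r$-equivariant map between the same spaces. (One could equally well work with the join version $(\Delta_N)^{*r}_{\Delta(2)}\to S(W_r^{\oplus(d+1)})$; I would use whichever makes the connectivity bookkeeping cleanest — probably the deleted product, since Theorem~\ref{th : Connectivity of deleted product} is already in hand.)

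Next I would check the three hypotheses of Dold's theorem for $G=\Z/r$, $X=(\Delta_N)^{\times r}_{\Delta(2)}$ and $Y=S(W_r^{\oplus d})$. First, $X$ is $(N-r)$-connected by Theorem~\ref{th : Connectivity of deleted product}, and $N-r=(d+1)(r-1)-r=dr-d-1=d(r-1)-1$. Second, $Y=S(W_r^{\oplus d})$ is a sphere of dimension $d\dim W_r-1=d(r-1)-1$, hence an $(d(r-1)-1)$-dimensional free $\Z/r$-CW complex — freeness because the diagonal $D_P$ was removed, so no nonzero vector of $W_r^{\oplus d}$ is fixed by a nontrivial cyclic permutation when $r$ is prime (here primality of $r$ is exactly what guarantees that $\Z/r$ acts freely on $W_r$, since $W_r$ contains no nonzero vector invariant under a generator of $\Z/r$, and more relevantly any nontrivial element of $\Z/r$ generates the whole group so has the same fixed set). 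Third, $\Z/r$ is a nontrivial finite group. With $X$ being $n$-connected for $n=d(r-1)-1$ and $Y$ being an at-most-$n$-dimensional free $\Z/r$-CW complex, Dold's theorem forbids any $\Z/r$-equivariant map $X\to Y$, contradicting the map produced above. Hence no counterexample exists and the theorem follows.

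The main obstacle — and the only place primality is genuinely used — is verifying that the $\Z/r$-action on $S(W_r^{\oplus d})$ is free; this is where the argument would break for $r$ a prime power but not prime, since then a nontrivial element of $\Z/r$ of non-maximal order could have a nonzero fixed vector in $W_r$. I should be careful to state this as: for $r$ prime, every nontrivial $g\in\Z/r$ generates $\Z/r$, and the only vector of $\R^r$ fixed by the cyclic shift is the all-ones vector, which does not lie in $W_r$; therefore the shift acts without fixed points on $W_r\setminus\{0\}$, hence freely on each $W_r^{\oplus d}\setminus\{0\}$ and on its sphere $S(W_r^{\oplus d})$. A secondary point to make explicit is the dimension/connectivity arithmetic $N-r=d(r-1)-1=\dim S(W_r^{\oplus d})$, so that $Y$ sits exactly at the top of the range where Dold's obstruction applies; this tightness is why $N=(d+1)(r-1)$ is the right bound. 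Everything else is a routine assembly of results already established in the excerpt.
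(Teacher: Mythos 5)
Your proposal is correct and follows essentially the same route as the paper: invoke Theorem~\ref{th : criterion for topological Tverberg} to get the $\Sym_r$-equivariant map, restrict to the cyclic subgroup $\Z/r\subset\Sym_r$, verify that $\Z/r$ acts freely on $S(W_r^{\oplus d})$ precisely because $r$ is prime, and then apply Dold's theorem using the connectivity/dimension count $N-r=d(r-1)-1=\dim S(W_r^{\oplus d})$. Your extra explicitness about why the $\Z/r$-action is free and why the bound is tight is a welcome elaboration, but it is the same argument.
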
 

\begin{proof}
According to Theorem \ref{th : criterion for topological Tverberg} it suffices to prove that there cannot be any $\Sym_r$-equivariant map $(\Delta_N)^{\times r}_{\Delta(2)}\rightarrow S(W_r^{\oplus d})$.
Let $\Z/r$ be the subgroup of $\Sym_r$ generated by the cyclic permutation $(123\ldots r)$.
Then it is enough to prove that there is no $\Z/r$-equivariant map $(\Delta_N)^{\times r}_{\Delta(2)}\rightarrow S(W_r^{\oplus d})$.
For that we are going to use Dold's theorem.

\noindent
The assumption that $r$ is a prime implies that the action of $\Z/r$ on the the sphere $S(W_r^{\oplus d})$ is free.
Now, since
\begin{compactitem}
	\item $(\Delta_N)^{\times r}_{\Delta(2)}$ is an $(N-r)$-connected $\Z/r$-space, and
	\item $S(W_r^{\oplus d})$ is a free $(N-r)$-dimensional $\Z/r$-CW complex,	
\end{compactitem}
the theorem of Dold implies that a $\Z/r$-equivariant map $(\Delta_N)^{\times r}_{\Delta(2)}\rightarrow S(W_r^{\oplus d})$ cannot exist.
\end{proof}
\noindent
The same argument yields that there cannot be any $\Sym_r$-equivariant map $(\Delta_N)^{* r}_{\Delta(2)}\rightarrow  S(W_r^{\oplus (d+1)})$, when $r$ is a prime.
Observe that for an application of the theorem of Dold the nature of the group action on the domain is of no importance.

\bigskip 
The next remarkable step followed a few years later.
In 1987 in his landmark unpublished manuscript \"Ozaydin \cite{Oezaydin1987} extended the result of B\'ar\'any, Shlosman and Sz\H{u}cs and proved that the topological Tverberg conjecture holds for $r$ a prime power.
He proved even more and left the topological Tverberg conjecture as a teaser for generations of mathematicians to come.
But this story will come a bit later. 

The first published proof of the topological Tverberg theorem for $r$ a prime power appeared in a paper of Aleksei Yu.\ Volovikov \cite{Volovikov1996-1}; see Remark \ref{rem : volovikov}.
Here we give a proof of the topological Tverberg theorem for prime powers
based on a comparison of Serre spectral sequences which uses a \dictionary{consequence of the localization theorem} for \dictionary{equivariant cohomology} \cite[Cor.\,1, p.\,45]{Hsiang1975}.
For background on spectral sequences we refer to the textbooks by John McCleary \cite{McCleary2001} and by Anatoly Fomenko and Dmitry Fuchs \cite{Fomenko2016}.

\begin{theorem}[Topological Tverberg theorem for prime powers $r$]
\label{th : Topological Tverberg theorem for prime power}
 Let $d\geq 1$ be an integer, let $r\geq 2$ be a prime power, $N=(d+1)(r-1)$, and let $f : \Delta_N\rightarrow \R^d$ be a continuous map.
Then there exist $r$ pairwise disjoint faces $\sigma_1,\ldots,\sigma_r$ of the simplex $\Delta_N$ whose $f$-images overlap, that is 
\begin{equation*}
 f(\sigma_1)\cap\cdots\cap f(\sigma_r)\neq\emptyset.
\end{equation*}	
\end{theorem}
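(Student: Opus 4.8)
The plan is to apply Theorem~\ref{th : criterion for topological Tverberg}: for $r=p^n$ a prime power and $N=(d+1)(r-1)$ it suffices to show that there is no $\Sym_r$-equivariant map $(\Delta_N)^{*r}_{\Delta(2)}\to S(W_r^{\oplus(d+1)})$ (the deleted product version $(\Delta_N)^{\times r}_{\Delta(2)}\to S(W_r^{\oplus d})$ can be treated in exactly the same way). Since for $r\geq 3$ neither the $\Sym_r$-action on the sphere nor any natural subgroup action on it is free, Dold's theorem is unavailable. Instead I would restrict the group to the elementary abelian subgroup $G:=(\Z/p)^n$, embedded into $\Sym_r$ through the faithful translation action of $G$ on the $r=p^n$ coordinates; it then suffices to rule out a $G$-equivariant map $\phi:(\Delta_N)^{*r}_{\Delta(2)}\to S(W_r^{\oplus(d+1)})$, and one works with Borel constructions and $\F_p$-coefficients throughout.

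Assume such a $\phi$ exists. Applying the Borel construction $(-)_{hG}:=EG\times_G(-)$ yields a map $\phi_{hG}$ over $BG$, so that the composition $H^*(BG;\F_p)\to H^*_G(S(W_r^{\oplus(d+1)});\F_p)\xrightarrow{\phi_{hG}^*}H^*_G\big((\Delta_N)^{*r}_{\Delta(2)};\F_p\big)$ equals the restriction map of the second fibration. Two facts now collide. First, by Lemma~\ref{lemma:deleted join of simplex} and the remark following it, $(\Delta_N)^{*r}_{\Delta(2)}$ is $N$-dimensional and $(N-1)$-connected; hence in the Serre spectral sequence of $\big((\Delta_N)^{*r}_{\Delta(2)}\big)_{hG}\to BG$ nothing hits $E_2^{N,0}=H^N(BG;\F_p)$, so the edge homomorphism $H^N(BG;\F_p)\to H^N_G\big((\Delta_N)^{*r}_{\Delta(2)};\F_p\big)$ is injective. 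Second, in the Serre spectral sequence of the sphere bundle $S^{N-1}\to S(W_r^{\oplus(d+1)})_{hG}\to BG$ the only nonzero differential is the transgression, given by multiplication by the equivariant Euler class $e:=e_G(W_r^{\oplus(d+1)})=e_G(W_r)^{d+1}\in H^N(BG;\F_p)$, and the Gysin sequence shows that $e$ lies in the kernel of $H^N(BG;\F_p)\to H^N_G(S(W_r^{\oplus(d+1)});\F_p)$. Combining the two: $\phi_{hG}^*$ would send the image of $e$, which is $0$, to the image of $e$ under the injective edge map, forcing $e=0$ in $H^N(BG;\F_p)$.

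Thus the argument reduces to one nontrivial computation: $e_G(W_r)^{d+1}\neq 0$ in $H^N(BG;\F_p)$. This is where the \dictionary{consequence of the localization theorem} for \dictionary{equivariant cohomology} \cite[Cor.\,1,\,p.\,45]{Hsiang1975} enters: since $G=(\Z/p)^n$ and $(W_r^{\oplus(d+1)})^G=0$, the $G$-space $S(W_r^{\oplus(d+1)})$ has empty fixed point set, and localization then forces the equivariant Euler class $e_G(W_r^{\oplus(d+1)})$ to be nonzero. Equivalently, and more concretely, $W_r$ is the real reduced regular representation of $G$: for $p=2$ it splits as $\bigoplus_{0\neq \ell}\R_\ell$ over the nonzero $\ell\in H^1(BG;\F_2)$, so $e_G(W_r)=\prod_{0\neq\ell}\ell$ is a nonzero product of linear forms in the polynomial ring $H^*(BG;\F_2)$; for $p$ odd it splits into the $(r-1)/2$ two-dimensional irreducibles, and $e_G(W_r)$ becomes a nonzero product of linear forms in the polynomial subalgebra of $H^*(BG;\F_p)$ generated by $H^2$. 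In either case $e_G(W_r)$ is a product of nonzero elements of an integral domain, so all of its powers are nonzero, contradicting $e_G(W_r)^{d+1}=0$. This proves the theorem.

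I expect the Euler-class nonvanishing to be the heart of the matter, and the delicate point is that $e_G(W_r)$ restricts to $0$ on every rank-one subgroup $\Z/p\leq G$ — so it is invisible to Dold-type free-action arguments — yet is nonzero in $H^*(BG;\F_p)$; this is precisely the gap which, in the prime case $n=1$, disappears because then $G=\Z/p$ acts freely on the sphere and the spectral sequence argument degenerates to Dold's theorem. Everything else — passing to Borel constructions, reading the single differential as the Euler class, and identifying the edge homomorphism through the connectivity statement for $(\Delta_N)^{*r}_{\Delta(2)}$ from Section~\ref{subsec : Equivariant topology steps in} — is routine.
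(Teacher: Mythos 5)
Your proof is correct and takes essentially the same approach as the paper: restrict the $\Sym_r$-equivariant problem to the subgroup $(\Z/p)^n$ embedded via the regular representation, pass to Borel constructions and Serre spectral sequences with $\F_p$-coefficients, exploit the connectivity of the deleted construction to get an injective edge map in the relevant degree on one side and read the transgression of the sphere bundle as the equivariant Euler class on the other, and invoke the localization-theorem criterion to conclude that this Euler class is nonzero. The only cosmetic differences are that you work with the deleted join $(\Delta_N)^{*r}_{\Delta(2)}\to S(W_r^{\oplus(d+1)})$ where the paper uses the deleted product $(\Delta_N)^{\times r}_{\Delta(2)}\to S(W_r^{\oplus d})$, and that you package the contradiction as a commutative square (injective edge map on the source meets a kernel element on the target) instead of tracking the element $a=\partial_{N-r+1}(1\otimes\ell)$ through the morphism of spectral sequences — which is precisely the Fadell--Husseini index reformulation the paper itself records immediately after its proof.
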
 
\begin{proof}
Let $d\geq 1$ be an integer, and let $r=p^n$ for $p$ a prime.
By Theorem \ref{th : criterion for topological Tverberg} it suffices to prove that there cannot be any $\Sym_r$-equivariant map $(\Delta_N)^{\times r}_{\Delta(2)}\rightarrow S(W_r^{\oplus d})$.

Consider the elementary abelian group $(\Z/p)^n$ and the regular embedding $\mathrm{reg} :  (\Z/p)^n\rightarrow \Sym_{r}$, as explained in  \cite[Ex.\,2.7, p.\,100]{Adem2004}.
It is given by the left translation action of $(\Z/p)^n$ on itself: 
To each element $g\in (\Z/p)^n$ we associate the permutation $L_g :   (\Z/p)^n\rightarrow  (\Z/p)^n$ from $\mathrm{Sym}((\Z/p)^n)\cong\Sym_{r}$ given by $L_g(x)=g+x$.
We identify the elementary abelian group $(\Z/p)^n$ with the subgroup $\im(\mathrm{reg})$ of the symmetric group $\Sym_r$.
Thus, in order to prove the non-existence of an $\Sym_r$-equivariant map it suffices 
to prove the non-existence of a $(\Z/p)^n$-equivariant map $(\Delta_N)^{\times r}_{\Delta(2)}\rightarrow S(W_r^{\oplus d})$.

Our proof takes several steps; the crucial ingredient is a comparison of Serre spectral sequences. As it will be by contradiction,
let us now assume that a $(\Z/p)^n$-equivariant map $\varphi  :  (\Delta_N)^{\times r}_{\Delta(2)}\rightarrow S(W_r^{\oplus d})$ exists.

\smallskip
\noindent{\bf (1)}
Let $\lambda$ denote the \dictionary{Borel construction} fiber bundle
\[
\lambda\quad :\quad
(\Delta_N)^{\times r}_{\Delta(2)}\rightarrow \E (\Z/p)^n\times_{(\Z/p)^n}(\Delta_N)^{\times r}_{\Delta(2)}\rightarrow \B (\Z/p)^n,
\]
while $\rho$ denotes the Borel construction fiber bundle
\[
\rho\quad :\quad
S(W_r^{\oplus d})\rightarrow \E (\Z/p)^n\times_{(\Z/p)^n}S(W_r^{\oplus d})\rightarrow \B (\Z/p)^n.
\]
Then the map $\varphi$ would induce the following morphism between fiber bundles $\lambda$ and $\rho$:
\[
\xymatrix{
\E (\Z/p)^n\times_{(\Z/p)^n} (\Delta_N)^{\times r}_{\Delta(2)}\ar[rr]^-{\id\times_{(\Z/p)^n} \varphi}\ar[d] &  &\E (\Z/p)^n\times_{(\Z/p)^n} S(W_r^{\oplus d})\ar[d]\\
\B (\Z/p)^n \ar[rr]^-{=}           &  &\B (\Z/p)^n.
}
\]
This bundle morphism induces a morphism of associated cohomology Serre spectral sequences:
\[
E^{i,j}_s (\lambda):=E^{i,j}_s (\E (\Z/p)^n\times_{(\Z/p)^n} (\Delta_N)^{\times r}_{\Delta(2)})\ \overset{\Phi^{i,j}_s}{\longleftarrow} \  E^{i,j}_s(\E (\Z/p)^n\times_{(\Z/p)^n} S(W_r^{\oplus d}))=:E^{i,j}_s (\rho)
\] 
such that on the zero row of the second term
\[
E^{i,0}_2 (\lambda):=E^{i,0}_2 (\E (\Z/p)^n\times_{(\Z/p)^n} (\Delta_N)^{\times r}_{\Delta(2)})\ \overset{\Phi^{i,0}_2}{\longleftarrow} \ E^{i,0}_2(\E (\Z/p)^n\times_{(\Z/p)^n} S(W_r^{\oplus d}))=:E^{i,0}_2 (\rho)
\]
is the identity.
Here for the morphisms we use the simplified notation $\Phi^{i,j}_s:=E^{i,j}_s(\id\times_{(\Z/p)^n}\varphi)$.

Before calculating both spectral sequences we recall the cohomology of $\B (\Z/p)^n$ with coefficients in the field $\F_p$ and denote it as follows:
\begin{equation*}
\begin{array}{lllll}
p=2: &H^*(\B\left((\Z/2) ^n\right);\F_2)=H^*((\Z/2)^n;\F_2)       \cong    \F_2[t_1,\ldots,t_n],                                & \deg t_{i}=1                     \\
p>2: & H^*(\B\left((\Z/p)^n\right);\F_p)=H^*((\Z/p)^n;\F_p)        \cong     \F_p[t_1,\ldots,t_n]\otimes \Lambda [e_1,\ldots,e_n], & \deg t_{i}=2,\,\deg e_{i}=1,
\end{array}
\end{equation*}
where $\Lambda[\,\cdot\,]$ denotes the exterior algebra. 

\smallskip
\noindent{\bf (2)}
First, we consider the Serre spectral sequence, with coefficients in the field $\F_p$, associated to the fiber bundle $\lambda$.
The $E_2$-term of this spectral sequence can be computed as follows:
\begin{eqnarray*}
	E^{i,j}_2(\lambda)  & = & H^i(\B \left((\Z/p)^n\right); \mathcal{H}^j((\Delta_N)^{\times r}_{\Delta(2)};\F_p))= H^i((\Z/p)^n; H^j((\Delta_N)^{\times r}_{\Delta(2)};\F_p))\\
	&=& \begin{cases}
		H^i((\Z/p)^n;\F_p), &\text{for } j=0,\\
		H^i( (\Z/p)^n; H^{N-r+1}((\Delta_N)^{\times r}_{\Delta(2)};\F_p)), &\text{for } j=N-r+1,\\
		0,  &\text{otherwise},
	\end{cases}
\end{eqnarray*}
since by Theorem \ref{th : Connectivity of deleted product} the deleted product $(\Delta_N)^{\times r}_{\Delta(2)}$ is an $(N-r+1)$-dimensional, $(N-r)$-connected simplicial complex and consequently $H^j((\Delta_N)^{\times r}_{\Delta(2)};\F_p)\neq 0$ only for $j=0$ or $j=N-r+1$.
Thus, the only possibly non-zero differential of the spectral sequence is $\partial_{N-r+2}$ and therefore $E^{i,0}_2(\lambda)\cong E^{i,0}_{\infty}(\lambda)$ for $i\leq N-r+1$. 

\smallskip
\noindent{\bf (3)}
The second Serre spectral sequence, with coefficients in the field $\F_p$, we consider is associated to the fiber bundle $\rho$.
In this case the fundamental group of the base space $\pi_1 (\B (\Z/p)^n)\cong (\Z/p)^n$ acts trivially on the cohomology $H^*(S(W_r^{\oplus d});\F_p)$.
Indeed, when $p=2$ the group $(\Z/2)^n$ can only act trivially on $H^0(S(W_r^{\oplus d});\F_2)\cong\F_2$ and on $H^{N-r}(S(W_r^{\oplus d});\F_2)\cong\F_2$.
For $p$ an odd prime all elements of the group $(\Z/p)^n$ have odd order and therefore the action is trivial on the $\F_p$ vector spaces $H^0(S(W_r^{\oplus d});\F_p)\cong\F_p$ and $H^{N-r}(S(W_r^{\oplus d});\F_p)\cong\F_p$. 
Thus the $E_2$-term of this spectral sequence is of the form
\begin{eqnarray*}
E^{i,j}_2(\rho)  &=& 	H^i(\B\left( (\Z/p)^n\right); \mathcal{H}^j(S(W_r^{\oplus d});\F_p))= H^i((\Z/p)^n; H^j(S(W_r^{\oplus d});\F_p))\\
&\cong & H^i((\Z/p)^n;\F_p)\otimes_{\F_p}H^j(S(W_r^{\oplus d});\F_p)
\cong 
\begin{cases}
		H^i( (\Z/p)^n;\F_p), & \text{for }j=0\text{ or } N-r,\\
		0,  &\text{otherwise}.
	\end{cases}
\end{eqnarray*}
Moreover, if $\ell\in H^{N-r}(S(W_r^{\oplus d});\F_p)\cong\F_p$ denotes a generator then the $(N-r)$-row of the $E_2$-term is a free $H^*((\Z/p)^n;\F_p)$-module generated by the element $1\otimes_{\F_p}\ell\in E^{0,N-r}_2(\rho)\cong H^{N-r}(S(W_r^{\oplus d});\F_p)$.
The only possible non-zero differential is 
\[
 \partial_{N-r+1} :  E^{0,N-r}_{N-r+1}(\rho) \rightarrow E^{N-r+1,0}_{N-r+1}(\rho).
\]
Consequently we have that
\begin{compactitem}
	\item $E_2$ and $E_{N-r+1}$ terms coincide, that is $E^{*,*}_2(\rho) \cong E^{*,*}_{N-r+1}(\rho)$,
	\item $(N-r)$-row of the $E_{N-r+1}$-term is a free $H^*((\Z/p)^n;\F_p)$-module generated by $1\otimes_{\F_p}\ell\in E^{0,N-r}_{N-r+1}(\rho)$,
	\item $\partial_{N-r+1}$, as all differentials, is an $H^*((\Z/p)^n;\F_p)$-module morphism.
\end{compactitem}
Therefore, the differential $\partial_{N-r+1}$ is zero if and only if 
\[
\partial_{N-r+1}(1\otimes_{\F_p}\ell)=0\in E^{N-r+1,0}_{N-r+1}(\rho)\cong E^{N-r+1,0}_2(\rho).
\]
Furthermore, if $\partial_{N-r+1}(1\otimes_{\F_p}\ell)=0$, then $E^{*,*}_2(\rho) \cong E^{*,*}_{\infty}(\rho)$.
Hence, the projection map 
\[
\E (\Z/p)^n\times_{(\Z/p)^n} S(W_r^{\oplus d})\rightarrow \B (\Z/p)^n
\]
induces a monomorphism in cohomology
\[
H^*(\B (\Z/p)^n;\F_p)\rightarrow H^*(\E (\Z/p)^n\times_{(\Z/p)^n} S(W_r^{\oplus d});\F_p).
\]
Now the following consequence of the localization theorem \cite[Cor.\,1, p.\,45]{Hsiang1975}, which 
in the case of finite groups holds only for elementary abelian groups, comes into play:
\begin{quote}
	{\small
	\textbf{Theorem.} \emph{Let $p$ be a prime, $G=(\Z/p)^n$ with $n\geq 1$, and let $X$ be a finite $G$-CW complex.
	The fixed point set $X^G$ of the space $X$ is non-empty if and only if the map in cohomology 
	$H^*(\B G;\F_p)\rightarrow H^*(\E G\times_{G} X;\F_p)$,
	induced by the projection $\E G\times_{G} X\rightarrow\B G$, is a monomorphism.
	}}
\end{quote}
Since the fixed point set $S(W_r^{\oplus d})^{(\Z/p)^n}=\emptyset$ of the sphere is empty, the theorem we just quoted implies that the map in cohomology
\[
H^*(\B (\Z/p)^n;\F_p)\rightarrow H^*(\E (\Z/p)^n\times_{(\Z/p)^n} S(W_r^{\oplus d});\F_p).
\]
is \emph{not} a monomorphism. 
Consequently, the element
\[
a:=\partial_{N-r+1}(1\otimes_{\F_p}\ell)\neq 0\in E^{N-r+1,0}_{N-r+1}(\rho)\cong E^{N-r+1,0}(\rho)
\] 
is \emph{not} zero.
\begin{figure}
\centering
\includegraphics[scale=0.81]{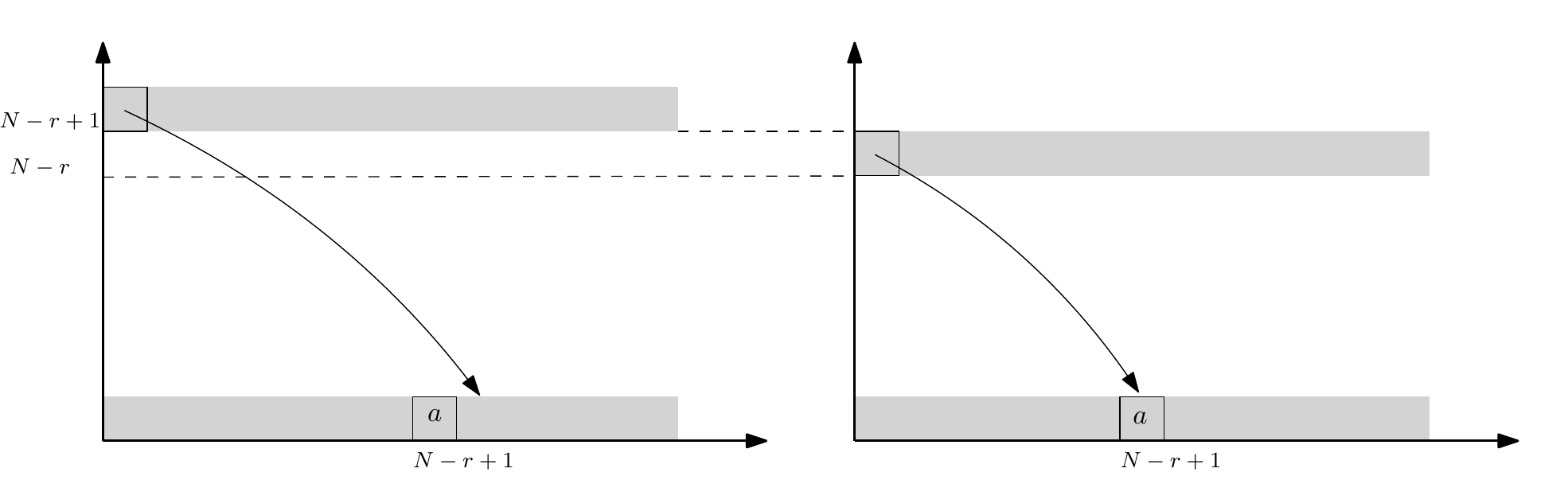}
\caption{\small Illustration of the spectral sequences $E^{*,*}_*(\lambda)$ and $E^{*,*}_*(\rho)$ and the morphism between them $\Phi^{*,*}_*: E^{*,*}_*(\lambda) \leftarrow E^{*,*}_*(\rho)$ that is the identity between the $0$-rows up to the $E_{N-r+1}$-term.}
\end{figure}

\smallskip
\noindent{\bf (4)}
Finally, to reach a contradiction with the assumption that the $(\Z/p)^n$-equivariant map $\varphi$ exists we track the element 
$a:=\partial_{N-r+1}(1\otimes_{\F_p}\ell)\neq 0\in E^{N-r+1,0}_{N-r+1}(\rho)\cong E^{N-r+1,0}_2(\rho)$ along the morphism of spectral sequences
\[\Phi^{N-r+1,0}_s :  E^{N-r+1,0}_s(\rho) \rightarrow E^{N-r+1,0}_s(\lambda) .\]
Since the differentials in both spectral sequences are zero in all terms $E_s(\rho)$ and $E_s(\lambda)$ for $2\leq s\leq N-r$ we have that
$\Phi^{*,0}_{s'}$ is the identity for $2\leq s'\leq N-r+1$.
In particular, the morphism  
\[
\Phi^{N-r+1,0}_{N-r+1} :  E^{N-r+1,0}_{N-r+1}(\rho) \rightarrow E^{N-r+1,0}_{N-r+1}(\lambda)
\]
is still identity as it was in the second term, and so $\Phi^{N-r+1,0}_{N-r+1}(a)=a$.
Passing to the $(N-r+1)$-term, with a slight abuse of notation, we have that 
\[
\Phi^{N-r+1,0}_{N-r+2}([a])=[a],
\]
where $[a]$ denotes the class induced by $a$ in the appropriate $(N-r+2)$-term of the spectral sequences.
Since $a:=\partial_{N-r+1}(1\otimes_{\F_p}\ell)\in E^{N-r+1,0}_{N-r+1}(\rho)$ and $0\neq a\in E^{N-r+1,0}_2(\lambda)\cong E^{N-r+1,0}_{\infty}(\lambda)$ passing to the next $E_{N-r+2}$-term we reach a contradiction:
\[
\Phi^{N-r+1,0}_{N-r+2}(0)=[a]=a\neq 0,
\]
because the class of the element $a$ in $E^{N-r+1,0}_{N-r+2}(\rho)$ vanishes (domain of $\Phi^{N-r+1,0}_{N-r+2}$) while in $E^{N-r+1,0}_{N-r+2}(\lambda)$ it does not vanish (codomain of $\Phi^{N-r+1,0}_{N-r+2}$).
Hence, there cannot be any $(\Z/p)^n$-equivariant map $(\Delta_N)^{\times r}_{\Delta(2)}\rightarrow S(W_r^{\oplus d})$, and the proof of the theorem is complete.
\end{proof}

\smallskip
\noindent
In the language of the \dictionary{Fadell--Husseini index}, as introduced in \cite{Fadell1988}, we have computed that
\[
	\ind_{(\Z/p)^n} ((\Delta_N)^{\times r}_{\Delta(2)};\F_p)\subseteq H^{\geq N-r+2} (\B (\Z/p)^n;\F_p).
\]
Furthermore, we showed the existence of an element $a\in H^{\geq N-r+1} (\B (\Z/p)^n;\F_p)$ that has the property
\begin{equation}
	\label{eq : element a}
	0\neq a \in \ind_{(\Z/p)^n} (S(W_r^{\oplus d});\F_p)\cap H^{N-r+1} (\B (\Z/p)^n;\F_p).
\end{equation}
Consequently $\ind_{(\Z/p)^n} (S(W_r^{\oplus d});\F_p)\not\subseteq\ind_{(\Z/p)^n} ((\Delta_N)^{\times r}_{\Delta(2)};\F_p)$ and so the monotonicity property of the Fadell--Husseini index implies the non-existence of a $(\Z/p)^n$-equivariant map $(\Delta_N)^{\times r}_{\Delta(2)}\rightarrow S(W_r^{\oplus d})$.

The element $a$ with the property \eqref{eq : element a} can be specified explicitly. 
It is the Euler class of the vector bundle 
\[
W_r^{\oplus d}\rightarrow \E (\Z/p)^n\times_{(\Z/p)^n} W_r^{\oplus d}\rightarrow \B (\Z/p)^n.
\]
From the work of Mann and Milgram \cite{Mann1982} we get that for an odd prime $p$ 
\[
a=\omega\cdot \Big( \prod_{(\alpha_1,\ldots,\alpha_n)\in\F_p^n{\setminus}\{0\} }(\alpha_1 t_1+\cdots +\alpha_n t_n)\Big)^{d/2},
\]
where $\omega\in\F_p{\setminus}\{0\}$, while for $p=2$ we have that
\[
a=\Big( \prod_{(\alpha_1,\ldots,\alpha_n)\in\F_2^n {\setminus}\{0\}} (\alpha_1 t_1+\cdots +\alpha_n t_n)\Big)^{d}.
\]
The square root in $\F_p[t_1,\ldots ,t_n]$ is not uniquely determined for an odd prime $p$ and $d$ odd:
The factor $\omega$ accounts for an arbitrary square root being taken.

\begin{remark}
\label{rem : volovikov}
Volovikov, in his 1996 paper \cite{Volovikov1996-1}, proved the following extension of the topological Tverberg theorem for continuous maps to manifolds:
\begin{quote}
{\bf Theorem.} 
{\em Let $d\geq 1$ be an integer, let $r\geq 2$ be a prime power, and $N=(d+1)(r-1)$.
For any topological $d$-manifold $M$ and any continuous map $f : \Delta_N\rightarrow M$, there  exist $r$ pairwise disjoint faces $\sigma_1,\ldots,\sigma_r$ of the simplex $\Delta_N$ whose $f$-images overlap, that is 
\[
 f(\sigma_1)\cap\cdots\cap f(\sigma_r)\neq\emptyset.
\]}
\end{quote}
 
\end{remark}

\section{Corollaries of the topological Tverberg theorem}
\label{sec : Corollaries of the topological Tverberg theorem}

Over time many results were discovered that were believed to be substantial extensions or analogs of the topological Tverberg theorem, such as the generalized Van Kampen--Flores theorem of Karanbir Sarkaria \cite{Sarkaria1991-1} and Aleksei Volovikov \cite{Volovikov1996-2}, the colored Tverberg theorems of Rade \v{Z}ivaljevi\'c and Sini\v{s}a Vre\'cica \cite{Zivaljevic1992} \cite{Vrecica1994} and Pablo Sober\'on's  result on Tverberg points with equal barycentric coordinates \cite{Soberon2013}.
It turned out only recently that the elementary idea of constraint functions together with the concept of “unavoidable complexes” introduced in \cite{Blagojevic2014} transforms all these results into simple corollaries of the topological Tverberg theorem.

Well, if all these results are corollaries, is there any genuine extension of the topological Tverberg theorem?
The answer to this question will bring us to the fundamental work of B\'ar\'any and Larman \cite{Barany1992}, and the optimal colored Tverberg theorem \cite{Blagojevic2009} from 2009.
But this will be the story of the final section of this paper.

\subsection{The generalized Van Kampen--Flores theorem}
\label{subsec : Generalized van Kampen--Flores theorem}

The first corollary we prove is the following generalized Van Kampen--Flores Theorem that was originally proved by Sarkaria \cite{Sarkaria1991-1} for primes and then by  Volovikov \cite{Volovikov1996-2} for prime powers.
The fact that this result can be derived easily from the topological Tverberg theorem by 
adding an extra component to the map  
was first sketched by Gromov in \cite[Sec.\,2.9c]{Gromov2010}; this can be seen as a first instance of the constraint method \cite[Thm.\,6.3]{Blagojevic2014} ``at work.''
  
\begin{theorem}[The generalized Van Kampen--Flores Theorem]
	\label{thm : genrelized_van_Kampen_Flores}
Let $d\ge1$ be an integer, let $r$ be a prime power, let $k\ge\lceil \tfrac{r-1}rd\rceil$ and $N=(d+2)(r-1)$, and let  $f :  \Delta_N \rightarrow \R^d$ be a continuous map.
Then there exist $r$ pairwise disjoint faces $\sigma_1, \dots, \sigma_r$ in the $k$-skeleton $\sk_k(\Delta_N)$ of the simplex $\Delta_N$ whose $f$-images overlap, 
\begin{equation*}
	f(\sigma_1)\cap\cdots\cap f(\sigma_r)\neq\emptyset.
\end{equation*}	
\end{theorem}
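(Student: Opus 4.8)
The plan is to deduce the theorem from the topological Tverberg theorem for prime powers (Theorem~\ref{th : Topological Tverberg theorem for prime power}) applied in dimension $d+1$, via the constraint method, exactly as in the proof of the classical Van Kampen--Flores theorem above. The crucial numerical coincidence is that $N=(d+2)(r-1)=\bigl((d+1)+1\bigr)(r-1)$, so that theorem applies to continuous maps $\Delta_N\to\R^{d+1}$.

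First I would form the auxiliary map $g:\Delta_N\to\R^{d+1}$,
\[
g(x):=\big(f(x),\ \dist(x,\sk_k(\Delta_N))\big),
\]
and apply Theorem~\ref{th : Topological Tverberg theorem for prime power} to $g$ (legitimate since $r$ is a prime power). This yields $r$ pairwise disjoint faces $\tau_1,\dots,\tau_r$ of $\Delta_N$ and points $x_i\in\tau_i$ on which $g$ takes a common value. Passing to the carrier face $\sigma_i$ of $x_i$ (the minimal face of $\Delta_N$ containing $x_i$, so $x_i\in\relint\sigma_i$ and $\sigma_i\subseteq\tau_i$), the $\sigma_i$ remain pairwise disjoint, and we have $f(x_1)=\cdots=f(x_r)$ together with $\dist(x_1,\sk_k(\Delta_N))=\cdots=\dist(x_r,\sk_k(\Delta_N))$.

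Next comes the pigeonhole step. The faces $\sigma_1,\dots,\sigma_r$ are disjoint faces of $\Delta_N$, a simplex with $N+1=(d+2)(r-1)+1$ vertices, so some $\sigma_j$ uses at most $\lfloor(N+1)/r\rfloor$ vertices and hence has dimension at most
\[
\Big\lfloor\tfrac{N+1}{r}\Big\rfloor-1
\ =\ d+1-\Big\lceil\tfrac{d+1}{r}\Big\rceil
\ =\ d-\Big\lfloor\tfrac{d}{r}\Big\rfloor
\ =\ \Big\lceil\tfrac{(r-1)d}{r}\Big\rceil
\ \le\ k .
\]
Thus $\sigma_j\subseteq\sk_k(\Delta_N)$, so $\dist(x_j,\sk_k(\Delta_N))=0$, and therefore $\dist(x_i,\sk_k(\Delta_N))=0$ for every $i$. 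Since $x_i\in\relint\sigma_i$ and $\sk_k(\Delta_N)$ is a subcomplex, this forces $\sigma_i\subseteq\sk_k(\Delta_N)$ for all $i$ (the same observation used in the proof of the Van Kampen--Flores theorem). Hence $\sigma_1,\dots,\sigma_r$ are pairwise disjoint faces contained in $\sk_k(\Delta_N)$, and since $f(x_1)=\cdots=f(x_r)$ with $x_i\in\sigma_i$, their $f$-images overlap, which is the assertion.

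Once Theorem~\ref{th : Topological Tverberg theorem for prime power} is in hand there is no genuine obstacle; the only point requiring care is the elementary identity $d+1-\lceil(d+1)/r\rceil=\lceil(r-1)d/r\rceil$ displayed above, which is exactly what makes $k=\lceil(r-1)d/r\rceil$ the optimal constraint dimension for this argument. As in the Van Kampen--Flores case, one may also note in passing that $f$ need only be defined on $\sk_k(\Delta_N)$, since the constraint component $\dist(\,\cdot\,,\sk_k(\Delta_N))$ drives the Tverberg points into that skeleton anyway.
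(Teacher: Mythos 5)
Your proof is correct and follows the same route as the paper's: apply the topological Tverberg theorem for prime powers to the constrained map $g(x)=(f(x),\dist(x,\sk_k(\Delta_N)))$ in $\R^{d+1}$, then use a pigeonhole count to show one (hence every) Tverberg face lies in $\sk_k(\Delta_N)$. The only cosmetic difference is that you phrase the pigeonhole step directly (one face has dimension at most $\lceil(r-1)d/r\rceil\le k$) while the paper states the contrapositive (if none were in $\sk_k$, the vertex count would exceed $N+1$); the arithmetic and substance are identical.
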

\begin{proof}
For the proof we use two ingredients, the topological Tverberg theorem and the pigeonhole principle.
First, consider the continuous map $g :  \Delta_N\rightarrow\R^{d+1}$ defined by 
\[
g(x)=(f(x),\dist(x, \sk_k(\Delta_N)).
\]
Since $N=(d+2)(r-1)=((d+1)+1)(r-1)$ and $r$ is a prime power we can apply the topological Tverberg theorem to the map $g$.
Consequently, there exist $r$ pairwise disjoint faces $\sigma_1,\ldots,\sigma_r$ with points $x_1\in\relint\sigma_1,\ldots,x_r\in\relint\sigma_r$ such that $g(x_1) = \cdots = g(x_r)$, that is,
\[
f(x_1) = \cdots = f(x_r)
\qquad\text{and}\qquad
\dist(x_1,\sk_k(\Delta_N))=\dots=\dist(x_r,\sk_k(\Delta_N)).
\]
One of the faces $\sigma_1,\ldots,\sigma_r$ has to belong to $\sk_k(\Delta_N)$.
Indeed, if all the faces $\sigma_1,\ldots,\sigma_r$, which are disjoint, would not belong to $\sk_k(\Delta_N)$, then the simplex $\Delta_N$ should have at least
    \[
     |\sigma_1|+\dots+|\sigma_r|\geq r(k+2)\geq r\big(\lceil \tfrac{r-1}rd\rceil+2\big)\geq (r-1)(d+2)+2=N+2
    \]
vertices.
Thus, since one of the faces is in the $k$-skeleton $\dist(x_1,\sk_k(\Delta_N))=\dots=\dist(x_r,\sk_k(\Delta_N))=0$, 
and consequently $\sigma_1\in \sk_k(\Delta_N),\ldots,\sigma_r\in \sk_k(\Delta_N)$, completing the proof of the theorem.
\end{proof}

\subsection{The colored Tverberg problem of B\'ar\'any and Larman}
\label{subsec : Colored Tverberg problem BL}

In their 1990 study on halving lines and halving planes, B\'ar\'any, Zoltan F\"uredi and L\'aszl\'o Lov\'asz \cite{Barany1990} realized a need for a colored version of the Tverberg theorem.
The sentence from this paper
\begin{quote}
	{\small For this we need a colored version of Tverberg’s theorem.}
\end{quote}
opened a new chapter in the study of extensions of the Tverberg theorem, both affine and topological.
Soon after, in 1992, B\'ar\'any and David Larman in \cite{Barany1992} formulated the colored Tverberg problem and brought to light a conjecture that motivated the progress in the area for decades to come.

Let $N\geq 1$ be an integer and let $\mathcal{C}$ be the set of vertices of the simplex $\Delta_N$. 
A \emph{coloring} of the set of vertices $\mathcal{C}$ by $\ell$ colors is a partition $(C_1,\ldots,C_{\ell})$ of $\mathcal{C}$, that is $\mathcal{C}=C_1\cup\cdots\cup C_{\ell}$ and $C_i\cap C_j=\emptyset$ for $1\leq i< j\leq\ell$.
The elements of the partition $(C_1,\ldots,C_{\ell})$ are called \emph{color classes}.
A face $\sigma$ of the simplex $\Delta_N$ is a \emph{rainbow} face if $|\sigma\cap C_i|\leq 1$ for all $1\leq i\leq \ell$.
The subcomplex of all rainbow faces of the simplex $\Delta_N$ induced by the coloring $(C_1,\ldots,C_{\ell})$ will be denoted by $R_{(C_1,\ldots,C_{\ell})}$ and will be called the \emph{rainbow subcomplex}.
There is an isomorphism of simplicial complexes $R_{(C_1,\ldots,C_{\ell})}\cong C_1*\cdots *C_{\ell}$.

\begin{problem}[B\'ar\'any--Larman colored Tverberg problem]
Let $d\geq 1$ and $r\geq 2$ be integers.
Determine the smallest number $n=n(d,r)$ such that for every affine map $f:\Delta _{n-1}\rightarrow \R^{d}$, and every coloring $(C_1,\ldots,C_{d+1})$ of the vertex set $\mathcal{C}$ of the simplex $\Delta_{n-1}$ by $d+1$ colors with each color of size at least $r$, there exist $r$ pairwise disjoint rainbow faces $\sigma_1, \dots, \sigma_r$ of $\Delta_{n-1}$  whose $f$-images overlap,
\begin{equation*}
	f(\sigma_1)\cap\cdots\cap f(\sigma_r)\neq\emptyset.
\end{equation*}	
\end{problem}
\noindent 
A trivial lower bound for the function $n(d,r)$ is $(d+1)r$.
B\'{a}r\'{a}ny and Larman proved that the trivial lower bound is tight in the cases $n(r,1)=2r $ and $n(r,2)=3r$, and presented a proof by Lov\'{a}sz for $n(2,d)=2(d+1)$.
Furthermore, they conjectured the following equality.

\begin{conjecture}[B\'{a}r\'{a}ny--Larman conjecture]
Let $r\ge2$ and $d\ge1$ be integers. 
Then $n(d,r)=(d+1)r$.	
\end{conjecture}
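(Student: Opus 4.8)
The plan is to separate $n(d,r)=(d+1)r$ into the trivial lower bound and the substantive upper bound, and to reduce the upper bound to a single balanced instance. The inequality $n(d,r)\ge(d+1)r$ is forced by the mere existence of a valid coloring, since $d+1$ color classes each of size at least $r$ need at least $(d+1)r$ vertices. For the reverse inequality, observe first that the colored Tverberg property is monotone in $n$: if it holds for $\Delta_{(d+1)r-1}$ with all $d+1$ classes of size exactly $r$, then for any larger simplex with classes of size $\ge r$ one restricts $f$ to the subface spanned by $r$ chosen vertices of each class and invokes the balanced case. So it suffices to prove: for $N=(d+1)r-1$ and a balanced coloring $(C_1,\dots,C_{d+1})$ with $|C_i|=r$, every continuous map $f:\Delta_N\to\R^d$ admits $r$ pairwise disjoint rainbow faces with overlapping $f$-images.

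That balanced statement is exactly what the optimal colored Tverberg theorem of Section~\ref{sec : The optimal colored Tverberg theorem} delivers \emph{when $r+1$ is prime}, and its proof follows the template of Section~\ref{sec : The Tverberg theorem}: assuming no such rainbow Tverberg partition, turn $f$ into an equivariant map from the rainbow analogue of the deleted join --- a configuration space built from iterated joins of chessboard complexes --- into the sphere of an appropriately chosen representation, and then rule that map out. The non-existence step is where primality of $r+1$ enters decisively: it is what makes the pertinent finite group act freely and makes the Fadell--Husseini index of the chessboard-complex configuration space correspondingly large, so that a Borsuk--Ulam/Dold-type comparison of indices goes through --- with the needed sharpening supplied by the complete determination of the Fadell--Husseini index of chessboard complexes in the prime case. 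Together with the monotonicity step this yields $n(d,r)=(d+1)r$ whenever $r+1$ is prime.

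The main obstacle is everything outside the prime case, and I do not expect to clear it: this really is an open conjecture. There is no padding trick that would deduce the balanced instance for a given $r$ from the balanced instance for a larger $r'$ with $r'+1$ prime, because inserting dummy vertices enlarges $N$ and a rainbow Tverberg partition of the padded map need not avoid the dummy vertices, so the conclusion does not descend. Worse, the equivariant obstruction itself degenerates once $r+1$ fails to be prime, in close analogy with the topological Tverberg story before \"{O}zaydin: the relevant group action on the target sphere is no longer free, and the prime-power rescue used in Section~\ref{subsec : The topological Tverberg theorem for r a prime} has no known analogue in the chessboard-complex setting --- which is precisely why even the prime-power case of the optimal colored Tverberg theorem is still open. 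One should expect an obstruction-theory analysis in the spirit of \"{O}zaydin's to show that for suitable composite $r+1$ the equivariant map one would need to exclude actually exists, so any proof of the full B\'ar\'any--Larman conjecture would have to abandon the equivariant-map paradigm altogether. Accordingly I would claim the conjecture only for $r$ with $r+1$ prime, together with the classical cases $d\le2$ (any $r$) and $r=2$ (any $d$) established by B\'ar\'any--Larman and Lov\'asz, and leave the general statement open.
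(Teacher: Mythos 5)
You correctly identify the B\'ar\'any--Larman conjecture as genuinely open; the paper does not resolve it either, and you accurately catalogue the settled cases (the classical results for $d\le 2$ and $r=2$, and the case $r+1$ prime, Corollary~\ref{cor : BL for prime +1}). Your reduction of the upper bound to the balanced instance $N=(d+1)r-1$, $|C_i|=r$ by restricting to a subsimplex is sound, and your assessment of why the general case is out of reach --- the degeneration of the equivariant obstruction for composite $r+1$, in analogy with \"Ozaydin, and the absence of a prime-power workaround in the chessboard setting --- matches the paper's own commentary around Theorem~\ref{th : exitence of equivariant map for BL} and the open status of even the prime-power optimal colored Tverberg theorem.

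One imprecision worth flagging: the balanced instance is not \emph{exactly} an instance of the optimal colored Tverberg theorem (Theorem~\ref{th : optimal colored Tverberg}). With $p=r+1$, that theorem requires $N\ge(d+1)(p-1)$ and yields $p$ pairwise disjoint rainbow faces, whereas the balanced B\'ar\'any--Larman statement lives on $N=(d+1)(p-1)-1$ and asks for only $p-1$ faces; both parameters are off by one. The paper bridges the gap via Theorem~\ref{th : non exitence --> BL}: adjoin an apex vertex as a new singleton color class $C_{d+2}$, so the augmented simplex has $(d+1)(p-1)$ vertices colored by $d+2$ classes all of size at most $p-1$; the non-existence of an $\Sym_{p}$-equivariant map $\Delta_{p-1,p}^{*(d+1)}*[p]\rightarrow S(W_{p}^{\oplus(d+1)})$, proved in Theorem~\ref{th : last 01} from the chessboard Fadell--Husseini index computations, then yields $p$ pairwise disjoint rainbow faces of the augmented simplex, and the one meeting $C_{d+2}$ is discarded. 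That apex maneuver is another instance of the constraint method, and it --- rather than a direct citation of the optimal colored Tverberg theorem --- is what secures the balanced case. The primality of $p=r+1$ enters exactly where you say: it is what makes $\Z/p\subseteq\Sym_{p}$ act freely on $S(W_{p}^{\oplus(d+1)})$ and makes the chessboard index computation tight.
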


Now we present the proof of Lov\'{a}sz for the B\'{a}r\'{a}ny--Larman conjecture in the case $r=2$ from the paper of B\'ar\'any and Larman \cite[Thm.\,(iii)]{Barany1992}.

\begin{theorem}
	\label{th : Lovasz}
	Let $d\ge1$ be an integer.
	Then $n(2,d)=2(d+1)$.
\end{theorem}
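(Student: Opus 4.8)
The plan is to prove this by establishing both inequalities $n(2,d)\le 2(d+1)$ and $n(2,d)\ge 2(d+1)$; the lower bound is the trivial bound already recorded ($n(d,r)\ge(d+1)r$ specializes to $2(d+1)$), so the real content is the upper bound. Concretely, I would fix an affine map $f:\Delta_{2d+1}\to\R^d$ together with a coloring $(C_1,\ldots,C_{d+1})$ of the $2d+2$ vertices of $\Delta_{2d+1}$ into $d+1$ color classes, each of size exactly $2$, and produce two disjoint rainbow faces $\sigma_1,\sigma_2$ with $f(\sigma_1)\cap f(\sigma_2)\ne\emptyset$. The key observation is that a rainbow face is exactly a face of the rainbow subcomplex $R_{(C_1,\ldots,C_{d+1})}\cong C_1*\cdots*C_{d+1}$, and since each $C_i\cong S^0$, this join is precisely $(S^0)^{*(d+1)}\cong S^d$. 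So what we are really doing is a Radon-type (Borsuk--Ulam) argument on a $d$-sphere sitting inside $\Delta_{2d+1}$.

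The main step is then a colored analogue of the proof of the topological Radon theorem (Theorem~\ref{th : Topological Radon theorem}), but run through the \emph{join} rather than the deleted product. I would look at the $2$-fold $2$-wise deleted join $R^{*2}_{\Delta(2)}$ of the rainbow subcomplex $R = C_1*\cdots*C_{d+1}$. Using the commutativity of join and deleted join (Lemma~\ref{lemma:commutativity of join and deleted join}), we have
\[
R^{*2}_{\Delta(2)}\;\cong\;(C_1*\cdots*C_{d+1})^{*2}_{\Delta(2)}\;\cong\;(C_1)^{*2}_{\Delta(2)}*\cdots*(C_{d+1})^{*2}_{\Delta(2)}.
\]
Each $C_i$ is a two-point discrete set, and a direct check (this is Example~\ref{example:deleted product and joins}(2): for $K=S^0$, $K^{*2}_{\Delta(2)}$ is a disjoint union of two intervals, hence homotopy equivalent to $S^0$ and equivariantly so under the swap $\Z/2$-action) shows $(C_i)^{*2}_{\Delta(2)}\simeq_{\Z/2} S^0$ with the antipodal action. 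Taking the join of $d+1$ copies, $R^{*2}_{\Delta(2)}$ is $\Z/2$-equivariantly homotopy equivalent to $(S^0)^{*(d+1)}\cong S^d$ with the antipodal $\Z/2$-action, where $\Z/2$ swaps the two join factors.

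Now suppose for contradiction that $f$ has no pair of disjoint rainbow faces with overlapping images. Exactly as in the proof of Theorem~\ref{th : Topological Radon theorem}, the join map $J_f$ restricted to $R^{*2}_{\Delta(2)}$, composed with the retraction $R_J$, yields a $\Z/2$-equivariant map
\[
R^{*2}_{\Delta(2)}\;\longrightarrow\;S(W_2^{\oplus(d+1)})\;\cong\;S^{d-1},
\]
since $W_2$ is $1$-dimensional with the sign action and $\Z/2$ acts antipodally on the target. Composing with the equivariant homotopy equivalence $S^d\simeq_{\Z/2} R^{*2}_{\Delta(2)}$ produces a $\Z/2$-equivariant map $S^d\to S^{d-1}$ between spheres with free antipodal actions, contradicting the \dictionary{Borsuk--Ulam theorem}. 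Hence disjoint overlapping rainbow faces exist, giving $n(2,d)\le 2(d+1)$, and combined with the trivial lower bound, $n(2,d)=2(d+1)$.

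The step I expect to be the main obstacle is verifying carefully that the deleted join $(C_i)^{*2}_{\Delta(2)}$ of a two-point set is $\Z/2$-equivariantly homotopy equivalent to the antipodal $S^0$ — in particular getting the equivariance of the homotopy equivalence right, and then checking that the join of these equivalences is again equivariant for the diagonal/swap action — so that the resulting map $S^d\to S^{d-1}$ genuinely has the free antipodal action needed to invoke Borsuk--Ulam. This is essentially bookkeeping with the deleted-join constructions already set up in the excerpt, but it is where one must be precise. An alternative, slightly more hands-on route that avoids the homotopy-equivalence argument is to directly build an explicit $\Z/2$-equivariant map $S^d\to R^{*2}_{\Delta(2)}$ mimicking the map $h$ in the proof of Theorem~\ref{th : Topological Radon theorem} (sending a sign vector on the $d+1$ color-pairs to the corresponding pair of rainbow faces), and then compose with $J_f$ and $R_J$; I would likely present whichever of the two is cleaner.
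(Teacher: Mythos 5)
Your key step contains a dimension error that the argument cannot survive. You claim that the join map, composed with the retraction, lands in $S(W_2^{\oplus(d+1)})\cong S^{d-1}$. But $W_2=\{(t_1,t_2):t_1+t_2=0\}$ is $1$-dimensional, so $W_2^{\oplus(d+1)}$ is a $(d+1)$-dimensional real representation and its unit sphere is $S^d$, not $S^{d-1}$. You correctly compute that $R^{*2}_{\Delta(2)}\cong\Delta_{2,2}^{*(d+1)}\simeq_{\Z/2}S^d$ with the antipodal action, but then the composition you build is a $\Z/2$-equivariant map $S^d\to S^d$. That is not a Borsuk--Ulam contradiction: the identity is such a map. (This is not a coincidence: Theorem~\ref{th : exitence of equivariant map for BL} in the paper is precisely the statement that for every $r$ and $d$ there is an $\Sym_r$-equivariant map $\Delta_{r,r}^{*(d+1)}\to S(W_r^{\oplus(d+1)})$, and your domain and target are the $r=2$ case.) So the deleted-join route, as you have set it up, cannot close.

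The deleted-\emph{product} route, by contrast, does close, and it is essentially what the paper does. Restricting $f$ to the rainbow complex gives $f|_R:R\cong[2]^{*(d+1)}\cong S^d\to\R^d$. Applying the Borsuk--Ulam theorem directly yields $x\in S^d$ with $f|_R(x)=f|_R(-x)$; the unique open face of the cross-polytope boundary containing $x$ and the opposite one containing $-x$ are disjoint rainbow faces with overlapping images. In the deleted-product language, this is the composition $S^d\hookrightarrow R^{\times 2}_{\Delta(2)}\xrightarrow{R_P\circ P_f}S(W_2^{\oplus d})\cong S^{d-1}$ (here $W_2^{\oplus d}$ has dimension $d$, so the target really is $S^{d-1}$), where the first arrow is $x\mapsto(x,-x)$, which lands in the deleted product because $x$ and $-x$ always lie in disjoint faces of the cross-polytope. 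The ``alternative route'' you sketch at the end, mimicking the map $h$ from Theorem~\ref{th : Topological Radon theorem}, is exactly this product-type argument and would be correct; but your main argument via $J_f$ and the deleted join is off by one dimension and does not prove the theorem.
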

\begin{proof}
	Let $n=2(d+1)$, and let $f : \Delta_{n-1}\rightarrow\R^d$ be an affine map.
	Furthermore, consider a coloring $(C_1,\ldots,C_{d+1})$ of the vertex set $\mathcal{C}$ of the simplex $\Delta_{n-1}$ by $d+1$ colors where $|C_1|=\cdots=|C_{d+1}|=2$.
	Denote $C_i=\{v_i,-v_i\}$ for $1\leq i\leq d+1$.
	The subcomplex of all rainbow faces of the simplex $\Delta_{n-1}$ is the join $R:=R_{(C_1,\ldots,C_{d+1})}=C_1*\cdots*C_{d+1}$.
	In this case, the rainbow subcomplex $R$ can be identified with the boundary of the cross-polytope $[2]^{*(d+1)} \cong S^{d}$. 
	Here $[2]$, as before, denotes the $0$-dimensional simplicial complex with two vertices. 
	
	\smallskip
	The restriction map $f|_R :  [2]^{*(d+1)}\rightarrow \R^d$ is a piecewise affine map, and therefore continuous.
	The Borsuk--Ulam theorem yields the existence of a point $x\in [2]^{*(d+1)}\cong S^{d}$ on the sphere with the property that $f|_R(x)=f|_R(-x)$.
	The point $x\in [2]^{*(d+1)}$ belongs to the relative interior of a unique simplex in the boundary of the cross-polytope  $[2]^{*(d+1)}$,
	\[
	x\in \relint \big(\conv \{\varepsilon_{i_1}v_{i_1},\ldots, \varepsilon_{i_k}v_{i_k}\}\big),
	\]
	where  $\varepsilon_{i_a}\in\{-1,+1\}$  and $1\leq k\leq d+1$.
	Thus, 
	$-x\in \relint\big( \conv \{-\varepsilon_{i_1}v_{i_1},\ldots, -\varepsilon_{i_k}v_{i_k}\}\big)$.
	Since the rainbow faces $\conv \{\varepsilon_{i_1}v_{i_1},\ldots, \varepsilon_{i_k}v_{i_k}\}$ and $\conv \{-\varepsilon_{i_1}v_{i_1},\ldots, -\varepsilon_{i_k}v_{i_k}\}$ are disjoint, and 
	\[
	f|_R(x)=f|_R(-x)\in 
	f|_R\big(\relint (\conv \{\varepsilon_{i_1}v_{i_1},\ldots, \varepsilon_{i_k}v_{i_k}\})\big)\cap
	f|_R\big(\relint(\conv \{-\varepsilon_{i_1}v_{i_1},\ldots, -\varepsilon_{i_k}v_{i_k}\})\big)\neq\emptyset,
	\]
	we have proved the theorem.
\end{proof}

\subsection{The colored Tverberg problem of \v{Z}ivaljevi\'c and Vre\'cica}
\label{subsec : Colored Tverberg problem ZV}
In response to the work of B\'ar\'any and Larman a modified colored Tverberg problem was presented by \v{Z}ivaljevi\'c and Vre\'cica in their influential paper \cite{Zivaljevic1992} from 1992. 

\begin{problem}[The \v{Z}ivaljevi\'c--Vre\'cica colored Tverberg problem]
Let $d\geq 1$ and $r\geq 2$ be integers.
Determine the smallest number $t=t(d,r)$ (or $t=tt(d,r)$) such that for every affine (or continuous) map $f:\Delta\rightarrow \R^{d}$, and every coloring $(C_1,\ldots,C_{d+1})$ of the vertex set $\mathcal{C}$ of the simplex $\Delta$ by $d+1$ colors with each color of size at least $t$, there exist $r$ pairwise disjoint rainbow faces $\sigma_1, \dots, \sigma_r$ of $\Delta$  whose $f$-images overlap, that is
\begin{equation*}
	f(\sigma_1)\cap\cdots\cap f(\sigma_r)\neq\emptyset.
\end{equation*}	
\end{problem}

\noindent
Observe that in the language of the function $t(d,r)$ the B\'{a}r\'{a}ny--Larman conjecture says that $t(d,r)=r$ for all $r\ge2$ and $d\ge1$.
Furthermore, proving that $t(d,r)<+\infty$ does \emph{not} imply $n(d,r)<+\infty$, while proving $t(d,r)=r$ would imply that $n(d,r)=r(d+1)$.

In order to address the modified problem \v{Z}ivaljevi\'c and Vre\'cica needed to know the connectivity of chessboard complexes.
For that they recalled the following result of Anders Bj\" orner, Lov\' asz, Vre\'{c}ica and \v{Z}ivaljevi\'{c} \cite[Thm.\,1.1]{Bjorner1994}.
Its connectivity lower bound is best possible according to \cite{Shareshian2007}.

\begin{theorem}
\label{th : conn. of chessboard}
Let $m\geq 1$ and $n\geq 1$ be integers.
The chessboard $\Delta_{m,n}$ is $\nu$-connected, where
\[
\nu=\min\left\{ m, n, \left\lfloor \tfrac{m+n+1}{3} \right\rfloor \right\}-2.
\]
\end{theorem}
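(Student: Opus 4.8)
The plan is to prove the statement by induction on $m+n$, using the elementary symmetry $\Delta_{m,n}\cong\Delta_{n,m}$ to assume $m\le n$ throughout. First I would dispose of the base cases by inspection: $\Delta_{1,n}$ is a nonempty set of $n$ points, hence $(-1)$-connected; $\Delta_{2,2}$ is a $4$-cycle; $\Delta_{2,3}\simeq S^1$; and in all these small cases $\nu\in\{-1,0\}$, so the asserted bound holds. (It is also worth recording that $\Delta_{m,m'}$ is empty only when $m=m'=0$, so the lower bound $\nu=-1$ is never vacuous for $m,n\ge1$.)

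For the inductive step I would use a decomposition of $\Delta_{m,n}$ that adjoins the cells of the last row one at a time. Label the cells of row $m$ as $v_1,\dots,v_n$ and, for $0\le j\le n$, let $\Delta^{(j)}\subseteq\Delta_{m,n}$ be the subcomplex spanned by all cells except $v_{j+1},\dots,v_n$, so that $\Delta^{(0)}\cong\Delta_{m-1,n}$ and $\Delta^{(n)}=\Delta_{m,n}$. One checks directly that passing from $\Delta^{(j-1)}$ to $\Delta^{(j)}$ adjoins exactly the closed star $\overline{\mathrm{st}}(v_j)=v_j*\mathrm{lk}(v_j)$, which is a cone hence contractible; that $\mathrm{lk}(v_j)\cong\Delta_{m-1,n-1}$ (delete the row and column through $v_j$); and that $\Delta^{(j-1)}\cap\overline{\mathrm{st}}(v_j)=\mathrm{lk}(v_j)$. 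Thus each $\Delta^{(j)}$ is the mapping cone of an inclusion $\Delta_{m-1,n-1}\hookrightarrow\Delta^{(j-1)}$, and in fact all $n$ of these attaching maps land inside the fixed subcomplex $\Delta^{(0)}\cong\Delta_{m-1,n}$; call the corresponding subcomplexes $L_1,\dots,L_n\subseteq\Delta_{m-1,n}$ (the ``delete column $j$'' subcomplexes), each $\cong\Delta_{m-1,n-1}$.

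Next I would invoke the standard gluing principle: if $X=A\cup B$ with $A,B$ subcomplexes, $A$ and $B$ are $k$-connected, and $A\cap B$ is $(k-1)$-connected, then $X$ is $k$-connected (Mayer--Vietoris for homology, van Kampen for $\pi_1$, Hurewicz to conclude). Iterating along $\Delta^{(0)}\subset\Delta^{(1)}\subset\dots\subset\Delta^{(n)}$ with $B=\overline{\mathrm{st}}(v_j)$ contractible and $A\cap B\cong\Delta_{m-1,n-1}$, this yields: \emph{if $\Delta_{m-1,n}$ is $\nu(m,n)$-connected and $\Delta_{m-1,n-1}$ is $(\nu(m,n)-1)$-connected, then $\Delta_{m,n}$ is $\nu(m,n)$-connected.} A short computation with floors shows $\nu(m-1,n-1)\ge\nu(m,n)-1$ always, so by the inductive hypothesis the second requirement is free; by the column-analogue of the decomposition the same conclusion also follows from $\nu(m,n-1)\ge\nu(m,n)$. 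Hence the induction closes whenever one of $\nu(m-1,n)$, $\nu(m,n-1)$ equals $\nu(m,n)$. Checking the floor arithmetic: for $m\le n$ this happens for all $n\ge 2m$, and on the middle range $m\le n\le 2m-2$ it happens unless $m+n\equiv 2\pmod 3$; the only cases left uncovered are $n\ge 2m-1$ (where $\nu(m,n)=m-2$ but both reductions only give $m-3$) and $m\le n\le 2m-2$ with $m+n\equiv 2\pmod3$ (already $(m,n)=(4,4)$: one wants $\Delta_{4,4}$ simply connected but the induction only provides that $\Delta_{3,4}$ and $\Delta_{3,3}$ are connected).

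The hard part is exactly these leftover cases, and here the single cone is genuinely too weak: one must use that $\Delta_{m,n}$ is built from $\Delta_{m-1,n}$ by coning off \emph{all $n$} copies $L_1,\dots,L_n$. Running the mapping-cone long exact sequences one at a time and using $\widetilde H_i(\Delta_{m-1,n-1})=0$ for $i\le\nu(m,n)-1$, everything collapses to
\[
\widetilde H_{\nu(m,n)}(\Delta_{m,n})\;\cong\;\widetilde H_{\nu(m,n)}(\Delta_{m-1,n})\,\big/\,{\textstyle\sum_{j}}\,\mathrm{im}\big((\iota_j)_*\big),
\]
where $\iota_j\colon L_j\hookrightarrow\Delta_{m-1,n}$, together with the parallel van Kampen statement for $\pi_1$ in the case $\nu(m,n)=1$. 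So the crux — and the step I expect to be the main obstacle — is an auxiliary \textbf{generation lemma}: the images of the ``delete a column'' maps $\widetilde H_{\nu}(\Delta_{m-1,n-1})\to\widetilde H_{\nu}(\Delta_{m-1,n})$ (and their $\pi_1$-analogues) together generate the target. I would prove this by strengthening the inductive statement to carry such a surjectivity claim along; alternatively, one can bypass the entire range $n\ge 2m-1$ by invoking the known shellability of $\Delta_{m,n}$ there, which shows it is homotopy equivalent to a wedge of $(m-1)$-spheres and hence $(m-2)=\nu(m,n)$-connected, leaving only the parity cases $m\le n\le 2m-2$, $m+n\equiv2\pmod3$ to be handled by the generation lemma.
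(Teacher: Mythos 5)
Your cover of $\Delta_{m,n}$ by the closed stars of the cells in a single row is exactly the cover the paper uses (the paper's $K_\ell$ is $\overline{\mathrm{st}}((1,\ell))$, a cone over $\Delta_{m-1,n-1}$), and your observations — each piece is contractible, pairwise intersections are copies of $\Delta_{m-1,n-1}$, and more generally the intersection of the pieces indexed by $\sigma$ is $\Delta_{m-1,n-|\sigma|}$ — all match what the paper records. Where you diverge is in how the cover is exploited: you glue the $n$ cones one at a time via Mayer--Vietoris and van Kampen, whereas the paper applies Björner's connectivity nerve theorem to the whole cover at once. Your diagnosis of why sequential gluing is too weak is also correct: it only sees pairwise intersections, so it yields $\min\{\nu(m-1,n),\nu(m-1,n-1)+1\}$-connectedness and falls short by one precisely when $n=2m-1$ or when $m\le n\le 2m-2$ with $m+n\equiv 2\pmod 3$.

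What is missing is the proof of the ``generation lemma'' that you correctly flag as the crux: that the images of the $n$ column-deletion inclusions $\Delta_{m-1,n-1}\hookrightarrow\Delta_{m-1,n}$ jointly surject onto $\widetilde H_{\nu}(\Delta_{m-1,n})$ (with a $\pi_1$-analogue when $\nu=1$). You do not prove it; ``strengthen the inductive statement to carry a surjectivity claim along'' is a plan, not an argument, and this is precisely where the content of the theorem lives in your approach. (The shellability of $\Delta_{m,n}$ for $n\ge 2m-1$ is a genuine fact and would dispose of that range, but it imports a separate nontrivial theorem and still leaves the middle-range parity cases resting on the unproved lemma.) So the proposal has a genuine gap. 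The paper closes it cleanly by a different mechanism: the nerve of the cover $\{K_1,\ldots,K_n\}$ is $\partial\Delta_{n-1}\cong S^{n-2}$, which is $(n-3)$-connected, and the nerve theorem converts the inductively known connectivities of \emph{all} the higher intersections $\bigcap_{\ell\in\sigma}K_\ell\cong\Delta_{m-1,n-|\sigma|}$ into the asserted connectivity of $\Delta_{m,n}$ — it is exactly the tool that globally accounts for how the $n$ cones overlap, which sequential gluing cannot see. Replacing your one-at-a-time gluing by the nerve theorem turns your decomposition into the paper's proof.
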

\begin{proof}
Without loss of generality we can assume that $1\leq m\leq n$.
The proof proceeds by induction on $\min\{m,n\}=m$.
In the case $m=1$ the statement of the theorem is obviously true.
For $m=2$ we distinguish between two cases:
\begin{compactitem}
\item If $n=2$, then $\nu=\min\{2,2,  \left\lfloor \tfrac{5}{3} \right\rfloor\}-2=-1$ and $\Delta_{2,2}$ is just a disjoint union of two edges, and
\item If $n\geq 3$, then $\nu=\min\{2,2,  \left\lfloor \tfrac{n+3}{3} \right\rfloor\}-2=0$ and $\Delta_{2,n}$ is path connected.
\end{compactitem}
Let $m\geq3$, and let us assume that the statement of the theorem holds for every chessboard $\Delta_{m',n'}$ where $1\leq \min\{m',n'\}<m$.
Now we prove the statement of the theorem for the chessboard $\Delta_{m,n}$.

\smallskip
Let $K_{\ell}$ for $1\leq \ell\leq n$ be a  subcomplex of $\Delta_{m,n}$ defined by
\[
\{(i_0,j_0),\ldots, (i_k,j_k) \}\in K_{\ell} \ \Longleftrightarrow \ \{(i_0,j_0),\ldots, (i_k,j_k), (1,\ell) \}\in \Delta_{m,n}.
\]
The family of subcomplexes $\mathcal{K}:=\{K_{\ell} : 1\leq \ell\leq n\}$ covers the chessboard $\Delta_{m,n}$. 
Moreover, each subcomplex $K_{\ell}$ is a cone over the chessboard $\Delta_{m-1,n-1}$, and therefore contractible.
Since, for $\sigma\subseteq [n]$ we have that
\[
\bigcap \{K_{\ell} : \ell\in\sigma\}=\emptyset
 \ \Longleftrightarrow \ 
\sigma=[n], 
\] 
the \dictionary{nerve} $N_{\mathcal{K}}$ of the family $\mathcal{K}$ is homeomorphic to the boundary of an $(n-1)$-simplex $\partial\Delta_{n-1}$.
Thus, $N_{\mathcal{K}}\cong S^{n-2}$ is $(n-3)$-connected.
Furthermore, for $\sigma\subseteq [n]$ with the property that $2\leq |\sigma|\leq n-1$ the intersection $\bigcap \{K_{\ell} : \ell\in\sigma\}$ is homeomorphism with the chessboard $\Delta_{m-1,n-|\sigma|}$.
The induction hypothesis can be applied to each of these intersections.
Therefore,  
\[
\conn \big(\bigcap \{K_{\ell} : \ell\in\sigma\}\big) = \conn (\Delta_{m-1,n-|\sigma|}) \geq 
\min\left\{ m-1, n-|\sigma|, \left\lfloor \tfrac{m+n-|\sigma|}{3} \right\rfloor \right\}-2.
\]
Now we will apply the following connectivity version of the \dictionary{Nerve theorem} due to Bj\"orner, see \cite[Thm.\,10.6]{Bjorner1995}. 
\begin{quote}
	{\small
	\textbf{Theorem.} \emph{
	Let $K$ be a finite simplicial complex, or a regular CW-complex, and let $\mathcal{K}:=\{ K_i : i\in I\}$ be a cover of $K$ by a family of subcomplexes, $K=\bigcup\{ K_i : i\in I\}$.
\begin{compactenum}[\rm (1)]
\item If for every face $\sigma$ of the nerve $N_{\mathcal{K}}$ the intersection $\bigcap \{K_i : i\in \sigma\}$ is contractible, then $K$ and $N_{\mathcal{K}}$ are homotopy equivalent, $K\simeq N_{\mathcal{K}}$.
\item If for every face $\sigma$ of the nerve $N_{\mathcal{K}}$ the intersection $\bigcap \{K_i : i\in \sigma\}$ is $(k-|\sigma|+1)$-connected, then the complex $K$ is $k$-connected if and only if the nerve $N_{\mathcal{K}}$ is $k$-connected.
\end{compactenum}
	}}
\end{quote}
In the case of the covering $\mathcal{K}$ of the chessboard $\Delta_{m,n}$, where $2<m\leq n$, we have that
\begin{compactitem}
\item for every face $\sigma$ of the nerve $N_{\mathcal{K}}$ the intersection $\bigcap \{K_i : i\in \sigma\}$ is contractible when $|\sigma|=1$, and 
\begin{eqnarray*}
	\conn\big( \bigcap \{K_{\ell} : \ell\in\sigma\}\big) &\geq  &
\min\left\{ m-1, n-|\sigma|, \left\lfloor \tfrac{m+n-|\sigma|}{3} \right\rfloor \right\}-2\\
&\geq & \min\left\{ m, n, \left\lfloor \tfrac{m+n+1}{3} \right\rfloor \right\}-2 -|\sigma|+1\\
&\geq &\nu -|\sigma|+1,
\end{eqnarray*}
when $2\leq |\sigma|\leq n-1$, while
\item the nerve $N_{\mathcal{K}}$ of the family $\mathcal{K}$ is $(n-3)$-connected with
\[
n-3\geq  \min\left\{ m, n, \left\lfloor \tfrac{m+n+1}{3} \right\rfloor \right\}-2 = \nu.
\] 
\end{compactitem}
Therefore, according to the Nerve theorem applied for the cover $\mathcal{K}$ the chessboard $\Delta_{m,n}$ is $\nu$-connected.
This concludes the induction step.
\end{proof}

The knowledge on the connectivity of the chessboard complexes was the decisive information both for the original proof  of the \v{Z}ivaljevi\'c and Vre\'cica colored Tverberg theorem \cite[Thm.\,1]{Zivaljevic1992}, which worked only for primes, and for the following version of the proof for prime powers; see also the proof of \v{Z}ivaljevi\'c \cite[Thm.\,3.2\,(2)]{Zivaljevic1998}.

\begin{theorem}[Colored Tverberg theorem of \v{Z}ivaljevi\'c and Vre\'cica]
\label{th : colored Tverberg of ZV}
	Let $d\geq 1$ be an integer, and let $r\geq 2$ be a prime power.
	For every continuous map $f : \Delta\rightarrow \R^{d}$, and every coloring $(C_1,\ldots,C_{d+1})$ of the vertex set $\mathcal{C}$ of the simplex $\Delta$ by $d+1$ colors with each color of size at least $2r-1$, there exist $r$ pairwise disjoint rainbow faces $\sigma_1, \dots, \sigma_r$ of $\Delta$  whose $f$-images overlap, that is
\begin{equation*}
	f(\sigma_1)\cap\cdots\cap f(\sigma_r)\neq\emptyset.
\end{equation*}
\end{theorem}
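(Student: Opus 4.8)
I will argue by contradiction, re‑using almost verbatim the machinery of Section~\ref{subsec : Equivariant maps induced by f} and of the prime‑power proof (Theorem~\ref{th : Topological Tverberg theorem for prime power}), but replacing the full simplex by the rainbow subcomplex. The chessboard complexes of Theorem~\ref{th : conn. of chessboard} enter precisely through the connectivity estimate for the deleted join, and the color size $2r-1$ is exactly what is needed to make that estimate work.

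\textbf{Step 1 (reduction).} Inside each color class $C_i$ choose a subset of exactly $2r-1$ vertices, and let $R$ be the rainbow subcomplex of the simplex spanned by these $(d+1)(2r-1)$ vertices, so that $R\cong[2r-1]^{*(d+1)}$. It suffices to produce $r$ pairwise disjoint rainbow faces of $R$ with overlapping images under the restriction $f|_R$; so assume no such faces exist.

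\textbf{Step 2 (equivariant map).} Exactly as in Section~\ref{subsec : Equivariant maps induced by f} with $R$ in place of $\Delta_N$, the join map $J_{f|_R}$ sends $R^{*r}_{\Delta(2)}$ into $(\R^{d+1})^{\oplus r}$, and the failure hypothesis means its image misses the thin diagonal $D_J$; composing with the retraction $R_J$ produces a $\Sym_r$‑equivariant map $R^{*r}_{\Delta(2)}\rightarrow S(W_r^{\oplus(d+1)})$. Writing $r=p^n$ and restricting along the regular embedding $(\Z/p)^n\hookrightarrow\Sym_r$, it is enough to rule out a $(\Z/p)^n$‑equivariant map $R^{*r}_{\Delta(2)}\rightarrow S(W_r^{\oplus(d+1)})$.

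\textbf{Step 3 (domain and its connectivity).} By Lemma~\ref{lemma:commutativity of join and deleted join} together with item~(4) of Examples~\ref{example:deleted product and joins},
\[
R^{*r}_{\Delta(2)}\cong (C_1)^{*r}_{\Delta(2)}*\cdots*(C_{d+1})^{*r}_{\Delta(2)}\cong(\Delta_{2r-1,r})^{*(d+1)}.
\]
By Theorem~\ref{th : conn. of chessboard} the chessboard $\Delta_{2r-1,r}$ is $(r-2)$‑connected, since $\min\{2r-1,r,\lfloor\tfrac{3r}{3}\rfloor\}-2=r-2$ (and $2r-1$ is the least color size for which this is true). Hence the $(d+1)$‑fold join $R^{*r}_{\Delta(2)}$ is $\big((d+1)(r-2)+2d\big)$‑connected, that is, $\big((d+1)r-2\big)$‑connected.

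\textbf{Step 4 (index comparison).} Running the localization‑theorem argument of Theorem~\ref{th : Topological Tverberg theorem for prime power} with $d+1$ in place of $d$ — the fixed point set $S(W_r^{\oplus(d+1)})^{(\Z/p)^n}$ is again empty — yields a nonzero index element, the Euler class $a=e(W_r^{\oplus(d+1)})$, with
\[
0\neq a\in\ind_{(\Z/p)^n}\big(S(W_r^{\oplus(d+1)});\F_p\big)\cap H^{(d+1)(r-1)}\big(\B(\Z/p)^n;\F_p\big).
\]
On the other hand, since $R^{*r}_{\Delta(2)}$ is $\big((d+1)r-2\big)$‑connected, the Serre spectral sequence of its Borel construction shows that $H^*(\B(\Z/p)^n;\F_p)\rightarrow H^*(\E(\Z/p)^n\times_{(\Z/p)^n}R^{*r}_{\Delta(2)};\F_p)$ is injective in all degrees $\le(d+1)r-1$, in particular in degree $(d+1)(r-1)\le(d+1)r-1$. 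Therefore $a\notin\ind_{(\Z/p)^n}(R^{*r}_{\Delta(2)};\F_p)$, so $\ind_{(\Z/p)^n}(S(W_r^{\oplus(d+1)});\F_p)\not\subseteq\ind_{(\Z/p)^n}(R^{*r}_{\Delta(2)};\F_p)$, and monotonicity of the Fadell--Husseini index forbids the $(\Z/p)^n$‑equivariant map — the desired contradiction.

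\textbf{Main obstacle.} Once one recognizes $R^{*r}_{\Delta(2)}$ as a join of chessboard complexes, everything is a rerun of the prime‑power Tverberg argument; the only genuinely external ingredient is the connectivity bound for chessboard complexes (Theorem~\ref{th : conn. of chessboard}), and the delicate point is simply that $2r-1$ is exactly the threshold at which $\Delta_{2r-1,r}$ becomes $(r-2)$‑connected, which is in turn exactly what the index comparison requires. For $r$ prime one may bypass the spectral‑sequence step and conclude directly by Dold's theorem, since $\Z/r$ acts freely on the CW complex $S(W_r^{\oplus(d+1)})$ of dimension $(d+1)(r-1)-1\le(d+1)r-2=\conn\big(R^{*r}_{\Delta(2)}\big)$.
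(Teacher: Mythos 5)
Your proof is correct and follows essentially the same path as the paper's: identification of the rainbow deleted join with $(\Delta_{2r-1,r})^{*(d+1)}$, the chessboard connectivity bound, the localization theorem for the sphere $S(W_r^{\oplus(d+1)})$, and the Borel-construction spectral sequence. The only presentational difference is that you phrase the final contradiction in Fadell--Husseini index terms ($\ind(S(W_r^{\oplus(d+1)}))\not\subseteq\ind(R^{*r}_{\Delta(2)})$ plus monotonicity), which the paper itself gives as an equivalent reformulation immediately after its explicit element-tracking argument through the morphism of Serre spectral sequences.
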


\noindent
In the language of the function $tt(d,r)$ the previous theorem yields the upper bound $tt(d,r)\leq 2r-1$ when $r$ is a prime power.
This bound implies the bound $t(d,r)\leq tt(d,r) \leq 4r-3$ for arbitrary $r$ via Bertrand's postulate.

\begin{proof}
Let $r=p^n$ for $p$ a prime and $n\geq1$.
Let $f : \Delta\rightarrow \R^{d}$ be a continuous map from a simplex $\Delta$ whose set of vertices $\mathcal{C}$ is colored by $d+1$ colors $(C_1,\ldots,C_{d+1})$.
Without loss of generality assume that $|C_1|=\cdots=|C_{d+1}|=2r-1$.
In addition assume that the map $f$ is a counterexample for the statement of the theorem.
Set $M:=(d+1)(2r-1)-1$ and $N:=(d+1)(r-1)$, so $\Delta$ is an $M$-dimensional simplex.
Now, the proof of the theorem will be presented in several steps.

\smallskip
\noindent{\bf (1)}
The rainbow subcomplex of the simplex $\Delta$ induced by the coloring $(C_1,\ldots,C_{d+1})$ in this case is
\[
R_{(C_1,\ldots,C_{d+1})}\cong C_1*\cdots *C_{d+1}\cong [2r-1]^{*(d+1)}.
\]  
The $r$-fold $2$-wise deleted join of the rainbow subcomplex $R_{(C_1,\ldots,C_{d+1})}$ can be identified, with the help of Lemma \ref{lemma:commutativity of join and deleted join} and Example \ref{example:deleted product and joins},  as follows
\[
(R_{(C_1,\ldots,C_{d+1})})^{*r}_{\Delta(2)}\cong
\big([2r-1]^{*(d+1)}\big)^{*r}_{\Delta(2)}\cong
\big([2r-1]^{*r}_{\Delta(2)}\big)^{*(d+1)}\cong
(\Delta_{2r-1,r})^{*(d+1)}.
\]
The action of the symmetric group $\Sym_r$ on the chessboard $\Delta_{2r-1,r}$ is assumed 
to be given by permutation of columns of the chessboard, that is
\[
\pi\cdot \{(i_0,j_0),\ldots, (i_k,j_k)\} = \{(i_0,\pi(j_0)),\ldots, (i_k,\pi(j_k))\},
\]
for $\pi\in\Sym_r$ and $\{(i_0,j_0),\ldots, (i_k,j_k)\}$ a simplex in $\Delta_{2r-1,r}$.
Furthermore, the chessboard $\Delta_{2r-1,r}$ is an $(r-1)$-dimensional and according to Theorem~\ref{th : conn. of chessboard} an $(r-2)$-connected simplicial complex.
Therefore
\begin{eqnarray}\begin{split}
	\label{eq : connectivity in colored tverberg}
	\dim \big ((\Delta_{2r-1,r})^{*(d+1)}\big) &= (d+1)r-1=N+d, 	\\
	\conn\big((\Delta_{2r-1,r})^{*(d+1)}\big)  &= (d+1)r-2=N+d-1.
\end{split}\end{eqnarray}

\smallskip
\noindent{\bf (2)}
Now, along the lines of Section \ref{subsec : Equivariant maps induced by f}, the continuous map $f : \Delta\rightarrow \R^{d}$ induces the join map
\[
J_f :  (\Delta)^{* r}_{\Delta(2)}\rightarrow (\R^{d+1})^{\oplus r},
\qquad
\lambda_1 x_1+\cdots +\lambda_r x_r\longmapsto (\lambda_1, \lambda_1 f(x_1))\oplus\cdots\oplus(\lambda_r, \lambda_rf(x_r)).
\]
Both domain and codomain of the join map $J_f$ are equipped with the action of the symmetric group $\Sym_r$ in such a way that $J_f$ is an $\Sym_r$-equivariant map.
The deleted join of the rainbow complex $(R_{(C_1,\ldots,C_{d+1})})^{*r}_{\Delta(2)}$ is an $\Sym_r$-invariant subcomplex of $(\Delta)^{* r}_{\Delta(2)}$.
Thus, the restriction map
\[
J_f':=J_f|_{(R_{(C_1,\ldots,C_{d+1})})^{*r}_{\Delta(2)}}
 :  (R_{(C_1,\ldots,C_{d+1})})^{*r}_{\Delta(2)}\rightarrow (\R^{d+1})^{\oplus r}
\]
is also an $\Sym_r$-equivariant map.
Next consider the thin diagonal
\[
D_J=\{(z_1,\ldots,z_r)\in (\R^{d+1})^{\oplus r} : z_1=\cdots=z_r\}.
\]
This is an $\Sym_r$-invariant subspace of $(\R^{d+1})^{\oplus r}$.
The key property of the map $J_f'$ we have constructed for any counterexample continuous map $f : \Delta\rightarrow \R^d$, is that $\im (J_f') \cap D_J=\emptyset$.
Thus $J_f'$ induces an $\Sym_r$-equivariant map
\begin{equation}
	\label{eq : eq-maps-01-colored}
	(R_{(C_1,\ldots,C_{d+1})})^{*r}_{\Delta(2)}
	\rightarrow 
	(\R^{d+1})^{\oplus r}{\setminus}D_J
\end{equation}
which we, with an obvious abuse of notation, again denote by $J_f'$.
Furthermore, let 
\begin{equation}
	\label{eq : eq-maps-02-colored}
	R_J :  (\R^{d+1})^{\oplus r}{\setminus}D_J\rightarrow D_J^{\perp}{\setminus}\{0\} \rightarrow S(D_J^{\perp})	
\end{equation}
be the composition of the appropriate projection and deformation retraction.
The map $R_J$ is $\Sym_r$-equivariant.
Recall from Section \ref{subsec : Equivariant maps induced by f} that there is an isomorphism of real $\Sym_r$-representations
$D_J^{\perp}\cong W_r^{\oplus (d+1)}$.
Here $W_r=\big\{(t_1,\ldots,t_r)\in\R^r : \sum_{i=1}^r t_i=0\big\}$ and it is equipped with the (left) action of the symmetric group $\Sym_r$ given by permutation of coordinates.
After the identification of the $\Sym_r$-representations the $\Sym_r$-equivariant map $R_J$ defined in \eqref{eq : eq-maps-02-colored} has the form
\begin{equation}
	\label{eq : eq-maps-03-colored}
	R_J :  (\R^{d+1})^{\oplus r}{\setminus}D_J\rightarrow  S(W_r^{\oplus (d+1)}).
\end{equation}
Thus we have proved that if there exists a counterexample map $f$ for the theorem, then there exists an $\Sym_r$-equivariant map
\begin{equation}
	\label{eq : eq-maps-04-colored}
	(R_{(C_1,\ldots,C_{d+1})})^{*r}_{\Delta(2)}\rightarrow  S(W_r^{\oplus (d+1)}).
\end{equation}

\smallskip
In the final step we reach a contradiction by proving that an $\Sym_r$-equivariant map \eqref{eq : eq-maps-04-colored} cannot exist, concluding that a counterexample $f$ could not exist in the first place.
The proof of the non-existence of an equivariant map is following the footsteps of the proof of Theorem \ref{th : Topological Tverberg theorem for prime power}.

\smallskip
\noindent{\bf (3)}
Consider the elementary abelian group $(\Z/p)^n$ and its regular embedding $\mathrm{reg} :  (\Z/p)^n\rightarrow \Sym_{r}$.
Now it suffices to prove the non-existence of a $(\Z/p)^n$-equivariant map $(R_{(C_1,\ldots,C_{d+1})})^{*r}_{\Delta(2)}\rightarrow S(W_r^{\oplus (d+1)})$.
To prove the non-existence of such a map assume the opposite:
let $\varphi :  (R_{(C_1,\ldots,C_{d+1})})^{*r}_{\Delta(2)}\rightarrow S(W_r^{\oplus (d+1)})$ be a $(\Z/p)^n$-equivariant map.

\smallskip
Denote by $\lambda$ the Borel construction fiber bundle
\[
\lambda\quad : \quad
(R_{(C_1,\ldots,C_{d+1})})^{*r}_{\Delta(2)}\rightarrow \E (\Z/p)^n\times_{(\Z/p)^n} (R_{(C_1,\ldots,C_{d+1})})^{*r}_{\Delta(2)} \rightarrow \B (\Z/p)^n,
\]
and by $\rho$ the following Borel construction fiber bundle
\[
\rho\quad : \quad
S(W_r^{\oplus (d+1)})\rightarrow \E (\Z/p)^n\times_{(\Z/p)^n} S(W_r^{\oplus (d+1)}) \rightarrow \B (\Z/p)^n.
\]
Then the map $\varphi$ induces the following morphism of fiber bundles
\[
\xymatrix{
\E (\Z/p)^n\times_{(\Z/p)^n} (R_{(C_1,\ldots,C_{d+1})})^{*r}_{\Delta(2)}\ar[rr]^-{\id\times_{(\Z/p)^n} \varphi}\ar[d] &  &\E (\Z/p)^n\times_{(\Z/p)^n} S(W_r^{\oplus (d+1)})\ar[d]\\
\B (\Z/p)^n \ar[rr]^-{=}           &  &\B (\Z/p)^n.
}
\]
In turn, this morphism induces a morphism of corresponding Serre spectral sequences
\[
E^{i,j}_s (\lambda):=E^{i,j}_s (\E (\Z/p)^n\times_{(\Z/p)^n}(R_{(C_1,\ldots,C_{d+1})})^{*r}_{\Delta(2)})\overset{\Phi^{i,j}_s}{\longleftarrow}  E^{i,j}_s(\E (\Z/p)^n\times_{(\Z/p)^n} S(W_r^{\oplus (d+1)}))=:E^{i,j}_s (\rho)
\]
with the property that on the zero row of the second term the induced map 
\[
E^{i,0}_2 (\lambda):=E^{i,0}_2 (\E (\Z/p)^n\times_{(\Z/p)^n}(R_{(C_1,\ldots,C_{d+1})})^{*r}_{\Delta(2)})\overset{\Phi^{i,0}_2}{\longleftarrow}  E^{i,0}_2(\E (\Z/p)^n\times_{(\Z/p)^n} S(W_r^{\oplus (d+1)}))=:E^{i,0}_2 (\rho)
\]
is the identity.
Again we use simplified notation by setting $\Phi^{i,j}_s:=E^{i,j}_s(\id\times_{(\Z/p)^n}\varphi)$.

Depending on the parity of the prime $p$, we have that
\begin{equation*}
\begin{array}{lllll}
p=2: &H^*(\B\left((\Z/2) ^n\right);\F_2)=H^*((\Z/2)^n;\F_2)       \cong    \F_2[t_1,\ldots,t_n],                                & \deg t_{i}=1                     \\
p>2: & H^*(\B\left((\Z/p)^n\right);\F_p)=H^*((\Z/p)^n;\F_p)        \cong     \F_p[t_1,\ldots,t_n]\otimes \Lambda [e_1,\ldots,e_n], & \deg t_{i}=2,\,\deg e_{i}=1,
\end{array}
\end{equation*}
where $\Lambda[\,\cdot\,]$ denotes the exterior algebra.

\smallskip
\noindent{\bf (4)}
First we consider the Serre spectral sequence, with coefficients in the field $\F_p$, associated to the fiber bundle $\lambda$.
Using the connectivity of $(R_{(C_1,\ldots,C_{d+1})})^{*r}_{\Delta(2)}$ derived in \eqref{eq : connectivity in colored tverberg}, we get that the $E_2$-term of this spectral sequence is
\begin{eqnarray*}
	E^{i,j}_2(\lambda)  & = & H^i(\B\left( (\Z/p)^n\right); \mathcal{H}^j((R_{(C_1,\ldots,C_{d+1})})^{*r}_{\Delta(2)};\F_p))=H^i((\Z/p)^n; H^j((R_{(C_1,\ldots,C_{d+1})})^{*r}_{\Delta(2)};\F_p))\\
	&\cong & \begin{cases}
		H^i((\Z/p)^n;\F_p), &\text{for } j=0,\\
		H^i( (\Z/p)^n; H^{N+d}((R_{(C_1,\ldots,C_{d+1})})^{*r}_{\Delta(2)};\F_p)), & \text{for } j=N+d,\\
		0,  &\text{otherwise}.
	\end{cases}
\end{eqnarray*}
Consequently, the only possibly non-zero differential of this spectral sequence is $\partial_{N+d+1}$ and therefore $E^{i,0}_2(\lambda) \cong E^{i,0}_{\infty}(\lambda)$ for $i\leq N+d$. 

\begin{figure}
\centering
\includegraphics[scale=0.81]{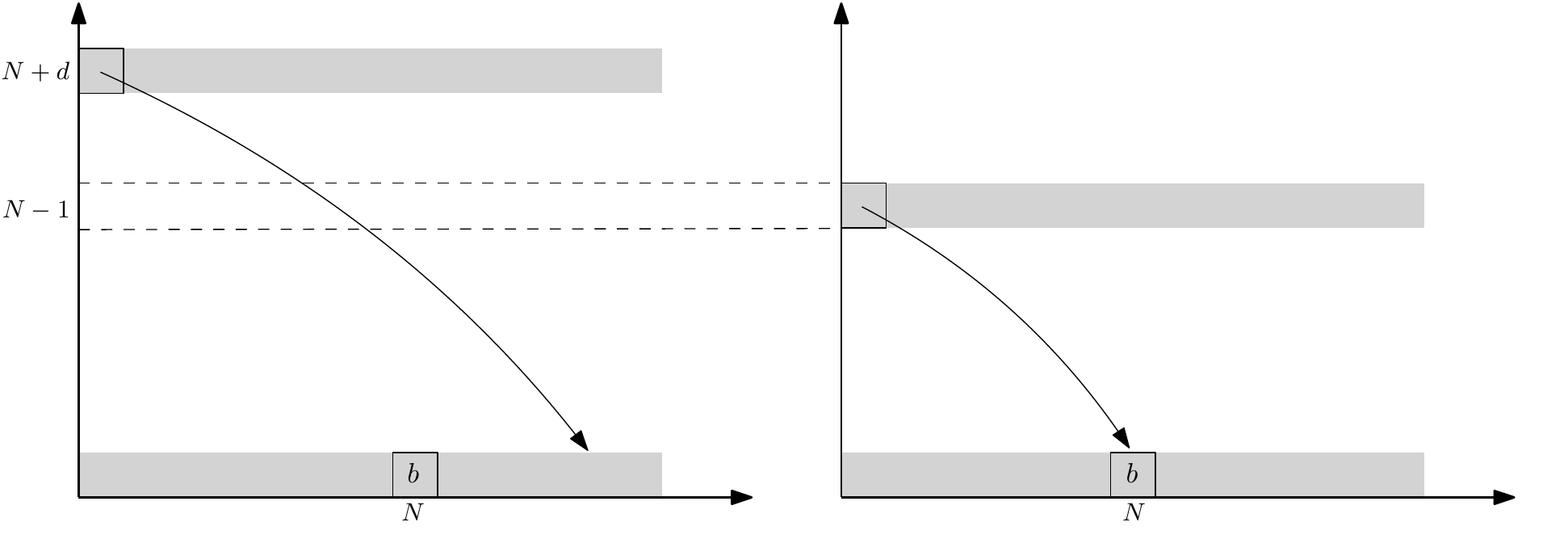}
\caption{Illustration of the spectral sequences $E^{*,*}_*(\lambda)$ and $E^{*,*}_*(\rho)$ and the morphism between them $\Phi^{*,*}_*: E^{*,*}_*(\lambda) \leftarrow E^{*,*}_*(\rho)$ that is the identity between the $0$-rows up to the $E_{N}$-term.}
\end{figure}

\smallskip
\noindent{\bf (5)}
The Serre spectral sequence, with coefficients in the field $\F_p$, associated to the fiber bundle $\rho$ is similar to the one appearing in the proof of Theorem \ref{th : Topological Tverberg theorem for prime power}.
Briefly, the $E_2$-term of this spectral sequence is
\[
E^{i,j}_2(\rho) =H^i((\Z/p)^n; H^j(S(W_r^{\oplus (d+1)});\F_p))\cong
\begin{cases}
		H^i( (\Z/p)^n;\F_p), & \text{for } j=0\text{ or } N-1,\\
		0,  &\text{otherwise}.
	\end{cases}
\]
Letting $\ell\in H^{N-1}(S(W_r^{\oplus (d+1)});\F_p)\cong\F_p$ denote a generator, then the $(N-1)$-row of the $E_2$-term can be seen as a free $H^*((\Z/p)^n;\F_p)$-module generated by $1\otimes_{\F_p}\ell\in E^{0,N-1}_2(\rho) \cong H^{N-1}(S(W_r^{\oplus (d+1)});\F_p)$.
Thus the only possible non-zero differential is $\partial_{N} :  E^{i,N-1}_{N}(\rho) \rightarrow E^{N+i,0}_{N}(\rho)$ and it is completely determined by the image $\partial_{N}(1\otimes_{\F_p}\ell)$.
As in the proof of Theorem \ref{th : Topological Tverberg theorem for prime power}, a consequence of the localization theorem implies that $b:=\partial_{N}(1\otimes_{\F_p}\ell)\neq 0\in E^{N,0}_{N}(\rho) \cong E^{N,0}_2(\rho)$ is \emph{not} zero.

\smallskip
\noindent{\bf (6)}
To reach the desired contradiction we track the element $b\in  E^{N,0}_{N}(\rho)\cong E^{N,0}_2(\rho)$ along the morphism of spectral sequences
\[
\Phi^{N,0}_s :  E^{N,0}_s(\rho) \rightarrow E^{N,0}_s(\lambda).
\]
The differentials in both spectral sequences are zero in all terms $E_s(\rho)$ and $E_s(\lambda)$ for $2\leq s\leq N-1$.
Thus, $\Phi^{*,0}_{s'}$ is an isomorphism for all $2\leq s'\leq N$.
In particular, the morphism  
$
\Phi^{N,0}_{N} :  E^{N,0}_{N}(\rho) \rightarrow E^{N,0}_{N}(\lambda)
$
is the identity, as it was in the second term, and so $\Phi^{N,0}_{N}(b)=b$.
When passing to the $(N+1)$-term, with a slight abuse of notation, we get
\[
\Phi^{N,0}_{N+1}([b])=[b],
\]
where $[b]$ denotes the class induces by $b$ in the appropriate $(N+1)$-term of the spectral sequences.
Since $b:=\partial_{N}(1\otimes_{\F_p}\ell)\in E^{N,0}_{N}(\rho)$ and $0\neq b\in E^{N,0}_2(\lambda) \cong E^{N,0}_{\infty}(\lambda)$ we have reached a contradiction:
\[
\Phi^{N,0}_{N+1}(0)=[b]=b\neq 0.
\]
Therefore, there cannot be any $(\Z/p)^n$-equivariant map $(R_{(C_1,\ldots,C_{d+1})})^{*r}_{\Delta(2)}\rightarrow S(W_r^{\oplus (d+1)})$, and the proof of the theorem is complete.
\end{proof}

As part of the proof of the previous theorem the following general criterion was derived.

\begin{corollary}
	\label{cor : CSTM for colored Tverberg}
	Let $(C_1,\ldots,C_{m})$ be a coloring of the simplex $\Delta$ by $m$ colors.
	If there is no $\Sym_r$-equivariant map
	\[
	\Delta_{|C_1|,r}*\cdots *	\Delta_{|C_m|,r}\cong (R_{(C_1,\ldots,C_{m})})^{*r}_{\Delta(2)}\rightarrow S(W_r^{\oplus (d+1)}),
	\]
	then for every continuous map $f : \Delta\rightarrow \R^{d}$ there exist $r$ pairwise disjoint rainbow faces $\sigma_1, \dots, \sigma_r$ of $\Delta$  whose $f$-images overlap, 
\begin{equation*}
	f(\sigma_1)\cap\cdots\cap f(\sigma_r)\neq\emptyset.
\end{equation*}
\end{corollary}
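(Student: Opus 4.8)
\emph{Proof proposal.}
The plan is to repeat, in this general setting, the Configuration Space/Test Map argument already carried out in steps \textbf{(1)} and \textbf{(2)} of the proof of Theorem~\ref{th : colored Tverberg of ZV}, now without committing to particular sizes of the color classes. Suppose, for contradiction, that $f:\Delta\rightarrow\R^d$ is a continuous map admitting no $r$ pairwise disjoint rainbow faces with overlapping $f$-images. First I would form the join map
\[
J_f:\Delta^{*r}_{\Delta(2)}\rightarrow(\R^{d+1})^{\oplus r},\qquad
\lambda_1 x_1+\cdots+\lambda_r x_r\longmapsto(\lambda_1,\lambda_1 f(x_1))\oplus\cdots\oplus(\lambda_r,\lambda_r f(x_r)),
\]
which is $\Sym_r$-equivariant for the coordinate-permutation actions on domain and codomain. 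Since $R_{(C_1,\ldots,C_{m})}\cong C_1*\cdots*C_m$ is a subcomplex of $\Delta$, the rainbow deleted join $(R_{(C_1,\ldots,C_{m})})^{*r}_{\Delta(2)}$ is a $\Sym_r$-invariant subcomplex of $\Delta^{*r}_{\Delta(2)}$, so the restriction $J_f':=J_f|_{(R_{(C_1,\ldots,C_{m})})^{*r}_{\Delta(2)}}$ is again $\Sym_r$-equivariant.

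Next I would observe that the counterexample hypothesis forces $\im(J_f')\cap D_J=\emptyset$, where $D_J=\{(z_1,\ldots,z_r)\in(\R^{d+1})^{\oplus r}:z_1=\cdots=z_r\}$ is the thin diagonal: a point of $\im(J_f')$ lying in $D_J$ would have all $\lambda_i$ equal to $\tfrac1r$, hence $f(x_1)=\cdots=f(x_r)$ with $x_1,\ldots,x_r$ lying in $r$ pairwise disjoint rainbow faces, contradicting the assumption on $f$. Composing $J_f'$ with the $\Sym_r$-equivariant projection-and-retraction $R_J:(\R^{d+1})^{\oplus r}\setminus D_J\rightarrow S(D_J^{\perp})$ and using the isomorphism of real $\Sym_r$-representations $D_J^{\perp}\cong W_r^{\oplus(d+1)}$ recalled in Section~\ref{subsec : Equivariant maps induced by f}, I obtain an $\Sym_r$-equivariant map $(R_{(C_1,\ldots,C_{m})})^{*r}_{\Delta(2)}\rightarrow S(W_r^{\oplus(d+1)})$.

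Finally I would identify the domain combinatorially. Applying Lemma~\ref{lemma:commutativity of join and deleted join} repeatedly to $R_{(C_1,\ldots,C_m)}\cong C_1*\cdots*C_m$ and then Example~\ref{example:deleted product and joins}(4) to each factor gives
\[
(R_{(C_1,\ldots,C_{m})})^{*r}_{\Delta(2)}\cong(C_1*\cdots*C_m)^{*r}_{\Delta(2)}\cong(C_1)^{*r}_{\Delta(2)}*\cdots*(C_m)^{*r}_{\Delta(2)}\cong\Delta_{|C_1|,r}*\cdots*\Delta_{|C_m|,r},
\]
and this chain of isomorphisms is $\Sym_r$-equivariant when each chessboard $\Delta_{|C_i|,r}$ is given the column-permutation action. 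Hence a counterexample $f$ would yield an $\Sym_r$-equivariant map $\Delta_{|C_1|,r}*\cdots*\Delta_{|C_m|,r}\rightarrow S(W_r^{\oplus(d+1)})$, contrary to the hypothesis; this contradiction proves the corollary. The argument is essentially bookkeeping extracted from the proof of Theorem~\ref{th : colored Tverberg of ZV}; the only point I would treat with care is that the disjointness of rainbow faces is exactly what makes the restricted test map miss the thin diagonal, and that the combinatorial identification of the domain respects the $\Sym_r$-actions.
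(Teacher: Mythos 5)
Your proposal is correct and follows essentially the same route as the paper: the paper presents this corollary as already having been established in steps \textbf{(1)} and \textbf{(2)} of the proof of Theorem~\ref{th : colored Tverberg of ZV}, and your argument is exactly that extraction, done with arbitrary color class sizes and with the combinatorial identification of the domain via Lemma~\ref{lemma:commutativity of join and deleted join} and Example~\ref{example:deleted product and joins}(4).
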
 

The proof of Theorem \ref{th : colored Tverberg of ZV} could have been written in the language of the Fadell--Husseini index \cite{Fadell1988}.
The non-existence of an $(\Z/p)^n$-equivariant map $(R_{(C_1,\ldots,C_{d+1})})^{*r}_{\Delta(2)}\rightarrow S(W_r^{\oplus (d+1)})$ would then follow from the observation that 
\[
\ind_{(\Z/p)^n} ((R_{(C_1,\ldots,C_{d+1})})^{*r}_{\Delta(2)};\F_p) \not\supseteq \ind_{(\Z/p)^n} (S(W_r^{\oplus (d+1)});\F_p).
\]
More precisely, we have computed that
\begin{eqnarray*}
\ind_{(\Z/p)^n} ((R_{(C_1,\ldots,C_{d+1})})^{*r}_{\Delta(2)};\F_p) &=&\ind_{(\Z/p)^n} ((\Delta_{2r-1,r})^{*(d+1)};\F_p)\\
&\subseteq &  H^{\geq N+d+1} (\B (\Z/p)^n;\F_p),
\end{eqnarray*}
when $|C_1|=\cdots=|C_{d+1}|=2r-1$.
Actually we proved more:
\begin{equation}
	\label{eq : index chess}
	\ind_{(\Z/p)^n} ((\Delta_{2r-1,r})^{* k};\F_p)\subseteq H^{\geq k r} (\B (\Z/p)^n;\F_p).
\end{equation}
Furthermore we have found an element $b\in H^{N} (\B (\Z/p)^n;\F_p)$ with the property that
\[
	0\neq b \in \ind_{(\Z/p)^n} (S(W_r^{\oplus (d+1)});\F_p)\cap H^{N} (\B (\Z/p)^n;\F_p),
\]
and moreover
\begin{equation}
	\label{eq : index of the sphere S(W_r)}
	\ind_{(\Z/p)^n} (S(W_r^{\oplus (d+1)});\F_p)=\langle b \rangle .
\end{equation}
The element $b$ with this property is the Euler class of the vector bundle 
\[
W_r^{\oplus (d+1)}\rightarrow \E (\Z/p)^n\times_{(\Z/p)^n} W_r^{\oplus (d+1)}\rightarrow \B (\Z/p)^n.
\]
The work of Mann and Milgram \cite{Mann1982} allows us to specify the element $b$ completely: For $p$ an odd prime it is 
\[
b=\omega\cdot \Big( \prod_{(\alpha_1,\ldots,\alpha_n)\in\F_p^n{\setminus}\{0\} }(\alpha_1 t_1+\cdots +\alpha_n t_n)\Big)^{(d+1)/2},
\]
where $\omega\in\F_p{\setminus}\{0\}$, while for $p=2$ it is
\[
b=\Big( \prod_{(\alpha_1,\ldots,\alpha_n)\in\F_2^n {\setminus}\{0\}} (\alpha_1 t_1+\cdots +\alpha_k t_n)\Big)^{d+1}.
\]
The square root in $\F_p[t_1,\ldots ,t_n]$ is not uniquely determined
for an odd prime $p$ and $d$ odd. 
Thus we consider an arbitrary square root.

\smallskip
Combining these index computations we have that 
\begin{eqnarray}
\label{eq : difference in dimensions}	
0\neq b &\in & \ind_{(\Z/p)^n} (S(W_r^{\oplus (d+1)});\F_p)\cap H^{N} (\B (\Z/p)^n;\F_p)\nonumber \\
		& \not\subseteq   & \ind_{(\Z/p)^n} ((\Delta_{2r-1,r})^{*(d+1)};\F_p) \\
		&\subseteq & H^{\geq N+d+1} (\B (\Z/p)^n;\F_p).\nonumber
\end{eqnarray}
If a $(\Z/p)^n$-equivariant map $(R_{(C_1,\ldots,C_{d+1})})^{*r}_{\Delta(2)}\rightarrow S(W_r^{\oplus (d+1)})$ exists, then the monotonicity property of the Fadell--Husseini index yields the inclusion
\[
\ind_{(\Z/p)^n} ((R_{(C_1,\ldots,C_{d+1})})^{*r}_{\Delta(2)};\F_p) \supseteq \ind_{(\Z/p)^n} (S(W_r^{\oplus (d+1)});\F_p),
\]
which does not hold, as we just proved. 
Thus the $(\Z/p)^n$-equivariant map in question does not exist.

\smallskip
Now observe the difference of dimensions in \eqref{eq : difference in dimensions} and compare the dimension of the element $b$ and the dimension of the group cohomology where the index of the join $(\Delta_{2r-1,r})^{*(d+1)}$ lives.
\emph{We could have proved more.}
Indeed, using the index computation \eqref{eq : index chess} we have that
\begin{eqnarray*}
0\neq b &\in & \ind_{(\Z/p)^n} (S(W_r^{\oplus (d+1)});\F_p)\cap H^{ N} (\B (\Z/p)^n;\F_p)\\
&\not\subseteq &\ind_{(\Z/p)^n} ((\Delta_{2r-1,r})^{* k};\F_p)\\
&\subseteq & H^{\geq k r } (\B (\Z/p)^n;\F_p)
\end{eqnarray*}
as long as $k r\geq N+1$.
We have just concluded that, if $k r\geq N+1$, then there is no $(\Z/p)^n$-equivariant map 
\[
(\Delta_{2r-1,r})^{*k}\cong  (R_{(C_1,\ldots,C_{k})})^{*r}_{\Delta(2)}\rightarrow S(W_r^{\oplus (d+1)}).
\]
Thus with Corollary \ref{cor : CSTM for colored Tverberg} we have proved the following ``colored Tverberg theorem of type B''  \cite[Thm.\,4]{Vrecica1994}.

\begin{theorem}[The Colored Tverberg theorem of type B of Vre\'cica and \v{Z}ivaljevi\'c]
\label{th : colored Tverberg of ZV-B}
	Let $d\geq 1$ and $k\geq 1$ be integers, $N=(d+1)(r-1)$, and let $r\geq 2$ be a prime power.
	For every continuous map $f : \Delta\rightarrow \R^{d}$, and every coloring $(C_1,\ldots,C_{k})$ of the vertex set $\mathcal{C}$ of the simplex $\Delta$ by $k$ colors, with each color of size at least $2r-1$ and $k r\geq N+1$, there exist $r$ pairwise disjoint rainbow faces $\sigma_1, \dots, \sigma_r$ of $\Delta$  whose $f$-images overlap, that is
\begin{equation*}
	f(\sigma_1)\cap\cdots\cap f(\sigma_r)\neq\emptyset.
\end{equation*}
\end{theorem}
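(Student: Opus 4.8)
The plan is to reduce the statement, via Corollary~\ref{cor : CSTM for colored Tverberg}, to a non-existence statement for an equivariant map, and then to settle that using the Fadell--Husseini index, reusing the computations assembled in the proofs of Theorems~\ref{th : Topological Tverberg theorem for prime power} and~\ref{th : colored Tverberg of ZV}. Writing $r=p^n$ with $p$ prime, Corollary~\ref{cor : CSTM for colored Tverberg} tells us it is enough to prove that there is no $\Sym_r$-equivariant map
\[
(R_{(C_1,\ldots,C_{k})})^{*r}_{\Delta(2)}\ \cong\ \Delta_{|C_1|,r}*\cdots*\Delta_{|C_k|,r}\ \longrightarrow\ S(W_r^{\oplus (d+1)}),
\]
and, exactly as in the proof of Theorem~\ref{th : Topological Tverberg theorem for prime power}, composing with the regular embedding $(\Z/p)^n\hookrightarrow\Sym_r$ reduces this further to ruling out a $(\Z/p)^n$-equivariant map between the same spaces.

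First I would do the bookkeeping for the domain. Since each colour satisfies $|C_i|\ge 2r-1$, Theorem~\ref{th : conn. of chessboard} yields that each chessboard $\Delta_{|C_i|,r}$ is $(r-2)$-connected, while $\dim\Delta_{|C_i|,r}=r-1$; hence the $k$-fold join $(R_{(C_1,\ldots,C_{k})})^{*r}_{\Delta(2)}$ is $(kr-2)$-connected of dimension $kr-1$, so its reduced $\F_p$-cohomology is concentrated in degree $kr-1$. Plugging this into the Serre spectral sequence of the Borel construction exactly as in steps (3)--(4) of the proof of Theorem~\ref{th : colored Tverberg of ZV}, the only possibly nonzero differential is $\partial_{kr}$, so the bottom row survives through degree $kr-1$ and one reads off the index bound
\[
\ind_{(\Z/p)^n}\big((R_{(C_1,\ldots,C_{k})})^{*r}_{\Delta(2)};\F_p\big)\ \subseteq\ H^{\ge kr}(\B (\Z/p)^n;\F_p),
\]
which is precisely~\eqref{eq : index chess} in the special case $|C_1|=\cdots=|C_k|=2r-1$.

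On the target side, the proof of Theorem~\ref{th : colored Tverberg of ZV} already produces a nonzero class $b\in\ind_{(\Z/p)^n}(S(W_r^{\oplus (d+1)});\F_p)$ lying in $H^{N}(\B (\Z/p)^n;\F_p)$ with $N=(d+1)(r-1)$ (in fact $\ind_{(\Z/p)^n}(S(W_r^{\oplus (d+1)});\F_p)=\langle b\rangle$, with $b$ the relevant Euler class). The hypothesis $kr\ge N+1$ places $b$ in cohomological degree $N<kr$, so $b\notin H^{\ge kr}(\B (\Z/p)^n;\F_p)$ and hence $b$ is not in the index of the domain. Therefore $\ind_{(\Z/p)^n}(S(W_r^{\oplus (d+1)});\F_p)\not\subseteq\ind_{(\Z/p)^n}((R_{(C_1,\ldots,C_{k})})^{*r}_{\Delta(2)};\F_p)$, and monotonicity of the Fadell--Husseini index forbids a $(\Z/p)^n$-equivariant map, which is what we needed. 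Alternatively, one may run the spectral-sequence comparison of Theorem~\ref{th : Topological Tverberg theorem for prime power} verbatim: the induced morphism is the identity on the bottom rows through the $E_N$-page and carries $b=\partial_N(1\otimes_{\F_p}\ell)$ both to $b\ne 0$ and to $\partial_N$ of a class in $E^{0,N-1}_N(\lambda)=0$, a contradiction.

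I do not expect a real obstacle here, since essentially all the ingredients are in place. The one point needing care is that the colours may be strictly larger than $2r-1$; but by Theorem~\ref{th : conn. of chessboard} this only increases the connectivity of each chessboard (it remains $(r-2)$-connected and of dimension $r-1$), hence only strengthens the index estimate, so no preliminary reduction to equal-sized colour classes is required. The mildly technical step is thus the connectivity-and-dimension count for the $k$-fold join and the extraction of~\eqref{eq : index chess} from the $\lambda$-spectral sequence, which is a routine variant of the $d+1$ summands case already treated in the proof of Theorem~\ref{th : colored Tverberg of ZV}.
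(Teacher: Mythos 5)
Your proof is correct and follows essentially the same route as the paper's primary derivation: reduce via Corollary~\ref{cor : CSTM for colored Tverberg} to the non-existence of a $(\Z/p)^n$-equivariant map $\Delta_{|C_1|,r}*\cdots*\Delta_{|C_k|,r}\to S(W_r^{\oplus(d+1)})$, show the domain's Fadell--Husseini index lies in $H^{\ge kr}$ using the connectivity and dimension count for joins of $(r-2)$-connected, $(r-1)$-dimensional chessboards, and observe that the Euler class $b$ generating the sphere's index sits in degree $N<kr$. Your treatment of color classes of size $>2r-1$ (rather than exactly $2r-1$, as in the paper's displayed estimate~\eqref{eq : index chess}) is a small but correct extra care, and the paper later notes the same theorem also falls out of the constraint-method Theorem~\ref{th : colored_tverberg_b}, an alternative route you did not take but were not required to.
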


As we have just seen, the proof of the colored Tverberg theorem of \v{Z}ivaljevi\'c and Vre\'cica is in fact also a proof of a type B colored Tverberg theorem.
Is it possible that this proof hides a way to prove, for example the B\'ar\'any--Larman conjecture?
For this we would need to prove that for some or all $r$ and $|C_1|=\cdots =|C_{d+1}|=r$ there is no $\Sym_r$-equivariant map 
\begin{equation}
	\label{eq : eqivariant map for BL}
	\Delta_{r,r}^{*(d+1)}\cong (R_{(C_1,\ldots,C_{d+1})})^{*r}_{\Delta(2)} \rightarrow S(W_r^{\oplus (d+1)}).
\end{equation} 
The connectivity of the chessboard $\Delta_{r,r}$ is only $\big(\lfloor\tfrac{2r+1}3\rfloor-2\big)$ and therefore the scheme of the proof of Theorem \ref{th : colored Tverberg of ZV} cannot be used. 
Even worse, the complete approach fails, as the following theorem of Blagojevi\'c, Matschke and Ziegler \cite[Prop.\,4.1]{Blagojevic2009} shows that an $\Sym_r$-equivariant map \eqref{eq : eqivariant map for BL} does exist.

\begin{theorem}
\label{th : exitence of equivariant map for BL}
	Let $r\geq 2$ and $d\geq 1$ be integers. 
	There exists an $\Sym_r$-equivariant map 
	\[
	\Delta_{r,r}^{*(d+1)}\rightarrow S(W_r^{\oplus (d+1)}).
	\]
\end{theorem}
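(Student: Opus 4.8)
The plan is to replace $\Delta_{r,r}$ by a much lower-dimensional $\Sym_r$-homotopy equivalent model and then produce the map by equivariant obstruction theory; the numerology works out because after the replacement the model has exactly the dimension that the fixed-point data of the target sphere can absorb. First I record the structure of the target. Writing $S(U\oplus V)\cong S(U)*S(V)$, we have $S(W_r^{\oplus(d+1)})\cong S(W_r)^{*(d+1)}$, and for any subgroup $H\le\Sym_r$ with exactly $m$ orbits on $[r]$ the fixed space $W_r^H$ has dimension $m-1$, so $S(W_r^{\oplus(d+1)})^H=S\big((W_r^H)^{\oplus(d+1)}\big)\cong S^{(d+1)(m-1)-1}$ is $\big((d+1)(m-1)-2\big)$-connected; in particular $S(W_r^{\oplus(d+1)})^{\Sym_r}=\emptyset$.

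Now the reduction. View $\Delta_{r,r}=[r]^{*r}_{\Delta(2)}$ with $\Sym_r$ permuting the $r$ columns. Its maximal faces are the $r!$ perfect matchings, permuted freely and transitively by $\Sym_r$, while every $(r-2)$-face is a partial matching missing exactly one row and one column and so is a facet of a \emph{unique} perfect matching, hence a free face. Choosing one such facet $\rho_0$ of one perfect matching $\tau_0$ and translating, the pairs $(\sigma\rho_0,\sigma\tau_0)$, $\sigma\in\Sym_r$, form a mutually independent $\Sym_r$-equivariant family of elementary collapses; performing all of them exhibits an $\Sym_r$-equivariant strong deformation retraction of $\Delta_{r,r}$ onto an $\Sym_r$-invariant subcomplex $Y$ with $\dim Y\le r-2$. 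Taking $(d+1)$-fold joins gives $\Delta_{r,r}^{*(d+1)}\simeq_{\Sym_r}Y^{*(d+1)}$, a $\Sym_r$-CW complex of dimension at most $(d+1)(r-1)-1$, so it suffices to build an $\Sym_r$-equivariant map $Y^{*(d+1)}\to S(W_r^{\oplus(d+1)})$ and precompose with the retraction.

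The last step is equivariant obstruction theory on $Y^{*(d+1)}$. A cell $\sigma_1*\cdots*\sigma_{d+1}$ has stabilizer the Young subgroup $\Sym_T$, where $T=[r]\setminus\bigcup_\ell(\text{columns used by }\sigma_\ell)$, so the occurring isotropy groups are exactly the $\Sym_T$ with $T\subsetneq[r]$; each such group has at least two orbits on $[r]$, so the corresponding target fixed set is a nonempty sphere. If $|T|=t$ then every cell whose stabilizer contains $\Sym_T$ is a join of matchings supported on the $r-t$ columns of $[r]\setminus T$, hence has dimension at most $(d+1)(r-t-1)+d=(d+1)(r-t)-1$, whereas $\Sym_T$ has $m=r-t+1$ orbits, so $S(W_r^{\oplus(d+1)})^{\Sym_T}\cong S^{(d+1)(r-t)-1}$ has connectivity $(d+1)(r-t)-2$. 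Constructing the map cell by cell in order of decreasing isotropy, the obstruction to extending over a $k$-cell of orbit type $(\Sym_T)$ lies in $\pi_{k-1}\big(S(W_r^{\oplus(d+1)})^{\Sym_T}\big)$ and vanishes because $k-1\le(d+1)(r-t)-2=\conn\big(S(W_r^{\oplus(d+1)})^{\Sym_T}\big)$; for $T=[r]$ there is nothing to do. Hence the equivariant map exists, and composing with the retraction proves the theorem.

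The main obstacle is the bookkeeping in the reduction: verifying that the collapses of $\Delta_{r,r}$ can be organized $\Sym_r$-equivariantly so that the deformation retraction is genuinely equivariant, that $Y^{*(d+1)}$ is a bona fide $\Sym_r$-CW complex with exactly the list of isotropy subgroups claimed, and that the borderline inequality $\dim=\conn+1$ still lets the Bredon-type extension theorem apply. Once these are settled, the vanishing of every obstruction group follows at once from the dimension--connectivity comparison.
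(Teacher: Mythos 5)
Your proposal is correct and takes a genuinely different route from the paper for the key extension step, while sharing the same first reduction. Both arguments begin by equivariantly collapsing $\Delta_{r,r}$ to a subcomplex $Y$ of dimension $r-2$ (your explicit choice of a single free face $\rho_0$ of a single top cell $\tau_0$, translated by the free $\Sym_r$-action on the maximal faces, gives a clean, unambiguous description of this collapse and of the resulting $\Sym_r$-equivariant deformation retraction of $\Delta_{r,r}^{*(d+1)}$ onto $Y^{*(d+1)}$, a complex of the same dimension $N-1=(d+1)(r-1)-1$ as the target sphere). The divergence is in how the map is then produced. The paper first exhibits an explicit $\Sym_r$-map on the singular locus $K^{>1}$ by restricting the join map $J_f'$ of an arbitrary continuous $f:\Delta_M\to\R^d$ (the point being that $J_f'$ misses the diagonal $D_J$ on the non-free cells), and then invokes obstruction theory only for the free cells, which works because the target is $(N-2)$-connected and $\dim K=N-1$. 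You avoid the geometric input altogether and run full Bredon equivariant obstruction theory: for each isotropy subgroup $\Sym_T$ that occurs, you verify that the fixed sphere $S(W_r^{\oplus(d+1)})^{\Sym_T}$ is nonempty and that its connectivity dominates (minus one) the dimension of every cell with that stabilizer, so every local obstruction class vanishes. This is a self-contained and arguably cleaner argument, at the cost of the explicit fixed-point bookkeeping; the paper's argument is more geometric and uses the machinery already set up for the constraint method.

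One small slip worth noting: the orbit count $m=r-t+1$ for $\Sym_T$ acting on $[r]$, where $t=|T|$, is only correct for $t\geq 1$ (it happens to agree with the truth $m=r$ when $t=1$, but gives $r+1$ instead of $r$ when $t=0$). For the cells with trivial stabilizer one should instead bound the cell dimension by $\dim Y^{*(d+1)}=N-1$, coming from the collapse, and compare against the connectivity $N-2$ of the full target sphere; the borderline inequality $k-1\le N-2$ still holds. Your numerology in the $t=0$ case works out only because both the cell-dimension bound and the connectivity formula you wrote down are off by the same amount; with the corrected values the conclusion is unchanged, so this is a presentational issue rather than a gap.
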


\begin{proof}
For this we use equivariant obstruction theory, as presented by Tammo tom Dieck \cite[Sec.\,II.3]{tomDieck:TransformationGroups}.

\noindent
Let $N:=(d+1)(r-1)$, $M:=r(d+1)-1$, and let $(C_1,\ldots,C_{d+1})$ be a coloring of the vertex set of the simplex $\Delta_M$ by $d+1$ colors of the same size $r$, that is $|C_1|=\cdots =|C_{d+1}|=r$.
As we know the deleted join $(R_{(C_1,\ldots,C_{d+1})})^{*r}_{\Delta(2)}$ of the rainbow complex is isomorphic to the join of chessboards $\Delta_{r,r}^{*(d+1)}$.
The action of the symmetric group $\Sym_r$ on the complex $\Delta_{r,r}^{*(d+1)}$ is not free.
The subcomplex of $\Delta_{r,r}^{*(d+1)}$ whose points have non-trivial stabilizers with respect to the action of $\Sym_r$ can be described as follows:
\begin{eqnarray*}
	(\Delta_{r,r}^{*(d+1)})^{>1} & = & ((R_{(C_1,\ldots,C_{d+1})})^{*r}_{\Delta(2)})^{>1}\\
& = &	\{ \lambda_1x_1+\cdots+\lambda_rx_r \in (R_{(C_1,\ldots,C_{d+1})})^{*r}_{\Delta(2)} : \lambda_i=\lambda_j=0\text{ for some }i\neq j\}.
\end{eqnarray*}
Here for a $G$-space (CW complex) $X$ we we use notation $X^{>1}$ for the subspace (subcomplex) of all points (cells) with non-trivial stabilizer, meaning that $X{\setminus}X^{>1}$ is a free $G$-space.

\smallskip
Let $f :  \Delta_M\rightarrow \R^d$ be any continuous map.
As explained in Section \ref{subsec : Equivariant maps induced by f} the map $f$ induces the join map given by
\[
J_f :  (\Delta_M)^{* r}_{\Delta(2)}\rightarrow (\R^{d+1})^{\oplus r},
\qquad
\lambda_1 x_1+\cdots +\lambda_r x_r\longmapsto (\lambda_1, \lambda_1 f(x_1))\oplus\cdots\oplus(\lambda_r, \lambda_rf(x_r)).
\]
Since the rainbow complex $(R_{(C_1,\ldots,C_{d+1})})^{*r}_{\Delta(2)}$ is an $\Sym_r$-invariant subcomplex of $(\Delta)^{* r}_{\Delta(2)}$, the restriction
\[
J_f':=J_f|_{(R_{(C_1,\ldots,C_{d+1})})^{*r}_{\Delta(2)}}
 :  (R_{(C_1,\ldots,C_{d+1})})^{*r}_{\Delta(2)}\rightarrow (\R^{d+1})^{\oplus r}
\]
is also an $\Sym_r$-equivariant map.
Moreover  $\im (J_f'|_{((R_{(C_1,\ldots,C_{d+1})})^{*r}_{\Delta(2)})^{>1}})\cap D_J=\emptyset$ where, as before, $D_J=\{(z_1,\ldots,z_r)\in (\R^{d+1})^{\oplus r} : z_1=\cdots=z_r\}$.
Thus the map $J_f'$ induces an $\Sym_r$-equivariant map 
\[
(\Delta_{r,r}^{*(d+1)})^{>1} = ((R_{(C_1,\ldots,C_{d+1})})^{*r}_{\Delta(2)})^{>1}\rightarrow (\R^{d+1})^{\oplus r}{\setminus}D_J.
\]
Composing this map with the $\Sym_r$-equivariant retraction $R_j :  (\R^{d+1})^{\oplus r}{\setminus}D_J \rightarrow S(D_J^{\perp})\cong S(W_r^{\oplus (d+1)})$ introduced in \eqref{eq : eq-maps-02}, we get a continuous $\Sym_r$-equivariant map
\begin{equation}
	\label{eq : map for extension - 01}
	(\Delta_{r,r}^{*(d+1)})^{>1} = ((R_{(C_1,\ldots,C_{d+1})})^{*r}_{\Delta(2)})^{>1}\rightarrow  S(W_r^{\oplus (d+1)}).
	\end{equation}

The $(r-1)$-dimensional chessboard complex $\Delta_{r,r}$ equivariantly retracts to a subcomplex of dimension $r-2$.
Indeed, for each facet of $\Delta_{r,r}$ there is an elementary collapse obtained by deleting all of its subfacets (faces of dimension $r-2$) that contain the vertex in the $r$-th column.
Performing these collapses to all facets of $\Delta_{r,r}$, we get that $\Delta_{r,r}$ collapses $\Sym_r$-equivariantly to an $(r-2)$-dimensional subcomplex of $\Delta_{r,r}$.
Consequently, the join $(\Delta_{r,r})^{*(d+1)}$ equivariantly retracts to a subcomplex $K$ of dimension $(d+1)(r-1)-1$.
Thus in order to prove the existence of an $\Sym_r$-equivariant map $\Delta_{r,r}^{*(d+1)}\rightarrow S(W_r^{\oplus (d+1)})$ it suffices to construct an $\Sym_r$-equivariant map $K \rightarrow S(W_r^{\oplus (d+1)})$.
Since
\begin{compactitem}
	\item $\dim K=\dim S(W_r^{\oplus (d+1)})=N-1$,
	\item $S(W_r^{\oplus (d+1)})$ is $(N-1)$-\dictionary{simple} and $(N-2)$-connected, 
\end{compactitem}
and the groups where the obstructions would live are zero, the equivariant obstruction theory yields the existence of an $\Sym_r$-equivariant map $K \rightarrow S(W_r^{\oplus (d+1)})$, provided that an $\Sym_r$-equivariant map $K^{>1} \rightarrow S(W_r^{\oplus (d+1)})$ exists.
The subcomplex of all points with non-trivial stabilizer $K^{>1}=K\cap (\Delta_{r,r}^{*(d+1)})^{>1}$ is a subcomplex of $(\Delta_{r,r}^{*(d+1)})^{>1}$ and therefore the map \eqref{eq : map for extension - 01} restricted to $K^{>1}$ completes the argument.	
\end{proof}

After this theorem an urgent question emerges: \emph{How are we going to handle the B\'ar\'any--Larman conjecture?}
An answer to this question will bring us to our last section and the optimal colored Tverberg theorem.

\subsection{The weak colored Tverberg theorem}
\label{subsec : Weak colored Tverberg theorem}

How many colored Tverberg theorems can we get directly from the topological Tverberg theorem without major topological machinery? 
Here is an answer given by  \cite[Thm.\,5.3]{Blagojevic2014}.

\begin{theorem}[The weak colored Tverberg theorem] 
\label{th : colored_tverberg}
	Let $d\ge1$ be an integer, let $r$ be a prime power, $N=(2d+2)(r-1)$, and let  $f :  \Delta_N \rightarrow \R^d$ be a continuous map.
  If the vertices of the simplex $\Delta_N$ are colored by $d+1$ colors, where each color class has cardinality at most $2r-1$,
  then there are $r$ pairwise disjoint rainbow faces $\sigma_1, \dots, \sigma_r$ of $\Delta_N$  whose $f$-images overlap, that is
\begin{equation*}
	f(\sigma_1)\cap\cdots\cap f(\sigma_r)\neq\emptyset.
\end{equation*}	
\end{theorem}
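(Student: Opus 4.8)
The plan is to deduce this from the prime-power topological Tverberg theorem (Theorem~\ref{th : Topological Tverberg theorem for prime power}) by the constraint method of \cite{Blagojevic2014}: we enlarge $f$ by one distance function for each of the $d+1$ color classes, so that the enlarged map lands in $\R^{2d+1}$ and the dimension bookkeeping $N=(2d+2)(r-1)=\bigl((2d+1)+1\bigr)(r-1)$ makes the topological Tverberg theorem directly applicable.

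Write the given coloring as $(C_1,\dots,C_{d+1})$, and for each $i$ let $\Sigma_i$ be the subcomplex of $\Delta_N$ consisting of all faces $\sigma$ with $|\sigma\cap C_i|\le 1$. This is a closed subcomplex of $\Delta_N$, and a face of $\Delta_N$ is rainbow for the coloring if and only if it belongs to $\Sigma_1\cap\cdots\cap\Sigma_{d+1}$. Now consider the continuous map
\[
g:\Delta_N\longrightarrow\R^d\times\R^{d+1}=\R^{2d+1},\qquad
g(x)=\bigl(f(x),\,\dist(x,\Sigma_1),\dots,\dist(x,\Sigma_{d+1})\bigr).
\]
Since $r$ is a prime power and $N=(2d+2)(r-1)$, Theorem~\ref{th : Topological Tverberg theorem for prime power} applied to $g$ yields $r$ pairwise disjoint faces $\sigma_1,\dots,\sigma_r$ of $\Delta_N$ together with points $x_1\in\relint\sigma_1,\dots,x_r\in\relint\sigma_r$ such that $g(x_1)=\cdots=g(x_r)$. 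In particular $f(x_1)=\cdots=f(x_r)$, so $f(\sigma_1)\cap\cdots\cap f(\sigma_r)\neq\emptyset$, and for each fixed $i$ the distances $\dist(x_1,\Sigma_i),\dots,\dist(x_r,\Sigma_i)$ all coincide.

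It remains to check that every $\sigma_j$ is a rainbow face. Fix $i$. Exactly as in the proof of the generalized Van Kampen--Flores theorem (Theorem~\ref{thm : genrelized_van_Kampen_Flores}), for a point $x$ in the relative interior of a face $\sigma$ one has $\dist(x,\Sigma_i)=0$ if and only if $\sigma\in\Sigma_i$, since $\Sigma_i$ is a subcomplex. Hence the common value $\delta_i:=\dist(x_j,\Sigma_i)$ is either $0$, in which case all of $\sigma_1,\dots,\sigma_r$ lie in $\Sigma_i$, or positive, in which case none of them lies in $\Sigma_i$, i.e.\ $|\sigma_j\cap C_i|\ge 2$ for every $j$. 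The latter is impossible: since $\sigma_1,\dots,\sigma_r$ are pairwise disjoint we would get $2r\le\sum_{j=1}^r|\sigma_j\cap C_i|\le|C_i|\le 2r-1$, a contradiction. Therefore $\delta_i=0$ for every $i$, so each $\sigma_j$ lies in $\Sigma_1\cap\cdots\cap\Sigma_{d+1}$ and is thus rainbow, which completes the proof.

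I do not expect a serious obstacle here once Theorem~\ref{th : Topological Tverberg theorem for prime power} is in hand: the whole argument is an instance of the constraint method, and its only delicate points are the dimension count that turns $N$ into a topological Tverberg instance over $\R^{2d+1}$, and the elementary inequality $|C_i|\le 2r-1<2r$, which makes each $\Sigma_i$ \emph{unavoidable} and so converts the equal-distance conclusion of a Tverberg partition into the rainbow property.
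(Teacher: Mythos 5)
Your proposal is correct and follows essentially the same route as the paper's own proof: define the subcomplexes $\Sigma_i$, form the extended map $g$ into $\R^{2d+1}$, apply the prime-power topological Tverberg theorem, and use the pigeonhole inequality $2r\le\sum_j|\sigma_j\cap C_i|\le|C_i|\le 2r-1$ to force each $\sigma_j$ into $\Sigma_1\cap\cdots\cap\Sigma_{d+1}$. The only cosmetic difference is that you phrase the dichotomy as ``all distances zero or all positive'' whereas the paper argues that at least one face must lie in $\Sigma_i$ and hence the common distance is zero; the underlying argument is identical.
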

\begin{proof}
Let $\mathcal{C}$ be the set of vertices of the simplex $\Delta_N$ and let $(C_1,\ldots,C_{d+1})$ be a coloring of $\mathcal{C}$ where $|C_i|\leq 2r-1$ for all $1\leq i\leq d+1$.
To each color class $C_i$ we associate the subcomplex $\Sigma_i$ of $\Delta_N$ defined by
\[
\Sigma_i:=\{\sigma\in\Delta_N : |\sigma\cap C_i|\leq 1\}.
\]
Observe that the intersection $\Sigma_1\cap\cdots\cap\Sigma_{d+1}$ is the subcomplex of all rainbow faces of $\Delta_N$ with respect to the given coloring.
Next consider the continuous map $g :  \Delta_N\rightarrow\R^{2d+1}$ defined by 
\[
g(x)=(f(x),\dist(x,\Sigma_1),\dist(x,\Sigma_2),\ldots,\dist(x,\Sigma_{d+1})).
\]
Since $N=(2d+2)(r-1)=((2d+1)+1)(r-1)$ and $r$ is a prime power, we can apply the topological Tverberg theorem to $g$.
Consequently there are $r$ pairwise disjoint faces $\sigma_1,\ldots,\sigma_r$ with points $x_1\in\relint\sigma_1,\ldots,x_r\in\relint\sigma_r$ such that $g(x_1) = \cdots = g(x_r)$, that is,
\[
f(x_1) 	 = \cdots =  f(x_r),\
\dist(x_1,\Sigma_1) 	 = \cdots =  \dist(x_r,\Sigma_1),\
	 \cdots, \
\dist(x_1,\Sigma_{d+1}) 	 = \cdots =  \dist(x_r,\Sigma_{d+1}).	
\]
Now observe that for every subcomplex $\Sigma_i$ one of the faces $\sigma_1,\ldots,\sigma_r$ is contained in it.
Indeed, if this would not hold then we would have $|\sigma_1\cap C_i|\geq 2,\ldots,|\sigma_r\cap C_i|\geq 2$, and consequently we would obtain the following contradiction:
\[
2r-1\geq |C_i|\geq |\sigma_1\cap C_i|+\cdots+|\sigma_r\cap C_i|\geq 2r.
\]
Hence the distances, which were previously know to be equal, have to vanish,
\[
\dist(x_1,\Sigma_1) 	 = \cdots =  \dist(x_r,\Sigma_1)=0,\
	 \cdots, \
\dist(x_1,\Sigma_{d+1}) 	 = \cdots =  \dist(x_r,\Sigma_{d+1})=0,
\]
implying that $x_i\in\Sigma_1\cap\cdots\cap\Sigma_{d+1}$ for every $1\leq i\leq r$.
Since $\Sigma_1,\ldots,\Sigma_{d+1}$ are subcomplexes of $\Delta_N$ and $x_1\in\relint\sigma_1,\ldots,x_r\in\relint\sigma_r$ it follows that the faces $\sigma_1, \ldots , \sigma_r$ belong to the subcomplex $\Sigma_1\cap\cdots\cap\Sigma_{d+1}$,
that is, $\sigma_1,\ldots,\sigma_r$ are rainbow faces.
\end{proof}

\noindent
A special case of the weak colored Tverberg theorem we just proved, namely $|C_1|=\cdots=|C_{d+1}|=2r-1$, yields $t(d,r)\leq tt(d,r)\leq 2r-1$ for $r$ a prime power. 
This is the colored Tverberg theorem of \v{Z}ivaljevi\'c and Vre\'cica presented in Theorem \ref{th : colored Tverberg of ZV}.

\medskip
Along the lines of the previous theorem we can prove the following colored Van Kampen--Flores theorem, where the number of color classes is at most $d+1$.
\begin{theorem}[The colored Van Kampen--Flores theorem]
\label{th : colored_tverberg_b}
	Let $d\ge1$ be an integer, let $r$ be a prime power, let $k \ge \lceil d\,\frac{r-1}{r} \rceil + 1$ be an integer, and $N=(d+k+1)(r-1)$.
	Let  $f :  \Delta_N \rightarrow \R^d$ be a continuous map.
  If the vertices of the simplex $\Delta_N$ are colored by $k$ colors, where each color class has cardinality at most $2r-1$,
  then there are $r$ pairwise disjoint rainbow faces $\sigma_1, \dots, \sigma_r$ of $\Delta_N$  whose $f$-images overlap,  
\begin{equation*}
	f(\sigma_1)\cap\cdots\cap f(\sigma_r)\neq\emptyset.
\end{equation*}	
 \end{theorem}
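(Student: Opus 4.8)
The plan is to apply the constraint method of \cite{Blagojevic2014} in the same spirit as the proof of the weak colored Tverberg theorem (Theorem~\ref{th : colored_tverberg}), but using one distance constraint for each of the $k$ color classes instead of $d+1$ of them. Write $\mathcal{C}$ for the vertex set of $\Delta_N$ and let $(C_1,\ldots,C_k)$ be the given coloring with $|C_i|\le 2r-1$ for every $i$. To each color class associate the subcomplex
\[
\Sigma_i:=\{\sigma\in\Delta_N : |\sigma\cap C_i|\le 1\},
\]
so that $\Sigma_1\cap\cdots\cap\Sigma_k$ is exactly the rainbow subcomplex of $\Delta_N$. Then I would consider the continuous map $g:\Delta_N\to\R^{d+k}$ defined by
\[
g(x)=\bigl(f(x),\,\dist(x,\Sigma_1),\,\ldots,\,\dist(x,\Sigma_k)\bigr).
\]
Since $N=(d+k+1)(r-1)=\bigl((d+k)+1\bigr)(r-1)$ and $r$ is a prime power, the topological Tverberg theorem for prime powers (Theorem~\ref{th : Topological Tverberg theorem for prime power}), applied with ``$d$'' replaced by ``$d+k$'', applies to $g$: there are $r$ pairwise disjoint faces $\sigma_1,\ldots,\sigma_r$ of $\Delta_N$ and points $x_j\in\relint\sigma_j$ with $g(x_1)=\cdots=g(x_r)$. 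In particular $f(x_1)=\cdots=f(x_r)$ and $\dist(x_1,\Sigma_i)=\cdots=\dist(x_r,\Sigma_i)$ for every $1\le i\le k$.

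The next step is a pigeonhole argument carried out one color class at a time, exactly as in the proof of Theorem~\ref{th : colored_tverberg}. Fix $i$. If none of $\sigma_1,\ldots,\sigma_r$ were a rainbow face with respect to $C_i$, then $|\sigma_j\cap C_i|\ge 2$ for all $j$, and since the $\sigma_j$ are pairwise disjoint the sets $\sigma_j\cap C_i$ are disjoint subsets of $C_i$, forcing $|C_i|\ge|\sigma_1\cap C_i|+\cdots+|\sigma_r\cap C_i|\ge 2r$, which contradicts $|C_i|\le 2r-1$. Hence some $\sigma_j$ lies in $\Sigma_i$, so $\dist(x_j,\Sigma_i)=0$; as these distances are all equal we get $\dist(x_\ell,\Sigma_i)=0$ for every $\ell$, and since $\Sigma_i$ is a subcomplex and $x_\ell\in\relint\sigma_\ell$ this gives $\sigma_\ell\in\Sigma_i$ for every $\ell$. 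Running this over $i=1,\ldots,k$ shows that $\sigma_1,\ldots,\sigma_r$ all lie in $\Sigma_1\cap\cdots\cap\Sigma_k$, i.e.\ they are pairwise disjoint rainbow faces, and the common value $f(x_1)=\cdots=f(x_r)$ witnesses the overlap of their $f$-images.

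I do not expect a genuine obstacle here: this is a routine instance of the constraint method, and the only thing that needs care is the bookkeeping — namely that $g$ has exactly $d+k$ coordinates, so Theorem~\ref{th : Topological Tverberg theorem for prime power} applies with $N=(d+k+1)(r-1)$ and no slack. The hypothesis $k\ge\lceil\tfrac{r-1}{r}d\rceil+1$ in fact plays no role in the deduction; it is precisely the non-vacuousness condition $k(2r-1)\ge N+1$, since $k(2r-1)\ge(d+k+1)(r-1)+1$ is equivalent to $kr\ge d(r-1)+r$, i.e.\ to $k\ge\tfrac{r-1}{r}d+1$ with $k$ an integer, which says exactly that the $N+1$ vertices of $\Delta_N$ admit a coloring by $k$ classes each of size at most $2r-1$. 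Finally, a rainbow face for a $k$-coloring has at most $k$ vertices, so the conclusion automatically produces faces in $\sk_{k-1}(\Delta_N)$ — this is the ``Van Kampen--Flores'' content — and the case $k=d+1$ recovers the weak colored Tverberg theorem.
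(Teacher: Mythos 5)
Your proof is correct and follows essentially the same route as the paper's: the constraint map $g(x)=(f(x),\dist(x,\Sigma_1),\ldots,\dist(x,\Sigma_k))$, the prime-power topological Tverberg theorem applied in dimension $d+k$, and a per-color pigeonhole argument using $|C_i|\le 2r-1$. Your remark that the hypothesis $k\ge\lceil d\tfrac{r-1}{r}\rceil+1$ serves only to guarantee that such a $k$-coloring with small classes exists (equivalently $k(2r-1)\ge N+1$) matches the paper's observation.
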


\begin{proof} 
Let $\mathcal{C}$ be the set of vertices of the simplex $\Delta_N$ and let $(C_1,\ldots,C_{k})$ be a coloring where $|C_i|\leq 2r-1$ for all $1\leq i\leq k$.
Such a coloring exists because $k(2r-1)\geq (d+k+1)(r-1)$ is equivalent to our assumption $k \ge \lceil d\,\frac{r-1}{r} \rceil + 1$. 
To each color class $C_i$ we associate the subcomplex $\Sigma_i$ of $\Delta_N$ defined as before by
\[
\Sigma_i:=\{\sigma\in\Delta_N : |\sigma\cap C_i|\leq 1\}.
\]
The subcomplex $\Sigma_1\cap\cdots\cap\Sigma_{k}$ is a subcomplex of all rainbow faces of $\Delta_N$ with respect to the given coloring.
Consider the continuous map $g :  \Delta_N\rightarrow\R^{d+k}$ defined by 
\[
g(x)=(f(x),\dist(x,\Sigma_1),\dist(x,\Sigma_2),\ldots,\dist(x,\Sigma_{k})).
\]
Since $N=(d+k+1)(r-1)$ and $r$ is a prime power the topological Tverberg theorem can be applied to the map $g$.
Therefore, there are $r$ pairwise disjoint faces $\sigma_1,\ldots,\sigma_r$ with points $x_1\in\relint\sigma_1,\ldots,x_r\in\relint\sigma_r$ such that $g(x_1) = \cdots = g(x_r)$, that is,
\[
f(x_1) 	 = \cdots =  f(x_r),\
\dist(x_1,\Sigma_1) 	 = \cdots =  \dist(x_r,\Sigma_1),\
	 \cdots, \
\dist(x_1,\Sigma_{k}) 	 = \cdots =  \dist(x_r,\Sigma_{k}).	
\]
Now observe that every subcomplex $\Sigma_i$ contains one of the faces $\sigma_1,\ldots,\sigma_r$.
Indeed, if this would not hold then $|\sigma_1\cap C_i|\geq 2,\ldots,|\sigma_r\cap C_i|\geq 2$, and we would get the contradiction
\[
2r-1\geq |C_i|\geq |\sigma_1\cap C_i|+\cdots+|\sigma_r\cap C_i|\geq 2r.
\]
Consequently the distances, which were previously known to be equal, have to vanish
\[
\dist(x_1,\Sigma_1) 	 = \cdots =  \dist(x_r,\Sigma_1)=0,\
	 \cdots, \
\dist(x_1,\Sigma_{k}) 	 = \cdots =  \dist(x_r,\Sigma_{k})=0,
\]
implying that $x_i\in\Sigma_1\cap\cdots\cap\Sigma_{k}$ for every $1\leq i\leq r$.
Since $\Sigma_1,\ldots,\Sigma_{k}$ are subcomplexes and $x_1\in\relint\sigma_1,\ldots,x_r\in\relint\sigma_r$, it follows that
\[
\sigma_1\in\Sigma_1\cap\cdots\cap\Sigma_{k},\ \ldots \ , \sigma_r\in\Sigma_1\cap\cdots\cap\Sigma_{k}, 
\] 
that is, $\sigma_1,\ldots,\sigma_r$ are rainbow faces.
\end{proof}

\noindent
The ``colored Tverberg theorem of type B'' of Vre\'cica and \v{Z}ivaljevi\'c, Theorem \ref{th : colored Tverberg of ZV-B}, is 
a particular case of this theorem, when the color classes have the same size.

\subsection{Tverberg points with equal barycentric coordinates}
\label{sec : Tverberg points with equal barycentric coordinate}

The last corollary of the Topological Tverberg theorem that we present here is the topological version  \cite[Thm.\,8.1]{Blagojevic2014} of a recent result by Sober\'on \cite[Thm.\,1.1]{Soberon2013} \cite[Thm.\,1]{Soberon2015}.

Let $N\geq 1$ be an integer, let $\mathcal{C}$ be the set of vertices of the simplex $\Delta_N$, and let 
$(C_1,\ldots,C_{\ell})$ be a coloring of $\mathcal{C}$.
Every point $x$ in the rainbow subcomplex $R_{(C_1,\ldots,C_{\ell})}$ has a unique presentation in barycentric coordinates as $x = \sum_{i=1}^{\ell} \lambda_i^{x}v_i^{x}$ where
$0 \le \lambda_i^x \le 1$ and $v_i^{x}\in C_i$ for all $0\le i\le \ell-1$.
Two points $x = \sum_{i=1}^{\ell} \lambda_i^{x}v_i^{x}$ and $y = \sum_{i=1}^{\ell} \lambda_i^{y}v_i^{y}$ in the rainbow subcomplex $R_{(C_1,\ldots,C_{\ell})}$ have \emph{equal barycentric coordinates} if $ \lambda_i^{x}= \lambda_i^{y}$ for all $1\leq i\leq\ell$.

\begin{theorem}
\label{theorem:equal_coeff}
Let $d \ge 1$ be an integer, let $r$ be a prime power, $N = r((r-1)d+1)-1=(r-1)(rd+1)$, and let $f :  \Delta_N \rightarrow \R^d$ be a continuous map. 
If the vertices of the simplex $\Delta_N$ are colored by $(r-1)d+1$ colors where each colored  class is of size $r$, 
then there are points $x_1, \dots, x_r$ with equal barycentric coordinates that belong to $r$ pairwise disjoint rainbow faces $\sigma_1, \dots, \sigma_r$ of~$\Delta_N$ whose $f$-images coincide, that is 
	\[
	 f(x_1) = \dots = f(x_r).
	\]
\end{theorem}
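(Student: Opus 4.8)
The plan is to obtain this as another application of the constraint method, deriving it directly from the topological Tverberg theorem for prime powers (Theorem~\ref{th : Topological Tverberg theorem for prime power}). The twist compared with the proof of Theorem~\ref{th : colored_tverberg} is that the ``extra'' coordinates of the auxiliary map will \emph{not} be distance functions: instead they will record the barycentric coordinates, and the rainbow property of the Tverberg faces will fall out for free from a counting argument rather than from the pigeonhole applied to distance constraints.

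Write $m:=(r-1)d+1$ for the number of colors, so that $N+1=rm$ and the coloring $(C_1,\dots,C_m)$ partitions all $N+1$ vertices into classes of size exactly $r$. Let $B:\Delta_N\to\Delta_{m-1}$ be the affine ``color-weight'' map sending every vertex of $C_j$ to the $j$-th vertex $\ee_j$ of $\Delta_{m-1}$; on a point $x$ with barycentric coordinates $(\mu_v)$ it reads $B(x)=\sum_{j=1}^m s_j(x)\,\ee_j$ with $s_j(x):=\sum_{v\in C_j}\mu_v$. Identifying the affine hull of $\Delta_{m-1}$ with $\R^{m-1}$ and using $d+(m-1)=d+(r-1)d=rd$, I would then consider the continuous map
\[
g\ :\ \Delta_N\longrightarrow \R^{d}\times\R^{m-1}=\R^{rd},\qquad x\longmapsto (f(x),B(x)).
\]
Since $N=(r-1)(rd+1)=(rd+1)(r-1)$ and $r$ is a prime power, Theorem~\ref{th : Topological Tverberg theorem for prime power} applies to $g$ and produces pairwise disjoint faces $\sigma_1,\dots,\sigma_r$ of $\Delta_N$ and points $x_i\in\relint\sigma_i$ with $g(x_1)=\dots=g(x_r)$, hence $f(x_1)=\dots=f(x_r)$ and $B(x_1)=\dots=B(x_r)=:s=(s_1,\dots,s_m)\in\Delta_{m-1}$.

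It remains to see that the $\sigma_i$ are automatically rainbow and that the $x_i$ then have equal barycentric coordinates. Put $J:=\{\,j:s_j>0\,\}$, which is non-empty because $\sum_j s_j=1$. For $j\in J$ the equality $s_j(x_i)=s_j>0$ together with $x_i\in\relint\sigma_i$ forces $\sigma_i\cap C_j\neq\emptyset$, so $\sum_{i=1}^r|\sigma_i\cap C_j|\ge r$; since the faces are pairwise disjoint we also have $\sum_{i=1}^r|\sigma_i\cap C_j|\le|C_j|=r$, whence $|\sigma_i\cap C_j|=1$ for every $i$. For $j\notin J$, from $s_j(x_i)=0$ and $x_i\in\relint\sigma_i$ we get $\sigma_i\cap C_j=\emptyset$. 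Thus each $\sigma_i$ is a rainbow face whose vertex set is a transversal of $\{C_j:j\in J\}$, and the barycentric coordinate of $x_i$ on its unique vertex of color $C_j$ equals $s_j(x_i)=s_j$ for $j\in J$ and $0$ for $j\notin J$, independently of $i$. Together with $f(x_1)=\dots=f(x_r)$ this is exactly the assertion of the theorem.

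I expect the counting step in the previous paragraph to be the only real content of the argument — everything else is bookkeeping — and it is precisely there that the exact value $N=(r-1)(rd+1)$ and the color sizes $|C_j|=r$ are used: because $B$ is \emph{affine}, ``the $j$-th color weight of $x_i$ is positive'' is literally ``$\sigma_i$ meets $C_j$'', after which the sandwich $r\le\sum_i|\sigma_i\cap C_j|\le|C_j|=r$ does the forcing without spending an additional coordinate on any distance function.
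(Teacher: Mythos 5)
Your proof is correct and is essentially the same as the paper's: the paper also applies the topological Tverberg theorem for prime powers in $\R^{rd}$ to the map $x\mapsto(f(x),h_1(x),\dots,h_{\ell-1}(x))$, where $h_j$ is the affine $j$-th color-weight, and then uses the sandwich $r\le\sum_i|\sigma_i\cap C_j|\le|C_j|=r$ together with $x_i\in\relint\sigma_i$ to force the faces to be rainbow with matching barycentric coordinates. The only cosmetic difference is that you package the $\ell-1$ weight functions into a single affine map $B$ to $\Delta_{m-1}\cong\R^{m-1}$, which is an equivalent bookkeeping choice.
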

\begin{proof}
Let $\ell=(r-1)d+1$, and let $(C_1,\ldots,C_{\ell})$ be a coloring of the vertex set $\mathcal{C}=\{v_0,\ldots,v_N\}$ of the simplex $\Delta_N$.
Each point $x$ of the simplex $\Delta_N$ can be uniquely presented in the barycentric coordinates as $x = \sum_{j=0}^N \lambda_j^xv_j$.
For every color class $C_i$, $1\leq i\leq \ell$, we define the function $h_i : \Delta_N\rightarrow\R$ by 
$h_i\big(\sum_{j=0}^N \lambda_j^xv_j\big)= \sum_{v_j \in C_i} \lambda_j^x$.
All functions $h_j$ are affine functions and $\sum_{i=1}^{\ell}h_i(x)=\sum_{j=0}^N\lambda_j^x=1$ for every $x\in\Delta_N$.

\smallskip
Now consider the function $g : \Delta_N\rightarrow\R^{rd}$ given by $g(x)= (f(x),h_1(x),\ldots, h_{\ell-1}(x))$.
Since $N = (r-1)(rd+1)$, the topological Tverberg theorem applied to the function $g$ implies that there exist $r$ pairwise disjoint faces $\sigma_1,\ldots,\sigma_r$ of $\Delta_N$ and $r$ points $x_1\in\relint\sigma_1 , \ldots, x_r \in \relint\sigma_r$ such that  $f(x_1) = \dots = f(x_r)$ and $h_i(x_1) = \dots = h_i(x_r)$ for $1\leq i\leq\ell-1$.
In addition, the equality $\sum_{i=1}^{\ell}h_i(x)=1$ implies that also $h_{\ell}(x_1) = \cdots =h_{\ell}(x_r)$.

\smallskip
Assume now that $|\sigma_j\cap C_i|\geq 1$ for some $1\leq j\leq r$ and some $1\leq i\leq \ell$.
Then $h_i(x_j)>0$ since $x_j\in\relint\sigma_j$.
Consequently, $h_i(x_1) = \dots = h_i(x_r)>0$ implying that $|\sigma_j\cap C_i|\geq 1$ for all  $1\leq j\leq r$. 
Since $|C_i|=r$ and $\sigma_1,\ldots,\sigma_r$ are pairwise disjoint it follows that each $\sigma_j$ has precisely one vertex in the color class $C_i$.
Thus, repeating the argument for each color class we conclude that all faces $\sigma_1,\ldots,\sigma_r$ are rainbow faces.
The immediate consequence of this fact is that $h_i(x_j)$, $1\leq i\leq\ell$, are the barycentric coordinates of the point $x_i$ and so all the points $x_1,\ldots,x_r$ have equal barycentric coordinates. 
\end{proof}

\section{Counterexamples to the topological Tverberg conjecture}
\label{sec : Counterexamples to the topological Tverberg conjecture}

Now we are going to get to a very recent piece of the topological Tverberg puzzle:
We show how counterexamples to the topological Tverberg conjecture for any number of parts that his not a prime power were derived by Frick \cite{Frick2015} \cite{Blagojevic2015} from the remarkable works of \"Ozaydin \cite{Oezaydin1987} and of Mabillard and Wagner \cite{Mabillard2014} \cite{Mabillard2015}, via a lemma of Gromov \cite[p.\,445]{Gromov2010} that is an instance of the constraint method of Blagojevi\'c, Frick, Ziegler \cite[Lemmas\,4.1(iii) and~4.2]{Blagojevic2014}.

\subsection{Existence of equivariant maps if \emph{r} is not a prime power}
\label{subsec : Existence of equivariant maps for $r$ not a prime power}
First we present the second main result of \"Ozaydin's landmark manuscript \cite[Thm.\,4.2]{Oezaydin1987}.

\begin{theorem}
	\label{th : OZaydin}
	Let $d\geq 1$ and $r\geq 6$ be integers, and let $N=(d+1)(r-1)$.
	If $r$ is not a prime power, then there exists an $\Sym_r$-equivariant map
	\begin{equation}
		\label{eq : existence of product map}
		(\Delta_N)^{\times r}_{\Delta(2)}\rightarrow S(W_r^{\oplus d}).
	\end{equation}
\end{theorem}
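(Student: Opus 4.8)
The plan is to proceed by equivariant obstruction theory, exactly in the spirit of the proof of Theorem~\ref{th : exitence of equivariant map for BL}. First I record the relevant bookkeeping: $\dim S(W_r^{\oplus d})=d(r-1)-1=N-r$, while by Theorem~\ref{th : Connectivity of deleted product} the domain $X:=(\Delta_N)^{\times r}_{\Delta(2)}$ is an $(N-r+1)$-dimensional, $(N-r)$-connected \emph{free} $\Sym_r$-CW complex. Since the sphere $S(W_r^{\oplus d})$ is $(N-r-1)$-connected, a $\Sym_r$-equivariant map can be defined over the $(N-r)$-skeleton of $X$ with no obstruction, and, as $\dim X=N-r+1$, there is exactly one further obstruction: a well-defined primary class
\[
\oo\ \in\ H^{N-r+1}_{\Sym_r}\!\big(X;\mathcal Z\big),
\]
where $\mathcal Z:=\pi_{N-r}\big(S(W_r^{\oplus d})\big)\cong\Z$ carries the $\Sym_r$-action through $\sgn^d$ (the homeomorphism of $S(W_r^{\oplus d})$ induced by $\pi\in\Sym_r$ has degree $\sgn(\pi)^d$). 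A $\Sym_r$-equivariant map $X\to S(W_r^{\oplus d})$ exists if and only if $\oo=0$. Because the $\Sym_r$-action on $X$ is free, $H^*_{\Sym_r}(X;\mathcal Z)$ is ordinary cohomology of $X/\Sym_r$ with local coefficients, so restriction to subgroups and the transfer homomorphism are available.

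The crucial observation is that $\oo$ vanishes on every Sylow subgroup. Fix a prime $p\le r$ and a Sylow $p$-subgroup $P\le\Sym_r$. Writing $r=\sum_i a_ip^i$ in base $p$, the group $P$ is a product of iterated wreath products of $\Z/p$ acting on $\{1,\dots,r\}$ with exactly $s_p(r)=\sum_i a_i$ orbits. Since $r$ is not a prime power, $r\ne p^i$ for every $i\ge0$, hence $s_p(r)\ge2$. Consequently the permutation representation $\R^r$, restricted to $P$, contains a trivial summand of dimension $s_p(r)\ge2$, so $W_r=\{x\in\R^r:\sum x_i=0\}$ contains, as a $P$-representation, a trivial summand of dimension $s_p(r)-1\ge1$; the same holds for $W_r^{\oplus d}$. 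Therefore $S(W_r^{\oplus d})^{P}\ne\emptyset$, the constant map at a $P$-fixed point is a $P$-equivariant map $X\to S(W_r^{\oplus d})$, and so $\res^{\Sym_r}_{P}\oo=0$.

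Finally I would combine these vanishings by a transfer argument. For each prime $p\le r$,
\[
[\Sym_r:P]\cdot\oo\ =\ \trf^{\Sym_r}_{P}\big(\res^{\Sym_r}_{P}\oo\big)\ =\ 0,
\]
so $\oo$ is annihilated by $[\Sym_r:P]$, which is the largest divisor of $r!$ coprime to $p$. As $p$ ranges over all primes $\le r$, the greatest common divisor of these indices is $1$ (each prime $q\le r$ is absent from the index belonging to $p=q$). Hence $\oo=0$, the primary obstruction vanishes, and the desired $\Sym_r$-equivariant map $(\Delta_N)^{\times r}_{\Delta(2)}\to S(W_r^{\oplus d})$ exists.

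The main obstacle is the first step rather than the last: one must set up the equivariant obstruction theory (à la tom Dieck) carefully enough to be certain that $\oo$ really is the \emph{only} obstruction and is a well-defined invariant of the equivariant homotopy problem --- this rests entirely on the precise dimension and connectivity data for $(\Delta_N)^{\times r}_{\Delta(2)}$ from Theorem~\ref{th : Connectivity of deleted product} together with the freeness of the $\Sym_r$-action, the latter being what places $\oo$ in honest equivariant cohomology so that the transfer behaves as expected. Once this is in place, the proof is the slick combination of the Sylow fixed-point observation with the elementary fact that the prime-to-$p$ parts of $r!$, over all primes $p\le r$, have trivial greatest common divisor.
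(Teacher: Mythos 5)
Your proof is correct and follows essentially the same strategy as the paper: set up the single primary obstruction in equivariant cohomology for the free $\Sym_r$-CW complex $(\Delta_N)^{\times r}_{\Delta(2)}$, kill its restriction to each Sylow $p$-subgroup via the constant map at a fixed point (which exists because the Sylow subgroup is intransitive on $[r]$ precisely when $r$ is not a power of $p$), and then conclude by transfer and the coprimality of the indices $[\Sym_r:\Sym_r^{(p)}]$. Your only addition is spelling out the digit-sum/orbit-count argument for why $S(W_r^{\oplus d})^{\Sym_r^{(p)}}\neq\emptyset$, which the paper asserts without proof.
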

\begin{proof}
In order to prove the existence of a continuous $\Sym_r$-equivariant map $(\Delta_N)^{\times r}_{\Delta(2)}\rightarrow S(W_r^{\oplus d})$ we again use the equivariant obstruction theory.
Since
\begin{compactitem}
\item
$(\Delta_N)^{\times r}_{\Delta(2)}$ is an $(N-r+1)$-dimensional, $(N-r)$-connected free $\Sym_r$-CW complex, and
\item 
$S(W_r^{\oplus d})$ is a path-connected $(N-r-1)$-connected, $(N-r)$-simple $\Sym_r$-space,
\end{compactitem}
we have that an $\Sym_r$-equivariant map $(\Delta_N)^{\times r}_{\Delta(2)}\rightarrow S(W_r^{\oplus d})$ exists if and only if the \dictionary{primary obstruction} 
\[
[\oo_{\Sym_r}^{N-r+1}(\mathrm{pt})]\in \mathcal{H}_{\Sym_r}^{N-r+1}((\Delta_N)^{\times r}_{\Delta(2)},\pi_{N-r}S(W_r^{\oplus d}))
\]
vanishes.
The obstruction element $[\oo_{\Sym_r}^{N-r+1}(\mathrm{pt})]=[\oo_{\Sym_r}^{N-r+1}(f)]$ does not depend on the particular $\Sym_r$-equivariant map $f :  \sk_{N-r}\big((\Delta_N)^{\times r}_{\Delta(2)}\big)\rightarrow  S(W_r^{\oplus d})$ used to define the obstruction cocycle $\oo_{\Sym_r}^{N-r+1}(f)$.
Thus, in order to prove the existence of an $\Sym_r$-equivariant map \eqref{eq : existence of product map} it suffices to prove that the obstruction element $[\oo_{\Sym_r}^{N-r+1}(f)]$ vanishes for some particular choice of $f$.

\smallskip
Let $p$ be a prime such that $p\,|\,|\Sym_r|=r!$, and let $\Sym_r^{(p)}$ denotes a $p$-Sylow subgroup of $\Sym_r$. 
Since $r$ is not a prime power each $p$-Sylow subgroup of $\Sym_r$ does not act transitively on the set $[r]$, and hence the fixed point set $S(W_r^{\oplus d})^{\Sym_r^{(p)}}\neq\emptyset$ is non-empty.
Thus there exists a (constant) $\Sym_r^{(p)}$-equivariant map $(\Delta_N)^{\times r}_{\Delta(2)}\rightarrow S(W_r^{\oplus d})$, or equivalently the primary obstruction element with respect to $\Sym_r^{(p)}$ vanishes, that is, $[\oo_{\Sym_r^{(p)}}^{N-r+1}(\mathrm{pt})]=[\oo_{\Sym_r^{(p)}}^{N-r+1}(f)]=0$.
Here, the $\Sym_r$-equivariant map $f$ is considered only as an $\Sym_r^{(p)}$-equivariant map.

\smallskip
The \dictionary{restriction} homomorphism 
\[
\res :  
\mathcal{H}_{\Sym_r}^{N-r+1}((\Delta_N)^{\times r}_{\Delta(2)},\pi_{N-r}S(W_r^{\oplus d}))
\rightarrow
\mathcal{H}_{\Sym_r^{(p)}}^{N-r+1}((\Delta_N)^{\times r}_{\Delta(2)},\pi_{N-r}S(W_r^{\oplus d})),
\]
is defined on the cochain level in \cite[Lem.\,5.4]{Blagojevic2012}.
According to the definition of the obstruction cochain (already on the cochain level) the restriction homomorphism sends the obstruction cochain $\oo_{\Sym_r}^{N-r+1}(f)$ to the obstruction cochain $\oo_{\Sym_r^{(p)}}^{N-r+1}(f)$.
Consequently the same hold for obstruction elements
\[
\res([\oo_{\Sym_r}^{N-r+1}(f)])=[\oo_{\Sym_r^{(p)}}^{N-r+1}(f)].
\]
Now, composing the restriction homomorphism with the \dictionary{transfer} homomorphism
\[
\trf : 
\mathcal{H}_{\Sym_r^{(p)}}^{N-r+1}((\Delta_N)^{\times r}_{\Delta(2)},\pi_{N-r}S(W_r^{\oplus d}))
\rightarrow
\mathcal{H}_{\Sym_r}^{N-r+1}((\Delta_N)^{\times r}_{\Delta(2)},\pi_{N-r}S(W_r^{\oplus d})),
\]
also defined on the cochain level in \cite[Lem.\,5.4]{Blagojevic2012}, we get
\[
[\Sym_r : \Sym_r^{(p)}]\cdot [\oo_{\Sym_r}^{N-r+1}(f)]=
\trf\circ\res([\oo_{\Sym_r}^{N-r+1}(f)])=
\trf([\oo_{\Sym_r^{(p)}}^{N-r+1}(f)])=
\trf(0)=0.
\]

\smallskip
Finally, since $[\Sym_r : \Sym_r^{(p)}]\cdot [\oo_{\Sym_r}^{N-r+1}(f)]=0$ for every prime $p$ that divides the order of the group $\Sym_r$, it follows that the obstruction element $[\oo_{\Sym_r}^{N-r+1}(f)]$ must vanish, and the existence of an $\Sym_r$-equivariant map \eqref{eq : existence of product map} is established.
\end{proof}

\begin{corollary}
	\label{cor : after Ozaydin}
	Let $d\geq 1$ be an integer, let $r\geq 6$ be an integer that is not a prime power and let $N=(d+1)(r-1)$.
	For any free $\Sym_r$-CW complex $X$ of dimension at most $N-r+1$ there exists an $\Sym_r$-equivariant map
	\[
		X\rightarrow S(W_r^{\oplus d}).
	\]
\end{corollary}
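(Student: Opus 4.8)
The plan is to deduce the statement from Theorem~\ref{th : OZaydin}: it suffices to produce, for an arbitrary free $\Sym_r$-CW complex $X$ with $\dim X\le N-r+1$, a $\Sym_r$-equivariant map $X\rightarrow (\Delta_N)^{\times r}_{\Delta(2)}$, and then to compose it with the $\Sym_r$-equivariant map $(\Delta_N)^{\times r}_{\Delta(2)}\rightarrow S(W_r^{\oplus d})$ provided by Theorem~\ref{th : OZaydin} --- this last map exists precisely because $r\ge 6$ is not a prime power.

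To construct the map $X\rightarrow Y$, where $Y:=(\Delta_N)^{\times r}_{\Delta(2)}$, I would run the standard skeleton-by-skeleton induction of equivariant obstruction theory, see \cite[Sec.\,II.3]{tomDieck:TransformationGroups}. By Theorem~\ref{th : Connectivity of deleted product} the complex $Y$ is $(N-r)$-connected, in particular path-connected. Since $X$ is free, a $\Sym_r$-equivariant map on $\sk_0(X)$ is obtained by choosing, for each $\Sym_r$-orbit of vertices, a representative $v$ (which has trivial stabilizer) and setting $\pi\cdot v\mapsto \pi\cdot y_0$ for a fixed base point $y_0\in Y$. Assume inductively that a $\Sym_r$-equivariant map $f_{i-1} : \sk_{i-1}(X)\rightarrow Y$ has been constructed for some $1\le i\le N-r+1$. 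The $i$-cells of $X$ split into free $\Sym_r$-orbits; for a representative $i$-cell $e\cong D^i$ with attaching map $\varphi_e : S^{i-1}\rightarrow \sk_{i-1}(X)$, the composite $f_{i-1}\circ\varphi_e : S^{i-1}\rightarrow Y$ is null-homotopic because $\pi_{i-1}(Y)=0$ (here $i-1\le N-r$), hence extends over $e$; this extension then spreads in a unique way to a $\Sym_r$-equivariant map over the whole orbit $\Sym_r\cdot e$, the orbit being free. Performing this for every orbit of $i$-cells produces $f_i : \sk_i(X)\rightarrow Y$, and after at most $N-r+1$ steps we reach the desired $\Sym_r$-equivariant map $f : X\rightarrow Y$.

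I do not expect a real obstacle here: this is a textbook application of equivariant obstruction theory, and the one point that must be used with care is the freeness of $X$, which ensures that every orbit of cells is $\Sym_r$-free, so that once an extension over a representative cell is fixed the equivariant extension over the whole orbit is automatic and the stabilizers impose no further constraint. The numerical input that makes the induction go all the way to the top skeleton of $X$ is the coincidence $\dim X\le N-r+1=\conn(Y)+1$; composing the resulting map $f : X\rightarrow Y$ with the map of Theorem~\ref{th : OZaydin} then completes the proof.
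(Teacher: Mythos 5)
Your proof is correct and follows exactly the paper's route: construct a $\Sym_r$-equivariant map $X\to(\Delta_N)^{\times r}_{\Delta(2)}$ (obstruction-free because $X$ is free of dimension $\le N-r+1$ and the target is $(N-r)$-connected), then compose with the map from Theorem~\ref{th : OZaydin}. The paper simply asserts ``there are no obstructions'' where you spell out the cell-by-cell induction, but the argument is identical.
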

\begin{proof}
	The free $\Sym_r$-CW complex $X$ has dimension at most $N-r+1$, and the deleted product $(\Delta_N)^{\times r}_{\Delta(2)}$ is $(N-r)$-connected, therefore there are no obstructions for the existence of an  $\Sym_r$-equivariant map $h :  X\rightarrow (\Delta_N)^{\times r}_{\Delta(2)}$.
	Next, let $f :  (\Delta_N)^{\times r}_{\Delta(2)}\rightarrow S(W_r^{\oplus d})$ be an $\Sym_r$-equivariant map whose existence was guaranteed by Theorem \ref{th : OZaydin}.
	The composition $f\circ h :  X\rightarrow S(W_r^{\oplus d})$ yields the required $\Sym_r$-equivariant map.	
\end{proof}

With this theorem \"Ozaydin \emph{only} proved that the ``deleted product approach'' towards solving the topological Tverberg conjecture fails in the case 
that $r$ is not a prime power. 
\emph{What about the ``deleted join approach''?}
This question was discussed in \cite[Sec.\,3.4]{Blagojevic2011-1}.

\begin{theorem}
	Let $d\geq 1$ and $r$ be integers, and let $N=(d+1)(r-1)$.
	If $r$  is not a prime power, then there exists an $\Sym_r$-equivariant map
	\begin{equation}
		\label{eq : existence of join map}
		(\Delta_N)^{* r}_{\Delta(2)}\rightarrow S(W_r^{\oplus (d+1)}).
	\end{equation}
\end{theorem}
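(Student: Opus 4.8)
The plan is to run an equivariant obstruction-theory argument in the spirit of the proof of Theorem~\ref{th : OZaydin}, but adapted to the deleted join. By Lemma~\ref{lemma:deleted join of simplex} the complex $X:=(\Delta_N)^{*r}_{\Delta(2)}$ is $N$-dimensional and $(N-1)$-connected, while $S(W_r^{\oplus(d+1)})$ has dimension $N-1$ and is $(N-2)$-connected and $(N-1)$-\dictionary{simple}. Two obstacles appear that were absent in the deleted-product case: the $\Sym_r$-action on $X$ is not free for $r\ge 3$ (so Dold and the simple obstruction count do not apply directly), and $X\cong[r]^{*(N+1)}$ has non-vanishing top homology, so it cannot be equivariantly collapsed below dimension~$N$; hence, unlike in Theorem~\ref{th : exitence of equivariant map for BL}, one genuine obstruction in dimension $N$ will survive and must be killed by a transfer argument.

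First I would separate off the singular part. Let $X^{>1}\subseteq X$ be the subcomplex of cells with non-trivial stabilizer; a point $\lambda_1x_1+\cdots+\lambda_rx_r$ lies in $X^{>1}$ exactly when at least two of its weights $\lambda_i$ vanish, so $X{\setminus}X^{>1}$ is a \emph{free} $\Sym_r$-CW complex. On $X^{>1}$ I would build an equivariant map by hand: for $f:\Delta_N\to\R^d$ the constant map $0$, the join map $J_f$ of Section~\ref{subsec : Equivariant maps induced by f} carries $X^{>1}$ into $(\R^{d+1})^{\oplus r}{\setminus}D_J$ (a diagonal point would force all $\lambda_i=\tfrac1r\ne 0$), and composing with the retraction $R_J$ of \eqref{eq : eq-maps-03} gives an $\Sym_r$-equivariant map $g:X^{>1}\to S(W_r^{\oplus(d+1)})$. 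Since $f\equiv 0$, the map $g$ depends only on the weights and factors as $X^{>1}\xrightarrow{q}\sk_{r-3}(\Delta_{r-1})\xrightarrow{\psi}S(W_r)\hookrightarrow S(W_r^{\oplus(d+1)})$, where $q(\sum\lambda_ix_i)=(\lambda_1,\dots,\lambda_r)$ and $\psi$ normalizes $(\lambda_i)_i\mapsto(\lambda_i-\tfrac1r)_i$. Because the sphere is $(N-2)$-connected, $g$ extends $\Sym_r$-equivariantly over $X^{>1}\cup\sk_{N-1}(X)$ to some $g'$, and since the relative cells lie in the free part, the only remaining obstruction is the primary obstruction $\oo_{\Sym_r}:=[\oo_{\Sym_r}^{N}(g')]\in\mathcal H_{\Sym_r}^{N}(X,X^{>1};\pi_{N-1}(S(W_r^{\oplus(d+1)})))$, which is independent of the chosen extension of $g$ over the $(N-1)$-skeleton.

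It then remains to show $\oo_{\Sym_r}=0$. For a prime $p\mid r!$ let $\Sym_r^{(p)}$ be a $p$-Sylow subgroup. Since $r$ is not a prime power, $\Sym_r^{(p)}$ does not act transitively on $[r]$ --- this is the only place the hypothesis is used --- hence every $H\le\Sym_r^{(p)}$ has at least two orbits, so $S(W_r^{\oplus(d+1)})^{H}$ is a non-empty sphere, of dimension $(o_H-1)(d+1)-1$ where $o_H\ge 2$ is the number of orbits of $H$. Fixing $v\in S(W_r^{\oplus(d+1)})^{\Sym_r^{(p)}}$, I would verify that $\psi$ is $\Sym_r^{(p)}$-equivariantly homotopic to the constant map $c_v$: over a face $\sigma$ of $\sk_{r-3}(\Delta_{r-1})$ on a vertex set indexed by $T$ (so $|T|=\dim\sigma+1\le r-2$) the obstruction to the homotopy lies in $\pi_{\dim\sigma}\!\big(S(W_r^{\oplus(d+1)})^{H}\big)$ for $H$ the pointwise stabilizer of $T$, and $H$ has at least $|T|+1$ orbits, so this fixed sphere has dimension $\ge|T|(d+1)-1>\dim\sigma$ (here $d\ge1$ is used) and the group vanishes. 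Thus $g=\psi\circ q\simeq_{\Sym_r^{(p)}}c_v\circ q$, and $c_v\circ q$ extends over $X$ by the constant map to $v$; therefore $g'$ extends $\Sym_r^{(p)}$-equivariantly over $X$, that is $\res(\oo_{\Sym_r})=[\oo_{\Sym_r^{(p)}}^{N}(g')]=0$, the restriction homomorphism being the cochain-level one of \cite[Lem.\,5.4]{Blagojevic2012} as in Theorem~\ref{th : OZaydin}. Composing with the transfer gives $[\Sym_r:\Sym_r^{(p)}]\cdot\oo_{\Sym_r}=\trf(0)=0$ for every prime $p\mid r!$; as these indices are coprime, $\oo_{\Sym_r}=0$, and the extension $g'$ then extends further to the required $\Sym_r$-equivariant map $(\Delta_N)^{*r}_{\Delta(2)}\to S(W_r^{\oplus(d+1)})$.

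The routine parts here are the connectivity and dimension bookkeeping and the identification of $X^{>1}$. The delicate step --- where I expect the real work to be --- is the vanishing of the \emph{restricted} obstruction $[\oo_{\Sym_r^{(p)}}^{N}(g')]$: merely knowing that $S(W_r^{\oplus(d+1)})$ has $\Sym_r^{(p)}$-fixed points is not enough, one must compare the \emph{specific} map $g$ on $X^{>1}$ with a constant, and what makes this possible is the choice $f\equiv 0$, which forces $g$ through the low-dimensional skeleton $\sk_{r-3}(\Delta_{r-1})$ where the relevant fixed-point spheres are connected enough to absorb every obstruction.
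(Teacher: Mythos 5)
Your argument is correct in essence, but it takes a genuinely different and substantially longer route than the paper. The paper's proof is a four-line reduction to Theorem~\ref{th : OZaydin}: given an $\Sym_r$-equivariant map $f:(\Delta_N)^{\times r}_{\Delta(2)}\to S(W_r^{\oplus d})$ on the deleted product (which Theorem~\ref{th : OZaydin} already furnishes when $r$ is not a prime power), one writes down the join map explicitly as
\[
g(\lambda_1x_1+\cdots+\lambda_rx_r)=\tfrac1{\nu}\Bigl((\lambda_1-\tfrac1r,\ldots,\lambda_r-\tfrac1r)\oplus \textstyle\prod_i\lambda_i\cdot f(x_1,\ldots,x_r)\Bigr)\in S(W_r\oplus W_r^{\oplus d}),
\]
using $S(W_r^{\oplus(d+1)})\cong S(W_r\oplus W_r^{\oplus d})$. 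This vector is never zero (if all $\lambda_i=\tfrac1r$ then $\prod\lambda_i\neq 0$ and $\|f\|=1$; otherwise the $W_r$-component is nonzero), it is continuous because $\prod_i\lambda_i\cdot f\to 0$ as any $\lambda_i\to 0$, and it is visibly $\Sym_r$-equivariant. In other words, the passage from the deleted product to the deleted join is trivial once one has the product map; the paper exploits exactly this.

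You instead rerun \"Ozaydin's obstruction-theory-plus-transfer scheme from scratch on $X=(\Delta_N)^{*r}_{\Delta(2)}\cong[r]^{*(N+1)}$, which is legitimate but requires more work because the $\Sym_r$-action on $X$ is not free. Your observations are sound: $X^{>1}$ is precisely the subcomplex where at least two join weights vanish; the map there (taking $f\equiv 0$) factors through $q:X^{>1}\to\sk_{r-3}(\Delta_{r-1})$ followed by the normalization $\psi$ into $S(W_r)$; and the crucial point, in contrast to Theorem~\ref{th : exitence of equivariant map for BL}, is that $[r]^{*(N+1)}$ is a sphere and does not collapse below dimension $N$, so a single top-dimensional obstruction survives and must be killed by the Sylow-transfer argument. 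This shows good understanding of why the constructions in Theorems~\ref{th : OZaydin} and~\ref{th : exitence of equivariant map for BL} are structured the way they are. The trade-off is that your proof is considerably longer and does not reuse the already-established product-side result, whereas the paper does.

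One technicality in your version deserves a flag. When you build the $\Sym_r^{(p)}$-equivariant homotopy $\psi\simeq c_v$ cell by cell over $\sk_{r-3}(\Delta_{r-1})$, you argue that the obstruction over a face $\sigma$ with vertex set $T$ lies in $\pi_{\dim\sigma}\bigl(S(W_r^{\oplus(d+1)})^H\bigr)$ where $H$ is the pointwise stabilizer of $T$. But on the simplicial complex $\sk_{r-3}(\Delta_{r-1})$ the setwise stabilizer of $\sigma$ may act nontrivially on $\sigma$ (e.g.\ a transposition of two vertices of~$T$), which is not the situation covered by the extension-over-a-free-orbit argument. The standard fix --- pass to the barycentric subdivision so that the $\Sym_r$-action becomes admissible and setwise stabilizers fix cells pointwise --- repairs this, and in fact improves your bound: a $k$-simplex of the subdivision corresponds to a chain $T_0\subsetneq\cdots\subsetneq T_k$ with $|T_k|\le r-2$, whose stabilizer $H$ has at least $k+2$ orbits, so $\dim S(W_r^{\oplus(d+1)})^H\ge(k+1)(d+1)-1>k$. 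You should state this subdivision step; as written the cell-by-cell argument is not quite correct. With that fix in place, your proof goes through.
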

\begin{proof}
Since $r\geq 6$ is not a prime power Theorem \ref{th : OZaydin} implies the existence of an $\Sym_r$-equivariant map
\[
		f :  (\Delta_N)^{\times r}_{\Delta(2)}\rightarrow S(W_r^{\oplus d}).
\]
Now an $\Sym_r$-equivariant map
\[
g :  (\Delta_N)^{* r}_{\Delta(2)}\rightarrow S(W_r^{\oplus (d+1)})\cong S(W_r\oplus W_r^{\oplus d})
\]
can be defined by
\[
g(\lambda_1x_1+\cdots+\lambda_rx_r)=\frac1{\nu} \big((\lambda_1-\tfrac1r,\ldots,\lambda_r-\tfrac1r)\oplus
\prod_{i=1}^r\lambda_i\cdot f(x_1,\ldots,x_r)\big),
\]
where $\nu:= \| \big((\lambda_1-\tfrac1r,\ldots,\lambda_r-\tfrac1r)\oplus
\prod_{i=1}^r\lambda_i\cdot f(x_1,\ldots,x_r)\big)\|$.
The function $g$ is well defined, continuous and $\Sym_r$-equivariant. 
Thus an $\Sym_r$-equivariant map \eqref{eq : existence of join map} exists.
\end{proof}

Now we see that not only the ``deleted product approach'' fails if $r$ is not a prime power, but the ``deleted join approach'' fails as well.
\emph{Is this an indication that the topological Tverberg theorem fails if the number of parts is not a prime power?}

\subsection{The topological Tverberg conjecture does not hold if \emph{r} is not a prime power}
\label{subsec : Generalized van Kampen--Flores theorem does not hold for $r$ not a prime power}

It is time to show that the topological Tverberg conjecture fails in the case that $r$ is not a prime power.
This will be done following the presentation given in \cite{Blagojevic2015}.

Based on the work of Mabillard and Wagner \cite{Mabillard2014} \cite{Mabillard2015} we will prove that the generalized Van Kampen--Flores theorem for any $r$ that is not a prime power fails, as demonstrated by Frick \cite{Frick2015} \cite{Blagojevic2015}.
Since, by the constraint method, the generalized Van Kampen--Flores theorem for fixed number of overlaps $r$ is a consequence of the topological Tverberg theorem for the same number of overlaps $r$, failure of the generalized Van Kampen--Flores theorem implies the failure of the topological Tverberg theorem.

\begin{theorem}[The generalized Van Kampen--Flores theorem fails when $r$ is not a prime power]
	\label{thm:counter_van_kampen_flores}
	Let $k \ge 3$ be an integer, and let $r\geq 6$ be an integer that is not a prime power.
	For any integer $N>0$ there exists a continuous map $f :  \Delta_N \rightarrow \R^{rk}$ such that for any $r$ pairwise disjoint faces $\sigma_1, \dots, \sigma_r$ from the $((r-1)k)$-skeleton $\sk_{(r-1)k}(\Delta_N)$ of the simplex $\Delta_N$ the corresponding $f$-images do not overlap, 
		\[
		f(\sigma_1) \cap \dots \cap f(\sigma_r) = \emptyset.
		\]
\end{theorem}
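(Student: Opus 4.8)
The plan is to combine \"Ozaydin's equivariant map, in the form of Corollary~\ref{cor : after Ozaydin}, with the $r$-fold Whitney trick of Mabillard and Wagner \cite{Mabillard2014,Mabillard2015}. Put $d:=rk$. Since $r$ divides $d$ we have $(r-1)k=\tfrac{r-1}{r}d=\lceil\tfrac{r-1}{r}d\rceil$, so the conclusion we want to \emph{refute} is exactly the generalized Van Kampen--Flores statement for maps to $\R^{d}$ restricted to the $((r-1)k)$-skeleton. Fix $N>0$ and set $K:=\sk_{(r-1)k}(\Delta_N)$. The goal is to produce a continuous map $g:K\to\R^{rk}$ that is an \emph{$r$-almost embedding}, i.e.\ $g(\sigma_1)\cap\cdots\cap g(\sigma_r)=\emptyset$ for all pairwise disjoint faces $\sigma_1,\dots,\sigma_r$ of $K$, and then to extend $g$ over $\Delta_N$.

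First I would build the required equivariant map. The $r$-fold $2$-wise deleted product $K^{\times r}_{\Delta(2)}$ carries a free $\Sym_r$-action, and since every face of $K$ has dimension at most $(r-1)k$, the complex $K^{\times r}_{\Delta(2)}$ has dimension at most $r(r-1)k$. With $d=rk$ one has $(d+1)(r-1)-r+1=rk(r-1)=r(r-1)k$, so $K^{\times r}_{\Delta(2)}$ is a free $\Sym_r$-CW complex of dimension at most $(d+1)(r-1)-r+1$. Because $r\ge6$ is not a prime power, Corollary~\ref{cor : after Ozaydin}, applied with this $d$, furnishes a $\Sym_r$-equivariant map
\[
K^{\times r}_{\Delta(2)}\longrightarrow S(W_r^{\oplus rk}).
\]

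Next I would feed this map into the Mabillard--Wagner machinery \cite{Mabillard2014,Mabillard2015}. The hypothesis $k\ge3$ guarantees that a top-dimensional face of $K$, of dimension $(r-1)k$, has codimension $rk-(r-1)k=k\ge3$ in $\R^{rk}$; this is precisely the range in which the $r$-fold Whitney trick converts the vanishing of the deleted-product obstruction into an actual $r$-almost embedding. Hence the equivariant map above produces a continuous map $g:K\to\R^{rk}$ with no $r$-fold points among pairwise disjoint faces. Finally, since $\R^{rk}$ is convex, hence an absolute retract, and $K=\sk_{(r-1)k}(\Delta_N)$ is a subcomplex of $\Delta_N$, I would extend $g$ to a continuous map $f:\Delta_N\to\R^{rk}$ by coning off one simplex at a time. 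As $f$ agrees with $g$ on $\sk_{(r-1)k}(\Delta_N)$, every tuple of pairwise disjoint faces $\sigma_1,\dots,\sigma_r\subseteq\sk_{(r-1)k}(\Delta_N)$ satisfies $f(\sigma_1)\cap\cdots\cap f(\sigma_r)=g(\sigma_1)\cap\cdots\cap g(\sigma_r)=\emptyset$, which is the assertion of the theorem.

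The hard part is entirely the geometric input of Mabillard and Wagner: the $r$-fold Whitney trick and the resulting completeness of the deleted-product obstruction in the codimension-$\ge3$ range are deep, and are used here as a black box. Everything else is bookkeeping --- checking that $\dim K^{\times r}_{\Delta(2)}\le(d+1)(r-1)-r+1$ so that Corollary~\ref{cor : after Ozaydin} applies, which is where the divisibility of $d=rk$ by $r$ enters --- together with the routine extension of a map from a subcomplex of a simplex to the whole simplex.
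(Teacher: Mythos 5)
Your argument is correct and follows the paper's proof essentially verbatim: apply Corollary~\ref{cor : after Ozaydin} to the deleted product of the skeleton (with $d=rk$, verifying $\dim K^{\times r}_{\Delta(2)}\le r(r-1)k=(d+1)(r-1)-r+1$), feed the resulting $\Sym_r$-equivariant map into the Mabillard--Wagner theorem to obtain an $r$-almost embedding of $\sk_{(r-1)k}(\Delta_N)$, and extend over $\Delta_N$. You are in fact a bit more careful than the published text, which harmlessly conflates $d$ with $\dim K^{\times r}_{\Delta(2)}=(r-1)rk$ when setting up Corollary~\ref{cor : after Ozaydin} and leaves the final extension to $\Delta_N$ implicit; both are exactly the small bookkeeping steps you spell out.
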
 
\begin{proof}
	The deleted product $(\sk_{(r-1)k}(\Delta_N))^{\times r}_{\Delta(2)}$ is a free $\Sym_r$-space of dimension at most $d:=(r-1)rk$.
	Since $r$ is not a power of a prime, according to Corollary \ref{cor : after Ozaydin}, there exists an $\Sym_r$-equivariant map 
\begin{equation}
	\label{eq : map h}
	h :  (\sk_{(r-1)k}(\Delta_N))^{\times r}_{\Delta(2)}\rightarrow S(W_r^{\oplus d}).
\end{equation}
Now we use the following result of Mabillard and Wagner \cite[Thm.\,3]{Mabillard2014} \cite[Thm.\,7]{Mabillard2015}, for which an alternative proof is given in \cite{MabillardWagner-III}.
Skopenkov \cite{Skopenkov-Survey} gives a user's guide.
\begin{quote}
	{\small
	\textbf{Theorem.} 
	\emph{Let $r\ge 2$ and $k \ge 3$ be integers, and let $K$ be an $((r-1)k)$-dimensional simplicial complex. 
	Then the following statements are equivalent:
	\begin{compactenum}[\rm(i)]
		\item There exists a continuous $\Sym_r$-equivariant map $K^{\times r}_{\Delta(2)} \rightarrow S(W_r^{\oplus rk})$.
		\item There exists a continuous map $f  :  K \rightarrow \R^{rk}$ such that for any $r$ pairwise disjoint faces $\sigma_1, \dots, \sigma_r$ of~$K$ we have that
		$f(\sigma_1) \cap \dots \cap f(\sigma_r) = \emptyset$.
	\end{compactenum}}
	}
\end{quote}
If we apply this result to the $\Sym_r$-equivariant map $h$ in \eqref{eq : map h} we get a continuous map $f  :  \sk_{(r-1)k}(\Delta_N) \rightarrow \R^{rk}$ with the property that for any collection of $r$ pairwise disjoint faces $\sigma_1, \dots, \sigma_r$ in $\sk_{(r-1)k}(\Delta_N)$ the corresponding $f$-images do not overlap,
	\[
		f(\sigma_1) \cap \dots \cap f(\sigma_r) = \emptyset.\vspace{-7pt}
	\]
\end{proof}

Thus we have proved that in the case when $r$ is not a prime power the generalized Van Kampen--Flores theorem fails.
As we have pointed out this means that the corresponding topological Tverberg theorem also fails \cite[Thm.\,4.3]{Blagojevic2015}.

\begin{theorem}[The topological Tverberg theorem fails for any $r$ that is not a prime power]
	\label{th : conterexample}
	Let $k \ge 3$ and $r\geq 6$ be integers, and let $N =(r-1)(rk+2)$. 
	If $r$ is not a prime power, then there exists a continuous map $g  :  \Delta_N \rightarrow \R^{rk+1}$ such that for any $r$ pairwise disjoint faces $\sigma_1, \dots, \sigma_r$ of $\Delta_N$ the corresponding $g$ images do not overlap,  
	\[
		g(\sigma_1) \cap \dots \cap g(\sigma_r) = \emptyset.
	\]		
\end{theorem}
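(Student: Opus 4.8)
The plan is to deduce this statement from the failure of the generalized Van Kampen--Flores theorem, Theorem~\ref{thm:counter_van_kampen_flores}, by running the constraint method ``in reverse'' --- this is precisely the content of Gromov's lemma. First I would observe that with $N=(r-1)(rk+2)=((rk+1)+1)(r-1)$ we are exactly in the parameter range in which the topological Tverberg theorem for maps to $\R^{rk+1}$ would apply, so that exhibiting a single continuous map $g:\Delta_N\rightarrow\R^{rk+1}$ without a Tverberg partition is enough.

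The construction: take the continuous map $f:\Delta_N\rightarrow\R^{rk}$ supplied by Theorem~\ref{thm:counter_van_kampen_flores} for this value of $N$, so that no $r$ pairwise disjoint faces lying in $\sk_{(r-1)k}(\Delta_N)$ have overlapping $f$-images, and set
\[
g:\Delta_N\rightarrow\R^{rk+1},\qquad g(x)=\bigl(f(x),\dist(x,\sk_{(r-1)k}(\Delta_N))\bigr).
\]
I would then show that $g$ admits no Tverberg partition. Assume for contradiction that there are pairwise disjoint faces $\sigma_1,\dots,\sigma_r$ and points $x_i\in\relint\sigma_i$ with $g(x_1)=\dots=g(x_r)$; then $f(x_1)=\dots=f(x_r)$ and all the distances $\dist(x_i,\sk_{(r-1)k}(\Delta_N))$ coincide.

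Next comes the pigeonhole step, exactly as in the proof of Theorem~\ref{thm : genrelized_van_Kampen_Flores}: if no $\sigma_i$ were contained in $\sk_{(r-1)k}(\Delta_N)$, each $\sigma_i$ would have at least $(r-1)k+2$ vertices, and since the faces are disjoint this forces $\Delta_N$ to have at least $r((r-1)k+2)=r(r-1)k+2r=N+2$ vertices, a contradiction. Hence some $\sigma_j$ lies in the skeleton, so the common distance value is $0$, so every $x_i$ lies in the subcomplex $\sk_{(r-1)k}(\Delta_N)$, and because $x_i\in\relint\sigma_i$ this forces $\sigma_i\in\sk_{(r-1)k}(\Delta_N)$ for every $i$. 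Then $f(x_1)\in f(\sigma_1)\cap\dots\cap f(\sigma_r)$ with the $\sigma_i$ pairwise disjoint faces of the $((r-1)k)$-skeleton, contradicting the defining property of $f$. So $g$ is the desired counterexample.

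I do not expect a genuine obstacle in this argument: all the depth has been absorbed into Theorem~\ref{thm:counter_van_kampen_flores}, which itself rests on \"Ozaydin's vanishing of the primary obstruction and the Mabillard--Wagner $r$-fold Whitney trick. The only point demanding attention is the arithmetic of the pigeonhole bound, which works precisely because $N$ was chosen so that $r((r-1)k+2)=N+2$ holds with equality; a smaller vertex budget would leave room for a Tverberg partition all of whose parts avoid the skeleton, and this is exactly why $N=(r-1)(rk+2)$ is the value that appears in the statement.
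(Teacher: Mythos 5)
Your proof is correct and is essentially identical to the paper's: the same choice of $g(x)=\bigl(f(x),\dist(x,\sk_{(r-1)k}(\Delta_N))\bigr)$ built from the Van Kampen--Flores counterexample of Theorem~\ref{thm:counter_van_kampen_flores}, the same pigeonhole count on vertices, and the same conclusion that all $\sigma_i$ must lie in the skeleton, contradicting the choice of $f$. Your arithmetic $r((r-1)k+2)=N+2$ is in fact a touch cleaner than the displayed inequality in the paper's proof, which contains a minor typographical slip in the middle expression, but the substance is the same.
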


\begin{proof}
	Since $r$ is not a power of a prime, Theorem~\ref{thm:counter_van_kampen_flores} yields a continuous map $f :  \Delta_N \rightarrow \R^{rk}$ such that for any $r$ pairwise disjoint faces $\sigma_1, \dots, \sigma_r$ in $\sk_{(r-1)k}\Delta_N$
	\[
		f(\sigma_1) \cap \dots \cap f(\sigma_r) = \emptyset.
	\]
	Motivated by the proof of Theorem \ref{thm : genrelized_van_Kampen_Flores} we consider the function $g :  \Delta_N \rightarrow \R^{rk+1}$ defined by
	\[
	g(x)= (f(x),\dist(x,\sk_{(r-1)k}(\Delta_N))). 
	\]
	We prove that the map $g$ fails the topological Tverberg conjecture.
	
	\smallskip
	Assume, to the contrary, that there are $r$ pairwise disjoint faces $\sigma_1, \dots, \sigma_r$ in $\Delta_N$ and $r$ points 
	\[
	x_1 \in \relint\sigma_1, \ \ldots, \ x_r \in \relint\sigma_r
	\]
	such that $g(x_1) = \cdots = g(x_r)$. 
	Consequently,
	\[
		\dist(x_1,\sk_{(r-1)k}(\Delta_N))=\cdots=\dist(x_r,\sk_{(r-1)k}(\Delta_N)).
	\]
	Next, at least one of the faces $\sigma_1, \dots, \sigma_r$ is in $\sk_{(r-1)k}(\Delta_N)$. 
	Indeed, if all the faces $\sigma_i$ would have dimension at least $(r-1)k+1$, then we would get the following contradiction:
	\[
	N+1 = (r-1)(rk+2)+1= |\Delta_N|\geq |\sigma_1|+\cdots+|\sigma_r|\geq ((r-1)rk+2)=(r-1)(rk+2)+2>N+1.
	\]
	Since one of the faces $\sigma_1, \dots, \sigma_r$ is in $\sk_{(r-1)k}(\Delta_N)$, all the distances vanish, meaning that 
	\[
	\dist(x_1,\sk_{(r-1)k}(\Delta_N))=\cdots=\dist(x_r,\sk_{(r-1)k}(\Delta_N))=0.
	\]
	Therefore, all the faces $\sigma_1, \dots, \sigma_r$ belong to $\sk_{(r-1)k}(\Delta_N)$ contradicting the choice of the map $f$.
	Thus the map $g$ is a counterexample to the topological Tverberg theorem.
\end{proof}

\begin{remark}
The smallest counterexample to the topological Tverberg theorem that can be obtained from Theorem \ref{th : conterexample} is a continuous map $\Delta_{100} \rightarrow \R^{19}$ with the property that no six pairwise disjoint faces in $\Delta_{100}$ have $f$-images that overlap.
	Recently, using additional ideas, Sergey Avvakumov, Isaac Mabillard, Arkadiy Skopenkov and Uli Wagner \cite{MabillardWagner-III} have improved this to get counterexamples $\Delta_{65} \rightarrow \R^{12}$.
\end{remark}

\section{The B\'ar\'any--Larman conjecture and the Optimal colored Tverberg theorem}
\label{sec : The optimal colored Tverberg theorem}

Let us briefly recall the original colored Tverberg problem posed by B\'ar\'any and Larman in their 1992 paper \cite{Barany1992}, see Section \ref{subsec : Colored Tverberg problem BL}.
\begin{problem}[B\'ar\'any--Larman colored Tverberg problem]
Let $d\geq 1$ and $r\geq 2$ be integers.
Determine the smallest number $n=n(d,r)$ such that for every affine (continuous) map $f:\Delta _{n-1}\rightarrow \R^{d}$, and every coloring $(C_1,\ldots,C_{d+1})$ of the vertex set $\mathcal{C}$ of the simplex $\Delta_{n-1}$ by $d+1$ colors with each color of size at least $r$, there exist $r$ pairwise disjoint rainbow faces $\sigma_1, \dots, \sigma_r$ of $\Delta_{n-1}$  whose $f$-images overlap, that is
\begin{equation*}
	f(\sigma_1)\cap\cdots\cap f(\sigma_r)\neq\emptyset.
\end{equation*}	
\end{problem}
\noindent 
A trivial lower bound for the function $n(d,r)$ is $(d+1)r$ and it is natural to conjecture the following.

\begin{conjecture}[B\'{a}r\'{a}ny--Larman Conjecture]
Let $r\ge2$ and $d\ge1$ be integers. 
Then $n(d,r)=(d+1)r$.	
\end{conjecture}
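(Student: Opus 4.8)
The plan is to prove the two inequalities $n(d,r)\ge (d+1)r$ and $n(d,r)\le (d+1)r$ separately. The lower bound is forced by the problem: an admissible coloring partitions the $n$ vertices of $\Delta_{n-1}$ into $d+1$ classes each of size at least $r$, hence $n\ge (d+1)r$; so the entire content is the upper bound. Concretely, on $\Delta_{(d+1)r-1}$ --- where every admissible coloring $(C_1,\ldots,C_{d+1})$ has all classes of size exactly $r$ --- we must show that every continuous map $f\colon\Delta_{(d+1)r-1}\to\R^d$ has $r$ pairwise disjoint rainbow faces with overlapping images. Via the configuration-space/test-map scheme of Section~\ref{subsec : Equivariant maps induced by f}, as packaged in Corollary~\ref{cor : CSTM for colored Tverberg}, this reduces to showing that there is no $\Sym_r$-equivariant map
\[
\Delta_{r,r}^{*(d+1)}\ \cong\ (R_{(C_1,\ldots,C_{d+1})})^{*r}_{\Delta(2)}\ \longrightarrow\ S(W_r^{\oplus(d+1)}).
\]

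The first thing to confront is that this non-existence cannot be obtained by the Borel-construction argument that proved the topological Tverberg theorem: by Theorem~\ref{th : exitence of equivariant map for BL} such a map \emph{exists} for all $r$ and $d$, since the connectivity of $\Delta_{r,r}$ is only $\lfloor\tfrac{2r+1}{3}\rfloor-2$, far below $\dim S(W_r^{\oplus(d+1)})$, so equivariant obstruction theory builds the map. Hence the rainbow deleted join is the wrong test space. The remedy I would follow is the optimal colored Tverberg theorem of Blagojevi\'c, Matschke and Ziegler \cite{Blagojevic2009}: enlarge each color class by a virtual extra point and carry an additional cyclic coordinate, so that $\Z/(r+1)$ acts \emph{freely} on the relevant target sphere; a Borsuk--Ulam/Dold-type argument (for $r=2$ this is exactly Lov\'asz's proof, Theorem~\ref{th : Lovasz}) then applies when $r+1$ is prime and gives $n(d,r)=(d+1)r$. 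I would first reconstruct this in the Borel-construction language of Theorem~\ref{th : Topological Tverberg theorem for prime power} and try to stretch the freeness bookkeeping to cover $r+1$ a prime power.

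Beyond that, I would look for a multiplicativity step --- if the conjecture holds for $a$ and for $b$, does it hold for $ab$, by an iterated-Tverberg argument that partitions each size-$ab$ color class into $a$ blocks of size $b$, applies the $b$-fold colored result inside a block and the $a$-fold one across blocks, with simplex dimensions adding to $(d+1)(ab-1)$? --- while keeping in mind that, since the topological Tverberg theorem itself fails to be multiplicative (it is false for $r=6=2\cdot 3$), such a step is far from automatic and its colored analogue would need genuine verification. The main obstacle --- and the reason the conjecture is still open --- is that none of this reaches all $r$: already at $r=5$ the successor $r+1=6$ is not a prime power and $r=5$ is prime, so neither the refined equivariant argument nor any factorization applies, and Theorem~\ref{th : exitence of equivariant map for BL} shows that for such $r$ the entire equivariant obstruction to the naive scheme vanishes. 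Settling these cases therefore demands either using the piecewise-linear/general-position structure of $f$ beyond mere continuity, or an entirely new reduction --- and producing one is the hard part.
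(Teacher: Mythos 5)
The statement you were asked to prove is the B\'ar\'any--Larman \emph{conjecture}, and neither the paper nor your proposal proves it: it is open to this day, and your text candidly says as much in its final paragraph. So there is a genuine gap --- the absence of any argument for general $r$ --- but it is a gap shared with the literature, and you have correctly mapped the terrain: the reduction via Corollary~\ref{cor : CSTM for colored Tverberg} to the non-existence of an $\Sym_r$-map $\Delta_{r,r}^{*(d+1)}\to S(W_r^{\oplus(d+1)})$, the fatal fact (Theorem~\ref{th : exitence of equivariant map for BL}) that this map \emph{does} exist, and the consequent need for a different test space. Your lower-bound remark and the observation that a multiplicativity step is not automatic (given the failure of the topological Tverberg theorem at $r=6$) are also sound.

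Two corrections to how you describe the partial result that the paper actually establishes. First, the remedy of \cite{Blagojevic2009} is not to ``enlarge each color class by a virtual extra point'': the color classes stay at size $r$, and instead one passes to a pyramid $\Delta'$ over $\Delta$ with a single new color class $C_{d+2}$ consisting of the apex alone, and asks for $r+1$ (not $r$) pairwise disjoint rainbow faces. This changes the test space to $\Delta_{r,r+1}^{*(d+1)}*[r+1]$ with the $\Sym_{r+1}$-action (Theorem~\ref{th : non exitence --> BL}); discarding the face meeting the apex recovers the $r$-fold statement. Second, the non-existence of the resulting equivariant map (Theorem~\ref{th : last 01}) is \emph{not} a Dold/freeness argument and does not extend to $r+1$ a prime power: it rests on the Fadell--Husseini index computation $\ind_{\Z/p}\Delta_{p-1,p}=H^{\geq p-1}(\B(\Z/p);\F_p)$ for $p=r+1$ an odd prime (Theorem~\ref{th : index of chessboard p-1}), which uses that $\Delta_{p-1,p}$ is an orientable pseudomanifold whose fundamental class maps to $(p-1)!\equiv -1\pmod p$ times a generator of $H_{p-2}(S(W_p))$ --- an argument tied to $p$ prime, with no known prime-power analogue. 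So ``stretching the freeness bookkeeping'' is not a viable route; already for $r=5$ (where $r+1=6$) nothing is known, exactly as you conclude.
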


In Section \ref{subsec : Colored Tverberg problem ZV} we tried to use the approach of \v{Z}ivaljevi\'c and Vre\'cica to solve the B\'{a}r\'{a}ny--Larman conjecture and we failed dramatically. 
We hoped to prove that an $\Sym_r$-equivariant map 
	\[
	\Delta_{r,r}^{*(d+1)}\rightarrow S(W_r^{\oplus (d+1)})
	\]
does not exist, but Theorem \ref{th : exitence of equivariant map for BL} gave us exactly the opposite, the existence of this map.
\emph{What can we do now?}
We change the question, and prove the non-existence of an $\Sym_{r+1}$-equivariant map
\[
(R_{(C_1,\ldots,C_{d+2})})^{*r+1}_{\Delta(2)}\cong \Delta_{r,r+1}^{*(d+1)}*[r+1]\rightarrow S(W_{r+1}^{\oplus (d+1)})
\]
instead; here $|C_1|=\cdots=|C_{d+1}|=r$, and $|C_{d+2}|=1$.
\emph{Still, why should we be interested in such a result?}
 
\begin{theorem}
\label{th : non exitence --> BL}
	Let $r\ge2$ and $d\ge1$ be integers. 
	If there is no $\Sym_{r+1}$-equivariant map 
	\[
	\Delta_{r,r+1}^{*(d+1)}*[r+1]\rightarrow S(W_{r+1}^{\oplus (d+1)}),
	\]
	then $n(d,r)=(d+1)r$ and $tt(d,r)=r$.
\end{theorem}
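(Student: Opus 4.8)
The plan is to deduce everything from Corollary~\ref{cor : CSTM for colored Tverberg}, applied with $r+1$ parts and an auxiliary $(d+2)$-nd color of size one --- the ``one extra vertex'' device of Blagojevi\'c--Matschke--Ziegler. Set $n:=(d+1)r$. Since the trivial lower bound $n(d,r)\ge (d+1)r$ is already known, and since a general-position affine map $\Delta_{(d+1)(r-1)-1}\to\R^d$ has no Tverberg partition at all (as recalled after Tverberg's theorem), hence none that is rainbow, which forces $tt(d,r)\ge r$, it suffices to prove the upper bounds $n(d,r)\le (d+1)r$ and $tt(d,r)\le r$. I will prove the stronger, continuous, form: every continuous $f:\Delta_{n-1}\to\R^d$ whose $(d+1)r$ vertices are colored by $d+1$ classes, each then necessarily of size exactly $r$, admits $r$ pairwise disjoint rainbow faces with overlapping $f$-images.

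First I would enlarge the picture. Given such an $f$ with coloring $(C_1,\ldots,C_{d+1})$, adjoin a new vertex $v$, so that $\Delta_n=\{v\}*\Delta_{n-1}$, set $C_{d+2}:=\{v\}$, and choose an arbitrary continuous extension $\tilde f:\Delta_n\to\R^d$ of $f$ (its values off $\Delta_{n-1}$ will be irrelevant). Iterating Lemma~\ref{lemma:commutativity of join and deleted join} and using Example~\ref{example:deleted product and joins}, the $(r+1)$-fold $2$-wise deleted join of the rainbow complex of $(C_1,\ldots,C_{d+2})$ is
\[
(R_{(C_1,\ldots,C_{d+2})})^{*(r+1)}_{\Delta(2)}\ \cong\ \big([r]^{*(r+1)}_{\Delta(2)}\big)^{*(d+1)}\,*\,[1]^{*(r+1)}_{\Delta(2)}\ \cong\ \Delta_{r,r+1}^{*(d+1)}*[r+1],
\]
so the hypothesis of the theorem is exactly the non-existence assumption of Corollary~\ref{cor : CSTM for colored Tverberg} for this coloring and $r+1$ parts. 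The corollary then produces $r+1$ pairwise disjoint faces $\sigma_1,\ldots,\sigma_{r+1}$ of $\Delta_n$, each rainbow for $(C_1,\ldots,C_{d+2})$, with $\tilde f(\sigma_1)\cap\cdots\cap\tilde f(\sigma_{r+1})\neq\emptyset$.

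The key step is then simply to throw the extra vertex away. As the $\sigma_i$ are pairwise disjoint, $v$ lies in at most one of them; relabel so that $v\notin\sigma_1,\ldots,\sigma_r$. Then $\sigma_1,\ldots,\sigma_r$ are nonempty pairwise disjoint faces of $\Delta_{n-1}$, and being rainbow for $(C_1,\ldots,C_{d+2})$ they satisfy $|\sigma_i\cap C_j|\le 1$ for $1\le j\le d+1$, i.e.\ they are rainbow for $(C_1,\ldots,C_{d+1})$; moreover $\tilde f|_{\Delta_{n-1}}=f$, so
\[
f(\sigma_1)\cap\cdots\cap f(\sigma_r)\ =\ \tilde f(\sigma_1)\cap\cdots\cap\tilde f(\sigma_r)\ \supseteq\ \tilde f(\sigma_1)\cap\cdots\cap\tilde f(\sigma_{r+1})\ \neq\ \emptyset .
\]
This gives $n(d,r)\le (d+1)r$. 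For $tt(d,r)\le r$ I would run the same argument after shrinking: given any simplex $\Delta$ colored by $(C_1,\ldots,C_{d+1})$ with all $|C_i|\ge r$, restrict $f$ to the face $\Delta'$ spanned by chosen subsets $C_i'\subseteq C_i$ of size $r$, apply the case just proved to $f|_{\Delta'}$, and note that the resulting $r$ faces, being faces of $\Delta'$, are automatically rainbow for $(C_1,\ldots,C_{d+1})$ and still have a common $f$-image. I do not expect a real obstacle, since all equivariant topology has been pushed into the hypothesis; the only points to watch are the combinatorial identification above --- in particular that the size-one class $C_{d+2}$ contributes the factor $[r+1]=\Delta_{1,r+1}=[1]^{*(r+1)}_{\Delta(2)}$ --- and the harmless-looking but essential fact that $\tilde f$, and in particular the face $\sigma_{r+1}$ that may contain $v$, never enters the final intersection.
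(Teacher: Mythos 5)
Your proof is correct and follows essentially the same route as the paper: adjoin one new vertex as a singleton $(d+2)$-nd color class, identify the resulting $(r+1)$-fold deleted join as $\Delta_{r,r+1}^{*(d+1)}*[r+1]$, invoke Corollary~\ref{cor : CSTM for colored Tverberg} for $r+1$ parts, and then discard the one face that may contain the apex. The extra care you take (explicitly naming the extension $\tilde f$, spelling out the combinatorial isomorphism via Lemma~\ref{lemma:commutativity of join and deleted join} and Example~\ref{example:deleted product and joins}, stating the lower bounds, and deriving $tt(d,r)\le r$ by restricting to size-$r$ subsets of the color classes) are all legitimate elaborations of steps the paper handles more tersely or implicitly.
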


\begin{proof}
Let $(C_1,\ldots,C_{d+1})$ be a coloring of the vertices of the simplex $\Delta$ with $|C_1|=\cdots=|C_{d+1}|=r$, and let $f :  \Delta\rightarrow\R^d$ be a continuous map.
Construct a simplex  $\Delta'$ as a pyramid over $\Delta$, and let $C_{d+2}$ be the additional color class containing only the apex of the pyramid. 
Thus, $(C_1,\ldots,C_{d+2})$ is a coloring of the vertices of the simplex $\Delta'$.

\smallskip
Let us assume that an $\Sym_{r+1}$-equivariant map $\Delta_{r,r+1}^{*(d+1)}*[r+1]\rightarrow S(W_{r+1}^{\oplus (d+1)})$ does not exist.
The non-existence of this map in combination with Corollary \ref{cor : CSTM for colored Tverberg} implies that there exist $r+1$ pairwise disjoint rainbow faces $\sigma_1, \dots, \sigma_{r+1}$ of the simplex $\Delta'$  whose $f$-images overlap,  $f(\sigma_1)\cap\cdots\cap f(\sigma_{r+1})\neq\emptyset$.

\smallskip
Without loss of generality we can assume that $\sigma_{r+1}\cap C_{d+2}\neq\emptyset$. 
Then the faces  $\sigma_1, \dots, \sigma_{r}$ are rainbow faces of the simplex $\Delta$ with respect to the coloring $(C_1,\ldots,C_{d+1})$ and
\[
f(\sigma_1)\cap\cdots\cap f(\sigma_{r})\neq\emptyset.
\]
Hence, $tt(d,r)=r$ and consequently $n(d,r)=(d+1)r$.
\end{proof}

\noindent
The theorem we just proved tells us that in order to make an advance on the B\'ar\'any--Larman conjecture we should try to prove the non-existence of a continuous $\Sym_{r+1}$-equivariant map 
\begin{equation}
\label{eq : 123 map}
\Delta_{r,r+1}^{*(d+1)}*[r+1]\rightarrow S(W_{r+1}^{\oplus (d+1)})	
\end{equation}
at least for some values of $r$.

\smallskip
Now in order to prove the non-existence of a continuous $\Sym_{r+1}$-equivariant map \eqref{eq : 123 map}, for $r+1=:p$ an odd prime, we will compute the Fadell--Husseini index of the join $\Delta_{r,r+1}^{*(d+1)}*[r+1]=\Delta_{p-1,p}^{*(d+1)}*[p]$ with respect to the cyclic group and compare the result with the index of the sphere $S(W_{r+1}^{\oplus (d+1)})$.

\subsection{The Fadell--Husseini index of chessboards}

Let $p:=r+1$ be an odd prime. 
In this section we compute the Fadell--Husseini index of chessboards
\[
\ind_{\Z/p}(\Delta_{k,p};\F_p)\subseteq H^*(\B(\Z/p);\F_p)=H^*(\Z/p;\F_p),\qquad\qquad k\geq1,
\]
and their joins with respect to the cyclic subgroup $\Z/p$ of the symmetric group $\Sym_p$.
Recall that 
\[
H^*(\B(\Z/p);\F_p)=H^*(\Z/p;\F_p)= \F_p[t]\otimes\Lambda[e],
\]
where $\deg t=2$, $\deg e=1$, and $\Lambda[\,\cdot\,]$ denotes the exterior algebra.
First, we collect some simple facts about the Fadell--Husseini index of chessboards.

\begin{lemma}
	\label{lem : FH index of chessboards}
	Let $k\geq 1$ be an integer, and let $p$ be an odd prime. 
	Then
	\begin{compactenum}[\rm (i)]
	\item $\ind_{\Z/p}\Delta_{1,p}=H^{\geq 1}(\B(\Z/p);\F_p)$,
	\item $\ind_{\Z/p}\Delta_{2p-1,p}=H^{\geq p}(\B(\Z/p);\F_p)$,
	\item $\ind_{\Z/p}\Delta_{1,p}\subseteq \ind_{\Z/p}\Delta_{2,p}\subseteq \cdots \subseteq\ind_{\Z/p}\Delta_{2p-1,p}= \ind_{\Z/p}\Delta_{2p,p}=\cdots=H^{\geq p}(\B(\Z/p);\F_p)$.
	\end{compactenum}
\end{lemma}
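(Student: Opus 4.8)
The plan is to treat the three items as a sequence of increasingly refined computations, all relying on standard properties of the Fadell--Husseini index (monotonicity under equivariant maps, the join formula $\ind_G(X*Y)\supseteq \ind_G X \cdot \ind_G Y$ on the subring level, and the behavior under restriction/coverings) together with the connectivity bound for chessboard complexes from Theorem~\ref{th : conn. of chessboard} and the identifications $\Delta_{k,p}=[k]^{*p}_{\Delta(2)}$ from Examples~\ref{example:deleted product and joins}. For item~(i), note that $\Delta_{1,p}=[1]^{*p}_{\Delta(2)}$ is just $p$ disjoint points permuted freely by $\Z/p$; this is $E(\Z/p)$ at level $0$, i.e.\ the $0$-sphere of the regular representation minus diagonal, so its index is exactly the augmentation ideal $H^{\geq1}(\B(\Z/p);\F_p)$ — equivalently, the index of a free $\Z/p$-orbit. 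I would simply record that a free orbit maps equivariantly to any free $\Z/p$-space and cite the elementary computation $\ind_{\Z/p}(\Z/p)=H^{\geq1}$.

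For item~(ii) I would argue both inclusions. The inclusion $\ind_{\Z/p}\Delta_{2p-1,p}\supseteq H^{\geq p}$: the chessboard $\Delta_{2p-1,p}$ has dimension $p-1$ and, by Theorem~\ref{th : conn. of chessboard}, is $(p-2)$-connected (since $\min\{2p-1,p,\lfloor(3p)/3\rfloor\}-2 = p-2$); moreover the $\Z/p$-action is free (no column is fixed by a nontrivial cyclic permutation once every point sits in a single column). A free $(p-2)$-connected $\Z/p$-CW complex of dimension $p-1$ receives an equivariant map from — and maps equivariantly to — the $(p-1)$-sphere $S(W_p^{\oplus 1})=S(\rho_p)$ with the standard free action, whose index is well known to be $H^{\geq p}(\B(\Z/p);\F_p)$ (the ideal generated by the Euler class $t^{(p-1)/2}$ up to a unit, using $\deg t=2$). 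By monotonicity the indices coincide in top degree; this pins down $\ind_{\Z/p}\Delta_{2p-1,p}=H^{\geq p}$. The reverse inclusion $\subseteq H^{\geq p}$ is automatic because the index of any $G$-space lies in degrees above its connectivity, i.e.\ $\ind_{\Z/p}X\subseteq H^{\geq \conn(X)+2}$, and $\conn(\Delta_{2p-1,p})=p-2$.

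For item~(iii), the chain of inclusions $\ind_{\Z/p}\Delta_{1,p}\subseteq\ind_{\Z/p}\Delta_{2,p}\subseteq\cdots$ follows from monotonicity applied to the obvious $\Z/p$-equivariant inclusions $\Delta_{k,p}\hookrightarrow\Delta_{k+1,p}$ (adding a row, column action unchanged), since an equivariant map $X\to Y$ forces $\ind_G Y\subseteq\ind_G X$. The stabilization — that the chain is constant equal to $H^{\geq p}$ from $k=2p-1$ onwards — follows because for $k\geq 2p-1$ the complex $\Delta_{k,p}$ is still exactly $(p-2)$-connected of dimension $p-1$ restricted suitably; more cleanly, $\Delta_{k,p}$ for $k\geq 2p-1$ admits a $\Z/p$-equivariant map from $\Delta_{2p-1,p}$ and to $S(W_p)$ (via Theorem~\ref{th : conn. of chessboard}: it is $(p-2)$-connected, so it maps equivariantly onto the $(p-1)$-sphere, and the $(p-1)$-skeleton argument gives a map back), squeezing its index between $\ind\Delta_{2p-1,p}=H^{\geq p}$ on both sides. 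The main obstacle I anticipate is the lower-bound direction in~(ii): showing $\ind_{\Z/p}\Delta_{2p-1,p}\supseteq H^{\geq p}$ rather than merely $\supseteq H^{\geq p+\text{something}}$. Connectivity alone only gives the trivial upper bound on the index; to get the sharp statement one genuinely needs an equivariant map $\Delta_{2p-1,p}\to S(W_p)$, which exists by obstruction theory exactly because the domain is $(p-2)$-connected and the target is an $(p-2)$-connected $(p-1)$-simple free $\Z/p$-space of dimension $p-1$ (the obstruction lives in $H^{p}_{\Z/p}(\Delta_{2p-1,p};\pi_{p-1}S(W_p))$, which one checks vanishes since $\Delta_{2p-1,p}$ collapses equivariantly to dimension $p-2$ — the same elementary-collapse trick used in the proof of Theorem~\ref{th : exitence of equivariant map for BL}). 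Once that map is in hand, monotonicity plus the known index of $S(W_p)$ finishes everything.
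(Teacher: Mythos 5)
Your proof of (i) coincides with the paper's: $\Delta_{1,p}=[p]$ is a free $\Z/p$-orbit, so $\E(\Z/p)\times_{\Z/p}\Delta_{1,p}\simeq\E(\Z/p)$ is contractible and the index is the augmentation ideal. For (iii), monotonicity under the inclusions $\Delta_{k,p}\hookrightarrow\Delta_{k+1,p}$ together with the observation that for $k\geq 2p-1$ these complexes stay free and $(p-1)$-dimensional is also the paper's argument. (As an aside, both you and the printed statement orient the chain backwards: an equivariant map $X\to Y$ gives $\ind_G X\supseteq\ind_G Y$, so the inclusion $\Delta_{k,p}\hookrightarrow\Delta_{k+1,p}$ yields $\ind_{\Z/p}\Delta_{k,p}\supseteq\ind_{\Z/p}\Delta_{k+1,p}$; the ideals are \emph{decreasing}, as one also sees from Theorem~\ref{th : index of chessboard k < p-2}, where $\ind_{\Z/p}\Delta_{k,p}=H^{\geq k}$.)

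Your argument for (ii), however, has a genuine gap. The $\Z/p$-equivariant map $\Delta_{2p-1,p}\to S(W_p)$ that your ``lower bound'' relies on does not exist: $\Delta_{2p-1,p}$ is $(p-2)$-connected by Theorem~\ref{th : conn. of chessboard}, while $W_p$ has real dimension $p-1$, so $S(W_p)\cong S^{p-2}$ is a free $(p-2)$-dimensional $\Z/p$-CW complex, and Dold's theorem forbids an equivariant map from a $(p-2)$-connected $\Z/p$-space to a free complex of dimension $p-2$. Your numerics for the sphere are also off: since the Euler class is $t^{(p-1)/2}$ of degree $p-1$, the index is $\ind_{\Z/p}S(W_p)=\langle t^{(p-1)/2}\rangle=H^{\geq p-1}(\B(\Z/p);\F_p)$, not $H^{\geq p}$ (Theorem~\ref{th : index of chessboard p-1}). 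And $\Delta_{2p-1,p}$ does \emph{not} equivariantly collapse to dimension $p-2$: being $(p-2)$-connected with $H^{p-1}(\Delta_{2p-1,p};\F_p)\neq0$, it would then be contractible, which it is not; you are thinking of $\Delta_{r,r}$ from the proof of Theorem~\ref{th : exitence of equivariant map for BL}. The correct lower bound $\ind_{\Z/p}\Delta_{2p-1,p}\supseteq H^{\geq p}(\B(\Z/p);\F_p)$ needs no map to a sphere at all: since $\Z/p$ acts freely on the $(p-1)$-dimensional complex $\Delta_{2p-1,p}$, the Borel construction $\E(\Z/p)\times_{\Z/p}\Delta_{2p-1,p}\simeq\Delta_{2p-1,p}/(\Z/p)$ has vanishing cohomology in degrees $\geq p$, so every class in $H^{\geq p}(\B(\Z/p);\F_p)$ lies in the kernel. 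The paper records both bounds at once via the Serre spectral sequence of the Borel fibration: the $(p-2)$-connectivity forces $E^{i,0}_\infty\cong E^{i,0}_2$ for $i\leq p-1$ (giving $\ind\subseteq H^{\geq p}$), while the vanishing of the abutment in degrees $\geq p$ forces $E^{i,0}_\infty=0$ for $i\geq p$ (giving $\ind\supseteq H^{\geq p}$).
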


\begin{proof}
For the statement (i) observe that $\Delta_{1,p}=[p]$ and therefore $\E(\Z/p)\times_{\Z/p} \Delta_{1,p} \cong \E(\Z/p)$.
Since $\E(\Z/p)$ is a contractible space,
	\[
	\ind_{\Z/p}\Delta_{1,p}=\ker \big(H^*(\B(\Z/p);\F_p)\rightarrow H^*(\E(\Z/p);\F_p)\big)=H^{\geq 1}(\B(\Z/p);\F_p).
	\]

\smallskip	
In order to prove (ii) recall that $\Delta_{2p-1,p}$ is a $(p-1)$-dimensional $(p-2)$-connected free $\Z/p$ simplicial complex, see Theorem \ref{th : conn. of chessboard}. 	
The Serre spectral sequence associated to the Borel construction fiber bundle $\Delta_{2p-1,p}\rightarrow \E(\Z/p)\times_{\Z/p} \Delta_{2p-1,p}\rightarrow\B(\Z/p)$ has the $E_2$-term given by
\begin{multline*}
	E^{i,j}_2   =  H^i(\B(\Z/p); \mathcal{H}^j(\Delta_{2p-1,p};\F_p))= H^i(\Z/p; H^j(\Delta_{2p-1,p};\F_p))\\=
	\begin{cases}
		H^i(\Z/p;\F_p), & \text{for }j=0,\\
		H^i(\Z/p; H^{p-1}(\Delta_{2p-1,p};\F_p)), & \text{for }j=p-1,\\
		0,  &\text{otherwise}.
	\end{cases}
\end{multline*}
Thus, $E^{i,0}_{\infty}\cong E^{i,0}_2 \cong H^i(\Z/p;\F_p)$ for $0\leq i\leq p-1$.
Consequently $\ind_{\Z/p}\Delta_{2p-1,p}\subseteq H^{\geq p}(\B(\Z/p);\F_p)$.
Since $\Delta_{2p-1,p}$ is a free $\Z/p$ simplicial complex, we get $\E(\Z/p)\times_{\Z/p} \Delta_{2p-1,p}\simeq  \Delta_{2p-1,p}/\Z/p$, implying that $H^i(\E(\Z/p)\times_{\Z/p}\Delta_{2p-1,p};\F_p)=0$ for $i\geq p$.
Since the spectral sequence $E^{*,*}_*$ converges to the cohomology of the Borel construction  $H^*(\E(\Z/p)\times_{\Z/p}\Delta_{2p-1,p};\F_p)$, we have that
$E^{i,j}_{\infty}=0$ for $i+j\geq p$.
In particular, $E^{i,0}_{\infty}=0$ for $i\geq p$, implying that
\[
 \ind_{\Z/p}\Delta_{2p-1,p}= H^{\geq p}(\B(\Z/p);\F_p).
\]

\smallskip	
For (iii) observe that there is a sequence of $\Z/p$-equivariant inclusions
\[
\Delta_{1,p}\xhookrightarrow{\quad} \Delta_{2,p} \xhookrightarrow{\quad} \cdots \xhookrightarrow{\quad} \Delta_{k,p}  \xhookrightarrow{\quad}  \Delta_{k+1,p}   \xhookrightarrow{\quad}\cdots
\]
given by the inclusions of the corresponding vertex sets
\[
[1]\times [p]\xhookrightarrow{\quad} [2]\times [p] \xhookrightarrow{\quad} \cdots \xhookrightarrow{\quad} [k]\times [p]  \xhookrightarrow{\quad}  [k+1]\times [p]   \xhookrightarrow{\quad}\cdots .
\]
Consequently the monotonicity property of the Fadell--Husseini index, combined with the fact that for $k\geq 2p-1$ all chessboards $\Delta_{k,p}$ are $(p-1)$-dimensional free $\Z/p$ simplicial complexes, implies that
\[
\ind_{\Z/p}\Delta_{1,p}\subseteq \ind_{\Z/p}\Delta_{2,p}\subseteq \cdots \subseteq\ind_{\Z/p}\Delta_{2p-1,p}= \ind_{\Z/p}\Delta_{2p,p}=\cdots=H^{\geq p}(\B(\Z/p);\F_p).
\]
\end{proof}
 
In the next step we compute the index of the chessboard $\Delta_{p-1,p}$.
For that we need to establish the following fact.

\begin{lemma}
\label{lem : map f}
	Let $p$ be an odd prime. 
	There exists a $\Z/p$-equivariant map $f : \Delta_{p-1,p}\rightarrow S(W_p)$ such that the induced map in cohomology $f^* :  H^{p-2}(S(W_p);\F_p)\rightarrow H^{p-2}(\Delta_{p-1,p};\F_p)$ is an isomorphism.
\end{lemma}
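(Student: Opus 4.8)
The plan is to write down one explicit equivariant map and compute the residue of its degree modulo $p$.

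First, recall that $\Delta_{p-1,p}$ is a \emph{closed orientable pseudomanifold} of dimension $p-2$: it is pure (every partial matching of the rows extends to a perfect matching of all $p-1$ rows), every ridge (a matching of $p-2$ rows, omitting two of the $p$ columns) lies in exactly two facets, and the dual graph is connected and bipartite. For the last point, identify a facet — a perfect matching of the rows, that is, an injection $\phi\colon\{1,\dots,p-1\}\hookrightarrow\{1,\dots,p\}$ — with the permutation $\bar\phi\in\Sym_p$ extending $\phi$ by $\bar\phi(p)=c(\phi)$, where $c(\phi)$ is the unique column missed by $\phi$; two facets share a ridge exactly when the corresponding permutations differ by right multiplication with one of the transpositions $(1\;p),\dots,(p-1\;p)$, which are odd and generate $\Sym_p$. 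Orienting each facet $\sigma_\phi$ by the row order $[(1,\phi(1)),\dots,(p-1,\phi(p-1))]$ twisted by the sign $\sgn(\bar\phi)$ gives a coherent orientation; by the standard cochain computation for coherently oriented pseudomanifolds, $H^{p-2}(\Delta_{p-1,p};\F_p)\cong\F_p$. Hence it suffices to produce a $\Z/p$-equivariant $f\colon\Delta_{p-1,p}\to S(W_p)$ whose degree is not divisible by $p$: by naturality of the top cohomology class, $f^*$ is then multiplication by a unit of $\F_p$, hence an isomorphism.

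Write a point of $\Delta_{p-1,p}$ as a formal convex combination $\sum_a\lambda_a\,(i_a,j_a)$ carried by a matching. Since that matching misses at least one column, the vector $\Phi\big(\sum_a\lambda_a(i_a,j_a)\big):=\sum_a\lambda_a\,\ee_{j_a}-\tfrac1p(1,\dots,1)$ lies in $W_p\setminus\{0\}$; let $f$ be the composition of $\Phi$ with the radial retraction $W_p\setminus\{0\}\to S(W_p)$. Because $\Z/p$ permutes columns, fixes $(1,\dots,1)$, and acts linearly, $f$ is $\Z/p$-equivariant. On the facet $\sigma_\phi$ the map $\Phi$ is the affine isomorphism onto the facet $F_{c}:=\conv\{\ee_j-\tfrac1p(1,\dots,1):j\neq c\}$ of the simplex $\conv\{\ee_1,\dots,\ee_p\}-\tfrac1p(1,\dots,1)\subset W_p$, which has the origin in its interior; post-composing with the radial homeomorphism of the boundary of that simplex onto $S(W_p)$, one sees that $f$ carries $\sigma_\phi$ homeomorphically onto one of the $p$ spherical simplices tiling $S(W_p)$, the one determined by $c=c(\phi)$.

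Now compute $\deg f$ by counting preimages of a generic point $y_0$ in the interior of the spherical simplex coming from a fixed $c$: these are exactly one interior point of each of the $(p-1)!$ facets $\sigma_\phi$ with $c(\phi)=c$, each contributing local degree $\pm1$. With the coherent orientation above, the sign contributed by $\sigma_\phi$ equals $\sgn(\bar\phi)$ times the sign of the affine isomorphism $\sigma_\phi\to F_c$ (which is $\sgn$ of $\phi$ read as a bijection onto $\{1,\dots,p\}\setminus\{c\}$) times the sign of the radial homeomorphism on $F_c$ (a constant independent of $\phi$); since $\sgn(\bar\phi)=(-1)^{p-c}\sgn(\phi)$, all these signs coincide, so $|\deg f|=(p-1)!$. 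By Wilson's theorem $(p-1)!\equiv-1\pmod p$, hence $\deg f\not\equiv0\pmod p$ and $f^*\colon H^{p-2}(S(W_p);\F_p)\to H^{p-2}(\Delta_{p-1,p};\F_p)$ is an isomorphism. The one genuinely delicate point is this orientation bookkeeping — checking that the coherent pseudomanifold orientation makes all $(p-1)!$ local degrees over $y_0$ agree, so that no cancellation occurs and $|\deg f|$ equals exactly $(p-1)!$.
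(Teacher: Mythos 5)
Your map is identical to the paper's ($(i,j)\mapsto v_j=\ee_j-\tfrac1p(1,\dots,1)$, extended affinely), and your strategy is also the paper's: show the degree of this map from the coherently oriented $(p-2)$-pseudomanifold $\Delta_{p-1,p}$ to $S(W_p)$ is $\pm(p-1)!$, hence a unit mod $p$. The only variation is how the degree is evaluated — you count signed preimages over a regular value in a fixed spherical cell, while the paper pushes the explicit orientation cycle $z_{p-1,p}=\sum_{\pi\in\Sym_p}(\sgn\pi)\langle(1,\pi(1)),\dots,(p-1,\pi(p-1))\rangle$ through $f_\#$ and reads off $(p-1)!$ by algebraic reindexing, which packages the sign bookkeeping you flag as the delicate point (the $\sgn\pi$ in the cycle cancels against the reordering sign $(-1)^{p+k}$ using $p$ odd).
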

\begin{proof}
	Let $\ee_1,\ldots,\ee_p$ be a standard basis of $\R^p$, let $\ee:=\tfrac1p(\ee_1+\cdots+\ee_p)$, and let $v_i:=\ee_i-\ee$ for $1\leq i\leq p$.
	Denote now by $\Delta_{p-1}\subseteq W_p$ the simplex $\conv\{v_1,\ldots,v_p\}$, which is invariant with respect to the action of the cyclic group $\Z/p$.
	Moreover, its boundary $\partial\Delta_{p-1}$ is equivariantly homeomorphic to the representation sphere $S(W_p)$.
	
	\smallskip
	We define a continuous map $f : \Delta_{p-1,p}\rightarrow \partial\Delta_{p-1}\cong S(W_p)$ to be the $\Z/p$-equivariant simplicial map given on the vertex set of $\Delta_{p-1,p}$ by $(i,j) \longmapsto v_j$, where $(i,j)\in [p-1]\times [p]$.
	It remains to be verified that $f^* :  H^{p-2}(S(W_p);\F_p)\rightarrow H^{p-2}(\Delta_{p-1,p};\F_p)$ is an isomorphism.
	
	\smallskip
	Since $p\geq 3$, the chessboard complex $\Delta_{p-1,p}$ is a connected, orientable pseudomanifold of dimension $p-2$, for this see \cite[p.\,145]{Jonsson2008}.
	Thus $H_{p-2}(\Delta_{p-1,p};\Z)=\Z$ and an orientation class is given by the chain
	\[
	z_{p-1,p}\ =\ \sum_{\pi\in\Sym_{p}}(\sgn \pi) \langle(1,\pi(1)),\dots,(p-1,\pi(p-1))\rangle.
	\]
	Then on the chain level we have that
	\begin{eqnarray*}
		f_{\#}(z_{p-1,p}) & = & \sum_{\pi\in\Sym_{p}}(\sgn \pi) \langle v_{\pi(1)},\dots, v_{\pi(p-1)}\rangle = \sum_{\pi\in\Sym_{p}}(\sgn \pi) \langle v_{\pi(1)},\dots, v_{\pi(p-1)}, \widehat{v_{\pi(p)}} \rangle\\
		 & = &\sum_{k=1}^{p} \sum_{\pi\in\Sym_{p} : \pi(p)=k} (-1)^{p+k}(\sgn \pi)^2\langle v_1,\ldots,\widehat{v_k},\ldots,v_p\rangle \\
		& = & \sum_{k=1}^{p} (-1)^{k-1} \sum_{\pi\in\Sym_{p} : \pi(p)=k} \langle v_1,\ldots,\widehat{v_k},\ldots,v_p\rangle 
		 =  \sum_{k=1}^{p} (-1)^{k-1} (p-1)! \langle v_1,\ldots,\widehat{v_k},\ldots,v_p\rangle\\
		& = &  (p-1)! \sum_{k=1}^{p} (-1)^{k-1}\langle v_1,\ldots,\widehat{v_k},\ldots,v_p\rangle.
	\end{eqnarray*}
	For this calculation keep in mind that $p$ is an odd prime.
 	The chain $\sum_{k=1}^{p} (-1)^{k-1}\langle v_1,\ldots,\widehat{v_k},\ldots,v_p\rangle$ is a generator of the top homology of the sphere $\partial\Delta_{p-1}\cong S(W_p)$.
 	Therefore, the induced map in homology 
 	\[
 	f_{*}  :  H_{p-2}(\Delta_{p-1,p};\Z) \rightarrow H_{p-2} (S(W_p);\Z)
 	\]
 	is just a multiplication by $(p-1)!\equiv -1\, (\mathrm{mod}\ p)$.
 	Using the naturality of the universal coefficient isomorphism \cite[Cor.\,7.5]{Bredon2010} we have that the induced map in homology with $\F_p$ field coefficients
 		\[
 		f_{*}  :  H_{p-2}(\Delta_{p-1,p};\F_p) \rightarrow H_{p-2} (S(W_p);\F_p)
 		\]
 	is again multiplication by $(p-1)!$.
 	Since $(p-1)!$ and $p$ are relatively prime the multiplication by $(p-1)!$ is an isomorphism.
 	Now using yet another universal coefficient isomorphism \cite[Cor.\,7.2]{Bredon2010} for the coefficients in a field we get that the induced map in cohomology with $\F_p$ coefficients
 		\[
 		f^{*}  :   H^{p-2} (S(W_p);\F_p) \rightarrow H^{p-2}(\Delta_{p-1,p};\F_p) 
 		\]
 	is an isomorphism.	
 	\end{proof}

Now we have all ingredients needed to compute the index of the chessboard $\Delta_{p-1,p}$.

\begin{theorem}
	\label{th : index of chessboard p-1}
	$\ind_{\Z/p}\Delta_{p-1,p}=\ind_{\Z/p}S(W_p)=H^{\geq p-1}(\B(\Z/p);\F_p)$.
\end{theorem}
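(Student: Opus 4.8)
The plan is to compare the chessboard $\Delta_{p-1,p}$ with the representation sphere $S(W_p)$ through the $\Z/p$-equivariant map $f\colon\Delta_{p-1,p}\to S(W_p)$ produced in Lemma~\ref{lem : map f}, exploiting that $\deg f\equiv(p-1)!\equiv-1\pmod p$ is prime to $p$ while both spaces are $(p-2)$-dimensional connected orientable pseudomanifolds carrying a free $\Z/p$-action. (The action on $\Delta_{p-1,p}$ is free because a $p$-cycle cannot fix any column occurring in a face, and it preserves the orientation class $z_{p-1,p}$ because a $p$-cycle is an even permutation.) Writing $Q:=\Delta_{p-1,p}/(\Z/p)$ and $L:=S(W_p)/(\Z/p)$, both quotients are connected orientable $(p-2)$-pseudomanifolds, $L$ is the standard $(p-2)$-dimensional lens space, and freeness gives $\E(\Z/p)\times_{\Z/p}\Delta_{p-1,p}\simeq Q$ and $\E(\Z/p)\times_{\Z/p}S(W_p)\simeq L$; hence $\ind_{\Z/p}\Delta_{p-1,p}=\ker\bigl(c_Q^*\colon H^*(\B(\Z/p);\F_p)\to H^*(Q;\F_p)\bigr)$, where $c_Q$ is the classifying map of the covering $\Delta_{p-1,p}\to Q$, and similarly with $Q$ replaced by $L$.

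First I would dispatch the bookkeeping. Realizing $\B(\Z/p)$ as the infinite lens space with its standard cell structure, whose $(p-2)$-skeleton is exactly $L$, the classifying map $c_L$ is the skeletal inclusion, so $c_L^*$ is an isomorphism in degrees $\le p-2$ while $H^i(L;\F_p)=0$ for $i\ge p-1$. This already gives $\ind_{\Z/p}S(W_p)=H^{\ge p-1}(\B(\Z/p);\F_p)$, and it exhibits $H^*(L;\F_p)$ as a Poincar\'e duality algebra whose top group $H^{p-2}(L;\F_p)$ is one-dimensional. It also yields one half of the theorem for free: since $\dim Q=p-2$, we have $H^{\ge p-1}(Q;\F_p)=0$, hence $H^{\ge p-1}(\B(\Z/p);\F_p)\subseteq\ker c_Q^*=\ind_{\Z/p}\Delta_{p-1,p}$.

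The substance is the reverse inclusion, i.e.\ injectivity of $c_Q^*$ in degrees $\le p-2$. Since $c_Q=c_L\circ\bar f$ with $\bar f\colon Q\to L$ induced by $f$, and $c_L^*$ is an isomorphism below degree $p-1$, it suffices to show that $\bar f^*\colon H^i(L;\F_p)\to H^i(Q;\F_p)$ is injective for $i\le p-2$. I would begin at the top degree: a routine covering-space comparison gives $\deg\bar f=\deg f$, which by Lemma~\ref{lem : map f} is $\equiv-1\pmod p$, so $\bar f_*$ carries the $\F_p$-fundamental class of $Q$ to a nonzero multiple of that of $L$; hence $\bar f_*$ is onto $H_{p-2}(L;\F_p)=\F_p$ and, dually, $\bar f^*$ is injective on $H^{p-2}$. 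For a general class $0\neq\xi\in H^i(L;\F_p)$ with $i\le p-2$, Poincar\'e duality on $L$ produces $\eta\in H^{p-2-i}(L;\F_p)$ with $\xi\cup\eta$ a generator of $H^{p-2}(L;\F_p)$; since $\bar f^*$ is a ring map and injective in degree $p-2$, $\bar f^*(\xi)\cup\bar f^*(\eta)=\bar f^*(\xi\cup\eta)\neq0$, so $\bar f^*(\xi)\neq0$. Thus $\bar f^*$, and with it $c_Q^*$, is injective in degrees $\le p-2$, giving $\ind_{\Z/p}\Delta_{p-1,p}\subseteq H^{\ge p-1}(\B(\Z/p);\F_p)$; together with the previous paragraph this proves $\ind_{\Z/p}\Delta_{p-1,p}=H^{\ge p-1}(\B(\Z/p);\F_p)=\ind_{\Z/p}S(W_p)$.

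The step I expect to carry the weight is the one just sketched. Unlike in the proofs of Theorem~\ref{th : Topological Tverberg theorem for prime power} and Theorem~\ref{th : colored Tverberg of ZV}, the complex $\Delta_{p-1,p}$ is \emph{not} $\F_p$-acyclic below its top dimension --- already $\widetilde\chi(\Delta_{4,5})=19$, which is incompatible with its reduced homology being concentrated in the odd top dimension $p-2$ --- so the usual ``only the top differential survives'' spectral-sequence reasoning breaks down, and a priori some differential could hit the bottom row in a degree $\le p-2$. Routing everything through the degree-$(p-1)!$ map $f$ and Poincar\'e duality on the lens space is precisely what sidesteps this, since we never need to know the intermediate homology of $\Delta_{p-1,p}$; the only external inputs are Lemma~\ref{lem : map f} and the two standard facts that $\deg\bar f=\deg f$ for the induced map of quotients and that $H^*(L;\F_p)$ is a Poincar\'e duality algebra.
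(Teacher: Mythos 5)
Your proof is correct, and it takes a genuinely different route from the paper's.

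The paper proves the theorem by comparing the cohomology Serre spectral sequences of the Borel-construction fibrations for $\Delta_{p-1,p}$ and $S(W_p)$ over $\B(\Z/p)$, using the map of Lemma~\ref{lem : map f} to transport the nonvanishing top differential from the sphere side to the chessboard side, and then closing a loophole at the end: because $\Delta_{p-1,p}$ has nontrivial reduced homology in intermediate degrees (exactly the point you raise), the paper must separately rule out differentials hitting the bottom row of $E_s(\lambda)$ for $2\le s\le p-2$, which it does by an $H^*(\Z/p;\F_p)$-module argument together with the surjectivity of $\partial_{p-1}$. Your argument instead exploits freeness to replace the Borel construction by the honest quotients $Q=\Delta_{p-1,p}/(\Z/p)$ and $L=S(W_p)/(\Z/p)$, identifies $\ind$ with the kernel of the classifying map on cohomology, and then transfers injectivity of $c_Q^*$ in degrees $\le p-2$ from the degree-$(p-1)!$ map $\bar f\colon Q\to L$ via Poincar\'e duality on the lens space $L$. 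This is more economical (no spectral sequence at all) and, as you note, structurally immune to the intermediate-homology issue, since the only cohomological inputs about $\Delta_{p-1,p}$ are its dimension and the top class; the tradeoff is that you invoke the Poincar\'e-duality ring structure of $L$ and the compatibility $\deg\bar f=\deg f$, both standard but not used by the paper. Both routes rest on Lemma~\ref{lem : map f} as the essential input. Two small points worth making explicit in a clean write-up: the freeness of the $\Z/p$-action on $\Delta_{p-1,p}$ uses that a face has at most $p-1$ vertices, so its column set can never be $\pi$-invariant for a $p$-cycle $\pi$; and the identification of $L$ with the $(p-2)$-skeleton of $\B(\Z/p)$ (and hence the isomorphism $c_L^*$ in low degrees) relies on the standard CW structure of the infinite lens space, or equivalently on the known ring structure $H^*(L;\F_p)\cong\F_p[x]/(x^{(p-1)/2})\otimes\Lambda(y)$.
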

\begin{proof}
Let us denote by $\lambda$ the Borel construction fiber bundle
\[
\lambda\quad : \quad
\Delta_{p-1,p}\rightarrow \E (\Z/p)\times_{\Z/p} \Delta_{p-1,p}\rightarrow \B (\Z/p),
\]
and by $\rho$ the Borel construction fiber bundle
\[
\rho\quad : \quad
S(W_p)\rightarrow \E (\Z/p)^n\times_{\Z/p} S(W_p) \rightarrow \B (\Z/p).
\]
The $\Z/p$-equivariant map $f : \Delta_{p-1,p}\rightarrow S(W_p)$ constructed in Lemma \ref{lem : map f} induces a morphism of the Borel construction fiber bundles $\lambda$ and $\rho$:
\[
\xymatrix{
\E(\Z/p)\times_{\Z/p} \Delta_{p-1,p}\ar[rr]^-{\id\times_{\Z/p} f}\ar[d] &  &\E(\Z/p)\times_{\Z/p} S(W_p)\ar[d]\\
\B(\Z/p) \ar[rr]^-{=}           &  &\B(\Z/p).
}
\]
This morphism induces a morphism of the corresponding Serre spectral sequences
\[
E^{i,j}_s (\lambda):=E^{i,j}_s (\E (\Z/p)\times_{\Z/p} \Delta_{p-1,p})\overset{f^{i,j}_s}{\longleftarrow}  E^{i,j}_s(\E(\Z/p)\times_{\Z/p} S(W_p))=:E^{i,j}_s (\rho)
\]
with the property that on the zero row of the second term the induced map 
\[
E^{i,0}_2 (\lambda)=E^{i,0}_2 (\E (\Z/p)\times_{\Z/p} \Delta_{p-1,p})\overset{f^{i,0}_2}{\longleftarrow}  E^{i,0}_2(\E(\Z/p)\times_{\Z/p} S(W_p))=E^{i,0}_2 (\rho)
\]
is the identity.
Here we use simplified notation $f^{i,j}_s:=E^{i,j}_s(\id\times_{\Z/p} f)$.
In the $E_2$-term, since the homomorphism $f^{*}  :   H^{p-2} (S(W_p);\F_p) \rightarrow H^{p-2}(\Delta_{p-1,p};\F_p)$ induces an isomorphism on the $(p-2)$-cohomology, and $\Z/p$ acts trivially on both cohomologies $H^{p-2} (S(W_p);\F_p) \cong H^{p-2}(\Delta_{p-1,p};\F_p)\cong \F_p$, the morphism of spectral sequences 
\begin{equation}
	\label{iso of p-2 row}
	f^{i,p-2}_2  :   E^{i,p-2}_2(\rho) \rightarrow E^{i,p-2}_2 (\lambda)
\end{equation}
is an isomorphism. 

\smallskip
The $E_2$-term of the Serre spectral sequence associated to the fiber bundle $\rho$ is given by
\[
E^{i,j}_2(\rho)   =  H^i(\B(\Z/p); \mathcal{H}^j(S(W_p) ;\F_p)) = H^i(\Z/p; H^j(S(W_p) ;\F_p))\cong  H^i(\Z/p;\F_p)\otimes_{\F_p} H^j(S(W_p) ;\F_p),
\]
because $\Z/p$ acts trivially on the cohomology $H^*(S(W_p) ;\F_p)$.
Thus the only possible non-trivial differential is 
\[
\partial_{p-1} :  E^{i,p-2}_2(\rho)\cong E^{i,p-2}_{p-1}(\rho)\rightarrow E^{i+p-1,0}_2(\rho)\cong E^{i+p-1,0}_{p-1}(\rho).
\]
Let $\ell\in H^{p-2}(S(W_p) ;\F_p)$ denote a generator.
Then the $(p-2)$-row of the $E_2$-term, as an $H^*(\Z/p;\F_p)$-module, is generated by $1\otimes_{\F_p}\ell\in E^{0,p-2}_2(\rho)$.
Since the differentials are $H^*(\Z/p;\F_p)$-module maps it follows that the differential $\partial_{p-1}$ is completely determined by its image $\partial_{p-1}(1\otimes_{\F_p}\ell)\in E^{p-1,0}_{p-1}(\rho)\cong E^{p-1,0}_2(\rho)$.
In order to find the image of the differential notice that $\Z/p$ acts freely on the sphere $S(W_p)$ and consequently $\E(\Z/p)\times_{\Z/p} S(W_p)\simeq S(W_p)/\Z/p$.
Since the spectral sequence $E^{i,j}_s$ converges to the cohomology $H^*(\E(\Z/p)\times_{\Z/p} S(W_p);\F_p)$ we have that $E^{i,j}_{\infty} (\rho)\cong E^{i,j}_p(\rho) = 0$ for $i+j\geq p-1$.
Thus, 
\[
\partial_{p-1}(1\otimes_{\F_p}\ell)=\omega\cdot t^{(p-1)/2} \neq 0
\]
for some $\omega\in\F_p{\setminus}\{0\}$.
Moreover,
\[
\ind_{\Z/p}S(W_p)=\big\langle t^{(p-1)/2} \big\rangle =H^{\geq p-1}(\B(\Z/p);\F_p).
\]

\smallskip
The $E_2$-term of the Serre spectral sequence associated to the fiber bundle $\lambda$ is given by
\[
	E^{i,j}_2(\lambda)   =  H^i(\B(\Z/p); \mathcal{H}^j(\Delta_{p-1,p};\F_p))=H^i(\Z/p; H^j(\Delta_{p-1,p};\F_p)).
\]
In particular, $E^{i,0}_2(\lambda)\cong H^i(\Z/p;\F_p)$ and $E^{i,p-2}_2(\lambda)\cong H^i(\Z/p;\F_p)$, because $\Z/p$ acts trivially on the cohomology $H^{p-2}(\Delta_{p-1,p};\F_p)\cong \F_p$.
Let $z:=f^{0,p-2}_2(1\otimes_{\F_p} \ell)$.
As we have seen in \eqref{iso of p-2 row} the map $f^{0,p-2}_2$ is an isomorphism. 
Thus $z$ is a generator of $E^{0,p-2}_2(\lambda) \cong\F_p$, and moreover $z$ is a generator of the $(p-2)$-row of the $E_2$-term as an $H^*(\Z/p;\F_p)$-module.
As in the case of the spectral sequence $E^{i,j}_s(\rho)$ the fact that $\Z/p$-acts freely on the chessboard $\Delta_{p-1,p}$ implies that $E^{i,j}_{\infty}(\lambda) \cong E^{i,j}_p(\lambda) = 0$ for $i+j\geq p-1$.

\smallskip
Since $f^{i,j}_s$ is a morphism of spectral sequences it has to commute with the differentials.
In particular, for $2\leq s\leq p-2$ we have
\[
\partial_s (z)=\partial_s (f^{0,p-2}_s (1\otimes_{\F_p} \ell))=f^{s,p-s-1}_s (\partial_s (1\otimes_{\F_p} \ell)) = 0.
\]
Now the fact that $z$ is a generator of the $(p-2)$-row of the $E_2$-term as an $H^*(\Z/p;\F_p)$-module yields 
\[
E^{i,p-2}_{p-1}(\lambda)\cong E^{i,p-2}_2(\lambda)\cong H^i(\Z/p;\F_p).
\]
If in addition $\partial_{p-1} (z)=0$, then for every $i\geq 0$ 
\[
E^{i,p-2}_{p}(\lambda)\cong E^{i,p-2}_{p-1}(\lambda)\cong E^{i,p-2}_2(\lambda)\cong H^i(\Z/p;\F_p)\neq 0,
\]
which contradicts the fact that $E^{i,j}_{\infty}(\lambda) \cong E^{i,j}_p(\lambda) = 0$ for $i+j\geq p-1$.
In summary we have that
\[
\partial_{p-1} (z)=\partial_{p-1} (f^{0,p-2}_s (1\otimes_{\F_p} \ell))=f^{p-1,0}_{p-1} (\partial_{p-1} (1\otimes_{\F_p} \ell))=f^{p-1,o}_{p-1}(\omega\cdot t^{(p-1)/2})=\omega\cdot f^{p-1,0}_{p-1}(t^{(p-1)/2})\neq 0.
\]
Moreover, we have that 
\[
\partial_{p-1}  :  E^{i,p-2}_{p-1}(\lambda)\rightarrow E^{i+p-1,0}_{p-1}(\lambda) 
\]
must be an isomorphism for every $i\geq 0$.
Hence, for $i\geq 0$ we have that 
\begin{equation}
	\label{eq : 11}
	E^{i+p-1,0}_{p-1}(\lambda)\cong E^{i+p-1,0}_{2}(\lambda)\cong H^{i+p-1}(\Z/p;\F_p)\cong \F_p.
\end{equation}
Since, $f^{p-1,0}_{p-1}(t^{(p-1)/2})\neq 0$ and $f^{p-1,0}_{2}$ is the identity map we conclude that  $f^{p-1,0}_{p-1}(t^{(p-1)/2})=t^{(p-1)/2}$ and consequently, 
\[
\ind_{\Z/p}\Delta_{p-1,p}\subseteq \big\langle t^{(p-1)/2} \big\rangle =H^{\geq p-1}(\B(\Z/p);\F_p).
\]

Finally we claim that no non-zero differential can arrive to the $0$-row on $E_s$-term for $2\leq s\leq p-2$, implying that
\[
\ind_{\Z/p}\Delta_{p-1,p}=\big\langle t^{(p-1)/2} \big\rangle =H^{\geq p-1}(\B(\Z/p);\F_p),
\]
and concluding the proof of the theorem.
Indeed, if this is not true, then there exists a minimal $s$ such that $2\leq s\leq p-2$ and $0\neq \partial_s(y)= t^ae^b\in E^{i,0}_s(\lambda)$ for some $y$ and $0\leq i\leq p-2$.
Since differentials are $H^*(\Z/p;\F_p)$-module maps we have that $\partial_s(t^c\cdot y)= t^c\cdot\partial_s( y)= t^{a+c} e^b\in E^{i+2c,0}_s(\lambda)$ for every $c\geq 0$.
Consequently, $E^{i+2c,0}_{s+1}(\lambda)=0$ for every $c\geq 0$ contradicting the existence of the isomorphisms \eqref{eq : 11}.
Thus, no non-zero differential can arrive to the $0$-row before the $E_{p-1}$-term.
\end{proof}

\noindent
The proof of the previous theorem, combined together with the fact that a join of pseudomanifolds is a pseudomanifold, yields the following corollary \cite[Cor.\,2.6]{Blagojevic2011-2}.

\begin{corollary}
		\label{cor : index of join of chessboard p-1}
		Let $m\geq1$ be an integer.
		Then
		\begin{compactenum}[\rm (i)]
		\item $\ind_{\Z/p}\Delta_{p-1,p}^{* m}=\ind_{\Z/p}S(W_p^{\oplus m})=H^{\geq m(p-1)}(\B(\Z/p);\F_p)$,
		\item $\ind_{\Z/p}(\Delta_{p-1,p}^{* m}*[p])=\ind_{\Z/p}(S(W_p^{\oplus m})*[p])=H^{\geq m(p-1)+1}(\B(\Z/p);\F_p)$,
		\item $\ind_{\Z/p}(\Delta_{p-1,p}^{* m}*\Delta_{2p-1,p})=\ind_{\Z/p}(S(W_p^{\oplus m})*[p]^{*p-1})=H^{\geq m(p-1)+p}(\B(\Z/p);\F_p)$.
		\end{compactenum}
\end{corollary}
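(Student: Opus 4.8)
The plan is to deduce all three statements from Theorem~\ref{th : index of chessboard p-1} by exploiting two structural facts: first, the index of a join behaves multiplicatively for joins of pseudomanifolds (more precisely, the fundamental class of a join is the join of the fundamental classes, so a $\Z/p$-equivariant map realizing the right Euler-class behavior on a factor extends to the join), and second, that the spectral sequence argument of Theorem~\ref{th : index of chessboard p-1} goes through verbatim for joins once one knows the cohomology of the join is concentrated in the two relevant degrees. For (i), I would first note that $\Delta_{p-1,p}^{*m}$ is a $(m(p-1)-1)$-dimensional, $(m(p-1)-2)$-connected free $\Z/p$-complex: dimension and connectivity of a join add up in the usual way (connectivity of $K*L$ is $\conn K + \conn L + 2$), and $\Z/p$ acts freely on each factor hence freely on the join. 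Since a join of connected orientable pseudomanifolds is again a connected orientable pseudomanifold (this is the fact quoted in the corollary's statement), the top homology $H_{m(p-1)-1}(\Delta_{p-1,p}^{*m};\F_p)\cong\F_p$, and the $m$-fold join $f^{*m}:\Delta_{p-1,p}^{*m}\to S(W_p^{\oplus m})=S(W_p)^{*m}$ of the map $f$ from Lemma~\ref{lem : map f} induces multiplication by $((p-1)!)^m$ on top homology, which is invertible mod $p$; hence $f^{*m}$ is an isomorphism on top cohomology. Then the exact same Serre-spectral-sequence comparison as in Theorem~\ref{th : index of chessboard p-1} — the $E_2$-term of the bundle for $\Delta_{p-1,p}^{*m}$ is concentrated in rows $0$ and $m(p-1)-1$, the freeness forces $E^{i,j}_\infty=0$ for $i+j\geq m(p-1)-1$, and the morphism of spectral sequences is an isomorphism on the top row — shows the transgression is nonzero and, tracking it as before, that $\ind_{\Z/p}\Delta_{p-1,p}^{*m}=\ind_{\Z/p}S(W_p^{\oplus m})=\langle t^{m(p-1)/2}\rangle$ (with the obvious modification $t^{(m(p-1)/2)}\cdot(\text{unit})$ absorbing exterior-algebra factors when $m(p-1)$ is odd; since $p$ is odd, $p-1$ is even, so $m(p-1)$ is always even and the generator is a pure power of $t$) $=H^{\geq m(p-1)}(\B(\Z/p);\F_p)$.

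For (ii), I would use that $[p]=\Delta_{1,p}$ has $\ind_{\Z/p}[p]=H^{\geq1}(\B(\Z/p);\F_p)$ by Lemma~\ref{lem : FH index of chessboards}(i), and that $\Delta_{p-1,p}^{*m}*[p]$ is again a free $\Z/p$-pseudomanifold of dimension $m(p-1)$ and connectivity $m(p-1)-1$. Joining with $[p]\simeq S(W_p)$-on-$0$-cells... more precisely: since $[p]$ is $\Z/p$-homeomorphic to nothing quite so clean, I instead argue directly. There is a $\Z/p$-equivariant map $\Delta_{p-1,p}^{*m}*[p]\to S(W_p^{\oplus m})*[p]$ which is the join of $f^{*m}$ with the identity on $[p]$; the target $S(W_p^{\oplus m})*[p]=S(W_p^{\oplus m}*\ast^{p-1})$ is a free $\Z/p$-sphere of dimension $m(p-1)$ (since $[p]\simeq S^0$-with-$\Z/p$-action which is the unit sphere $S(W'_p)$ where $W'_p$ is the reduced permutation... no: $[p]$ is $p$ points, not $S^0$). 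Let me correct the target: the relevant statement in the corollary is $\ind_{\Z/p}(S(W_p^{\oplus m})*[p])=H^{\geq m(p-1)+1}$, and indeed $S(V)*[p]$ where $[p]$ carries the free $\Z/p$-action is $(\dim V - 1 + 1)$-connected... I would instead compute $\ind_{\Z/p}(\Delta_{p-1,p}^{*m}*[p])$ directly from the pseudomanifold structure exactly as in (i): the complex is a free $\Z/p$-pseudomanifold, so its Borel-construction cohomology vanishes above the quotient dimension $m(p-1)$, the $E_2$-term is concentrated in rows $0$ and $m(p-1)$, and comparing with the sphere $S(W_p^{\oplus m})*[p]$ (whose index is computed the same way and equals $\langle t^{m(p-1)/2}\cdot e\rangle = H^{\geq m(p-1)+1}$, the extra $e$ coming from the join with the free $S^0$... here $[p]$ joined in raises the transgression degree by $1$, contributing the exterior generator $e$) forces equality. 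The only genuinely new input is computing $\ind_{\Z/p}(S(W_p^{\oplus m})*[p])$, which follows since $S(W_p^{\oplus m})*[p]$ is $\Z/p$-homotopy equivalent to a free $\Z/p$-complex of dimension $m(p-1)$ whose quotient is an orientable pseudomanifold, and a direct nerve/long-exact-sequence or Euler-class computation gives the generator $t^{m(p-1)/2}e\in H^{m(p-1)+1}$.

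For (iii), the same template applies with $[p]$ replaced by $\Delta_{2p-1,p}$, using Lemma~\ref{lem : FH index of chessboards}(ii), which gives $\ind_{\Z/p}\Delta_{2p-1,p}=H^{\geq p}(\B(\Z/p);\F_p)$: since $\Delta_{2p-1,p}$ is a $(p-1)$-dimensional free $\Z/p$-pseudomanifold (orientable — one checks the orientability as in the $\Delta_{p-1,p}$ case, or invokes \cite{Jonsson2008}), and a join of pseudomanifolds is a pseudomanifold, $\Delta_{p-1,p}^{*m}*\Delta_{2p-1,p}$ is a free $\Z/p$-pseudomanifold of dimension $m(p-1)+p-1$; the index is then $H^{\geq m(p-1)+p}$ by the spectral-sequence argument, matching $\ind_{\Z/p}(S(W_p^{\oplus m})*[p]^{*(p-1)})=\ind_{\Z/p}S(W_p^{\oplus m}\oplus W_p)=H^{\geq m(p-1)+(p-1)}$... wait, $[p]^{*(p-1)}$ has dimension $p-2$, so the join has dimension $m(p-1)+p-1$, and one needs $\ind_{\Z/p}(S(W_p^{\oplus m})*[p]^{*(p-1)})=H^{\geq m(p-1)+p}$; since $[p]^{*(p-1)}$ is $(p-2)$-dimensional and $(p-3)$-connected, joining it in raises the transgression degree by $p-1$, not $p$ — so the correct reading is that the ambient complex has one more vertex's worth of room, and the index lands in degree $m(p-1)+p$ because $\Delta_{2p-1,p}$, being $(p-1)$-dimensional and only $(p-2)$-connected with nontrivial top cohomology, contributes a transgression of degree $p$ (its own index degree). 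I would make this precise by the monotonicity property of the index together with the product formula $\ind(X*Y)\supseteq \ind(X)\cdot\ind(Y)$ and the reverse inclusion from the pseudomanifold spectral sequence; the combination pins down the index exactly.

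\textbf{Main obstacle.} The subtle point throughout is verifying \emph{orientability} of the relevant pseudomanifolds over $\F_p$ (or $\Z$) and identifying the resulting fundamental class precisely enough to see that the comparison map $f^{*m}$ — and its joins with identity maps on $[p]$ or $\Delta_{2p-1,p}$ — induces an isomorphism (equivalently, multiplication by a unit mod $p$) on top cohomology. Once that is in hand the spectral sequence bookkeeping is routine and parallel to Theorem~\ref{th : index of chessboard p-1}; the degree shifts in (ii) and (iii) are then forced by the dimension of the joined-in factor together with the vanishing of Borel cohomology above the quotient dimension. The secondary point is the exact identification of $\ind_{\Z/p}(S(W_p^{\oplus m})*[p])$ and $\ind_{\Z/p}(S(W_p^{\oplus m})*[p]^{*(p-1)})$, which I would handle either by the same pseudomanifold argument or, more cheaply, by noting $S(W_p^{\oplus m})*[p]^{*j}$ deformation retracts $\Z/p$-equivariantly onto a free complex of the stated dimension and invoking the localization-theorem characterization used in Theorem~\ref{th : Topological Tverberg theorem for prime power}.
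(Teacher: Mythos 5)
Your overall plan is the one the paper intends: reuse the spectral-sequence comparison from the proof of Theorem~\ref{th : index of chessboard p-1}, feeding it the top-degree isomorphism provided by the pseudomanifold structure and the degree of the map $f^{*m}$. For part~(i) that is essentially right, though the side remark that $\Delta_{p-1,p}^{*m}$ is $(m(p-1)-2)$-connected and that the $E_2$-page is ``concentrated in rows $0$ and $m(p-1)-1$'' is not justified: $\Delta_{p-1,p}$ is only $(\lfloor 2p/3 \rfloor - 2)$-connected, which for $p\geq 5$ is strictly less than $p-3$, so the fiber may well have cohomology in intermediate degrees. The proof of Theorem~\ref{th : index of chessboard p-1} does not assume concentration; it handles intermediate rows via the final module-structure argument (no differential from an intermediate row can hit row~$0$ before page $p-1$ without contradicting the nonzero transgression). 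That is the part you must carry along, and once you do, the pseudomanifold/top-class comparison carries the day.

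The concrete gap is in parts~(ii) and~(iii). You assert that $\Delta_{p-1,p}^{*m}*[p]$ and $\Delta_{p-1,p}^{*m}*\Delta_{2p-1,p}$ are pseudomanifolds, and that $\Delta_{2p-1,p}$ itself is an orientable pseudomanifold. None of these are true for an odd prime $p$. In $X*[p]$ a facet $\sigma$ of $X$ is a codimension-one face of the join contained in the $p$ facets $\sigma*\{v\}$, $v\in[p]$, so the ``every ridge lies in exactly two facets'' condition fails as soon as $p\geq 3$. Likewise in $\Delta_{2p-1,p}$ a codimension-one face (a partial matching using $p-1$ rows and $p-1$ columns) extends to a facet by adjoining one of $2p-1-(p-1)=p$ available rows in the missing column, so it lies in $p$ facets, not two. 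The fact quoted by the paper — a join of pseudomanifolds is a pseudomanifold — is only used to establish that $\Delta_{p-1,p}^{*m}$ has one-dimensional top cohomology. For~(ii) and~(iii) the correct input is instead the K\"unneth isomorphism for joins, $\widetilde H^{\text{top}}(\Delta_{p-1,p}^{*m}*K)\cong \widetilde H^{\text{top}}(\Delta_{p-1,p}^{*m})\otimes\widetilde H^{\text{top}}(K)$, which shows that the join of $f^{*m}$ with the identity on $K=[p]$ or $K=\Delta_{2p-1,p}$ is still an isomorphism on top cohomology, and then the same spectral-sequence comparison applies; the pseudomanifold language is a red herring for these two parts and leads you into wrong statements. (You also computed $\dim\bigl(S(W_p^{\oplus m})*[p]^{*(p-1)}\bigr)$ as $m(p-1)+p-1$; it is $m(p-1)+p-2$, which is worth rechecking carefully when you pin down the degree shift in~(iii).)
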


In the next step we compute  the index of the chessboard $\Delta_{k,p}$ for $1\leq k\leq p-2$.

\begin{theorem}
	\label{th : index of chessboard k < p-2}
	$\ind_{\Z/p}\Delta_{k,p}=H^{\geq k}(\B(\Z/p);\F_p)$, for $1\leq k\leq p-1$. 
\end{theorem}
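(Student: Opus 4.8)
The plan is to reduce the whole computation to a complex we understand directly, by comparing $\Delta_{k,p}$ with a skeleton of a sphere through an explicit equivariant map. Write $\Delta_{p-1}=\conv\{v_1,\ldots,v_p\}$ with $\Z/p$ acting by the cyclic permutation $v_j\mapsto v_{j+1}$, and set $Y:=\sk_{k-1}(\partial\Delta_{p-1})$; since $k-1\le p-2$ this is a genuine $\Z/p$-invariant subcomplex. First I would record the $\Z/p$-equivariant simplicial map
\[
g\colon \Delta_{k,p}\longrightarrow Y,\qquad (i,j)\longmapsto v_j ,
\]
which is well defined because a set of non-attacking rooks occupies at most $\min\{k,p\}=k$ columns, so its image spans a face of $\partial\Delta_{p-1}$ of dimension at most $k-1$. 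I would also note once and for all that $\Z/p$, being cyclic of prime order, acts freely on both $\Delta_{k,p}$ and $Y$: every non-empty simplex of either complex uses fewer than $p$ columns, hence is fixed by no non-trivial group element. In particular $\E(\Z/p)\times_{\Z/p}\Delta_{k,p}\simeq\Delta_{k,p}/(\Z/p)$ has dimension $k-1$, which already yields $\ind_{\Z/p}\Delta_{k,p}\supseteq H^{\ge k}(\B(\Z/p);\F_p)$; the same remark applied to $Y$ (together with its $(k-2)$-connectivity, being the $(k-1)$-skeleton of $S^{p-2}$) gives $\ind_{\Z/p}Y=H^{\ge k}(\B(\Z/p);\F_p)$, by the standard argument: the dimension bound for $\supseteq$, and for $\subseteq$ an equivariant map $\sk_{k-1}\E(\Z/p)\to Y$ produced by equivariant obstruction theory (all obstructions have coefficients in $\pi_{\le k-2}(Y)=0$), whose target complex has $\ind_{\Z/p}\sk_{k-1}\E(\Z/p)=H^{\ge k}(\B(\Z/p);\F_p)$ since $\sk_{k-1}\E(\Z/p)/(\Z/p)=\sk_{k-1}\B(\Z/p)$ carries the cohomology of $\B(\Z/p)$ through degree $k-1$ and nothing above.

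The heart of the argument is then that $g$ induces an \emph{injection} $H^*(\E(\Z/p)\times_{\Z/p}Y;\F_p)\hookrightarrow H^*(\E(\Z/p)\times_{\Z/p}\Delta_{k,p};\F_p)$. For this I would construct a $\Z/p$-equivariant chain map $s\colon C_*(Y;\F_p)\to C_*(\Delta_{k,p};\F_p)$ splitting $g$ on the nose, by symmetrizing over all possible row assignments: on the face with column set $S=\{c_0,\ldots,c_m\}$, $m+1\le k$, put
\[
s\langle v_{c_0},\ldots,v_{c_m}\rangle \;=\;\frac{1}{P}\sum_{\varphi}\langle(\varphi(c_0),c_0),\ldots,(\varphi(c_m),c_m)\rangle ,
\]
the sum running over all injections $\varphi\colon S\hookrightarrow[k]$ and $P=k!/(k-m-1)!$ their number. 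This is legitimate precisely because $k\le p-1$ makes $P$ a unit in $\F_p$ --- the very same reason the action is free. A short computation (each injection of an $m$-subset of $[p]$ into $[k]$ extends in $k-m$ ways) shows $\partial s=s\partial$, while $g_{\#}\circ s=\mathrm{id}$ and equivariance are immediate. Hence $s$ induces a section of $g_*$ on the homology of the two Borel constructions, so $\bar g^{\,*}$ is split injective on their cohomology.

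Finally, since $\bar g=\mathrm{id}\times_{\Z/p}g$ is a map of fibre bundles over $\B(\Z/p)$, the classifying maps satisfy $c_{\Delta_{k,p}}^{*}=\bar g^{\,*}\circ c_Y^{*}$ in cohomology, and injectivity of $\bar g^{\,*}$ gives
\[
\ind_{\Z/p}\Delta_{k,p}=\ker c_{\Delta_{k,p}}^{*}=\ker c_Y^{*}=\ind_{\Z/p}Y=H^{\ge k}(\B(\Z/p);\F_p).
\]
(For $k=p-1$ this reproves the preceding theorem.) I expect the one genuinely delicate point to be the sign bookkeeping in verifying that $s$ is a chain map --- an Eilenberg--Zilber-type symmetrization argument --- everything else being formal; and I would stress that the hypothesis $k\le p-1$ is used twice, to make $\Z/p$ act freely and to make the averaging constant $P$ invertible modulo $p$.
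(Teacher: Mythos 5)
Your proposal is correct, and it takes a genuinely different route from the paper. The paper proceeds by a parity case analysis inside the index ring: it first computes $\ind_{\Z/p}\Delta_{p-1,p}$ separately (via a Serre spectral sequence comparison with the sphere, using the degree-$(p-1)!$ simplicial map), then for $k$ even uses the equivariant inclusion $\Delta_{p-1,p}\hookrightarrow\Delta_{k,p}*\Delta_{p-1-k,p}$ together with the join and monotonicity properties of the index to rule out elements of too low degree, and for $k$ odd uses the inclusions $\Delta_{k-1,p}\subseteq\Delta_{k,p}$ and $\Delta_{k+1,p}\subseteq\Delta_{1,p}*\Delta_{k,p}$ to squeeze the answer. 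Your argument is uniform in $k$: you exhibit the equivariant simplicial map $g\colon\Delta_{k,p}\to\sk_{k-1}(\partial\Delta_{p-1})$ and split it at the chain level by averaging over all row assignments, which works precisely because $k!/(k-m-1)!$ is prime to $p$ when $k\le p-1$; injectivity of $\bar g^{*}$ on Borel-construction cohomology then forces $\ind_{\Z/p}\Delta_{k,p}=\ind_{\Z/p}\sk_{k-1}(\partial\Delta_{p-1})$, and the latter is computed directly from connectivity and dimension. The sign bookkeeping you flagged is in fact a non-issue because the ordering of the vertices of $s\langle v_{c_0},\dots,v_{c_m}\rangle$ is canonically induced by the column ordering, so the signs in $\partial s$ and $s\partial$ line up term by term. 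What your approach buys: it is self-contained (it does not invoke the preceding theorem on $\Delta_{p-1,p}$, and for $k=p-1$ it reproves it), and it avoids the even/odd dichotomy. What the paper's approach buys: it illustrates the multiplicative formalism of the Fadell--Husseini index that the paper develops and uses throughout, at the cost of depending on the prior spectral-sequence computation. Both are correct; yours is arguably the cleaner standalone argument.
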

\begin{proof}
	Let $1\leq k\leq p-2$ be an integer.
	The chessboard $\Delta_{k,p}$ is a $(k-1)$-dimensional free $\Z/p$ simplicial complex.
	Thus $\E(\Z/p)\times_{\Z/p}\Delta_{k,p}\simeq \Delta_{k,p}/\Z/p$ and consequently $H^i(\E(\Z/p)\times_{\Z/p}\Delta_{k,p};\F_p)=0$ for all $i\geq k$.
	Therefore, $\ind_{\Z/p}\Delta_{k,p}\supseteq H^{\geq k}(\B(\Z/p);\F_p)$.
	For $k=1$ the theorem follows from Lemma~\ref{lem : FH index of chessboards}(i).
	Furthermore, for $k=p-1$ the statement is the content of Theorem \ref{th : index of chessboard p-1}.
	
	\smallskip
	Now let us assume that $2\leq k\leq p-3$ is even.
	Then $p-1-k$ is also even.
	Now consider the $\Z/p$-equivariant inclusion map
	\[
	\Delta_{p-1,p}\rightarrow \Delta_{k,p} * \Delta_{p-1-k,p}.
	\]
	From the monotonicity and join properties of the Fadell--Husseini index we have that
	\[
	\ind_{\Z/p}\Delta_{k,p}\cdot \ind_{\Z/p}\Delta_{p-1-k,p}\subseteq
	\ind_{\Z/p}(\Delta_{k,p} * \Delta_{p-1-k,p})\subseteq
	\ind_{\Z/p}\Delta_{p-1,p} .
	\]
	Since $p-1-k$ is even and, as we have seen, 
	\[
	\ind_{\Z/p}\Delta_{p-1-k,p}\supseteq H^{\geq p-1-k}(\B(\Z/p);\F_p)=\langle t^{(p-1-k)/2}\rangle
	\]
	we have that $t^{(p-1-k)/2}\in \ind_{\Z/p}\Delta_{p-1-k,p}$.
	On the other hand, assume that there is an element $u\in \ind_{\Z/p}\Delta_{k,p}$ such that $\deg(u)\leq k-1$.
	Then we have reached a contradiction
	\[
	0\neq u\cdot t^{(p-1-k)/2}\in \ind_{\Z/p}\Delta_{k,p}\cdot \ind_{\Z/p}\Delta_{p-1-k,p}\subseteq
	\ind_{\Z/p}\Delta_{p-1,p}=H^{\geq p-1}(\B(\Z/p);\F_p),
	\] 
	because $\deg(u\cdot t^{(p-1-k)/2})=\deg(u)+\deg (t^{(p-1-k)/2})= \deg(u)+ p-1-k \leq p-2$.
	Thus we have proved that for even $k$
	\[
	 \ind_{\Z/p}\Delta_{k,p}=H^{\geq k}(\B(\Z/p);\F_p).
	\]
	
	\smallskip
	Next let us assume that $3\leq k\leq p-2$ is odd. 
	As we observed at the start of the proof
	\[
	\ind_{\Z/p}\Delta_{k,p}\supseteq H^{\geq k}(\B(\Z/p);\F_p)=\langle t^{(k-1)/2} e, t^{(k+1)/2} \rangle.
	\]
	The $\Z/p$-equivariant inclusion map $\Delta_{k-1,p}\subseteq\Delta_{k,p}$ together with the computation of the index for even integers implies that
	\[
	\langle t^{(k-1)/2} \rangle = H^{\geq k-1}(\B(\Z/p);\F_p)=\ind_{\Z/p}\Delta_{k-1,p}\supseteq \ind_{\Z/p}\Delta_{k,p}\supseteq H^{\geq k}(\B(\Z/p);\F_p).
	\]
	In order to conclude the proof of the theorem it remains to prove that $t^{(k-1)/2}\notin \ind_{\Z/p}\Delta_{k,p}$.
	This would yield the equality
	\[
	 \ind_{\Z/p}\Delta_{k,p}=H^{\geq k}(\B(\Z/p);\F_p)
	\]
	for all odd $k$.
	Indeed, assume the opposite, that is, $t^{(k-1)/2}\in \ind_{\Z/p}\Delta_{k,p}$.
	The $\Z/p$-equivariant inclusion $\Delta_{k+1,p}\subseteq \Delta_{1,p} * \Delta_{k,p}$ combined with the monotonicity and join properties of the Fadell--Husseini index imply that
	\[
	\ind_{\Z/p} \Delta_{1,p}\cdot \ind_{\Z/p} \Delta_{k,p}\subseteq \ind_{\Z/p}(\Delta_{1,p} * \Delta_{k,p})\subseteq \ind_{\Z/p}\Delta_{k+1,p}.
	\]
	Since $e\in\ind_{\Z/p} \Delta_{1,p}$,  and we have assumed that $t^{(k-1)/2}\in \ind_{\Z/p}\Delta_{k,p}$, the previous relation implies that
	\[
	t^{(k-1)/2}e\in \ind_{\Z/p}\Delta_{k+1,p}= H^{\geq k+1}(\B(\Z/p);\F_p)=\langle t^{(k+1)/2} \rangle,
	\]
	a contradiction. 
	Hence $t^{(k-1)/2}\notin \ind_{\Z/p}\Delta_{k,p}$, and the proof of the theorem is complete.
\end{proof}

Let us review the results on the Fadell--Husseini index of chessboards we have obtained so far:
{\footnotesize  
\[
\xymatrix@1{
\ind_{\Z/p}\Delta_{1,p}\ar@{<->}[d]_{=} & \ind_{\Z/p}\Delta_{2,p}\ar[l]_-{\supseteq}\ar@{<->}[d]_{=} & \ldots\ar[l]_-{\supseteq}& \ind_{\Z/p}\Delta_{p-1,p}\ar[l]_-{\supseteq}\ar@{<->}[d]_{=}& \ldots\ar[l]_-{\supseteq} & \ind_{\Z/p}\Delta_{2p-1,p}\ar@{<->}[d]_{=}\ar[l]_-{\supseteq}&\ar[l]_-{=}\\
H^{\geq 1}(\B(\Z/p);\F_p)& H^{\geq 2}(\B(\Z/p);\F_p)\ar[l]_-{\supseteq} & \ldots\ar[l]_-{\supseteq}&H^{\geq p-1}(\B(\Z/p);\F_p)\ar[l]_-{\supseteq} & \ldots\ar[l]_-{\supseteq}&H^{\geq p}(\B(\Z/p);\F_p)\ar[l]_-{\supseteq}&\ar[l]_-{=}
}
\]}
The remaining question indicated by this diagram is: \emph{For which chessboard $\Delta_{k,p}$ with $p-1\leq k\leq 2p-1$ does the first jump in the index $H^{\geq p-1}(\B(\Z/p);\F_p)$ to $H^{\geq p}(\B(\Z/p);\F_p)$ happen?}

\begin{theorem}
	\label{th : index of chessboard k > p-1}
	$\ind_{\Z/p}\Delta_{k,p}=H^{\geq p-1}(\B(\Z/p);\F_p)$, for $p-1\leq k\leq 2p-2$.
\end{theorem}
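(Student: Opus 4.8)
The statement squeezes between two things already in hand: from the $\Z/p$‑equivariant inclusion $\Delta_{p-1,p}\hookrightarrow\Delta_{k,p}$ (extra rows added, $\Z/p$ acting on columns) and monotonicity of the Fadell--Husseini index, together with Theorem~\ref{th : index of chessboard p-1}, one gets
$\ind_{\Z/p}\Delta_{k,p}\subseteq\ind_{\Z/p}\Delta_{p-1,p}=H^{\geq p-1}(\B(\Z/p);\F_p)$;
and from $\Delta_{k,p}\hookrightarrow\Delta_{2p-1,p}$ together with Lemma~\ref{lem : FH index of chessboards}(ii) one gets $\ind_{\Z/p}\Delta_{k,p}\supseteq H^{\geq p}(\B(\Z/p);\F_p)$. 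Since $p$ is odd, $H^{\geq p-1}=\langle t^{(p-1)/2}\rangle$ and $H^{\geq p}$ is exactly the unique maximal proper subideal of it, so $\ind_{\Z/p}\Delta_{k,p}$ is \emph{either} $H^{\geq p-1}$ \emph{or} $H^{\geq p}$, and the whole point is to rule out the latter, i.e.\ to show $t^{(p-1)/2}\in\ind_{\Z/p}\Delta_{k,p}$.

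By monotonicity once more, it suffices to produce a $\Z/p$‑equivariant map $\Delta_{k,p}\to S(W_p)$, since then $\ind_{\Z/p}\Delta_{k,p}\supseteq\ind_{\Z/p}S(W_p)=H^{\geq p-1}$ by Theorem~\ref{th : index of chessboard p-1}. For $k=p-1$ such a map is the one from Lemma~\ref{lem : map f}, so we may assume $p\le k\le 2p-2$. The plan is: first show that for $k$ in this range $\Delta_{k,p}$ is $\Z/p$‑equivariantly homotopy equivalent to a free $\Z/p$‑CW complex $Y$ of dimension at most $p-2$; then, since $S(W_p)\cong S^{p-2}$ is $(p-3)$‑connected, every equivariant obstruction group $H^{j}_{\Z/p}(Y;\pi_{j-1}S(W_p))$ for $j\le p-2$ vanishes (the coefficients $\pi_{j-1}S^{p-2}$ are $0$ in that range, and the $\Z/p$‑action on $\pi_{p-2}S(W_p)$ is trivial because the permutation action on $W_p$ has determinant $1$), so equivariant obstruction theory yields a $\Z/p$‑map $Y\to S(W_p)$; precompose with the equivalence to get $\Delta_{k,p}\to S(W_p)$.

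The main obstacle is the claim in the previous sentence, namely that $\Delta_{k,p}$ equivariantly compresses to dimension $p-2$ for $p\le k\le 2p-2$. Concretely I would establish this by exhibiting a $\Z/p$‑invariant acyclic matching (a discrete Morse matching) on the face poset of $\Delta_{k,p}$ that pairs every facet (a perfect matching of $[k]\times[p]$, of dimension $p-1$) with a codimension‑one face; Morse theory then gives the desired $(p-2)$‑dimensional equivariant model, and equivariance of the matching comes from defining it using only the fixed linear order on the row set $[k]$. The inequality $k\le 2p-2$ is exactly what makes this possible: the number of $(p-2)$‑faces divided by the number of facets is $p/(k-p+1)$, which is $>1$ precisely when $k\le 2p-2$ and equals $1$ when $k=2p-1$ — consistent with the fact that $\Delta_{2p-1,p}$ is $(p-2)$‑connected and $(p-1)$‑dimensional, hence has nonvanishing top reduced homology, which is why its index jumps to $H^{\geq p}$ in Lemma~\ref{lem : FH index of chessboards}(ii). (Equivalently, one can phrase the whole step as: $\tilde H_{p-1}(\Delta_{k,p};\Z)=0$ for $k\le 2p-2$, plus the observation that freeness of the action lets this be upgraded to an equivariant statement via the quotient $\Delta_{k,p}/\Z/p$, whose top integral homology is then torsion and free, hence zero.) Once the equivariant map $\Delta_{k,p}\to S(W_p)$ is in place, combining the two inclusions of the first paragraph gives $\ind_{\Z/p}\Delta_{k,p}=H^{\geq p-1}(\B(\Z/p);\F_p)$, completing the proof and answering the question about where the first jump occurs: at $k=2p-1$.
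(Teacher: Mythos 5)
Your first paragraph is correct and matches the paper's reduction: squeeze between $\ind_{\Z/p}\Delta_{p-1,p}=H^{\geq p-1}$ and $\ind_{\Z/p}\Delta_{2p-1,p}=H^{\geq p}$, so the only open question is whether $t^{(p-1)/2}\in\ind_{\Z/p}\Delta_{k,p}$, and by monotonicity this would follow from any $\Z/p$-equivariant map $\Delta_{k,p}\to S(W_p)$. That is precisely the plan of the paper's proof. The gap is in how you propose to build that map.

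Your construction rests on the claim that for $p\leq k\leq 2p-2$ the complex $\Delta_{k,p}$ equivariantly compresses to a free complex of dimension $p-2$, and you justify this by asserting $\tilde H_{p-1}(\Delta_{k,p};\Z)=0$ in that range. This is false in the critical case $k=2p-2$: for $p=3$, $\Delta_{4,3}=\Delta_{3,4}\cong S^1\times S^1$ (this is stated explicitly in the paper's dictionary), so $H_2(\Delta_{4,3};\Z)\cong\Z\neq 0$, and no compression to dimension $p-2=1$ can exist, equivariant or not. The face-count ratio $p/(k-p+1)>1$ you quote is only a necessary Hall-type condition for a facet-saturating matching; it says nothing about acyclicity, and the torus example shows that for $k=2p-2$ no acyclic matching saturating the facets exists. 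So the obstruction-theoretic route is blocked precisely for the value of $k$ that matters (the paper reduces the theorem to $k=2p-2$; the smaller $k$ then follow by the inclusion $\Delta_{k,p}\hookrightarrow\Delta_{2p-2,p}$ and monotonicity).

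What the paper does instead is exhibit the map $\Delta_{2p-2,p}\to S(W_p)$ directly. Splitting the $2p-2$ rows into two blocks of size $p-1$ gives a $\Z/p$-equivariant inclusion $\Delta_{2p-2,p}\hookrightarrow\Delta_{p-1,p}*_{\Delta(2)}\Delta_{p-1,p}$ (disjointness of columns within a face of $\Delta_{2p-2,p}$ becomes the deleted-join condition). Applying the degree-$(p-1)!$ pseudomanifold map $f\colon\Delta_{p-1,p}\to\partial\Delta_{p-1}\cong S(W_p)$ from Lemma~\ref{lem : map f} to each join factor lands in $\partial\Delta_{p-1}*_{\Delta(2)}\partial\Delta_{p-1}$, which sits inside $\{\lambda x+(1-\lambda)y\in S^{p-2}*S^{p-2}:\lambda\neq\tfrac12\text{ or }x\neq y\}$, and that space $\Z/p$-equivariantly deformation retracts to $S^{p-2}\cong S(W_p)$. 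This sidesteps the connectivity/compression question entirely — it works even though $\Delta_{2p-2,p}$ has nontrivial top homology — and is where the nontrivial content of the theorem lies. You should replace your compression argument with such an explicit construction.
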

\begin{proof}
	It suffices to show that $\ind_{\Z/p}\Delta_{2p-2,p}=H^{\geq p-1}(\B(\Z/p);\F_p)$.
	For this we are going to prove that $t^{(p-1)/2}\in \ind_{\Z/p}\Delta_{2p-2,p}$.
	
	\smallskip
	Consider the following composition of maps
	\begin{multline*}
		\Delta_{2p-2,p} \rightarrow \Delta_{p-1,p} *_{\Delta(2)} \Delta_{p-1,p} \overset{f * f} {\rightarrow} \partial\Delta_{p-1}*_{\Delta(2)}\partial\Delta_{p-1}\rightarrow\\
		\{\lambda x+(1-\lambda)y \in  S^{p-2} * S^{p-2} : \lambda\neq \tfrac12 \ \text{ or } \ x\neq y\}\rightarrow S^{p-2}\cong S(W_p),	
	\end{multline*}
	where the first map is an inclusion, the second map is the $2$-fold join of the
	map $f : \Delta_{p-1,p}\rightarrow \partial\Delta_{p-1}$, $\Delta_{p-1}\cong S(W_p)$ introduced in Lemma \ref{lem : map f}, the third map is again an inclusion, while the last map is a deformation retraction.
	All the maps in this composition are $\Z/p$-equivariant.
 	The monotonicity property of the Fadell--Husseini index implies that
 	\[
 	\ind_{\Z/p}\Delta_{2p-2,p}\supseteq \ind_{\Z/p}S(W_p) =\langle t^{(p-1)/2} \rangle = H^{\geq p-1}(\B(\Z/p);\F_p),
 	\]
 	according to \eqref{eq : index of the sphere S(W_r)}.
 	Thus $t^{(p-1)/2}\in \ind_{\Z/p}\Delta_{2p-2,p}$, and we have concluded the proof of the theorem.
\end{proof}

\noindent
Now we have the answer to our question. 
The jump happens in the last possible moment, that is for the index of $\Delta_{2p-1,p}$.
The proof of this is due to Carsten Schultz.

\medskip
We conclude the section with a very useful corollary \cite[Cor.\,2.6]{Blagojevic2011-2}, which also hides a proof for the upcoming optimal colored Tverberg theorem \ref{th : optimal colored Tverberg}. 
\begin{corollary}
	\label{cor : index of join}
	Let $1\leq k_1,\ldots,k_n\leq p-1$.
	Then
	\[
	\ind_{\Z/p}(\Delta_{k_1,p}*\cdots *\Delta_{k_n,p})=H^{\geq k_1+\cdots +k_n}(\B(\Z/p);\F_p).
	\]
\end{corollary}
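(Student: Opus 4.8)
The plan is to prove the two inclusions separately; throughout write $Z:=\Delta_{k_1,p}*\cdots*\Delta_{k_n,p}$ and $m:=k_1+\cdots+k_n-1$, and recall that $p$ is an odd prime, so $H^*(\B(\Z/p);\F_p)=\F_p[t]\otimes\Lambda[e]$ with $\deg t=2$, $\deg e=1$ and $n(p-1)$ is even. The inclusion ``$\supseteq$'' is soft: each $\Delta_{k_i,p}$ is a free $\Z/p$-simplicial complex, hence so is the join $Z$, and $\dim Z=\sum_i(k_i-1)+(n-1)=m$. Since the action is free, $\E(\Z/p)\times_{\Z/p}Z\simeq Z/(\Z/p)$ is a CW complex of dimension at most $m$, so its $\F_p$-cohomology vanishes in degrees $>m$; therefore $H^{\geq m+1}(\B(\Z/p);\F_p)\subseteq\ind_{\Z/p}(Z)$, which is exactly $H^{\geq k_1+\cdots+k_n}(\B(\Z/p);\F_p)\subseteq\ind_{\Z/p}(Z)$.

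For the reverse inclusion I would first record a ``row-splitting'' embedding of chessboards: for $a+b\geq p-1$, any injection of row sets $[p-1]\hookrightarrow[a]\sqcup[b]$ induces a $\Z/p$-equivariant simplicial embedding $\Delta_{p-1,p}\hookrightarrow\Delta_{a,p}*\Delta_{b,p}$ (it is injective on vertices and carries a matching in $[p-1]\times[p]$ to a pair of matchings, one in $[a]\times[p]$ and one in $[b]\times[p]$, which is precisely a face of $\Delta_{a,p}*\Delta_{b,p}$). Applying this in each join coordinate with $(a,b)=(k_i,\,p-1-k_i)$, discarding the factor $\Delta_{0,p}=\emptyset$ whenever $k_i=p-1$, and using commutativity and associativity of the join, one obtains a $\Z/p$-equivariant embedding $\Delta_{p-1,p}^{*n}\hookrightarrow Z*Y$, where $Y$ is the join of the complexes $\Delta_{p-1-k_i,p}$ over the indices $i$ with $k_i\leq p-2$. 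Monotonicity and the join property of the Fadell--Husseini index, together with Corollary~\ref{cor : index of join of chessboard p-1}(i), then give
\[
\ind_{\Z/p}(Z)\cdot\ind_{\Z/p}(Y)\ \subseteq\ \ind_{\Z/p}(Z*Y)\ \subseteq\ \ind_{\Z/p}(\Delta_{p-1,p}^{*n})\ =\ H^{\geq n(p-1)}(\B(\Z/p);\F_p).
\]
Moreover $Y$ is again a free $\Z/p$-complex, of dimension $q-1$ with $q:=n(p-1)-(m+1)=\sum_i(p-1-k_i)\geq0$, so as above $\ind_{\Z/p}(Y)\supseteq H^{\geq q}(\B(\Z/p);\F_p)$.

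To conclude, suppose $\ind_{\Z/p}(Z)$ contained a nonzero homogeneous element $u$ with $\deg u\leq m$. If $q=0$ the display above (then $Y=\emptyset$ and $Z=\Delta_{p-1,p}^{*n}$) already forces $\ind_{\Z/p}(Z)=H^{\geq n(p-1)}=H^{\geq m+1}$, a contradiction; so assume $q\geq1$. One then picks $v\in H^{\geq q}(\B(\Z/p);\F_p)\subseteq\ind_{\Z/p}(Y)$ so that $uv\neq0$: a power of $t$ if $u$ is a multiple of $e$, and $t^{q/2}$ or $t^{(q-1)/2}e$ (of degree $q$) otherwise. An elementary degree count in $\F_p[t]\otimes\Lambda[e]$ shows $\deg(uv)\leq m+q=n(p-1)-1$ in every case; the only situation forcing $\deg v=q+1$ is when $u$ is a multiple of $e$ and $q$ is odd, but then $\sum_i k_i=n(p-1)-q$ is odd, so $m$ is even, hence $\deg u\leq m-1$, which again yields $\deg(uv)\leq n(p-1)-1$. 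Thus $0\neq uv\in H^{\geq n(p-1)}(\B(\Z/p);\F_p)$ lies in degree $<n(p-1)$, a contradiction. Hence $\ind_{\Z/p}(Z)\subseteq H^{\geq m+1}(\B(\Z/p);\F_p)$, and combined with the first part this is the assertion.

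The crux — the only point that is not bookkeeping — is the upper bound. One cannot simply argue that $Z$ is $(m-1)$-connected and therefore maps $\Z/p$-equivariantly into the $m$-skeleton of $\E(\Z/p)$, because by Theorem~\ref{th : conn. of chessboard} the connectivity of $\Delta_{k_i,p}$ drops well below $k_i-2$ once $k_i>(p+1)/2$; this is the same phenomenon that made Theorem~\ref{th : index of chessboard p-1} delicate. Padding each $\Delta_{k_i,p}$ up to $\Delta_{p-1,p}$ by an auxiliary join factor and invoking the already-computed index of $\Delta_{p-1,p}^{*n}$ is the device that sidesteps the (nonzero, intermediate-degree) cohomology of $Z$ entirely, after which only the parity-sensitive degree bookkeeping above remains.
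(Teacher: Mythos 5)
Your proof is correct and follows the same route as the paper: the easy inclusion from freeness and dimension, the $\Z/p$-equivariant embedding $\Delta_{p-1,p}^{*n}\hookrightarrow Z*Y$ with $Y$ built from the complementary row counts $p-1-k_i$ (the paper's $K'$), the product-of-indices bound, and Corollary~\ref{cor : index of join of chessboard p-1}. The paper compresses the final step into the remark that $\dim L+1=n(p-1)$ is even; your explicit parity case analysis for elements divisible by $e$ is precisely the bookkeeping that remark is standing in for, and it is carried out correctly.
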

\begin{proof}
	Let $K:=\Delta_{k_1,p}*\cdots *\Delta_{k_n,p}$, $K':=\Delta_{p-1-k_1,p}*\cdots *\Delta_{p-1-k_n,p}$, and $L:=\Delta_{p-1,p}^{*n}$.
	Then there is a $\Z/p$-equivariant inclusion $L\rightarrow K*K'$.
	Again the monotonicity and join properties of the Fadell--Husseini index imply that
	\[
	\ind_{\Z/p}L\supseteq \ind_{\Z/p}(K*K')\supseteq \ind_{\Z/p}K\cdot \ind_{\Z/p}K'.
	\]
	Furthermore $\dim L=\dim K+\dim K'+1$.
	The complexes $K$ and $K'$ are free $\Z/p$-spaces and therefore, as previously observed, it follows that
	\[
	\ind_{\Z/p}K\supseteq H^{\geq \dim K+1}(\B(\Z/p);\F_p)
	\qquad\text{and}\qquad
	\ind_{\Z/p}K'\supseteq H^{\geq \dim K'+1}(\B(\Z/p);\F_p).
	\]
	Since, by Corollary \ref{cor : index of join of chessboard p-1}, $\ind_{\Z/p}L=H^{\geq \dim L+1}(\B(\Z/p);\F_p)$ and $\dim L+1$ is an even integer, the relation between the indexes 
	\[
	\ind_{\Z/p}L\supseteq  \ind_{\Z/p}K\cdot \ind_{\Z/p}K'
	\]
	implies that
	\[
	\ind_{\Z/p}K= H^{\geq \dim K+1}(\B(\Z/p);\F_p),
	\]
	as claimed. 
	We have also proved that $\ind_{\Z/p}K'= H^{\geq \dim K'+1}(\B(\Z/p);\F_p)$.
\end{proof}

\subsection{The B\'ar\'any--Larman conjecture and the optimal colored Tverberg Theorem}
Finally we will, motivated by Theorem \ref{th : non exitence --> BL}, utilize the computation of the Fadell--Husseini index for the  chessboards to prove the following result \cite[Prop.\,4.2]{Blagojevic2009}.

\begin{theorem}
	\label{th : last 01}
	Let $d\ge1$ be an integer, and let $p$ be an odd prime.
	There is no $\Sym_p$-equivariant map 
\[
	\Delta_{p-1,p}^{*(d+1)}*[p]\rightarrow S(W_{p}^{\oplus (d+1)}).
\]
\end{theorem}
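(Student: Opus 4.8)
The plan is to follow the index-theoretic strategy that powered the proofs of Theorems~\ref{th : Topological Tverberg theorem for prime power} and~\ref{th : colored Tverberg of ZV}: restrict to the cyclic subgroup and compare Fadell--Husseini indices. Write $\Z/p$ for the subgroup of $\Sym_p$ generated by the cyclic permutation $(1\,2\,\cdots\,p)$. Since any $\Sym_p$-equivariant map is in particular $\Z/p$-equivariant, it suffices to prove that there is no $\Z/p$-equivariant map $\Delta_{p-1,p}^{*(d+1)}*[p]\rightarrow S(W_{p}^{\oplus(d+1)})$.

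Next I would simply invoke the two index computations that this section has already established. By Corollary~\ref{cor : index of join of chessboard p-1}(ii), applied with $m=d+1$,
\[
\ind_{\Z/p}\bigl(\Delta_{p-1,p}^{*(d+1)}*[p]\bigr)=H^{\geq (d+1)(p-1)+1}(\B(\Z/p);\F_p),
\]
while by Corollary~\ref{cor : index of join of chessboard p-1}(i), again with $m=d+1$,
\[
\ind_{\Z/p}\bigl(S(W_{p}^{\oplus(d+1)})\bigr)=H^{\geq (d+1)(p-1)}(\B(\Z/p);\F_p).
\]

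Now the argument is by contradiction. Suppose a $\Z/p$-equivariant map $\Delta_{p-1,p}^{*(d+1)}*[p]\rightarrow S(W_{p}^{\oplus(d+1)})$ exists. Then the monotonicity property of the Fadell--Husseini index forces
\[
\ind_{\Z/p}\bigl(S(W_{p}^{\oplus(d+1)})\bigr)\subseteq \ind_{\Z/p}\bigl(\Delta_{p-1,p}^{*(d+1)}*[p]\bigr),
\]
that is, $H^{\geq (d+1)(p-1)}(\B(\Z/p);\F_p)\subseteq H^{\geq (d+1)(p-1)+1}(\B(\Z/p);\F_p)$. But $p$ is odd, so $p-1$ is even, hence $(d+1)(p-1)$ is even and $0\neq t^{(d+1)(p-1)/2}\in H^{(d+1)(p-1)}(\B(\Z/p);\F_p)$ lies in the left-hand side yet not in the right-hand side --- a contradiction. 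Therefore no such $\Z/p$-equivariant map exists, and hence no $\Sym_p$-equivariant map exists either.

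The point to stress is that essentially all the work has already been done: the content of this theorem is entirely packaged into the index computations of Theorems~\ref{th : index of chessboard p-1}--\ref{th : index of chessboard k > p-1} and Corollary~\ref{cor : index of join of chessboard p-1} (which themselves rest on the localization-theorem comparison of Serre spectral sequences and on the pseudomanifold structure of joins of chessboards). Granting those, the statement reduces to a one-line degree count via monotonicity, so there is no genuine remaining obstacle; the only subtlety is the parity of $p$, which is exactly what guarantees that the critical cohomology class in degree $(d+1)(p-1)$ is a nonzero power of $t$ rather than sitting in a vanishing group.
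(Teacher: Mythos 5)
Your proposal is correct and follows essentially the same route as the paper: reduce to the cyclic subgroup $\Z/p$, read off $\ind_{\Z/p}(\Delta_{p-1,p}^{*(d+1)}*[p])=H^{\geq(d+1)(p-1)+1}$ and $\ind_{\Z/p}(S(W_p^{\oplus(d+1)}))=H^{\geq(d+1)(p-1)}$ from Corollary~\ref{cor : index of join of chessboard p-1}, and conclude by monotonicity of the Fadell--Husseini index since $t^{(d+1)(p-1)/2}$ witnesses the failure of containment. This matches the paper's argument step for step.
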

\begin{proof}
	It suffices to prove that there is no $\Z/p$-equivariant map $\Delta_{p-1,p}^{*(d+1)}*[p]\rightarrow S(W_{p}^{\oplus (d+1)})$, where $\Z/p$ is a subgroup of the symmetric group $\Sym_p$ generated by the cycle $(12\ldots p)$.
	The proof uses the monotonicity property of the Fadell--Husseini index.
	
	\smallskip
	According to \eqref{eq : index of the sphere S(W_r)} and the join property for the spheres, the index of the sphere $S(W_{p}^{\oplus (d+1)})$ is 
	\[
 	\ind_{\Z/p}S(W_p^{\oplus (d+1)}) = \langle t^{(d+1)(p-1)/2} \rangle = H^{\geq (d+1)(p-1)}(\B(\Z/p);\F_p).
 	\]
 	Using Corollary \ref{cor : index of join of chessboard p-1} we get that
 	\[
 	\ind_{\Z/p} (\Delta_{p-1,p}^{*(d+1)}*[p])=H^{\geq (d+1)(p-1)+1}(\E(\Z/p);\F_p),
 	\]
	and consequently $t^{(d+1)(p-1)/2}\notin \ind_{\Z/p} (\Delta_{p-1,p}^{*(d+1)}*[p])$.
	Thus,
	\[
	\ind_{\Z/p}S(W_p^{\oplus (d+1)})\not\subseteq   \ind_{\Z/p} (\Delta_{p-1,p}^{*(d+1)}*[p]),
	\]
	implying that a $\Z/p$-equivariant map $\Delta_{p-1,p}^{*(d+1)}*[p]\rightarrow S(W_{p}^{\oplus (d+1)})$ cannot exist. 
\end{proof}

A direct corollary of Theorems \ref{th : non exitence --> BL} and \ref{th : last 01} is that the B\'ar\'any--Larman conjecture holds for all integers $r$ such that $r+1$ is a prime \cite[Cor.\,2.3]{Blagojevic2009}.

\begin{corollary}[The B\'ar\'any--Larman conjecture for primes$-1$]
	\label{cor : BL for prime +1}
	Let $r\ge2$ and $d\ge1$ be integers such that $r+1=:p$ is a prime.
	Then $n(d,r)=(d+1)r$ and $tt(d,r)=r$.
\end{corollary}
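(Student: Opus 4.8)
The plan is to obtain this corollary as an immediate consequence of Theorems~\ref{th : non exitence --> BL} and~\ref{th : last 01}; essentially all of the work has already been carried out in the Fadell--Husseini index computations of the preceding subsection, in particular Theorem~\ref{th : index of chessboard p-1} and Corollary~\ref{cor : index of join of chessboard p-1}, which feed into Theorem~\ref{th : last 01}. So this proof will be a short bookkeeping argument.

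First I would record the only genuine verification: since $r\ge2$, the integer $p:=r+1$ satisfies $p\ge3$, so the hypothesis ``$r+1$ is a prime'' means precisely that $p$ is an \emph{odd} prime --- exactly the situation to which Theorem~\ref{th : last 01} applies. That theorem, applied with this $d$ and this $p$, tells us that there is no $\Sym_p$-equivariant map
\[
\Delta_{p-1,p}^{*(d+1)}*[p]\rightarrow S(W_p^{\oplus(d+1)}).
\]
Substituting $p-1=r$ and $p=r+1$ turns this into the statement that no $\Sym_{r+1}$-equivariant map
\[
\Delta_{r,r+1}^{*(d+1)}*[r+1]\rightarrow S(W_{r+1}^{\oplus(d+1)})
\]
exists, which is exactly the hypothesis of Theorem~\ref{th : non exitence --> BL}. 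Feeding this into Theorem~\ref{th : non exitence --> BL} then yields $n(d,r)=(d+1)r$ and $tt(d,r)=r$, as claimed.

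I do not expect any real obstacle: the argument amounts to matching the non-existence statement produced by Theorem~\ref{th : last 01} with the non-existence statement consumed by Theorem~\ref{th : non exitence --> BL}, after the trivial index relabelling $p-1=r$. The matching lower bounds (the trivial bound $n(d,r)\ge(d+1)r$ from the affine configuration discussed after the B\'ar\'any--Larman problem, and the analogous $tt(d,r)\ge r$) are already incorporated into the equalities asserted by Theorem~\ref{th : non exitence --> BL}, so they need not be reproved here. If one wants to point at where the content actually sits, it is in the chessboard index identity $\ind_{\Z/p}\Delta_{p-1,p}=H^{\ge p-1}(\B(\Z/p);\F_p)$ combined with the join formula of Corollary~\ref{cor : index of join of chessboard p-1}, which together force $t^{(d+1)(p-1)/2}\notin\ind_{\Z/p}(\Delta_{p-1,p}^{*(d+1)}*[p])$ and hence the failure of the equivariant map underlying Theorem~\ref{th : last 01}.
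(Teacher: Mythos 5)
Your proposal is correct and matches the paper's own reasoning, which simply records the corollary as a direct consequence of Theorems~\ref{th : non exitence --> BL} and~\ref{th : last 01} after the substitution $p=r+1$. The one verification you explicitly supply --- that $r\ge2$ forces $p=r+1\ge3$ to be an \emph{odd} prime, as Theorem~\ref{th : last 01} requires --- is a useful detail the paper leaves implicit.
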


Using the pigeonhole principle and the index computation for the chessboards we can in addition prove that in the case when $p$ is an odd prime the B\'ar\'any--Larman function $n(d,p)$ is finite.

\begin{theorem}
	Let $p$ be an odd prime.
	Then $n(d,p)\leq (d+1)(2p-2)+1$.
\end{theorem}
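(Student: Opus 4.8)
The plan is to deduce the bound from Corollary~\ref{cor : CSTM for colored Tverberg} together with the chessboard index computations just obtained. Set $n := (d+1)(2p-2)+1$, let $\mathcal{C}$ be the vertex set of $\Delta_{n-1}$, let $(C_1,\ldots,C_{d+1})$ be an arbitrary coloring of $\mathcal{C}$ with $|C_i|\geq p$ for all $i$, and let $f : \Delta_{n-1}\rightarrow\R^d$ be a continuous map. By Corollary~\ref{cor : CSTM for colored Tverberg} it suffices to prove that there is no $\Sym_p$-equivariant map $(R_{(C_1,\ldots,C_{d+1})})^{*p}_{\Delta(2)}\rightarrow S(W_p^{\oplus(d+1)})$, and for this it is enough to rule out a $\Z/p$-equivariant map, where $\Z/p=\langle(12\ldots p)\rangle\leq\Sym_p$.

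First I would pass to a smaller sub-coloring. Since $|\mathcal{C}|=(d+1)(2p-2)+1$ and there are only $d+1$ color classes, the pigeonhole principle forces some class, say $C_1$ after relabeling, to satisfy $|C_1|\geq 2p-1$. Choose $C_1'\subseteq C_1$ with $|C_1'|=2p-1$ and $C_i'\subseteq C_i$ with $|C_i'|=p-1$ for $2\leq i\leq d+1$ (possible since $|C_i|\geq p>p-1$). Then $R':=R_{(C_1',\ldots,C_{d+1}')}=C_1'*\cdots*C_{d+1}'$ is a $\Sym_p$-invariant subcomplex of $R:=R_{(C_1,\ldots,C_{d+1})}$, so it induces a $\Z/p$-equivariant inclusion $(R')^{*p}_{\Delta(2)}\hookrightarrow R^{*p}_{\Delta(2)}$, and by Lemma~\ref{lemma:commutativity of join and deleted join} and Example~\ref{example:deleted product and joins} one has $(R')^{*p}_{\Delta(2)}\cong\Delta_{2p-1,p}*\Delta_{p-1,p}^{*d}$.

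Now I would compute indices and compare. By Corollary~\ref{cor : index of join of chessboard p-1}(iii) (with $m=d$), $\ind_{\Z/p}\big((R')^{*p}_{\Delta(2)}\big)=\ind_{\Z/p}\big(\Delta_{p-1,p}^{*d}*\Delta_{2p-1,p}\big)=H^{\geq d(p-1)+p}(\B(\Z/p);\F_p)$, so by monotonicity of the Fadell--Husseini index applied to the inclusion above, $\ind_{\Z/p}\big(R^{*p}_{\Delta(2)}\big)\subseteq H^{\geq d(p-1)+p}(\B(\Z/p);\F_p)$. On the other hand, by Corollary~\ref{cor : index of join of chessboard p-1}(i) (with $m=d+1$), $\ind_{\Z/p}\big(S(W_p^{\oplus(d+1)})\big)=H^{\geq (d+1)(p-1)}(\B(\Z/p);\F_p)=\langle t^{(d+1)(p-1)/2}\rangle$, using that $p-1$ is even. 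Since $(d+1)(p-1)=d(p-1)+(p-1)<d(p-1)+p$, the generator $t^{(d+1)(p-1)/2}$ does not lie in $H^{\geq d(p-1)+p}(\B(\Z/p);\F_p)$, hence $\ind_{\Z/p}\big(S(W_p^{\oplus(d+1)})\big)\not\subseteq\ind_{\Z/p}\big(R^{*p}_{\Delta(2)}\big)$. By monotonicity this precludes a $\Z/p$-equivariant map $R^{*p}_{\Delta(2)}\rightarrow S(W_p^{\oplus(d+1)})$, hence also a $\Sym_p$-equivariant one, and Corollary~\ref{cor : CSTM for colored Tverberg} delivers the required $p$ pairwise disjoint rainbow faces with overlapping $f$-images. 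Therefore $n(d,p)\leq(d+1)(2p-2)+1$.

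The main obstacle is not any single hard lemma but the degree bookkeeping around the one ``extra'' vertex: one must notice that if \emph{all} color classes were cut down to size $\leq 2p-2$ (in particular to the trivial size $p$), then the deleted join of the rainbow complex would have index only $H^{\geq(d+1)(p-1)}$, which contains $\ind_{\Z/p}\big(S(W_p^{\oplus(d+1)})\big)$ and makes the argument collapse --- consistent with Theorem~\ref{th : exitence of equivariant map for BL}. It is precisely the pigeonhole vertex, forcing one factor $\Delta_{k,p}$ with $k\geq 2p-1$ and hence index $H^{\geq p}$ instead of $H^{\geq p-1}$ (Lemma~\ref{lem : FH index of chessboards}(ii)), that raises the index of the deleted join by exactly one degree past the generator of the sphere's index; and one has to keep the direction of index monotonicity straight (a $G$-map $X\to Y$ gives $\ind_G Y\subseteq\ind_G X$) when combining the sub-coloring inclusion with the chessboard join formula.
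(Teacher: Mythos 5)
Your proposal is correct and takes essentially the same route as the paper: pigeonhole forces one color class to have size $\geq 2p-1$, one then uses Corollary~\ref{cor : CSTM for colored Tverberg}, the index formula $\ind_{\Z/p}(\Delta_{p-1,p}^{*d}*\Delta_{2p-1,p})=H^{\geq d(p-1)+p}$ from Corollary~\ref{cor : index of join of chessboard p-1}(iii), monotonicity under the chessboard inclusion, and comparison with $\ind_{\Z/p}S(W_p^{\oplus(d+1)})=H^{\geq(d+1)(p-1)}$. Your passage to a sub-coloring $(C_1',\ldots,C_{d+1}')$ is just the paper's $\Z/p$-equivariant inclusion $\Delta_{p-1,p}^{*d}*\Delta_{2p-1,p}\hookrightarrow\Delta_{|C_1|,p}*\cdots*\Delta_{|C_{d+1}|,p}$ phrased on the vertex level.
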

\begin{proof}
Let $n=	(d+1)(2p-2)+1$, and let $(C_1,\ldots,C_{d+1})$ be a coloring of the vertex set of the simplex $\Delta _{n-1}$ by $d+1$ colors with each color class of size at least $p$.
Then by the pigeonhole principle at least one of the colors, let say $C_{d+1}$, has to be of the size at least $2p-1$.
According to Corollary \ref{cor : CSTM for colored Tverberg}: If we can prove that there is no $\Sym_p$- or  $\Z/p$-equivariant map 
\[
\Delta_{|C_0|,p}*\cdots *\Delta_{|C_{d+1}|,p}\cong (R_{(C_1,\ldots,C_{d+1})})^{*p}_{\Delta(2)}\rightarrow S(W_{p}^{\oplus (d+1)}),
\]
then for every continuous map $f :  \Delta _{n-1}\rightarrow\R^d$ there are $p$ pairwise disjoint rainbow faces $\sigma_1, \dots, \sigma_r$ of $\Delta_{n-1}$  whose $f$-images overlap, that is $f(\sigma_1)\cap\cdots\cap f(\sigma_p)\neq\emptyset$.
Thus we will now prove that there is no $\Z/r$-equivariant map $\Delta_{|C_0|,p}*\cdots *\Delta_{|C_{d+1}|,p}\rightarrow S(W_{p}^{\oplus (d+1)})$.

\smallskip
Again, using \eqref{eq : index of the sphere S(W_r)} and the join property for the spheres, we have that
	\[
 	\ind_{\Z/p}S(W_p^{\oplus (d+1)}) = \langle t^{(d+1)(p-1)/2} \rangle = H^{\geq (d+1)(p-1)}(\B\Z/p;\F_p).
 	\]
Since $|C_0|\geq p,\ldots,|C_d|\geq p$ and $|C_{d+1}|\geq 2p-1$, there is a $\Z/p$-equivariant inclusion  	
 \[
 \Delta_{p-1,p}*\cdots *\Delta_{p-1,p}*\Delta_{2p-1,p}\rightarrow\Delta_{|C_0|,p}*\cdots *\Delta_{|C_{d}|,p}*\Delta_{|C_{d+1}|,p}.
 \]
Thus the monotonicity property of the Fadell--Husseini index and Corollary \ref{cor : index of join of chessboard p-1}\,(iii) imply that
  \begin{eqnarray*}
   H^{\geq (d+1)(p-1)+1}(\B\Z/p;\F_p) &=& \ind_{\Z/p}(\Delta_{p-1,p}*\cdots *\Delta_{p-1,p}*\Delta_{2p-1,p})\\
  &\supseteq &\ind_{\Z/p}(\Delta_{|C_0|,p}*\cdots *\Delta_{|C_{d}|,p}*\Delta_{|C_{d+1}|,p})	.
  \end{eqnarray*}
Therefore, 
\[
\ind_{\Z/p}S(W_p^{\oplus (d+1)})\not\subseteq \ind_{\Z/p}(\Delta_{|C_0|,p}*\cdots *\Delta_{|C_{d}|,p}*\Delta_{|C_{d+1}|,p}),
\] 
and consequently there is no $\Z/p$-equivariant map $\Delta_{|C_0|,p}*\cdots *\Delta_{|C_{d+1}|,p}\rightarrow S(W_{p}^{\oplus (d+1)})$.
This concludes the proof of the theorem.
\end{proof}

\medskip
While focusing on the B\'ar\'any--Larman conjecture and the corresponding function $n(d,r)$, we almost overlooked that the index computations for the chessboards establish a considerable strengthening of the topological Tverberg theorem that is known as the optimal colored Tverberg theorem \cite[Thm.\,2.1]{Blagojevic2009}. 

\begin{theorem}[The optimal colored Tverberg theorem]
\label{th : optimal colored Tverberg}
Let $d\ge1$ be an integer, let $p$ be a prime, $N\geq (d+1)(p-1)$, and let  $f :  \Delta_N \rightarrow \R^d$ be a continuous map.
If the vertices of the simplex $\Delta_N$ are colored by $m$ colors, where each color class has cardinality at most $p-1$, then there are $p$ pairwise disjoint rainbow faces $\sigma_1, \dots, \sigma_p$ of $\Delta_N$  whose $f$-images overlap,
\begin{equation*}
	f(\sigma_1)\cap\cdots\cap f(\sigma_p)\neq\emptyset.
\end{equation*}		
\end{theorem}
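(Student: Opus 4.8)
The plan is to run the standard configuration space / test map scheme and to close it with the Fadell--Husseini index computations for joins of chessboard complexes established above. Assume, for contradiction, that $f$ admits no collection of $p$ pairwise disjoint rainbow faces with overlapping images. Write $R:=R_{(C_1,\dots,C_m)}\cong C_1*\cdots*C_m$ for the rainbow subcomplex; since each $C_i$ is a $0$-dimensional complex on $|C_i|$ vertices, Lemma \ref{lemma:commutativity of join and deleted join} together with Example \ref{example:deleted product and joins}(4) give an $\Sym_p$-equivariant isomorphism $R^{*p}_{\Delta(2)}\cong \Delta_{|C_1|,p}*\cdots*\Delta_{|C_m|,p}$, the symmetric group acting by permuting the $p$ factors (columns). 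Restricting $f$ to $R$ and forming the join map $J_f'\colon R^{*p}_{\Delta(2)}\to(\R^{d+1})^{\oplus p}$ as in Section \ref{subsec : Equivariant maps induced by f}, the assumed absence of a rainbow Tverberg partition forces $\im(J_f')$ to miss the thin diagonal $D_J$; composing with the $\Sym_p$-equivariant radial retraction onto $S(D_J^{\perp})\cong S(W_p^{\oplus(d+1)})$ produces an $\Sym_p$-equivariant map $\Delta_{|C_1|,p}*\cdots*\Delta_{|C_m|,p}\to S(W_p^{\oplus(d+1)})$. This is exactly the content of Corollary \ref{cor : CSTM for colored Tverberg}, so it suffices to prove that no such map exists, and since $\Z/p\le\Sym_p$ it is enough to rule out a $\Z/p$-equivariant map.

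Next I would compare Fadell--Husseini indices over $\Z/p$, assuming for the moment that $p$ is odd. Because the color classes partition the $N+1$ vertices of $\Delta_N$ and $1\le|C_i|\le p-1$ for every $i$, Corollary \ref{cor : index of join} gives
\[
\ind_{\Z/p}\big(\Delta_{|C_1|,p}*\cdots*\Delta_{|C_m|,p}\big)=H^{\ge |C_1|+\cdots+|C_m|}(\B(\Z/p);\F_p)=H^{\ge N+1}(\B(\Z/p);\F_p).
\]
On the other hand, by Corollary \ref{cor : index of join of chessboard p-1}(i) (equivalently, the join property applied to Theorem \ref{th : index of chessboard p-1}),
\[
\ind_{\Z/p}\big(S(W_p^{\oplus(d+1)})\big)=H^{\ge (d+1)(p-1)}(\B(\Z/p);\F_p)=\big\langle t^{(d+1)(p-1)/2}\big\rangle,
\]
where $t^{(d+1)(p-1)/2}$ is a nonzero class of degree $(d+1)(p-1)$, which is even since $p$ is odd. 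From $N\ge (d+1)(p-1)$ we get $N+1>(d+1)(p-1)$, so $t^{(d+1)(p-1)/2}$ lies in $\ind_{\Z/p}(S(W_p^{\oplus(d+1)}))$ but not in $H^{\ge N+1}(\B(\Z/p);\F_p)=\ind_{\Z/p}(\Delta_{|C_1|,p}*\cdots*\Delta_{|C_m|,p})$. Hence $\ind_{\Z/p}(S(W_p^{\oplus(d+1)}))\not\subseteq\ind_{\Z/p}(\Delta_{|C_1|,p}*\cdots*\Delta_{|C_m|,p})$, and the monotonicity of the Fadell--Husseini index forbids a $\Z/p$-equivariant map between these spaces, which is the desired contradiction.

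The case $p=2$ has to be handled separately, since the chessboard index computations above were set up for odd primes: here $|C_i|\le 1$ for all $i$, so every face of $\Delta_N$ is rainbow, and the statement reduces to finding two disjoint faces of $\Delta_N$ whose $f$-images meet. Restricting $f$ to any $(d+1)$-dimensional face of $\Delta_N$ (possible because $N\ge d+1$) and invoking the topological Radon theorem \ref{th : Topological Radon theorem} produces such faces. I expect the genuinely hard part of this chain of reasoning to have been the index computations for joins of chessboards (Theorems \ref{th : index of chessboard p-1}, \ref{th : index of chessboard k < p-2}, \ref{th : index of chessboard k > p-1} and Corollary \ref{cor : index of join}), which are already in hand; granted those, the argument is a short assembly, the only delicate point being the bookkeeping $\sum_i|C_i|=N+1>(d+1)(p-1)$ — which also explains why allowing $N>(d+1)(p-1)$ costs nothing, as the strict inequality only becomes more generous.
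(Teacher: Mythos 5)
Your proposal is correct and is exactly the argument the paper intends but leaves implicit: the text points to Corollary~\ref{cor : index of join} as ``hiding'' a proof of Theorem~\ref{th : optimal colored Tverberg}, and the intended unpacking is your chain — pass via Corollary~\ref{cor : CSTM for colored Tverberg} to the non-existence of a $\Z/p$-equivariant map $\Delta_{|C_1|,p}*\cdots*\Delta_{|C_m|,p}\to S(W_p^{\oplus(d+1)})$, then compare $\ind_{\Z/p}$ of the source ($H^{\ge N+1}$ since $\sum_i |C_i|=N+1$ and each $|C_i|\le p-1$) with that of the target ($H^{\ge (d+1)(p-1)}$), and invoke monotonicity using $N+1>(d+1)(p-1)$. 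You have also correctly noticed, and cleanly patched, the one point the paper glosses over: the chessboard index computations are established only for odd primes, and your reduction of the $p=2$ case (where every face is rainbow) to the topological Radon theorem is the right way to cover it.
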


\bigskip

\section*{Dictionary}

\begin{footnotesize}
\begin{multicols}{2}

\subsection*{Borel construction}\cite{Adem2004} \cite{tomDieck:TransformationGroups} \cite{Hsiang1975}
Let $G$ be a finite group and let $X$ be a (left) $G$-space.
The \emph{Borel construction} of $X$ is the space given by $\E G\times_G X:=(\E G\times X)/G$, where $\E G$ is a free, contractible right $G$-space and $G$ acts on the product by $g\cdot(e,x)=(e\cdot g^{-1},g\cdot x)$.
The projection $\E G\times X\rightarrow\E G$ induces the following fiber bundle
\[
X\rightarrow \E G\times_G X \rightarrow \B G.
\]
This fiber bundle is called the \emph{Borel construction fiber bundle}.
The Serre spectral sequence associated to the Borel construction fiber bundle has the $E_2$-term given by
\[
E^{r,s}_2=H^r(\B G;\mathcal{H}^s(X;R))\cong H^r(G;H^s(X;R)),
\]
where the coefficients are local and determined by the action of $\pi_1(\B G)\cong G$ on the cohomology of $X$. 
Moreover, each row of the spectral sequence has the structure of an $H^*(\B G;R)$-module, while all differentials are $H^*(\B G;R)$-module morphisms.

\noindent 
The Borel construction and the associated fibration are natural with respect to equivariant maps, that is, 
any $G$-equivariant map $f :  X\rightarrow Y$ between $G$-spaces induces the following morphism of fiber bundles
\[
\xymatrix{
\E G\times_G X\ar[r]^-{\id\times_G f}\ar[d] & \E G\times_G Y\ar[d]\\
\B G \ar[r]^-{=}           & \B G.
}
\]
This morphism of fiber bundle induces a morphism of associated Serre spectral sequences
\[
E^{r,s}_t(f) :  E^{r,s}_t(\E G\times_G Y) \rightarrow E^{r,s}_t (\E G\times_G X),
\] 
such that
\[
E^{r,0}_2(f) :  E^{r,0}_2(\E G\times_G Y) \rightarrow E^{r,0}_2 (\E G\times_G X)
\]
is the identity.

\subsection*{\emph{BG}}\cite{Adem2004} \cite{Hsiang1975}
For a finite group $G$ the {\em classifying space} is the quotient space $\B G=\E G/G$.
The projection $\E G\rightarrow\B G$ is the universal principal $G$-bundle, that is, the set of all homotopy classes of maps $[X,\B G]$ is in bijection with the set of all isomorphism classes of principal $G$ bundles over $X$.

\subsection*{Borsuk--Ulam theorem}\cite{Matousek2008}
Let $S^n$ and $S^m$ be free $\Z/2$-spaces.
Then a continuous $\Z/2$-equivariant map $S^m\rightarrow S^n$ exists if and only if $m\leq n$.
 
\subsection*{Cohomology of a group (algebraic definition)}\cite{Adem2004} \cite{Brown1994}
Let $G$ be a finite group, and let $M$ be a (left) $G$-module.
Consider a projective resolution $(P_n,d_n)_{n\geq 0}$ of the trivial (left) $G$ module $\Z$, that is, an exact sequence 
\[
\xymatrix@r@C=1.45pc{
\cdots\ar[r] & P_{n+1}\ar[r]^-{d_{n}} & P_{n}\ar[r]^-{d_{n-1}} & \cdots\ar[r]^-{d_0} & P_0\ar[r]^-{\pi} & \Z\ar[r] & 0,
}
\]
where each $P_n$ is a projective (left) $G$-module.
The \emph{group cohomology} of $G$ with coefficients in the module $M$ is the cohomology of the following cochain complex
\[
\xymatrix@r@C=1.45pc{
\cdots & \ar[l]_-{d_{n}^*} \hom_{G}(P_{n},M) & \ar[l]_-{d_{n-1}^*}\cdots& \ar[l]_-{d_{0}^*} \hom_{G}(P_{0},M) & \ar[l] 0.
}
\]

\subsection*{Cohomology of group (topological definition)}\cite{Adem2004} \cite{Brown1994}
Let $G$ be a finite group, and let $M$ be a (left) $G$-module.
The \emph{group cohomology} of $G$ with coefficients in the module $M$ is the cohomology of $\B G$ with local coefficients in the $\pi_1(\B G)\cong G$-module $\mathcal{M}$, that is 
\[
H^*(G;M):=H^*(\B G;\mathcal{M}).
\]

\subsection*{Connectedness}\cite{Bredon2010} \cite{Matousek2008}
Let $n\geq -1$ be an integer.
A topological space $X$ is \emph{$n$-connected} if any continuous map $f :  S^k\rightarrow X$, where $-1\leq k\leq n$, can be continuously extended to a continuous map $g :  B^{k+1}\rightarrow X$, that is $g|_{\partial B^{k+1}=S^k}=f$.
Here $B^{k+1}$ denotes a $(k+1)$-dimensional closed ball whose boundary is the sphere $S^k$.
A topological space $X$ is $(-1)$-connected if it is non-empty; it is $0$-connected if and only if it is path-connected.
If the space $X$ is $n$-connected and $Y$ is $m$-connected, then the join $X*Y$ is $(n+m+2)$-connected.

\noindent
If the space $X$ is $n$-connected, but not $(n+1)$-connected, we write $\conn (X)=n$.
Then
\[
\conn (X*Y)\geq \conn (X) + \conn (Y) +2.
\]

\subsection*{Chessboard complex}\cite{Jonsson2008} \cite{Matousek2008}
The $m\times n$ \emph{chessboard complex} $\Delta_{m,n}$ is the simplicial complex whose vertex set is $[m]\times [n]$, and  where the set of vertices $\{(i_0,j_0),\ldots, (i_k,j_k)\}$ spans a $k$-simplex if and only if $\prod_{0\leq a<b\leq k}(i_a-i_b)(j_a-j_b)\neq 0$.
For example, $\Delta_{2,3}\cong S^1$, $\Delta_{3,4}\cong S^1\times S^1$.
The chessboard complex $\Delta_{m,n}$ is an $(\Sym_m\times \Sym_n)$-space by 
\begin{multline*}
	(\pi_1,\pi_2)\cdot \{(i_0,j_0),\ldots, (i_k,j_k)\} = \\
	\{(\pi_1(i_0),\pi_2(j_0)),\ldots, (\pi_1(i_k),\pi_2(j_k))\},
\end{multline*}
where $(\pi_1,\pi_2)\in \Sym_m\times \Sym_n$, and $\{(i_0,j_0),\ldots, (i_k,j_k)\}$ is a simplex in $\Delta_{m,n}$.
The connectivity of the chessboard complex $\Delta_{m,n}$ is
\[
\conn(\Delta_{m,n})= \min\left\{ m, n, \left\lfloor \tfrac{m+n+1}{3} \right\rfloor \right\}-2.
\]
For $n\geq3$, the chessboard complex $\Delta_{n-1,n}$ is a connected, orientable pseudomanifold of dimension $n-2$.
Therefore, ${H}_{n-2}(\Delta_{n-1,n};\Z)=\Z$ and an orientation homology class is given by the chain
\[
z_{n-1,n}\ =\ \sum_{\pi\in\Sym_{n}}(\sgn \pi) \langle(1,\pi(1)),\dots,(n-1,\pi(n-1))\rangle.
\]
The symmetric group $\Sym_n\cong 1\times \Sym_n\subseteq\Sym_{n-1}\times\Sym_n$ acts on $\Delta_{n-1,n}$ by the restriction action.
Then $\pi\cdot z_{n,n-1}=(\sgn\pi)\, z_{n-1,n-1}$.

\subsection*{Deleted join}\cite{Matousek2008}
Let $K$ be a simplicial complex, let $n\geq 2$, $k\geq 2$ be integers, and let $[n]:=\{1,\ldots,n\}$. 
The \emph{$n$-fold $k$-wise deleted join} of the simplicial complex $K$ is the simplicial complex
\begin{multline*}
K^{*n}_{\Delta(k)}:=
\{
\lambda_1x_1+\cdots+\lambda_nx_n\in \sigma_1*\cdots*\sigma_n\subset K^{*n} :\\
(\forall I\subset [n])\, \card I\geq k \Rightarrow \bigcap_{i\in I} \sigma_i=\emptyset
\},
\end{multline*}
where $\sigma_1,\ldots,\sigma_n$ are faces of $K$, including the empty face.
The symmetric group $\Sym_n$ acts on $K^{* n}_{\Delta(k)}$ by
\begin{multline*}
\pi\cdot (\lambda_1x_1+\cdots+\lambda_nx_n)	:=\\
	\lambda_{\pi^{-1}(1)}x_{\pi^{-1}(1)}+\cdots+\lambda_{\pi^{-1}(n)}x_{\pi^{-1}(n)},
\end{multline*}
for $\pi\in\Sym_n$ and $\lambda_1x_1+\cdots+\lambda_nx_n\in K^{* n}_{\Delta(k)}$. 

\subsection*{Deleted product}\cite{Matousek2008}
Let $K$ be a simplicial complex, let $n\geq 2$, $k\geq 2$ be integers, and let $[n]:=\{1,\ldots,n\}$. 
The  \emph{$n$-fold $k$-wise deleted product} of the simplicial complex $K$ is the cell complex
\begin{multline*}
K^{\times n}_{\Delta(k)}:=
\{
(x_1,\ldots,x_n)\in \sigma_1\times\cdots\times \sigma_n\subset K^{\times n} :\\
(\forall I\subset [n])\, \card I\geq k \Rightarrow \bigcap_{i\in I} \sigma_i=\emptyset
\},	
\end{multline*}
where $\sigma_1,\ldots,\sigma_n$ are non-empty faces of $K$.
The symmetric group $\Sym_n$ acts on $K^{\times n}_{\Delta(k)}$ by
\[
\pi\cdot (x_1,\ldots,x_n):=	(x_{\pi^{-1}(1)},\ldots,x_{\pi^{-1}(n)}),
\]
for $\pi\in\Sym_n$ and $(x_1,\ldots,x_n)\in K^{\times n}_{\Delta(k)}$.

\subsection*{Dold's theorem}\cite{Matousek2008}
Let $G$ be a non-trivial finite group.
For an $n$-connected $G$-space $X$ and an at most $n$-dimensional free $G$-CW complex $Y$ there is no  continuous $G$-equivariant map $X\rightarrow Y$. 

\subsection*{\emph{EG}}\cite{Adem2004} \cite{tomDieck:TransformationGroups} \cite{Hsiang1975}
For a finite group $G$ any contractible free $G$-CW complex equipped with the right $G$ cellular action is a model for an $\E G$ space.
Milnor's model is given by $\E G=\colim_{n\in\N}G^{*n}$ where $G$ stands for a $0$-dimensional free $G$-simplicial complex whose vertices are indexed by the elements of the group $G$ and the action on $G$ is given by the right translation, and $G^{*n}$ is an $n$-fold join of the $0$-dimensional simplicial complex with induced diagonal (right) action.

\subsection*{Equivariant cohomology (via the Borel construction)}
\cite{Adem2004} \cite{tomDieck:TransformationGroups} \cite{Hsiang1975}
Let $G$ be a finite group and let $X$ be a (left) $G$-space.
The singular or \v{C}ech cohomology of the Borel construction $\E G\times_G X$ of the space $X$ is called the \emph{equivariant cohomology} of $X$ and is denoted by $H_G(X;R)$.
Here $R$ denotes a group, or a ring of coefficients.

\subsection*{Equivariant cohomology of a relative $G$-CW complex}\cite{tomDieck:TransformationGroups}
Let $G$ be a finite group, let $(X,A)$ be a relative $G$-CW complex with a free action on $X{\setminus}A$, and let $C_{*}(X,A;\Z)$ denote the integral cellular chain complex. 
The cellular free $G$-action on every skeleton of $X{\setminus}A$ induces a free $G$-action on the chain complex $C_{*}(X,A;\Z)$. 
Thus $C_{*}(X,A;\Z)$ is a chain complex of free $\Z G$-modules.

\noindent
For a $\Z G$-module $M$ consider
\begin{compactitem}[\ $\bullet$]
\item the $G$-equivariant chain complex
\[
\mathcal{C}_{*}^{G}(X,A;M)=C_{*}(X,A;\Z)\otimes_{\Z G}M,
\]
and define the \emph{equivariant homology} $\mathcal{H}_{*}^{G}(X,A;M)$ of $(X,A)$ with coefficients in $M$ to be the homology of the chain complex $\mathcal{C}_{*}^{G}(X,A;M)$;

\item the $G$-equivariant cochain complex
\[
\mathcal{C}^{*}_{G}(X,A;M)=\mathrm{Hom}_{\Z G}( C_{*}(X,A;\Z),M), 
\]
and define the \emph{equivariant cohomology} $\mathcal{H}^{*}_{G}(X,A;M)$ of $(X,A)$ with coefficients in $M$ to be the cohomology of the cochain complex $\mathcal{C}^{*}_{G}(X,A;M)$.
\end{compactitem}

\subsection*{Exact obstruction sequence}\cite{tomDieck:TransformationGroups}
Let $G$ be a finite group, let $n\geq 1$ be an integer and let $Y$ be a path-connected $n$-simple $G$-space.
For every relative $G$-CW complex $(X,A)$ with a free action of $G$ on the complement $X{\setminus}A$, there exists the obstruction exact sequence
\begin{multline*}
[\sk_{n+1} X,Y]_{G}\rightarrow 
\mathrm{im}\big([\sk_{n}X,Y]_{G}\rightarrow [\sk_{n-1}X,Y]_{G}\big) \\
\overset{[\oo_{G}^{n+1}]}{\longrightarrow }\mathcal{H}_{G}^{n+1}(X,A;\pi_{n}Y),	
\end{multline*}
The sequence is natural in $X$ and $Y$. 
This should be understood as follows:
\begin{compactitem}[\ $\bullet$]
\item 
A $G$-equivariant map $f : \sk_{n-1}X\rightarrow Y$ that can be equivariantly extended to the $n$-skeleton $f' :  \sk_{n}X\rightarrow Y$, that is $f'|_{\sk_{n-1}X}=f$, defines a unique element $[\oo_{G}^{n+1}(f)]$ living in $\mathcal{H}_{G}^{n+1}(X,A;\pi _{n}Y)$, called the \emph{obstruction element} associated to the map $f$;
\item 
The exactness of the sequence means that the obstruction element $[\oo_{G}^{n+1}(f)]$ is zero if and only if there is a $G$-equivariant map $f' :  \sk_{n}X\rightarrow Y$ whose restriction is in the $G$ homotopy class of the restriction of $f$, that is $f'|_{\sk_{n-1}X}\simeq_G f|_{\sk_{n-1}X}$, which extends to the $(n+1)$-skeleton $\sk_{n+1}X$.
\end{compactitem}
The obstruction element $[\mathfrak{o}_{G}^{n+1}(f)]$ associated with the homotopy class $[f]\in [\sk_{n}X,Y]_{G}$ can be introduced on the cochain level as well. 
Let $h :  (D^{n+1},S^{n})\rightarrow (\sk_{n+1}X,\sk_{n}X)$ be an attaching map and $e\in C_{n+1}(X,A;\Z)$ the corresponding generator.
The \emph{obstruction cochain} $\oo_{G}^{n+1}(f)\in \mathcal{C}_{G}^{n+1}(X,A;\pi _{n}Y)$ of the map $f$ is
defined on $e$   by
\[
\mathfrak{o}_{G}^{n+1}(h)(e)=[f\circ h ]\in \lbrack S^{n},Y].
\]
The cohomology class of the obstruction cocycle coincides with the obstruction element defined via the exact sequence.

\subsection*{Fadell--Husseini index}\cite{Fadell1988}
Let $G$ be a finite group and $R$ be a commutative ring with unit. 
For a $G$-space $X$ and a ring $R$, the \emph{Fadell--Husseini index} of $X$ is defined to be the
kernel ideal of the map in equivariant cohomology induced by the $G$-equivariant
map $p_X  :  X\rightarrow \mathrm{pt}$:
\[
\ind_{G}(X;R)=\ker\Big(H^*(\B G;R)\rightarrow H^*(\E G\times _{G}X;R)\Big).
\]
Some basic properties of the index are:
\begin{compactitem}[\ $\bullet$]

\item \textit{Monotonicity}: If $X\rightarrow Y$ is a $G$-equivariant map then
\[
\ind_{G}(X;R) \supseteq \ind_{G}(Y;R).
\]

\item \textit{Additivity}: If $(X_1\cup X_2,X_1,X_2)$ is an excisive triple of $G$-spaces, then
\[
\ind_{G}(X_1;R)\cdot\ind_{G}(X_2;R)\subseteq\ind_{G}(X_1\cup X_2;R).
\]
\item \emph{Join:} Let $X$ and $Y$ be $G$-spaces, then
\[
\ind_{G}(X;R)\cdot \ind_{G}(Y;R)\subseteq \ind_{G}(X*Y).
\]

\item \emph{Generalized Borsuk--Ulam--Bourgin--Yang theorem:} 
Let $f :  X\rightarrow Y$ be a $G$-equivariant map, and let $Z\subseteq Y$ be a closed $G$-invariant subspace.  
Then
\[
\ind_{G}(f^{-1}(Z);R)\cdot\ind_{G}(Y{\setminus}Z;R)\subseteq \ind_{G}(X;R).
\]

\item Let $U$ and $V$ be finite dimensional real $G$-representations.
If $H^{\ast }(S(U),R)$ and $H^{\ast }(S(V),R)$ are trivial $G$-modules,
$\ind_{G}(S(U);R)=\langle f\rangle$ and $\ind_{G}(S(V);R)=\langle g\rangle$, then
\[
\ind_{G}(S(U\oplus V);R)=\langle f\cdot g\rangle \subseteq H^{\ast }(\B G;R).
\]
\end{compactitem}

\subsection*{\emph{G}-action}
Let $G$ be a group and let $X$ be a non-empty set.
A \emph{(left) $G$-action} on $X$ is a function
$G\times X\rightarrow X$, $(g,x)\longmapsto g\cdot x$
with the property that:
\[
g\cdot (h\cdot x)=(gh)\cdot x\qquad\text{and}\qquad 1\cdot x=x,
\]
for every $g,h\in G$ and $x\in X$.
A set $X$  with a $G$-action is called a $G$\emph{-set}.
Let $G$ and $X$ in addition be topological spaces.
Then a $G$-action is \emph{continuous} if the function $G\times X\rightarrow X$ is continuous with respect to the product topology on $G\times X$.
A topological space equipped with a continuous $G$-action is called a $G$\emph{-space}.

\subsection*{\emph{G}-equivariant map}
Let $X$ and $Y$ be $G$-sets (spaces). 
A (continuous) map $f :  X\rightarrow Y$ is a {\em $G$-equivariant map} if $f(g\cdot x)=g\cdot f(x)$ for all $x\in X$ and all $g\in G$.

\subsection*{\emph{G}-CW complex}\cite{Bredon1967} \cite{tomDieck:TransformationGroups}
Let $G$ be a finite group.
A CW-complex $X$ is a \emph{$G$-CW complex} if the group $G$ acts on $X$ by cellular maps and for every $g\in G$ the subspace $\{x\in X : g\cdot x=x\}$ is a CW-subcomplex of $X$.

\noindent
Let $X$ be a $G$-CW complex, and let $A$ be a subcomplex of $X$ that is invariant with respect to the action of the group $G$ and consequently a  $G$-CW complex in its own right.
The pair of $G$-CW complex $(X,A)$ is a \emph{relative $G$-CW complex}.

\subsection*{Localization theorem}\cite{tomDieck:TransformationGroups} \cite{Hsiang1975} 
The following result is a consequence of the localization theorem for elementary abelian groups:
Let $p$ be a prime, $G=(\Z/p)^n$ for $n\geq 1$, and let $X$ be a finite $G$-CW complex.
The fixed points set $X^G$ of the space $X$ is non-empty if and only the map in cohomology $H^*(\B G;\F_p)\rightarrow H^*(\E G\times_{G} X;\F_p)$, induced by the projection $\E G\times_{G} X\rightarrow\B G$, is a monomorphism.

\subsection*{\emph{n}-simple}\cite{Bredon2010}
A topological space $X$ is \emph{$n$-simple} if the fundamental group $\pi_{1}(X,x_{0})$ acts
trivially on the $n$-th homotopy group $\pi_{n}(X,x_{0})$ for every $x_{0}\in X$. 

\subsection*{Nerve of a family of subsets}
Let $X$ be a set and let $\mathcal{X}:=\{X_i : i\in I\}$ be a family of subsets of $X$.
The \emph{nerve} of the family $\mathcal{X}$ is the simplical complex $N_{\mathcal{X}}$ with the vertex set $I$, and a finite subset $\sigma\subseteq I$ is a face of the complex if and only if $\bigcap \{X_i : i\in \sigma\}\neq \emptyset$.

\subsection*{Nerve theorem}\cite{Bjorner1995}
Let $K$ be a finite simplicial complex, or a regular CW-complex, and let $\mathcal{K}:=\{ K_i : i\in I\}$ be a cover of $K$ by a family of subcomplexes, that is $K=\bigcup\{ K_i : i\in I\}$.
\begin{compactenum}[\rm (1)]
\item If for every face $\sigma$ of the nerve $N_{\mathcal{K}}$ the intersection $\bigcap \{K_i : i\in \sigma\}$ is contractible, then $K$ and $N_{\mathcal{K}}$ are homotopy equivalent, that is $K\simeq 	N_{\mathcal{K}}$.
\item If for every face $\sigma$ of the nerve $N_{\mathcal{K}}$ the intersection $\bigcap \{K_i : i\in \sigma\}$ is $(k-|\sigma|+1)$-connected, then the complex $K$ is $k$-connected if and only if the nerve $N_{\mathcal{K}}$ is $k$-connected.
\end{compactenum}

\subsection*{Primary obstruction.}\cite{Bredon1967} \cite{tomDieck:TransformationGroups} 
Let $G$ be a finite group, let $n\geq 1$ be an integer and let $Y$ be an $(n-1)$-connected and $n$-simple $G$-space.
Furthermore, let $(X,A)$ be a relative $G$-CW complex with the free $G$ action on $X{\setminus}A$, and let $f :  A\rightarrow Y$ be a $G$-equivariant map.
Then
\begin{compactitem}[\ $\bullet$]
\item 
there exists a $G$-equivariant map $f' : \sk_{n}X\rightarrow Y$ extending $f$, that is $f'|_{A}=f$,

\item 
every two $G$-equivariant extensions $f',f'' : \sk_{n}X\rightarrow Y$ of $f$ are $G$-homotopic, relative to $A$, on $\sk_{n-1}X$, that is
\[
\mathrm{im}\big( [\sk_nX,Y]_{G}\rightarrow [\sk_{n-1}X,Y]_{G}\big)=\{\mathrm{pt}\},
\]

\item 
if $H :  A\times I\rightarrow Y$ is a $G$-equivariant homotopy between $G$-equivariant maps $f :  A\rightarrow Y$ and $f' :  A\rightarrow Y$, and if $h :  \sk_{n}X\rightarrow Y$ and $h' :  \sk_{n}X\rightarrow Y$ are $G$-equivariant extensions of $f$ and $f'$, then there exists a $G$-equivariant homotopy $K : \sk_{n-1}X\times I\rightarrow Y$  between $h|_{\sk_{n-1}X}$ and $h'|_{\sk_{n-1}X}$ that extends $H$.
\end{compactitem}
In the case when $\mathrm{im}\big( [\sk_{n}X,Y]_{G}\rightarrow [\sk_{n-1}X,Y]_{G}\big)=\{\mathrm{pt}\}$ the obstruction sequence becomes
\[
[\sk_{n+1}X,Y]_{G}\rightarrow \{ \mathrm{pt}\}\overset{[\oo_{G}^{n+1}]}{\rightarrow }\mathcal{H}_{G}^{n+1}(X,\pi _{n}Y).
\]
The obstruction element $[\mathfrak{o}_{G}^{n+1}(\mathrm{pt})]\in \mathcal{H}_{G}^{n+1}(X,\pi
_{n}Y)$ is called the \emph{primary obstruction} and does not depend on the choice of a $G$-equivariant map on the $n$-th skeleton of $X$.

\subsection*{Restriction and transfer}\cite{Blagojevic2012} \cite{Brown1994}
Let $G$ be a finite group and let $H \subseteq G$ be its subgroup. 
Consider a $\Z G$-chain complex $C_*=(C_n,c_n)$ and a $\Z G$-module $M$.
Denote by $\res$ the restriction from $G$ to $H$.
For every integer $n$ there exists a homomorphism
\[
 \res  :  H^n\bigl(\hom_{\Z G}(C_*,M)\bigr) \rightarrow H^n\bigl(\hom_{\Z H}(\res C_*,\res M)\bigr)
\]
that we call the \emph{restriction} from $G$ to $H$, and a homomorphism
\[
\trf  :  H^n\bigl(\hom_{\Z H}(\res C_*,\res M)\bigr) \rightarrow H^n\bigl(\hom_{\Z G}(C_*,M)\bigr)
\]
that is called the \emph{transfer} from $H$ to $G$, with the property
\[
\trf \circ \res = [G:H] \cdot \id.
\]
 
\end{multicols}
\end{footnotesize}

\begin{small}

\providecommand{\noopsort}[1]{}
\providecommand{\bysame}{\leavevmode\hbox to3em{\hrulefill}\thinspace}
\providecommand{\MR}{\relax\ifhmode\unskip\space\fi MR }
\providecommand{\MRhref}[2]{%
  \href{http://www.ams.org/mathscinet-getitem?mr=#1}{#2}
}
\providecommand{\href}[2]{#2}

\end{small}

\end{document}